\documentclass{amsart}
\usepackage{amsfonts}

\usepackage{amsmath}

\usepackage{amssymb}
\usepackage{amsthm}
\usepackage{mathrsfs}
\usepackage{mathtools}
\usepackage{esint}
\newtheorem{theorem}{Theorem}[section]
\newtheorem{corollary}[theorem]{Corollary}
\newtheorem{lemma}[theorem]{Lemma}
\newtheorem{proposition}[theorem]{Proposition}
\theoremstyle{definition}
\newtheorem{definition}[theorem]{Definition}
\theoremstyle{remark}
\newtheorem{remark}[theorem]{Remark}
\newtheorem{example}{Example}
\numberwithin{equation}{section}

\def\XXint#1#2#3{{\setbox0=\hbox{$#1{#2#3}{int}$}  
		\vcenter{\hbox{$#2#3$}}\kern-.5\wd0}}

\usepackage{graphicx}
\usepackage{color}
\usepackage{hyperref}

\begin{document}
	\title[Stochastic $\Sigma$-convergence in Orlicz setting and Applications]{Stochastic $\Sigma$-convergence in Orlicz setting and Applications}
	
	\author{Fotso Tachago Joel}
	\address{ Fotso Tachago Joel, University of Bamenda, 	Higher Teachers Training College, Department of Mathematics
		P.O. Box 39
		Bambili, Cameroon}
	\email{fotsotachago@yahoo.fr}
	\author{Nnang Hubert}
	\address{ Nnang Hubert, University of Yaounde I,
		Higher Teachers Trainning College, Department of Mathematics
		P.O. Box 47
		Yaounde, Cameroon}
	\email{hnnang@yahoo.fr}
	\author{Tchinda Takougoum Franck}
	\address{ Tchinda Takougoum Franck, University of Bertoua, Department of Mathematics, Statistics and Computer Science, P.O. box 652, Bertoua, Cameroon}
	\email{takougoumfranckarnold@gmail.com}
	\author{Woukeng Jean Louis}
	\address{ Woukeng Jean Louis, Department of Mathematics and Computer Science,
		University of Dschang, P.O. Box 67, Dschang, Cameroon}
	\email{jwoukeng@yahoo.fr}
	
	\date{\today}
	\subjclass[2010]{35B40, 37A55, 46E30, 46J10, 49J55}
	\keywords{Homogenization, Dynamical systems, Ergodic $H$-supralgebra, Stochastic $\Sigma$-convergence, Orlicz-Sobolev spaces, Minimizations problems}
	
	\begin{abstract}
		This paper aims to extend the concept of stochastic $\Sigma$-convergence to the framework of Orlicz-Sobolev spaces in order to deals with  coupled stochastic and deterministic homogenization problems in this type of spaces. Thus, this concept is a combination of both well-known  $\Sigma$-convergence [\textit{Acta Math. Sinica, English Series} \textbf{30}(9) 1621-1654] and stochastic two-scale convergence in the mean schemes [\textit{Asympt. Anal. (2025)} \textbf{142}, 291-320]. An application to the stochastic-deterministic homogenization (in the context of ergodic $H$-supralgebra) of a class of highly oscillatory minimizations problems involving integral functionals with convex
		and nonstandard growth integrands is also given, and some concrete homogenization problems following varied structure hypothesis are deduce from this latter.
	\end{abstract}
	
	\begin{center}
		\emph{Cordially dedicated to the full professor Joseph Dongho (1975--2026)}
	\end{center}
	
	\maketitle

	
	\section{Introduction} \label{labelintro} 
	
	This paper is devoted to the stochastic-deterministic homogenization theory in the framework of Orlicz-Sobolev spaces, and more precisely to the stochastic $\Sigma$-convergence method (see, e.g. \cite{sango2}) in this type of spaces. 
	
	For the applications, we consider the following minimization  problem 
	\begin{equation}\label{ras2}
		\min \left\{ F_{\varepsilon}(v) : v \in W^{1}_{0}L^{\Phi}_{D_{x}}(Q; L^{\Phi}(\Omega)) \right\},
	\end{equation} 
	where, for each $\varepsilon>0$ the functional $F_{\varepsilon}$ is defined by 
	\begin{equation}\label{ras1}
		F_{\varepsilon}(v) = \iint_{Q\times \Omega} f\left(x, T\left(\frac{x}{\varepsilon_{1}}\right)\omega, \frac{x}{\varepsilon_{2}}, \, Dv(x,\omega) \right)dxd\mu, 
	\end{equation} 
	with $f$ a random-deterministic integrand (see, Subsection \ref{hypoproblem} and comment below) and we are interested to the  homogenization (i.e., the analysis of  asymptotic behaviour when $0<\varepsilon \rightarrow 0$) of the sequence of solutions of the problem (\ref{ras2})-(\ref{ras1}).
	
	\subsection{Our hypotheses}\label{hypoproblem}
	
	Let us clarify data in (\ref{ras2})-(\ref{ras1}):
	\begin{itemize}
		\item $\Phi : [0,+\infty[ \rightarrow [0,+\infty[ $ be a $N$-function of class   $\Delta_{2}\cap \Delta'$ (see,  (\ref{delta2}) and (\ref{deltaprime})), such that its Fenchel's conjugate $\widetilde{\Phi}$ is also a $N$-function of class $\Delta_{2}\cap \Delta'$ ;
		\item $(\Omega, \mathscr{M}, \mu)$ be a measure space with probability measure $\mu$ ;
		\item $Q$ is a bounded open set in $\mathbb{R}^{d}_{x}$ (the space $\mathbb{R}^{d}$ of variables $x = (x_{1}, \cdots , x_{d})$) and $\{ T(x) : \Omega \rightarrow \Omega, \, x \in \mathbb{R}^{d} \}$ is a fixed $d$-dimensional dynamical system on $\Omega$ such that the probability measure $\mu$ on $\Omega$ is invariant for $T$ (see, Subsection \ref{labelsubsub3sect2}) ;
		\item $D$ denotes the usual gradient operator in $Q$, i.e., $D = D_{x}= \left(\frac{\partial}{\partial x_{i}} \right)_{1\leq i\leq d}$ and $W^{1}_{0}L^{\Phi}_{D_{x}}(Q; L^{\Phi}(\Omega))$ is a  Orlicz-Sobolev's space which will be specified later (see, (\ref{c1orli1S}) and comment below) ;
		\item $\varepsilon _{1}$ and $%
		\varepsilon _{2}$ be two well separated functions of $\varepsilon $ tending
		towards zero with $\varepsilon $, that is, $0<\varepsilon _{1},\varepsilon
		_{2},\varepsilon _{2}/\varepsilon _{1}\rightarrow 0$ as $\varepsilon
		\rightarrow 0$ ;
		\item $f : \mathbb{R}^{d}\times\Omega\times\mathbb{R}^{d}\times \mathbb{R}^{d}\rightarrow\mathbb{R}$, $(x,\omega,y,\lambda) \rightarrow f(x,\omega,y,\lambda)$ is a random-deterministic integrand satisfying the following properties:
		\begin{itemize}
			\item[\textbf{(H$_{1}$)}] \textit{(Carath\'{e}odory function hypothesis)} : for all $(x,\lambda) \in \mathbb{R}^{d}\times \mathbb{R}^{d}$ and for almost all $(\omega,y)\in\Omega\times\mathbb{R}^{d}$, $f(x,\cdot,\cdot, \lambda)$ is measurable and $f(\cdot,\omega,\cdot,\cdot)$ is continuous. Moreover, there exist a continuous positive function $\varpi : \mathbb{R} \to \mathbb{R}_{+}$ with $\varpi(0)=0$, and a function $a \in L^{1}_{loc}(\mathbb{R}^{d}_{y})$ such that 
			\begin{equation*}
				|f(x,\omega,y,\lambda) - f(x',\omega,y,\lambda)| \leq \varpi(|x - x'|)[a(y) + f(x,\omega,y,\lambda)]
			\end{equation*}
			for all $x, x' \in \mathbb{R}^{d}$, $\lambda \in \mathbb{R}^{d}$ and for $d\mu\times dy$-almost all $(\omega,y) \in \Omega\times\mathbb{R}^{d}$  
			\item[\textbf{(H$_{2}$)}] \textit{(Strictly convexity hypothesis) : }$f(x,\omega,y,\cdot)$ is strictly convex for $d\mu\times dy$-almost all $(\omega,y) \in \Omega\times\mathbb{R}^{d}$ and for all $x\in \mathbb{R}^{d}$ ;
			\item[\textbf{(H$_{3}$)}] \textit{(Nonstandard growth hypothesis)} : $f$ have a nonstandard growth, that is, there are two constants $c_{1}, \, c_{2} > 0$ such that 
			\begin{equation*}
				c_{1} \Phi(|\lambda|) \leq f(x,\omega,y,\lambda) \leq c_{2}(1+ \Phi(|\lambda|))
			\end{equation*}
			for all $(x,\lambda) \in \mathbb{R}^{d}\times \mathbb{R}^{d}$ and for $d\mu\times dy$-almost all $(\omega,y) \in \Omega\times\mathbb{R}^{d}$ ;
			\item[\textbf{(H$_{4}$)}] \textit{(Abstract algebra structure hypothesis)} : $f(x,\omega,\cdot,\lambda) \in A$ for all $(x,\lambda) \in \mathbb{R}^{d}\times \mathbb{R}^{d}$ and for $\mu$-almost all $\omega \in \Omega$, where $A$ is an ergodic $H$-supralgebra of class $\mathcal{C}^{\infty}$ on $\mathbb{R}_{y}^{d}$  (see, Definitions \ref{d2.1} and \ref{d2.2}).
		\end{itemize}
	\end{itemize}
	
	\subsection{Litterature review}
	
	The functionals $F_{\varepsilon}$ defined in (\ref{ras1}) have been studied by many authors when the integrand $f$ have a particular structure of periodicity.  So, in \cite{tacha1}, J. Fotso Tachago and H. Nnang after extending the periodic two-scale convergence \cite{allair1,nguet1} to Orlicz setting, will study the periodic homogenization of functionals $F_{\epsilon}$ under the form $\int_{Q} f(\frac{x}{\epsilon}, Dv(x))\,dx$, where $v$ belongs the Orlicz-Sobolev type space $W^{1}_{0}L^{\Phi}(Q)$. In \cite{tacha3}, the same authors in order to tackle periodic multiscale problems in Orlicz setting, will extend the periodic reiterated two-scale convergence \cite{allair3} and studied the periodic homogenization of functionals $F_{\epsilon}$ under the form $\int_{Q} f(\frac{x}{\epsilon}, \frac{x}{\epsilon^{2}}, Dv(x))\,dx$ with $v \in W^{1}_{0}L^{\Phi}(Q)$.
	For others works about periodic (reiterated) homogenization in the Orlicz setting, we refer to \cite{tacha2,tacha5,tacha3,tacha4,martin}.  
	On the other hand, in \cite{franck} J. Dongho \textit{et al} after extending  the two-scale convergence in the mean method (see \cite{Bourgeat}) to Orlicz-Sobolev's spaces, will study the stochastic homogenization of functionals $F_{\varepsilon}$ when they are of type $\iint_{Q\times \Omega} f\left(T\left(\frac{x}{\varepsilon}\right)\omega,\, Dv(x,\omega) \right)dxd\mu$, where $ v \in W^{1}_{0}L^{\Phi}_{D_{x}}(Q; L^{\Phi}(\Omega))$.
	
	Note that the works cited above only take into account either the periodic aspect or the stochastic aspect in the homogenization process; which is not always consistent with most physical phenomena since most of these phenomena behave randomly in some scales, and periodically in others scales.  
	Thus, in \cite{sango}, M. Sango and J.L. Woukeng proposed a general method called stochastic two-scale convergence used to solve coupled-periodic and stochastic-homogenization problems.
	More precisely, they consider the general case, when the functionals $F_{\epsilon}$ are of type   $\iint_{Q\times\Omega} f(x,T(\frac{x}{\varepsilon_{1}})\omega, \frac{x}{\varepsilon_{2}}, Dv(x,\omega))\,dx\,d\mu$ and study  it for the test functions with values in classical Lebesgue-Sobolev's spaces, i.e., $v \in L^{p}(\Omega, W_{0}^{1,p}(Q))$.  In \cite{joe,joelf}, J. Fotso Tachago and H. Nnang use this stochastic two-scale convergence for the homogenization of Maxwell equations with linear and nonlinear periodic conductivity. In the same order of idea, J. Dongho \textit{et al} (see, \cite{tchin2}) study via the stochastic two-scale convergence method extended to Orlicz-Sobolev's spaces, the stochastic-periodic homogenization of functionals $F_{\varepsilon}$ when they are of type $\iint_{Q\times\Omega} f(x,T(\frac{x}{\varepsilon})\omega, \frac{x}{\varepsilon^{2}}, Dv(x,\omega))\,dx\,d\mu$, where $ v \in W^{1}_{0}L^{\Phi}_{D_{x}}(Q; L^{\Phi}(\Omega))$.   For others works on stochastic homogenization and periodic homogenization in classical Sobolev's spaces we refer, e.g., to \cite{abda,andre,bia,blan,cham,dal,gambi,jikov}.
	
	However, in order to deal with deterministic homogenization theory beyond the
	periodic setting, Nguetseng \cite{nguetseng2003homogenization}, following Zhikov and Krivenko \cite%
	{zhikov1983}, introduced the concept of homogenization algebras. This theory
	relies heavily on ergodic theory (but not the ergodicity!) because in
	applications, the assumption of ergodicity of the homogenization algebra
	considered is fundamental.
	 We refer to \cite{nguetseng2004homogenization,nguetseng2025homogenization,nanguet1,wou,nguetsengWoukeng} for deterministic homogenization and \cite{luka,gabri} for deterministic reiterated homogenization in classical Sobolev spaces. We also refer to \cite{nnang2014deterministic} for deterministic homogenization and \cite{tchin3} for deterministic reiterated homogenization in Orlicz setting.
	 It is important to note that there was a
	gap between the periodic homogenization theory and the stochastic
	homogenization theory, gap which was filled by Nguetseng's deterministic
	homogenization theory.
	
	 The two theories mentioned above have the specificity to be used to solve
	either stochastic homogenization problems only (for the first one) or
	deterministic homogenization problems only (for the second one).
	Unfortunately, as we know, in nature, very few phenomena behave, either just
	randomly or deterministically; most of these phenomena behave randomly in
	some scales, and deterministically in other scales.
	
	Motivated by this vision of the physical nature, M. Sango and J.L. Woukeng \cite{sango2} in the framework of Lebesgue's spaces, rely on these two
	theories and hence on their associated convergence methods (the stochastic
	two scale convergence in the mean \cite{Bourgeat} and the $\Sigma $-convergence %
	\cite{nguetseng2003homogenization}) to propose a general method of solving coupled -
	deterministic and stochastic - homogenization problems. Their method, called 
	\textit{stochastic }$\Sigma $\textit{-convergence}, combines the macroscopic
	and microscopic [random and deterministic] scales, and has therefore the
	advantage of taking both the simplicity and the efficiency of the
	macroscopic models, as well as the accuracy of the coupled
	random-deterministic microscopic models.
	
	\subsection{Motivation and objectives}
	
	 Motivated by the works of H. Nnang \cite{nnang2014deterministic} on the deterministic homogenization in Orlicz setting and by recent works of J. Dongho \textit{et al.} \cite{franck} on the stochastic homogenization in Orlicz setting, we propose in the first time, to combines these two works in order to extends the stochastic $\Sigma$-convergence method to the framework of Orlicz-Sobolev spaces.
	
	After this extension,  our second contribution in this paper is the analysis of the asymptotic behaviour (when $0<\varepsilon \rightarrow 0$) via the stochastic $\Sigma$-convergence extended to Orlicz setting, of the sequence of solutions to the problem \eqref{ras2}-\eqref{ras1} under hypotheses of Subsection \ref{hypoproblem}, in particular the \textit{ abstract algebra structure hypothesis} \textbf{(H$_{4}$)} and then several examples considered in various concrete settings are presented by way of illustration.
	
	The extension to the Orlicz setting is also motivated by the fact that the stochastic $\Sigma$-convergence method developed in \cite{sango2} have been widely adopted in stochastic-deterministic homogenization of PDEs in classical Lebesgue-Sobolev spaces neglecting materials where microstructure cannot be conveniently captured by modeling exclusively by means of those spaces.  Moreover, it is well known (see, e.g. \cite{mignon2}) that there exist problems whose solution must naturally belong not to the classical Lebesgue-Sobolev spaces but rather to the Orlicz-Sobolev spaces. 
	
		Therefore, we generalize the results obtained in \cite{sango} of two sense: firstly we pass from the periodic framework to the general deterministic framework and secondly we pass from the Lebesgue spaces to the Orlicz spaces. In the same vein, we generalize the work of \cite{tchin2} and \cite{tchin3} in the sense where, for the first, the periodicity hypothesis is replaced by the abstract algebra structure hypothesis and for the second, it is a particular case as we will see in Remark \ref{remcase}.

	\subsection{Main results}
	
	After having defined the concept of (weak and strong) stochastic $\Sigma$-convergence in Orlicz setting (see, Definitions \ref{d3.1} and \ref{d3.2}), our first novelty is concerned the generalization of the compactness results of stochastic $\Sigma$-convergence in classical Sobolev spaces to a class of Orlicz-Sobolev spaces.	Thus, considering notations in Section \ref{labelsect2} and \ref{labelsect3}, we extend the result in \cite[Theorem 5]{sango2} and \cite[Theorem 8]{sango2} to Orliz-Sobolev spaces as follows:
	
	\begin{theorem}
	Let $\Phi$ be a $N$-function of class $\Delta_{2}\cap \nabla_{2}$ and let $A$ be an $H$-supralgebra on $\mathbb{R}^{d}_{y}$. Then, any bounded sequence $(u_{\varepsilon })_{\varepsilon \in E}$ in 
		$L^{\Phi}(Q\times \Omega )$ (where $E$ is a fundamental sequence) admits a subsequence which is weakly stochastically $\Sigma $%
		-convergent in $L^{\Phi}(Q\times \Omega )$.
	\end{theorem}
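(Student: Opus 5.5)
The plan is the standard duality/compactness argument used for $\Sigma$-convergence in Orlicz spaces (as in \cite{nnang2014deterministic} and \cite{franck}), adapted to the coupled random–deterministic scales. Weak stochastic $\Sigma$-convergence of $(u_\varepsilon)$ to some $u_0\in L^{\Phi}(Q\times\Omega;\mathcal{B}^{\Phi}_{A})$ means
\[
\iint_{Q\times\Omega} u_\varepsilon(x,\omega)\,f\Bigl(x,T\Bigl(\frac{x}{\varepsilon_1}\Bigr)\omega,\frac{x}{\varepsilon_2}\Bigr)\,dx\,d\mu \longrightarrow \iint_{Q\times\Omega}\int_{\Delta(A)} u_0\,\widehat{f}\,dx\,d\mu\,d\beta
\]
for every test function $f$ in $L^{\widetilde{\Phi}}(Q\times\Omega;A)$ (or a dense subspace such as $\mathcal{C}_0^\infty(Q)\otimes I^{\widetilde\Phi}(\Omega)\otimes A$). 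The space $A$ is separable here; otherwise one passes to a separable $H$-subalgebra containing the countable family of test functions, as usual.

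\textbf{Step 1: the approximating functionals.} For each $\varepsilon\in E$ define the linear functional
\[
L_\varepsilon(f)=\iint_{Q\times\Omega} u_\varepsilon\, f^{\varepsilon}\,dx\,d\mu, \qquad f^\varepsilon(x,\omega)=f\Bigl(x,T\Bigl(\frac{x}{\varepsilon_1}\Bigr)\omega,\frac{x}{\varepsilon_2}\Bigr).
\]
The Hölder inequality in Orlicz spaces gives $|L_\varepsilon(f)|\le 2\|u_\varepsilon\|_{\Phi}\|f^\varepsilon\|_{\widetilde\Phi}\le C\|f^\varepsilon\|_{\widetilde\Phi,Q\times\Omega}$.

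\textbf{Step 2: the key lemma.} For admissible $f$, the rescaled traces satisfy
\[
\limsup_{\varepsilon}\|f^\varepsilon\|_{\widetilde\Phi,Q\times\Omega}\le \|f\|_{L^{\widetilde\Phi}(Q\times\Omega;\mathcal{B}^{\widetilde\Phi}_A)}.
\]
This follows from the modular convergence
\[
\iint \widetilde\Phi\bigl(|f^\varepsilon|/\delta\bigr) \;\to\; \iint\int_{\Delta(A)}\widetilde\Phi\bigl(|\widehat f|/\delta\bigr),
\]
applied to the function $\widetilde\Phi(|f|/\delta)$. That function still lies in the admissible class because $A$ is an algebra stable under composition with continuous functions, and $\widetilde\Phi$ is continuous. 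The modular convergence itself combines two ingredients: the mean value property of $A$ in $y$, via the $\Sigma$-convergence of $g(x,x/\varepsilon_2)$ established in the earlier section, and the ergodic theorem (invariance of $\mu$ under $T$) in the random variable. The separation of scales $\varepsilon_2/\varepsilon_1\to0$ is used to decouple the two limits. Taking the infimum over $\delta$ converts the modular statement into the Luxemburg-norm bound. I expect this to be the main obstacle, since the Orlicz norm is not an integral and one must pass from modular to norm convergence while handling the joint $(T(x/\varepsilon_1)\omega, x/\varepsilon_2)$ oscillation.

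\textbf{Step 3: extraction and extension.}
\begin{itemize}
\item Choose a countable dense subset $\{f_n\}$ of the test space.
\item For each fixed $n$, the sequence $(L_\varepsilon(f_n))_\varepsilon$ is bounded, so a diagonal argument gives a subsequence $E'$ along which $L_\varepsilon(f_n)$ converges for every $n$.
\item By the uniform bound of Step 2, $\limsup_\varepsilon |L_\varepsilon(f)-L_\varepsilon(f_n)|\le C\|f-f_n\|$, so the limit $L(f)$ exists for every $f$ in the test space.
\item The resulting $L$ is linear and satisfies $|L(f)|\le C\|f\|_{L^{\widetilde\Phi}(Q\times\Omega;\mathcal{B}^{\widetilde\Phi}_A)}$.
\end{itemize}

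\textbf{Step 4: identification of the limit.} Extend $L$ by continuity to $L^{\widetilde\Phi}(Q\times\Omega;\mathcal{B}^{\widetilde\Phi}_A)$, identified through the Gelfand transform with $L^{\widetilde\Phi}(Q\times\Omega\times\Delta(A))$. Since $\Phi,\widetilde\Phi\in\Delta_2$, this space is reflexive with dual $L^{\Phi}(Q\times\Omega\times\Delta(A))$. The Riesz-type representation therefore yields $\widehat{u}_0$ with
\[
L(f)=\iint\int_{\Delta(A)}\widehat u_0\,\widehat f\,dx\,d\mu\,d\beta,
\]
and $u_0=\mathcal{G}^{-1}\widehat{u}_0$ is the weak stochastic $\Sigma$-limit along $E'$. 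This proves the theorem.
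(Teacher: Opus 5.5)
Your Steps 1, 2 and 4 are the paper's argument. These are the functionals $L_\varepsilon$, the asymptotic bound $\limsup_\varepsilon|L_\varepsilon(f)|\le c\|\varrho(f)\|_{Y}$ with $Y=L^{\widetilde\Phi}(Q\times\Omega;\mathcal{X}^{\widetilde\Phi}_A)$ coming from Corollary~\ref{c3.1}(ii), and the identification $Y'=L^{\Phi}(Q\times\Omega;\mathcal{X}^{\Phi}_A)$. Step 3, however, has a genuine gap.

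\textbf{The gap.} The diagonal extraction needs a countable set of test functions whose images are dense in $Y$. That amounts to separability of $\mathcal{X}^{\widetilde\Phi}_A\cong L^{\widetilde\Phi}(\Delta(A))$ and of $L^{\widetilde\Phi}(\Omega)$, and the statement grants neither. $A$ is an arbitrary $H$-supralgebra, and dropping separability is exactly what distinguishes supralgebras from $H$-algebras; likewise $(\Omega,\mathscr{M},\mu)$ is an arbitrary probability space. Take $A=AP(\mathbb{R}^d)$, one of the paper's model cases. Since $|\gamma_k(y)-\gamma_{k'}(y)|=2|\sin(\pi(k-k')\cdot y)|$, the norm $\|\gamma_k-\gamma_{k'}\|_{\widetilde\Phi,A}$ is the same positive constant for all $k\neq k'$ in $\mathbb{R}^d$. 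Hence $Y$ is not separable and no countable dense family exists, so your sentence ``$A$ is separable here'' is false under the hypotheses.

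The fallback does not repair this. Definition~\ref{d3.1} requires one subsequence along which (\ref{3.1}) holds for \emph{every} $g\in A$. A diagonal argument inside a separable subalgebra $A_0$ only gives convergence for $g\in A_0$, with a limit in $\mathcal{X}^{\Phi}_{A_0}$, and there is no countable family of test functions fixed in advance to restrict to. Nor can you simply invoke Eberlein--\v{S}mulian: the bound is only asymptotic, so $(L_\varepsilon)$ is not a bounded sequence in $Y'$.

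\textbf{How to close it.} The paper does not do the extraction by hand; it feeds the limsup bound into the abstract lemma \cite{wou}, Proposition 3.2, in the reflexive space $Y$. For a self-contained argument valid for non-separable $A$ and $\Omega$, reflexivity must enter the extraction itself, for instance as follows.
\begin{enumerate}
\item Index $E=(\varepsilon_n)$. For a free ultrafilter $p\in\mathbb{N}^*=\beta\mathbb{N}\setminus\mathbb{N}$, put $h_p(f)=p\text{-}\lim_n L_{\varepsilon_n}(f)$ for $f\in X=\mathcal{C}_0^\infty(Q)\otimes\mathcal{C}^\infty(\Omega)\otimes A$.
\item The limsup bound gives $h_p\in Y'$ with $\|h_p\|\le c$. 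The map $p\mapsto h_p$ is continuous from $\mathbb{N}^*$ into $cB_{Y'}$ with the weak topology: on bounded sets weak$^*$ convergence is pointwise convergence on the dense set $\varrho(X)$, and weak$^*$ equals weak since $Y$ is reflexive.
\item The image $K$ is weakly compact. By Namioka's theorem it contains a point $k$ at which the identity $(K,\text{weak})\to(K,\|\cdot\|)$ is continuous, and such a point is a $G_\delta$ point of $(K,\text{weak})$.
\item Thus $h^{-1}(k)$ is a nonempty $G_\delta$ subset of $\mathbb{N}^*$. Every nonempty $G_\delta$ in $\mathbb{N}^*$ has nonempty interior, so $h^{-1}(k)$ contains $T^*=\overline{T}\setminus T$ for some infinite $T\subset\mathbb{N}$.
\item All free ultrafilters containing $T$ give the same limit, and $(L_{\varepsilon_n}(f))_n$ is eventually bounded. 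Hence $L_{\varepsilon_n}(f)\to k(f)$ as $T\ni n\to\infty$ for every $f\in X$, with $k\in Y'=L^{\Phi}(Q\times\Omega;\mathcal{X}^{\Phi}_A)$.
\end{enumerate}
Alternatively, add separability hypotheses on $A$ and $L^{\widetilde\Phi}(\Omega)$ and keep your diagonal argument.

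\textbf{A remark on Step 2.} The modular convergence uses only Fubini and the invariance of $\mu$. For each fixed $x$, invariance removes $T(x/\varepsilon_1)$ altogether, and then the deterministic mean-value limit in $x/\varepsilon_2$ finishes, exactly as in Proposition~\ref{p3.3} and Corollary~\ref{c3.1}(ii). Neither the ergodic theorem nor $\varepsilon_2/\varepsilon_1\to 0$ is involved, so that step is not the obstacle you expected.
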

	
	\begin{theorem}
		Let $\Phi \in \Delta_{2}$ be a $N$-function and $\widetilde{\Phi} \in \Delta_{2}$ its conjugate. Let $A$ be an ergodic supralgebra on $\mathbb{\mathbb{R}}_{y}^{d}$ such that $A$ is translation invariant and  each of its elements is uniformly
		continuous.
		Assume $(u_{\varepsilon })_{\varepsilon \in E}$ is a sequence in $%
		L^{\Phi}(Q\times \Omega )$ such that:
		
		\begin{itemize}
			
			\item[(i)]  $(u_{\varepsilon })_{\varepsilon \in E}$ is bounded in $L^{\Phi}\left(Q\times\Omega\right)$ and $(D_{x}u_{\epsilon})_{\epsilon\in E}$ is bounded in $L^{\Phi}\left(Q\times\Omega\right)^{d}$. 
		\end{itemize}
		
		Then there exist $u_{0} \in W^{1}L^{\Phi}_{D_{x}}(Q; I_{nv}^{\Phi}(\Omega))$, $u_{1} \in L^{1}\left(Q; W^{1}_{\#}L^{\Phi}(\Omega)\right)$, $u_{2}\in L^{1}(Q\times\Omega; W^{1}_{\#}\mathcal{X}^{\Phi}_{A})$ and a subsequence $E'$ from $E$ such that, as $E' \ni \varepsilon \rightarrow 0$, 
		
		\begin{itemize}
			
			\item[(ii)]  $u_{\varepsilon }\rightarrow u_{0}$  in $L^{\Phi}(Q\times
			\Omega )$-weak $\digamma\, \Sigma$; 
			
			\item[(iii)]  $Du_{\varepsilon }\rightarrow Du_{0}+\overline{D}_{\omega }u_{1}+%
			\overline{D}_{y}u_{2}$  in $L^{\Phi}(Q\times \Omega )^{d}$-weak $\digamma\, \Sigma $,  with $u_{1} \in L^{\Phi}(Q; W^{1}_{\#}L^{\Phi}(\Omega))$, $u_{2}\in L^{\Phi}(Q\times\Omega; W^{1}_{\#}\mathcal{X}^{\Phi}_{A})$  when $\widetilde{\Phi} \in \Delta'$ [see, (\ref{deltaprime})].  
		\end{itemize}
	\end{theorem}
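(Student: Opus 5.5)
The plan is to deduce the weak stochastic $\Sigma$-limits in three layers, starting from the compactness result of the preceding theorem.

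\emph{Extraction of limits.} By hypothesis (i), the sequences $(u_\varepsilon)$ and $(D_x u_\varepsilon)$ are bounded in $L^{\Phi}(Q\times\Omega)$ and $L^{\Phi}(Q\times\Omega)^d$. Applying the previous compactness theorem twice (and a diagonal argument), I get a subsequence $E'$ and limits $u_0\in L^{\Phi}(Q\times\Omega;\mathcal{X}^{\Phi}_A)$ and $v\in L^{\Phi}(Q\times\Omega;\mathcal{X}^{\Phi}_A)^d$ with
\[
u_\varepsilon\to u_0,\qquad D_xu_\varepsilon\to v \qquad\text{in the weak } \digamma\Sigma \text{ sense}.
\]
Here $\Delta_2$ for $\Phi$ and $\widetilde\Phi$ gives reflexivity, which is what that theorem needs.

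\emph{Independence of $u_0$ from the microscopic variables.} I test $\varepsilon_2\,D_xu_\varepsilon$ against $\psi(x,T(x/\varepsilon_1)\omega,x/\varepsilon_2)$, with $\psi=\varphi\otimes g\otimes w$, $w\in A^\infty$, $g$ smooth on $\Omega$, and integrate by parts in $x$. The chain rule gives
\[
D_x\bigl[\psi^\varepsilon\bigr]=(D_x\psi)^\varepsilon+\varepsilon_1^{-1}(D_\omega\psi)^\varepsilon+\varepsilon_2^{-1}(D_y\psi)^\varepsilon .
\]
Multiplying by $\varepsilon_2$ and letting $\varepsilon\to0$, using $\varepsilon_2/\varepsilon_1\to0$, only the $D_y$ term survives. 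This yields $\iint\!\!\int u_0\,\overline{D}_y w=0$, so $u_0$ is constant in $y$ by ergodicity of $A$. Repeating the argument with $\varepsilon_1D_xu_\varepsilon$ and $y$-independent test functions shows $D_\omega u_0=0$, so $u_0(x,\cdot)\in I^{\Phi}_{nv}(\Omega)$. Testing $D_xu_\varepsilon$ against $\varphi(x)g(T(x/\varepsilon_1)\omega)$ with $g$ invariant then identifies the mean of $v$ over $(\omega,y)$ with $D_xu_0$. Hence $u_0\in W^1L^{\Phi}_{D_x}(Q;I^{\Phi}_{nv}(\Omega))$, which is (ii).

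\emph{Structure of $v-D_xu_0$.} This is the main obstacle. I take test functions that are divergence-free in $y$ ($\overline{\mathrm{div}}_y\,w=0$), respectively in $\omega$ ($\mathrm{div}_\omega\,g=0$). After integrating by parts, the singular terms vanish, which gives
\[
\int (v-D_xu_0)\cdot \psi=0
\]
for such $\psi$. Two steps would then finish the argument:
\begin{itemize}
\item The $y$-part: $\mathbb{E}_\omega$-averaged orthogonality to solenoidal fields in $\mathcal{X}^{\Phi}_A$ gives a $y$-gradient $\overline{D}_yu_2$, via the de Rham-type characterization in $W^1_\#\mathcal{X}^{\Phi}_A$ stated in Section~\ref{labelsect2}.
\item The $\omega$-part: the $y$-mean of $v-D_xu_0$ is orthogonal to $\omega$-solenoidal fields, which gives $\overline{D}_\omega u_1$ with $u_1\in W^1_\#L^{\Phi}(\Omega)$, via the stochastic analogue from \cite{franck}.
\end{itemize}

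In general these potentials are obtained only fibrewise, with integrability in $L^1$ with respect to the parameters. This is because the orthogonal-decomposition lemmas in Orlicz spaces control the gradient norm only up to a Luxemburg-type constant, which explains $u_1\in L^1(Q;\cdot)$ and $u_2\in L^1(Q\times\Omega;\cdot)$. When $\widetilde\Phi\in\Delta'$, the Young-type estimate $\Phi(|a||b|)\le c\,\Phi(|a|)\Phi(|b|)$ available in that class lets me upgrade the fibrewise bounds to $L^{\Phi}$ membership. Measurability in the parameters will come from the uniqueness (mean zero) of the potentials.
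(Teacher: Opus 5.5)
Your extraction step, the proof that $u_0$ is independent of $y$ and $\omega$-invariant, and the construction of $u_1$ from $y$-independent, $\omega$-solenoidal test functions via Lemma~\ref{lem5} agree with the paper. Your identification of $D_xu_0$ through invariant test functions is also fine, provided ``mean over $(\omega,y)$'' is read as the projection of $M(\mathbf{v})$ onto invariant functions, since $T$ is not assumed ergodic.

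\textbf{The gap is the $y$-part.} For $\psi=\varphi\otimes g\otimes w$ with $\overline{\mathrm{div}}_y w=0$, integration by parts gives
\[
\iint_{Q\times\Omega}Du_\varepsilon\cdot\psi^\varepsilon\,dx\,d\mu=-\iint_{Q\times\Omega}u_\varepsilon\Bigl[g^\varepsilon\,(\nabla\varphi\cdot w)^\varepsilon+\frac{1}{\varepsilon_1}\,\varphi\,(D_\omega g)^\varepsilon\cdot w^\varepsilon\Bigr]dx\,d\mu .
\]
The $\varepsilon_1^{-1}$-term does not go away, and your claimed identity $\int(\mathbf{v}-D_xu_0)\cdot\psi=0$ is false. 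Take $w\equiv e$ a constant vector, which is $y$-solenoidal, and $g\in\mathcal{C}^\infty(\Omega)$ arbitrary. The identity would then force $M(\mathbf{v})=D_xu_0$, i.e.\ $\overline{D}_\omega u_1=0$. This already fails for $u_\varepsilon=\varepsilon_1h(T(x/\varepsilon_1)\omega)$ with $h$ non-invariant.

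The singular term drops out for invariant $g$ ($D_\omega g=0$). But then you only learn that the invariant projection (your ``$\omega$-average'') of $\mathbf{v}-D_xu_0$ is orthogonal to $y$-solenoidal fields. That gives at best an $\omega$-averaged $y$-potential, not $u_2(x,\omega,\cdot)$ with $\overline{D}_yu_2=\mathbf{v}-M(\mathbf{v})$ for a.e.\ $(x,\omega)$. A zero-mean $y$-solenoidal field with zero invariant projection is invisible to your test.

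Restricting to zero-mean $w$ does not repair this without new input. You would have to show the singular term tends to $0$, e.g.\ via $w=\overline{\mathrm{div}}_y\Lambda$ with $\Lambda$ skew-symmetric in $A^\infty$ (skew-symmetry cancels the $\varepsilon_1^{-1}D_\omega D_\omega g$ contribution). You would also need density of such $w$ and a de Rham lemma in $\mathcal{X}^\Phi_A$. None of this is available for a general algebra with mean value; Section~\ref{labelsect2} only states Lemma~\ref{lem5} on $\Omega$. Using fields solenoidal in $\omega$ and $y$ simultaneously would instead require a joint decomposition lemma, which is not available either.

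\textbf{The paper avoids duality in $y$ altogether.} It introduces
\[
z^r_\varepsilon=\varepsilon_2^{-1}\Bigl(u_\varepsilon-|B_{\varepsilon_2r}|^{-1}\int_{B_{\varepsilon_2r}}u_\varepsilon(\cdot+\rho,\omega)\,d\rho\Bigr).
\]
This is bounded in $L^\Phi(Q\times\Omega)$ by the gradient bound and Jensen, and its gradient is $\varepsilon_2^{-1}$ times $Du_\varepsilon$ minus its local average. Test against $\varphi(x)f(T(x/\varepsilon_1)\omega)g(x/\varepsilon_2)$ with \emph{arbitrary} $f\in\mathcal{C}^\infty(\Omega)$. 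The $\omega$-derivative now carries the factor $\varepsilon_2/\varepsilon_1$ in front of the bounded $z^r_\varepsilon$ (identity \eqref{5.10'}), so it disappears in the limit. Together with Lemma~\ref{l5.1} and Remark~\ref{r5.2}, which use translation invariance of $A$, this gives for a.e.\ $(x,\omega)$
\[
\overline{D}_yz_r=\mathbf{v}-|B_r|^{-1}\int_{B_r}\mathbf{v}(\cdot+\rho)\,d\rho .
\]
Ergodicity of $A$ (Lemma~\ref{p2.4}) then makes $J_1(z_r-M_y(z_r))$ Cauchy in $W^1_\#\mathcal{X}^\Phi_A$ as $r\to\infty$. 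This yields $u_2$ with $\overline{D}_yu_2=\mathbf{v}-M(\mathbf{v})$ fibrewise in $(x,\omega)$, with no de Rham lemma in $y$. Your correct $\omega$-step then supplies $M(\mathbf{v})-Du_0=\overline{D}_\omega u_1$. This rescaled local-average device, which absorbs the $\varepsilon_1^{-1}$ from the $\omega$-derivative into a factor $\varepsilon_2$, is the idea your proposal is missing.
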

	
	Our second result (main homogenization result) in this paper are about the stochastic-deterministic homogenization problem \eqref{ras2}-\eqref{ras1}. 	Thus, considering notations in Section \ref{labelsect3} and \ref{labelsect4}, we extend the results in \cite[Theorem 1.3]{tchin2}  to the general deterministic setting as follows:
	
	\begin{theorem}\label{usetheo}
		Let $\Phi \in \Delta_{2} \cap \Delta'$ be a $N$-function and let $A$ be an ergodic supralgebra on $\mathbb{\mathbb{R}}_{y}^{d}$ such that $A$ is translation invariant and  each of its elements is uniformly
		continuous.
	For each $\varepsilon > 0$, let $(u_{\varepsilon})_{\varepsilon\in E} \in W^{1}_{0}L^{\Phi}_{D_{x}}(Q; L^{\Phi}(\Omega))$ be the unique solution of (\ref{ras2}). Then, as $\varepsilon \to 0$, 
\begin{equation*}
	u_{\varepsilon} \rightarrow u_{0} \quad  \;\textup{in} \; L^{\Phi}(Q\times\Omega)-weak \,\digamma\, \Sigma ,  
\end{equation*}
and 
\begin{equation*}
	Du_{\varepsilon} \rightarrow Du_{0} + \overline{D}_{\omega}u_{1} + \overline{D}_{y}u_{2} \quad  \;\textup{in} \; L^{\Phi}(Q\times\Omega)^{d}-weak \,\digamma\, \Sigma ,  
\end{equation*}
where $\mathbf{u} = (u_{0}, u_{1}, u_{2}) \in \mathbb{F}_{0}^{1}L^{\Phi}=  W^{1}_{0}L^{\Phi}_{D_{x}}(Q; I_{nv}^{\Phi}(\Omega))\times L^{\Phi}(Q; W^{1}_{\#}L^{\Phi}(\Omega))\times L^{\Phi}(Q\times\Omega; W^{1}_{\#}\mathcal{X}^{\Phi}_{A})$ 
is the unique solution to the minimization problem 
\begin{equation*}
\inf \left\{  F(\mathbf{v})\, : \, \mathbf{v} \in \mathbb{F}_{0}^{1}L^{\Phi} \right\},
\end{equation*}
where the functional $F$ on $\mathbb{F}_{0}^{1}L^{\Phi}$ is defined by 
\begin{equation*}
	F(\mathbf{v}) = \iint_{Q\times\Omega} \pounds_{A} \int  \varrho \circ f(\cdot,\cdot, \mathbb{D}\mathbf{v})\, dydxd\mu,
\end{equation*}
with $\varrho \circ f(\cdot,\cdot, \mathbb{D}\mathbf{v})(x,\omega,y) =  \varrho (f(x,\omega, \cdot, Dv_{0} + \overline{D}_{\omega}v_{1} + \overline{D}_{y}v_{2}(x,\omega,\cdot)))(y)$ for $(x,\omega,y) \in Q \times\Omega\times \mathbb{R}^{d}$ and $\varrho$ as in \eqref{rholabel}.
 
	\end{theorem}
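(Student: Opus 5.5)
We prove Theorem \ref{usetheo} by the standard $\Gamma$-convergence-type scheme combined with the compactness result (the second theorem above).

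\emph{Step 1 (a priori bounds and extraction).} Existence and uniqueness of $u_\varepsilon$ follow from (H$_2$)--(H$_3$), reflexivity of $W^{1}_{0}L^{\Phi}_{D_{x}}(Q;L^{\Phi}(\Omega))$ (since $\Phi,\widetilde\Phi\in\Delta_2$) and weak lower semicontinuity of the convex functional $F_\varepsilon$. Comparing with $v=0$ gives $c_1\iint\Phi(|Du_\varepsilon|)\le F_\varepsilon(u_\varepsilon)\le F_\varepsilon(0)\le c_2|Q|$. Hence $(Du_\varepsilon)$ is bounded in $L^\Phi(Q\times\Omega)^d$, and by a Poincaré inequality in Orlicz spaces so is $(u_\varepsilon)$. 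The compactness theorem then gives a subsequence $E'$ and a triple $\mathbf u=(u_0,u_1,u_2)\in\mathbb F^1_0L^\Phi$ (the $\Delta'$ condition places $u_1,u_2$ in the $L^\Phi$ spaces, and the zero trace passes to $u_0$) with the two stated weak $\digamma\Sigma$ convergences.

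\emph{Step 2 (lower bound).} We show $F(\mathbf u)\le\liminf F_\varepsilon(u_\varepsilon)$. By convexity, for any $\Psi$ in a dense class of test functions $\psi_0+\overline D_\omega\psi_1+\overline D_y\psi_2$ (smooth in $x$, in $\mathcal C^\infty$-type random functions in $\omega$, in $A^\infty$ in $y$) we use the subgradient inequality
\[
f(\cdot,Du_\varepsilon)\ge f(\cdot,\Psi^\varepsilon)+\partial_\lambda f(\cdot,\Psi^\varepsilon)\cdot(Du_\varepsilon-\Psi^\varepsilon),
\]
where $\Psi^\varepsilon(x,\omega)=\Psi(x,T(x/\varepsilon_1)\omega,x/\varepsilon_2)$. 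The right-hand side passes to the limit by weak $\digamma\Sigma$ convergence, provided that $\partial_\lambda f(\cdot,\Psi^\varepsilon)$ is an admissible test function in $L^{\widetilde\Phi}$. This in turn requires the growth bound $|\partial_\lambda f|\le c\,\widetilde\Phi^{-1}(\Phi(c|\lambda|))$-type estimates from (H$_3$)+convexity, the continuity in $x$ from (H$_1$), and membership in $A$ from (H$_4$), with $\varrho$ used to pass from $A$ to its spectrum. One then takes the supremum over $\Psi\to\mathbb D\mathbf u$ in $L^\Phi$. Alternatively, a cleaner route is a lower-semicontinuity lemma for convex integrands under weak $\digamma\Sigma$ convergence, proved via Fenchel duality $f=\sup_k(a_k+b_k\cdot\lambda)$.

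\emph{Step 3 (upper bound and identification).} For any $\mathbf v\in\mathbb F^1_0L^\Phi$, approximate it by smooth triples $(\psi_0,\psi_1,\psi_2)$. Set
\[
v_\varepsilon=\psi_0+\varepsilon_1\psi_1(x,T(x/\varepsilon_1)\omega)+\varepsilon_2\psi_2(x,T(x/\varepsilon_1)\omega,x/\varepsilon_2).
\]
Its gradient equals $\Psi^\varepsilon+o(1)$ strongly, using $\varepsilon_2/\varepsilon_1\to0$ to kill the cross term $\varepsilon_2\varepsilon_1^{-1}D_\omega\psi_2$. Then $F_\varepsilon(v_\varepsilon)\to F(\boldsymbol\psi)$ by the strong-convergence / mean-value property of the test functions, which uses ergodicity of both $T$ and $A$, and the continuity of $F$ on $\mathbb F^1_0L^\Phi$, which follows from (H$_3$) and $\Delta_2$. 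Minimality gives $F_\varepsilon(u_\varepsilon)\le F_\varepsilon(v_\varepsilon)$. Combining with Step 2 yields $F(\mathbf u)\le F(\boldsymbol\psi)$, hence $F(\mathbf u)\le F(\mathbf v)$ by density. Strict convexity of $f$ makes $F$ strictly convex in $\mathbb D\mathbf v$, and the map $\mathbf v\mapsto\mathbb D\mathbf v$ is injective on $\mathbb F^1_0L^\Phi$, since the three gradient components are mutually orthogonal in the mean. So the minimizer is unique, and the whole sequence converges.

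\emph{Main obstacle.} The main obstacle is Step 2. The issue is showing that $f(\cdot,T(x/\varepsilon_1)\omega,x/\varepsilon_2,\Psi^\varepsilon)$ and $\partial_\lambda f(\cdot,\Psi^\varepsilon)$ are legitimate test functions whose oscillating traces converge to their $\varrho$-means. We only have measurability in $\omega$ and $A$-membership in $y$, so the functions are not continuous jointly. I would first handle $f$ independent of $x$, approximating in $L^1$ by sums of products of functions in $\omega$ and elements of $A$. I would then use (H$_1$) with a partition of $Q$ into small cubes to freeze $x$, controlling the error by $\varpi(\delta)[a+f]$.
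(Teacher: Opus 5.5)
Your overall scheme is the paper's: compactness via Theorem \ref{t3.3}, a convexity lower bound, the recovery sequence $\phi_\varepsilon$ of Corollary \ref{lem24} combined with minimality and density (estimate (\ref{f3})), and uniqueness of the limit minimizer to recover the whole sequence. Steps 1 and 3 match the paper essentially line by line. The problem is Step 2 as you have written it.

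Hypotheses (H$_1$)--(H$_4$) give strict convexity in $\lambda$, not differentiability. For example, with $\Phi(t)=t^p/p$, the integrand $f(\lambda)=\frac1p|\lambda|^p+|\lambda_1|$ satisfies all four hypotheses and has no gradient on $\{\lambda_1=0\}$. So $\partial_\lambda f(\cdot,\Psi^\varepsilon)$ would have to be a measurable selection of subgradients. Nothing makes such a selection continuous in $y$, let alone an element of $A$: (H$_4$) puts $f(x,\omega,\cdot,\lambda)$ in $A$, not its $\lambda$-subgradients. Even for differentiable $f$, $\partial_\lambda f(x,\omega,\cdot,\lambda)$ is only a pointwise limit of difference quotients lying in $A$, and $A$ is not closed under pointwise limits on $\mathbb{R}^d_y$ (the limit of the Gelfand transforms need not be continuous on $\Delta(A)$). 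Hence $\partial_\lambda f(\cdot,\Psi^\varepsilon)$ is not an admissible test function for Definition \ref{d3.1} or Proposition \ref{p3.4}, and you cannot pass to the limit in $\partial_\lambda f(\cdot,\Psi^\varepsilon)\cdot Du_\varepsilon$. Your ``main obstacle'' paragraph treats the trace of $f(\cdot,\Psi^\varepsilon)$, which the paper handles via Proposition \ref{lem23}, but not this issue.

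The missing idea is the paper's regularization in $\lambda$, $f_n(\cdot,\lambda)=\int\theta_n(\eta)f(\cdot,\lambda-\eta)\,d\eta$. Then $\partial_\lambda f_n(x,\omega,\cdot,\lambda)=\int\nabla\theta_n(\eta)f(x,\omega,\cdot,\lambda-\eta)\,d\eta$ is an average of an $A$-valued map that is sup-norm continuous, because convexity plus (H$_3$) make $f$ locally Lipschitz in $\lambda$ uniformly in $y$. So it lies in $A$ and obeys $(H_5)_n$. The subgradient inequality for $f_n$ at smooth $\mathbf{v}_l\to\mathbb{D}\mathbf{u}$ then passes to the limit. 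The same local Lipschitz bound produces the $C'/n$ loss in Proposition \ref{conseq1}, and letting $n\to\infty$ gives Corollary \ref{lem32}.

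Your Fenchel alternative can also be made rigorous, and would bypass the regularization, but only with a choice you do not state: constant slopes $\xi_k\in\mathbb{Q}^d$ and intercepts $-f^*(x,\omega,\cdot,\xi_k)$. These intercepts lie in $A$ for the following reasons. By the coercivity in (H$_3$), the supremum defining $f^*(\cdot,\xi)$ is a maximum over a fixed compact $\lambda$-ball of that same uniformly Lipschitz $A$-valued map. Moreover, $A$ is closed under finite maxima of real elements. Combined with $f\ge 0$ and nonnegative test functions $\chi_k$ with $\sum_k\chi_k\le 1$, this yields the lower bound. With unspecified affine minorants $a_k+b_k\cdot\lambda$ you hit the same obstruction as with $\partial_\lambda f$.

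A minor point: ergodicity of $T$ is neither assumed nor needed in Step 3. Corollary \ref{lem24} uses only the invariance of $\mu$ and the mean value on $A$.
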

	
		\begin{remark}
		It should be noted that in this study, we investigate the homogenization problem \eqref{ras2}-\eqref{ras1} not  under the periodicity hypothesis as in the \cite{tchin2}, but in a general deterministic setting including the periodicity, almost periodicity, weakly almost periodicity, convergence at infinity hypotheses and more others (see Section \ref{labelsect5} for more details). 
	\end{remark}
	
	\subsection{Organization of paper}
	
	The paper is divided into sections each revolving around a specific aspect: Section \ref{labelsect2} dwells on preliminaries on  Orlicz-Sobolev spaces, dynamical systems and homogenization supralgebra.  Section \ref{labelsect3} is devoted to the compactness results of the stochastic $\Sigma$-convergence in Orlicz setting. In Section \ref{labelsect4} we study the stochastic-deterministic homogenization  of problem (\ref{ras2})-(\ref{ras1}). The periodicity hypothesis stated in \cite{tchin2} is here replaced by an \textit{abstract algebra structure hypothesis}. Finally, Section \ref{labelsect5} is concerned with a few concrete examples of homogenization problem (\ref{ras2})-(\ref{ras1}) under various concrete structure.  

	
	\section{Prelimaries on  Orlicz-Sobolev spaces, dynamical systems and homogenization supralgebra} \label{labelsect2} 
	
	
	In this section, we recall some basic results about Orlicz spaces, dynamical systems and homogenization supralgebra.
	
	\subsection{Orlicz spaces and Dynamical systems} \label{labelsub1sect2} 
	
		All definitions and results recalled here are classical and can be found in \cite{adam,franck,zand}.
		
	\subsubsection{$N$-functions}\label{labelsubsub1sect2} 
	 
	Let $\Phi : [0,+\infty) \rightarrow  [0,+\infty)$ be a $N$-function, that is, 
	$\Phi$ is continuous, convex ,
	$\Phi(t) > 0$ for $t > 0$,
	$\frac{\Phi(t)}{t} \to 0$ as $t\to 0$ and $\frac{\Phi(t)}{t} \to \infty$ as $t\to \infty$. Then
	$\Phi$ has an integral representation under form
	$
	\Phi(t) = \int_{0}^{t} \phi(\tau)\, d\tau,
	$
	where $\phi : [0, \infty) \rightarrow  [0, \infty)$ is nondecreasing, right continuous, with $\phi(0) = 0$, $\phi(t) > 0$ if $t>0$ and $\phi(t)\rightarrow \infty$ if $t\rightarrow \infty$. We denote $\widetilde{\Phi}$
	the Fenchel's conjugate (or complementary function) of the $N$-function $\Phi$, that is, $\widetilde{\Phi}(t) = \sup_{s\geq 0} \left( st - \Phi(s) \right) \; (t\geq 0)$, then $\widetilde{\Phi}$ is also a $N$-function. Given two $N$-functions $\Phi$ and $\Psi$, we say that $\Phi$ dominates $\Psi$ (denoted by $\Phi \succ \Psi$ or $\Psi \prec \Phi$) near infinity if there are $k>1$ and $t_{0}>0$ such that 
	\begin{equation*}
		\Psi(t) \leq \Phi(kt), \quad \forall t> t_{0}.
	\end{equation*}
	With this in mind, it is well known that if $\displaystyle \lim_{t\to +\infty} \frac{\Phi(t)}{\Psi(t)} = +\infty$ then $\Phi$ dominates $\Psi$ near infinity. Let us recall some important properties about $N$-functions.  
	Let $\Phi$ be a $N$-function. $\Phi$ is said to satisfy the $\Delta_{2}$-condition or $\Phi$ belongs to the class $\Delta_{2}$ at $\infty$, which is written as  $\Phi \in \Delta_{2}$, if there exist constants  $t_{0} > 0$, $k > 2$ such that  
	\begin{equation}\label{delta2}
		\Phi(2t) \leq k \Phi(t),
	\end{equation}
	for all $t \geq t_{0}$.
	$\Phi$ is said to satisfy the $\nabla_{2}$-condition or $\Phi$ belongs to the class $\nabla_{2}$ at $\infty$, which is written as  $\Phi \in \nabla_{2}$, if there exist constants $t_{0} > 0$, $c > 2$ such that  
	\begin{equation*}
		\Phi(t) \leq \dfrac{1}{2c} \Phi(ct),
	\end{equation*}
	for all $t \geq t_{0}$. As mentioned in \cite{ttchin1}, an $N$-function $\Phi$ is of class $\nabla_{2}$ if and only if it conjugate $\widetilde{\Phi}$ is of class $\Delta_{2}$. Furthermore an $N$-function
	$\Phi$ is said to satisfy the $\Delta'$-condition (or $\Phi$ belongs to the class $\Delta'$) denoted by $\Phi \in \Delta'$ if there exists $\beta > 0$ such that  
	\begin{equation}\label{deltaprime}
		\Phi(ts) \leq \beta\, \Phi(t)\Phi(s), \quad \forall t,s \geq 0.
	\end{equation}
	Let $t\rightarrow \Phi(t) = \int_{0}^{t} \phi(\tau) d\tau$ be a $N$-function, and let $\widetilde{\Phi}$ be the Fenchel's conjugate of $\Phi$. Then one has
	\begin{equation}\label{lem10} 
		\left\{ \begin{array}{l}
			\dfrac{t\,\phi(t)}{\Phi(t)} \geq 1 \quad (\textup{resp.} \, > 1 \; \textup{if} \, \phi \, \textup{is} \, \textup{strictly} \; \textup{increasing})  \\
			
			\\
			\widetilde{\Phi}(\phi(t)) \leq t\,\phi(t) \leq \Phi(2t)
		\end{array}\right.
	\end{equation}
	for all $t >0$. If $\Phi$ is of class $\Delta_{2}$. Then there are $k> 0$ and $t_{0} \geq 0$ such that 
	\begin{equation*}\label{lem11}
		\widetilde{\Phi}(\phi(t)) \leq k \, \Phi(t) \quad \textup{for} \, \textup{all} \, t \geq t_{0}.
	\end{equation*}	
	We give now some examples of $N$-functions.
	\begin{example}
		The function $t \rightarrow \frac{t^{p}}{p}$, ($p > 1$) is a $N$-function which satisfy $\Delta_{2}$-condition and $\nabla_{2}$-condition. Its Fenchel's conjugate is a $N$-function $t \rightarrow \frac{t^{q}}{q}$, where $\frac{1}{p} + \frac{1}{q} = 1$. The function $t \rightarrow t^{p}\ln(1+t)$, ($p \geq 1$) is a $N$-function that satisfies $\Delta_{2}$-condition, while the $N$-function $t \rightarrow t^{\ln t}$ and $t \rightarrow e^{t^{r}} - 1$, ($r >0$) are not of class $\Delta_{2}$. However, for $r=1$, the $N$-function $t \rightarrow e^{t} - 1$ satisfy $\Delta'$-condition.   
	\end{example}
	
	\subsubsection{Orlicz spaces on bounded open set}\label{labelsubsub2sect2} 
	
	Let $Q$ be a bounded open set in $\mathbb{R}^{d}$ (integer $d\geq 1$), and let $\Phi$ be a $N$-function. The Orlicz space $L^{\Phi}(Q)$ is defined to be the space of all measurable functions $u: Q\rightarrow \mathbb{R}$ such that 
	\begin{equation*}
		\int_{Q}^{} \Phi\left(\dfrac{|u(x)|}{\delta} \right)dx < +\infty,
	\end{equation*}
	for some $\delta=\delta(u) >0$. 
	On the space $L^{\Phi}(Q)$ we define two equivalent norms:
	\begin{itemize}
		\item[(a)] the Luxemburg norm, 
		\begin{equation*}
			\lVert u \rVert_{L^{\Phi}(Q)} = \inf \left\{ \delta>0 \; : \; \int_{Q}^{} \Phi\left(\dfrac{|u(x)|}{\delta} \right)dx  \leq 1 \right\}, \quad \forall u \in L^{\Phi}(Q),
		\end{equation*}
		\item[(b)] the Orlicz norm, 
		\begin{equation*}
			\begin{array}{rcr}
				\lVert u \rVert_{L^{(\Phi)}(Q)}& =& \sup \left\{ \left| \displaystyle  \int_{Q} u(x)v(x) dx \right| \; : \; v \in L^{\widetilde{\Phi}}(Q) \; \textup{and} \; \|v\|_{L^{\widetilde{\Phi}}(Q)} \leq 1 \right\} \\
				& &  \\
				& &  \forall u \in L^{\Phi}(Q),
			\end{array}
		\end{equation*}
	\end{itemize}
	which makes it a Banach space.  \\
	Now we will give some properties of Orlicz spaces and we will refer to \cite{adam} for more details.
	Assume that $\Phi \in \Delta_{2}$. Then:
	\begin{itemize}
		\item[\textbf{(i)}] $\mathcal{D}(Q)$ is dense in $L^{\Phi}(Q)$ ;
		\item[\textbf{(ii)}] $L^{\Phi}(Q)$ is separable and reflexive whenever $\widetilde{\Phi}\in \Delta_{2}$ ;
		\item[\textbf{(iii)}] the dual of $L^{\Phi}(Q)$ is identified with $L^{\widetilde{\Phi}}(Q)$, and the dual norm on $L^{\widetilde{\Phi}}(Q)$ is equivalent to $\lVert \cdot \rVert_{L^{\widetilde{\Phi}}(Q)}$ ;
		\item[\textbf{(iv)}] given $u \in L^{\Phi}(Q)$ and $v \in L^{\widetilde{\Phi}}(Q)$ the product $uv$ belongs to $L^{1}(Q)$ with the generalized H\"{o}lder's inequality 
		\begin{equation*}
			\left| \int_{Q} u(x)v(x)dx \right| \leq 2\, \lVert u \rVert_{L^{\Phi}(Q)} \, \lVert v \rVert_{L^{\widetilde{\Phi}}(Q)} ;
		\end{equation*}
		\item[\textbf{(v)}] given $v \in L^{\Phi}(Q)$ the linear functional $L_{v}$ on $L^{\widetilde{\Phi}}(Q)$ defined by 
		\begin{equation*}
			L_{v}(u) = \int_{Q} u(x)v(x) dx, \quad u \in L^{\widetilde{\Phi}}(Q) ;
		\end{equation*} 
		belongs to the dual $[L^{\widetilde{\Phi}}(Q)]'$ with $\lVert v \rVert_{L^{(\Phi)}(Q)} \leq \lVert L_{v} \rVert_{[L^{\widetilde{\Phi}}(Q)]'} \leq 2 \lVert v \rVert_{L^{(\Phi)}(Q)}$ 
		\item[\textbf{(vi)}] $L^{\Phi}(Q) \hookrightarrow L^{1}(Q) \hookrightarrow	L^{1}_{\textup{loc}}(Q) \hookrightarrow \mathcal{D'}(Q)$,
		\item[\textbf{(vii)}] given two $N$-functions $\Phi$ and $\Psi$, we have the continuous embedding \\ $L^{\Phi}(Q) \hookrightarrow L^{\Psi}(Q)$ if and only if $\Phi \succ \Psi$ near infinity ;
		\item[\textbf{(viii)}] the product space $L^{\Phi}(Q)^{d}= L^{\Phi}(Q)\times L^{\Phi}(Q) \times \cdots \times L^{\Phi}(Q)$, ($d$-times), is endowed with the norm 
		\begin{equation*}
			\lVert \textbf{v} \rVert_{L^{(\Phi)}(Q)^{d}} = \sum_{i=1}^{d} \lVert v_{i} \rVert_{L^{(\Phi)}(Q)}, \quad \textbf{v}=(v_{i}) \in L^{\Phi}(Q)^{d} ;
		\end{equation*}
		\item[\textbf{(ix)}] \label{property9}  If $Q_{1} \subset \mathbb{R}^{d_{1}}$ and $Q_{2} \subset \mathbb{R}^{d_{2}}$ are two bounded open sets with $d_{1}+d_{2}=d$, and if $u \in L^{\Phi}(Q_{1}\times Q_{2})$,
		then for almost all $x_{1}\in Q_{1}$, $u(x_{1}, \cdot) \in L^{\Phi}(Q_{2})$. If in addition $\widetilde{\Phi} \in \Delta'$ associate with a constant $\beta$, then the function $u$ belongs to $L^{\Phi}(Q_{1}, L^{\Phi}(Q_{2}))$, with	
		\begin{equation}\label{be1}
			\|u\|_{L^{\Phi}(Q_{1}, L^{\Phi}(Q_{2}))} \leq \iint_{Q_{1}\times Q_{2}} \Phi(|u(x_{1},x_{2})|) dx_{1}dx_{2} + \beta.
		\end{equation}
	\end{itemize}
	
	\subsubsection{Dynamical systems}\label{labelsubsub3sect2} 
	
	Let $(\Omega, \mathscr{M}, \mu)$ be a measure space with probability measure $\mu$. We define an $d$-dimensional dynamical system on $\Omega$ as a family $\{ T(x) : x \in \mathbb{R}^{d} \}$	of invertible maps,
	\begin{equation*}
		T(x) : \Omega \longrightarrow \Omega
	\end{equation*}
	such that for each $x \in \mathbb{R}^{d}$ both $T(x)$ and $T(x)^{-1}$ are measurable, and such that the following properties hold: 
	\begin{itemize}
		\item[(1)]\textit{(group property)} $T(0)=$ identity map on $\Omega$ and for all $x_{1},x_{2} \in \mathbb{R}^{d}$,
		\begin{equation*}
			T(x_{1}+x_{2})=T(x_{1})\circ T(x_{2}),
		\end{equation*}
		\item[(2)]\textit{(invariance)} for each $x \in \mathbb{R}^{d}$, the map $T(x) : \Omega \rightarrow \Omega$ is measurable on $\Omega$ and $\mu$-measure preserving, i.e. $\mu(T(x)F)=\mu(F)$, for all $F \in \mathscr{M}$,
		\item[(3)]\textit{(measurability)} for each $F \in \mathscr{M}$, the set $\{ (x,\omega) \in \mathbb{R}^{d}\times \Omega : T(x)\omega \in F \}$ is measurable with respect to the product $\sigma$-algebra $\mathscr{L}\otimes \mathscr{M}$, where $\mathscr{L}$ is the $\sigma$-algebra of Lebesgue measurable sets. 
	\end{itemize}
	If $\Omega$ is a compact topological space, by a continuous $d$-dynamical system on $\Omega$, we mean any family of mappings $\{ T(x) : \Omega \rightarrow \Omega,\; x \in \mathbb{R}^{d} \}$  satisfying the above group property $(i)$ and the following condition :
	\begin{itemize}
		\item[(4)] \textit{(continuity)} the mapping $(x, \omega) \mapsto T(x)\omega$ is continuous from $\mathbb{R}^{d}\times \Omega$ to $\Omega$.
	\end{itemize} 
	
	Let $\Phi \in \Delta_{2}$ be a $N$-function. As in \cite[Proposition 3]{franck}, a $d$-dimensional dynamical system $\{ T(x) : \Omega\rightarrow \Omega\; ; \; x \in \mathbb{R}^{d} \}$ induces an $d$-parameter group of isometries $\{ U(x) : L^{\Phi}(\Omega)\rightarrow L^{\Phi}(\Omega)\; ; \; x \in \mathbb{R}^{d} \}$ defined by
	\begin{equation}\label{group1S}
		\left(U(x)f  \right)(\omega) = f(T(x)\omega), \quad \forall\, f \in L^{\Phi}(\Omega)
	\end{equation}
	which is strongly continuous, i.e.
	\begin{equation}\label{lem38}
		\lim_{x \to 0} \|U(x)f - f\|_{L^{\Phi}(\Omega)}=0, \quad \forall\, f \in L^{\Phi}(\Omega).
	\end{equation}
	
	Now, we consider  based on (\ref{group1S})  the $d$ 1-parameter group of isometries $\{ U(t\vec{e_{i}}) : L^{\Phi}(\Omega)\rightarrow L^{\Phi}(\Omega)\; ; \; t \in \mathbb{R} \}$ where, for $i=1, \cdots, d$, $\vec{e_{i}}= (\delta_{ij})_{1\leq j\leq d}$, $\delta_{ij}$ being the Kronecker symbol. We denote by $D_{i,\Phi}$ the generator (see, e.g. \cite[Page 376]{rudin}) of $U(t\vec{e_{i}})$ and by $\textbf{D}_{i,\Phi}$ its domain. Thus, for $f \in L^{\Phi}(\Omega)$, $f$ is in $\mathbf{D}_{i,\Phi}$ if and only if the limit $D_{i,\Phi}f$ defined by 
	\begin{equation*}
		D_{i,\Phi}f = \lim_{t \to 0} \dfrac{U(t\vec{e_{i}})f - f}{t}
	\end{equation*}  
	exists strongly in $L^{\Phi}(\Omega)$, i.e., $\displaystyle \lim_{t \to 0} \frac{1}{t}\left\|U(t\vec{e_{i}})f - f \right\|_{L^{\Phi}(\Omega)} = 0$. 
	In this case, $D_{i,\Phi}f$ is called $i$-th stochastic derivative of $f$.
	\begin{remark}
		If $\Omega = Y \subset \mathbb{R}^{d}$, $d\mu = dy$ (the Lebesgue measure on $\mathbb{R}^{d}$) and that we consider the $d$-dimensional dynamical system of translation $\{ T(x) : Y\rightarrow Y;  \; x \in \mathbb{R}^{d} \}$ defined by 
		\begin{equation*}
			T(x)y = (x+y)\textup{mod\,$\mathbb{Z}^{d}$},
		\end{equation*}
		then the $i$-th stochastic derivative $D_{i,\Phi}f$ of an element $f\in L^{\Phi}(Y)$ is also the $i$-th partial derivative of $f$.
	\end{remark}
	
	For a multi-index $\alpha= (\alpha_{1}, \cdots, \alpha_{d}) \in \mathbb{N}^{d}$, one can naturally define higher order stochastic derivatives by setting:
	\begin{equation*}
		D_{\Phi}^{\alpha} = D_{1,\Phi}^{\alpha_{1}}\circ \cdots\circ D_{d,\Phi}^{\alpha_{d}} \;\;\; \textup{and} \;\;\;	D_{i,\Phi}^{\alpha_{i}} = 
		\underset{\alpha_{i}-times}{\underbrace{D_{i,\Phi}\circ \cdots\circ D_{i,\Phi}}}, \quad i=1, \cdots, d.
	\end{equation*}
	Now we need to define the stochastic analog of the smooth functions on $\mathbb{R}^{d}$.\\
	We set $\displaystyle\textbf{D}_{\Phi}(\Omega) = \bigcap_{i=1}^{d} \textbf{D}_{i,\Phi}(\Omega)$, and define 
	\begin{equation*}
		\textbf{D}_{\Phi}^{\infty}(\Omega) = \left\{ f \in L^{\Phi}(\Omega) \; : \; D^{\alpha}_{\Phi}f \in \textbf{D}_{\Phi}(\Omega), \; \forall \alpha \in \mathbb{N}^{d}  \right\}.
	\end{equation*}
	We recall that (see \cite[Page 99]{sango}), for the Lebesgue spaces $L^{p}(\Omega)$, with $1\leq p \leq \infty$, we have 
	$\displaystyle\textbf{D}_{p}(\Omega) = \bigcap_{i=1}^{d} \textbf{D}_{i,p}$ and 
	\begin{equation*}
		\textbf{D}_{p}^{\infty}(\Omega) = \left\{ f \in L^{p}(\Omega) \; : \; D^{\alpha}_{p}f \in \textbf{D}_{p}(\Omega), \; \forall \alpha \in \mathbb{N}^{d}  \right\},
	\end{equation*}
	where $D^{\alpha}_{p}$ stands for stochastic higher order derivative in $L^{p}(\Omega)$. 
	Furthermore, for $p=\infty$ it is a fact that each element of $\textbf{D}_{\infty}^{\infty}(\Omega)$ possesses stochastic derivatives of any order that are bounded. It is noted by the suggestive symbol $\mathcal{C}^{\infty}(\Omega)$ and is endowed with its natural topology defined by the family of seminorms 
	\begin{equation*}
		N_{n}(f) = \sup_{|\alpha|\leq n} \sup_{\omega\in\Omega} |D_{\infty}^{\alpha}f(\omega)|, \;\; \textup{where} \;\, f\in \mathcal{C}^{\infty}(\Omega) \;\, \textup{and} \;\, |\alpha|= \alpha_{1}+ \cdots + \alpha_{d}. 
	\end{equation*}
	Note that the space $\mathcal{C}^{\infty}(\Omega)$ is a generalization to probability sets of the usual space $\mathcal{C}^{\infty}(\mathbb{R}^{d})$ of smooth functions.  
	Let now $K \in \mathcal{C}^{\infty}_{0}(\mathbb{R}^{d})$ be a nonnegative even function such that $\int_{\mathbb{R}^{d}} K(x)dx = 1$. \\ We set $K_{\delta}(x)= \delta^{-d}K(x/\delta)$, with $\delta >0$ and then define the operator $J_{\delta}$ of $L^{\Phi}(\Omega)$ into $L^{\Phi}(\Omega)$ by 
	\begin{equation*}
		J_{\delta}f = \int_{\mathbb{R}^{d}} K_{\delta}(y) U(y)f \, dy. 
	\end{equation*}  
	By the \cite[Lemma 4]{franck}, for all $f \in L^{\Phi}(\Omega)$, the function $J_{\delta}f \in \mathcal{C}^{\infty}(\Omega)$ and we have  
	\begin{equation}\label{rem6}
		\lim_{\delta \to 0} \|J_{\delta}f - f\|_{L^{\Phi}(\Omega)} = 0.
	\end{equation}
	
	Indeed, the space $ \mathcal{C}^{\infty}(\Omega)$ is dense in $L^{\Phi}(\Omega)$, when the $N$-function $\Phi$ is of class $\Delta_{2}$.
	Now, by a stochastic distribution on $\Omega$ is meant any continuous linear mapping from $\mathcal{C}^{\infty}(\Omega)$ to the real field $\mathbb{R}$. We denote the space of stochastic distributions by $(\mathcal{C}^{\infty}(\Omega))'$.
	\\
	For any $\alpha \in \mathbb{N}^{d}$, we define the stochastic weak derivative of $f \in (\mathcal{C}^{\infty}(\Omega))'$ as follows:
	\begin{equation*}
		(D^{\alpha}f)(\phi) = (-1)^{|\alpha|}f(D^{\alpha}\phi), \quad \forall \phi \in \mathcal{C}^{\infty}(\Omega).
	\end{equation*}
	Let $\Phi \in \Delta_{2}$ be a $N$-function and $\widetilde{\Phi}$ its conjugate.
	As $\mathcal{C}^{\infty}(\Omega)$ is dense in $L^{\widetilde{\Phi}}(\Omega)$, it is immediate that $(L^{\widetilde{\Phi}}(\Omega))' = L^{\Phi}(\Omega) \subset (\mathcal{C}^{\infty}(\Omega))'$, so that one may define the stochastic weak derivative of any $f \in L^{\Phi}(\Omega)$, and it verifies the functional equation: 
	\begin{equation}\label{deri1}
		(D^{\alpha}f)(\phi) = (-1)^{|\alpha|} \int_{\Omega} f\, D^{\alpha}\phi\,d\mu, \quad \forall \phi \in \mathcal{C}^{\infty}(\Omega).
	\end{equation}	
	In particular for $f \in \textbf{D}_{i,\Phi}$, we have
	\begin{equation}\label{deri2}
		\int_{\Omega}\phi D_{i,\Phi}f \,d\mu = - \int_{\Omega} f\, D_{i,\infty}\phi\,d\mu, \quad \forall \phi \in \mathcal{C}^{\infty}(\Omega).
	\end{equation}
	So that we may identify $ D_{i,\Phi}$ with $D^{\alpha_{i}}$, where $\alpha_{i} = (\delta_{ij})_{1\leq j\leq d}$.  
	Note that, formulas (\ref{deri1}) and (\ref{deri2}) are analogous to Sango-Woukeng paper \cite{sango} and the same notation as them has been adopted in the entire subsection. 
	\\
	Conversely, if $f \in L^{\Phi}(\Omega)$ is such that there exists $f_{i} \in L^{\Phi}(\Omega)$ with \\ $(D^{\alpha_{i}}f)(\phi) = - \int_{\Omega} f_{i} \phi\ d\mu$ for all $\phi \in \mathcal{C}^{\infty}(\Omega)$, then $f \in \textbf{D}_{i,\Phi}$ and $D_{i,\Phi}f = f_{i}$. 
	\\
	Endowing $\displaystyle\textbf{D}_{\Phi}(\Omega) = \bigcap_{i=1}^{d} \textbf{D}_{i,\Phi}(\Omega)$ with the natural graph norm 
	\begin{equation*}
		\|f\|_{\textbf{D}_{\Phi}(\Omega)} = \|f\|_{L^{\Phi}(\Omega)} + \sum_{i=1}^{d} \|D_{i,\Phi}f\|_{L^{\Phi}(\Omega)}, \quad \forall f \in \textbf{D}_{\Phi}(\Omega),
	\end{equation*}
	we obtain a Banach space representing the stochastic generalization of the classical Orlicz-Sobolev space $W^{1}L^{\Phi}(\mathbb{R}^{d})$, and so, we denote it by $W^{1}L^{\Phi}(\Omega)$.  \\
	Now we recall some definitions about the invariance of functions and ergodic dynamical system.  Note that these definitions are a generalization to Orlicz-Sobolev spaces $W^{1}L^{\Phi}$ of the definitions obtained in \cite[Page 99]{sango} for the case of Lebesgue-Sobolev spaces $W^{1,p}$. \label{pageS} \\  	A function $f \in L^{\Phi}(\Omega)$ is said to be invariant for the dynamical system $T$ (relative to $\mu$) if for any $x \in \mathbb{R}^{d}$
	\begin{equation*}
		f\circ T(x) = f, \quad \mu-a.e. \;\, \textup{on} \;\, \Omega.
	\end{equation*}
	We denote  $I_{nv}^{\Phi}(\Omega)$ the set of functions in $L^{\Phi}(\Omega)$ that are invariant for $T$. 
	The set $I_{nv}^{\Phi}(\Omega)$ is a closed vector subspace of $L^{\Phi}(\Omega)$   
	and when $\Phi(t) = \frac{t^{p}}{p}$ ($p\geq 1$), then $I_{nv}^{\Phi}(\Omega)$ becomes the space $I_{nv}^{p}(\Omega)$ obtained by replacing the Orlicz space $ L^{\Phi}(\Omega)$ in the definition with the Lebesgue space  $L^{p}(\Omega)$. 
	The dynamical system $T$ is said to be ergodic if every invariant function is $\mu$-equivalent to a constant, i.e. for all $f \in I_{nv}^{\Phi}(\Omega)$, we have $f(\omega) = \textup{c}$, $c\in \mathbb{R}$, for $\mu$-a.e. $\omega\in \Omega$.  
	Let $f$ be a measurable function in $\Omega$, for a fixed $\omega\in \Omega$ the function $x\rightarrow f(T(x)\omega)$, $x\in \mathbb{R}^{d}$ is called a realization of $f$ and the mapping $(x,\omega) \rightarrow f(T(x)\omega)$ is called a stationary process. The process is said to be stationary ergodic if the dynamical system $T$ is ergodic. \\
	Thus,	for $f \in \textbf{D}_{1}^{\infty}(\Omega)$, and for $\mu$-a.e. $\omega \in \Omega$, the function $x \rightarrow f(T(x)\omega)$ is in $\mathcal{C}^{\infty}(\mathbb{R}^{d})$ and further 
	\begin{equation}\label{lem18}
		D^{\alpha}_{x} f(T(x)\omega) = (D^{\alpha}_{1}f)(T(x)\omega), \quad \textup{for\; any} \;\, \alpha \in \mathbb{N}^{d},
	\end{equation}
	where $D^{\alpha}_{x}$ is the usual higher order derivative with respect to $x$ and $D^{\alpha}_{1}$ is the stochastic higher order derivative in $L^{1}(\Omega)$.
	If the dynamical system $T$ is ergodic, then:
	\begin{itemize}
		\item[(i)] For $f\in L^{1}(\Omega)$, we have $f \in I_{nv}^{1}(\Omega)$ if and only if $D_{i,1}f = 0$ for each $1 \leq i \leq d$.
		\item[(ii)] For $f\in L^{\Phi}(\Omega)$, we have $f \in I_{nv}^{\Phi}(\Omega)$ if and only if $D_{i,\Phi}f = 0$ for each $1 \leq i \leq d$.
	\end{itemize}
	So if we endow $\mathcal{C}^{\infty}(\Omega)$ with the seminorm 
	\begin{equation*}
		\|u\|_{\# ,\Phi} = \sum_{i=1}^{d} \|D_{i,\Phi}u\|_{L^{\Phi}(\Omega)}, \quad u \in \mathcal{C}^{\infty}(\Omega),
	\end{equation*}
	we obtain a locally convex space which is generally non separated and non complete. We denote $W^{1}_{\#}L^{\Phi}(\Omega)$ the separated completion of $\mathcal{C}^{\infty}(\Omega)$ with respect to the seminorm $\|\cdot\|_{\# ,\Phi}$, and we denote by 
	\begin{equation*}
		I_{\Phi} : \mathcal{C}^{\infty}(\Omega) \hookrightarrow W^{1}_{\#}L^{\Phi}(\Omega),
	\end{equation*}
	the canonical mapping. It is to be noted that $W^{1}_{\#}L^{\Phi}(\Omega)$ is also the separated completion of $\mathcal{C}^{\infty}(\Omega)/ (I^{\Phi}_{nv}(\Omega) \cap \mathcal{C}^{\infty}(\Omega))$ with respect to the same seminorm since for $u \in \mathcal{C}^{\infty}(\Omega)$ we have $\|u\|_{\# ,\Phi} = 0$ if and only if $u \in v$, that is $u \in I^{\Phi}_{nv}(\Omega) \cap \mathcal{C}^{\infty}(\Omega)$.   
	The following property is obtained through the theory of completion of uniform spaces; see, e.g. \cite[Chap. II]{bourbaki2007topologie}. \\
	The gradient operator $D_{\omega,\Phi} = (D_{1,\Phi}, \cdots, D_{d,\Phi}) : \mathcal{C}^{\infty}(\Omega) \rightarrow L^{\Phi}(\Omega)^{d}$ extends by continuity to a unique mapping $\overline{D}_{\omega,\Phi} = (\overline{D}_{1,\Phi}, \cdots, \overline{D}_{d,\Phi}) : W^{1}_{\#}L^{\Phi}(\Omega) \rightarrow L^{\Phi}(\Omega)^{d}$ with the properties:
	\begin{itemize}
		\item[(i)] $D_{i,\Phi} = \overline{D}_{i,\Phi}\circ I_{\Phi}$, for $1\leq i\leq d$,
		\item[(ii)] $\displaystyle \|u\|_{W^{1}_{\#}L^{\Phi}(\Omega)}= \|u\|_{\# ,\Phi} = \sum_{i=1}^{d} \|\overline{D}_{i,\Phi}u\|_{L^{\Phi}(\Omega)}, \quad u \in W^{1}_{\#}L^{\Phi}(\Omega)$.
	\end{itemize}
	Moreover, the mapping $\overline{D}_{\omega,\Phi}$ is an isometric embedding of $W^{1}_{\#}L^{\Phi}(\Omega)$ into a closed subspace of $L^{\Phi}(\Omega)^{d}$ and as $L^{\Phi}(\Omega)^{d}$ is a reflexive Banach space, then we deduce that the Banach space $W^{1}_{\#}L^{\Phi}(\Omega)$ is reflexive.  \\
	By duality we define the operator $\textup{div}_{\omega,\widetilde{\Phi}} : L^{\widetilde{\Phi}}(\Omega)^{d} \rightarrow (W^{1}_{\#}L^{\Phi}(\Omega))'$ by:
	\begin{equation*}
		\langle \textup{div}_{\omega,\widetilde{\Phi}} u \, , \, v \rangle = - \langle u\, , \, \overline{D}_{\omega,\Phi}v \rangle, \quad \forall u \in L^{\widetilde{\Phi}}(\Omega)^{d} \; \textup{and} \; \forall v \in W^{1}_{\#}L^{\Phi}(\Omega).
	\end{equation*}  
	The operator $\textup{div}_{\omega,\widetilde{\Phi}}$ just defined extends the natural divergence operator defined in $\mathcal{C}^{\infty}(\Omega)$ since for all $f\in \mathcal{C}^{\infty}(\Omega)$ we have $D_{i,\Phi}f = \overline{D}_{i,\Phi}\circ I_{\Phi}(f)$. 
	We will denote $D_{\omega,\Phi}=D_{\omega}$, $\overline{D}_{\omega,\Phi}=\overline{D}_{\omega}$ and $\textup{div}_{\omega,\widetilde{\Phi}}= \textup{div}_{\omega,p'}$ ($p'= \frac{p-1}{p}$) if there is no ambiguity, and we refer to \cite{sango} for the definitions of these operators in $L^{p}$-spaces that the authors note $D_{\omega,p}$, $\overline{D}_{\omega,p}$ and $\textup{div}_{\omega,p'}$ respectively.  \\
	Now, we define the Orlicz-Sobolev space $W^{1}L^{\Phi}_{D_{x}}(Q; L^{\Phi}(\Omega))$ as follows
	\begin{equation}\label{c1orli1S}
		W^{1}L^{\Phi}_{D_{x}}(Q; L^{\Phi}(\Omega)) = \left\{  v \in L^{\Phi}(Q\times\Omega) \, : \, \frac{\partial v}{\partial x_{i}} \in L^{\Phi}(Q\times\Omega), \; 1 \leq i \leq d \right\}
	\end{equation}
	where derivatives are taken in the distributional sense on $Q$. Endowed with the norm 
	\begin{equation*}
		\lVert v \rVert_{W^{1}L^{\Phi}_{D_{x}}(Q; L^{\Phi}(\Omega))} = \lVert v \rVert_{L^{\Phi}(Q\times\Omega)} + \sum_{i=1}^{d} \lVert D_{x_{i}} v \rVert_{L^{\Phi}(Q\times\Omega)}\, ; \; v \in W^{1}L^{\Phi}_{D_{x}}(Q; L^{\Phi}(\Omega)),
	\end{equation*}
	$W^{1}L^{\Phi}_{D_{x}}(Q; L^{\Phi}(\Omega))$ is a reflexive Banach space. On the other hand, we denote by $W^{1}_{0}L^{\Phi}_{D_{x}}(Q; L^{\Phi}(\Omega))$ the set of functions in $W^{1}L^{\Phi}_{D_{x}}(Q; L^{\Phi}(\Omega))$ with zero boundary condition on $Q$. Endowed with the norm 
	\begin{equation*}
		\lVert v \rVert_{W^{1}_{0}L^{\Phi}_{D_{x}}(Q; L^{\Phi}(\Omega))} =  \sum_{i=1}^{d} \lVert D_{x_{i}} v \rVert_{L^{\Phi}(Q\times\Omega)} \;\, ; \quad v \in W^{1}_{0}L^{\Phi}_{D_{x}}(Q; L^{\Phi}(\Omega)),
	\end{equation*}
	$W^{1}_{0}L^{\Phi}_{D_{x}}(Q; L^{\Phi}(\Omega))$ is a reflexive Banach space. \\
	We end this section by the following lemma will be of great interest for the proof of compactness result of a derivatives sequence of functions.
	\begin{lemma}\cite{franck}\label{lem5}
		Let $\Phi \in \Delta_{2}$ be a  $N$-function and $\widetilde{\Phi}$ its conjugate.	Let $v \in L^{\Phi}(\Omega)^{d}$ satisfy 
		\begin{equation*}
			\int_{\Omega} v\cdot g d\mu = 0, \quad \textup{for \; all} \;\, g \in  \mathcal{V}^{\widetilde{\Phi}}_{\textup{div} }= \{ f \in \mathcal{C}^{\infty}(\Omega)^{d} : \textup{div}_{\omega,\widetilde{\Phi}}f=0 \}.
		\end{equation*}
		Then, there exists $u \in W^{1}_{\#}L^{\Phi}(\Omega)$ such that 
		\begin{equation*}
			v = \overline{D}_{\omega,\Phi}u.
		\end{equation*} 
	\end{lemma}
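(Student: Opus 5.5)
The plan is a duality (orthogonality) argument in the reflexive space $W^{1}_{\#}L^{\Phi}(\Omega)$, exactly as in the $L^{p}$ case of \cite{sango}.

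\emph{Step 1: the target subspace.} Consider
\[
\mathcal{G}=\{\overline{D}_{\omega}u : u\in W^{1}_{\#}L^{\Phi}(\Omega)\}\subset L^{\Phi}(\Omega)^{d}.
\]
Since $\overline{D}_{\omega}$ is an isometric embedding of the Banach space $W^{1}_{\#}L^{\Phi}(\Omega)$ into $L^{\Phi}(\Omega)^{d}$, the set $\mathcal{G}$ is a closed subspace. Moreover, $\mathcal{G}$ is the closure in $L^{\Phi}(\Omega)^{d}$ of $\{D_{\omega}\varphi:\varphi\in\mathcal{C}^{\infty}(\Omega)\}$, because $\mathcal{C}^{\infty}(\Omega)$ is dense in its completion.

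\emph{Step 2: reduce to orthogonality.} Because $\widetilde{\Phi}\in\Delta_{2}$ is implicit here (reflexivity is used throughout), we have $(L^{\Phi}(\Omega)^{d})'=L^{\widetilde{\Phi}}(\Omega)^{d}$. By Hahn--Banach it therefore suffices to show the following: if $g\in L^{\widetilde{\Phi}}(\Omega)^{d}$ annihilates $\mathcal{G}$, then $\int_{\Omega}v\cdot g\,d\mu=0$. Annihilating $\mathcal{G}$ means
\[
\int_{\Omega}g\cdot D_{\omega}\varphi\,d\mu=0 \quad\text{for all } \varphi\in\mathcal{C}^{\infty}(\Omega),
\]
that is, $\textup{div}_{\omega}g=0$ in the sense of stochastic distributions. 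We only know that $v$ is orthogonal to the smooth divergence-free fields $\mathcal{V}^{\widetilde{\Phi}}_{\textup{div}}$, so we need to approximate such a $g$ by smooth divergence-free fields in $L^{\widetilde{\Phi}}(\Omega)^{d}$.

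\emph{Step 3: regularization (the main point).} Set $g_{\delta}=J_{\delta}g$, componentwise, with the convolution operator $J_{\delta}$ introduced above. By \cite[Lemma 4]{franck} we have $g_{\delta}\in\mathcal{C}^{\infty}(\Omega)^{d}$; to apply this in $L^{\widetilde{\Phi}}$ we need $\widetilde{\Phi}\in\Delta_{2}$. By \eqref{rem6}, $g_{\delta}\to g$ in $L^{\widetilde{\Phi}}(\Omega)^{d}$. It remains to check that $\textup{div}_{\omega}g_{\delta}=0$. For this, use that $J_{\delta}$ commutes with the stochastic derivatives on $\mathcal{C}^{\infty}(\Omega)$, since $U(y)$ commutes with $U(t\vec e_i)$. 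Then use the measure invariance of $T$, together with the evenness of $K$, which makes $J_{\delta}$ self-adjoint with respect to $\int_{\Omega}\cdot\,d\mu$. For $\varphi\in\mathcal{C}^{\infty}(\Omega)$ this gives
\[
\int_{\Omega}g_{\delta}\cdot D_{\omega}\varphi\,d\mu=\int_{\Omega}g\cdot J_{\delta}D_{\omega}\varphi\,d\mu=\int_{\Omega}g\cdot D_{\omega}(J_{\delta}\varphi)\,d\mu=0 .
\]
The swap of $\int_{\mathbb{R}^{d}}$ and $\int_{\Omega}$ is justified by Fubini, using the measurability of the dynamical system and H\"older's inequality (iv). Hence $g_{\delta}\in\mathcal{V}^{\widetilde{\Phi}}_{\textup{div}}$. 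This commutation and self-adjointness step is where care is needed, in particular for the Bochner-type integral defining $J_{\delta}$ in Orlicz spaces; everything else is functional-analytic bookkeeping.

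\emph{Step 4: conclude.} By hypothesis $\int_{\Omega}v\cdot g_{\delta}\,d\mu=0$ for every $\delta>0$. Letting $\delta\to0$ and using the generalized H\"older inequality gives $\int_{\Omega}v\cdot g\,d\mu=0$. So $v$ is annihilated by every functional vanishing on $\mathcal{G}$, hence $v\in\overline{\mathcal{G}}=\mathcal{G}$. This gives $u\in W^{1}_{\#}L^{\Phi}(\Omega)$ with $v=\overline{D}_{\omega,\Phi}u$.
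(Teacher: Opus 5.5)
The paper gives no proof of this lemma; it is quoted from \cite{franck}, and your argument is the standard duality proof of this result and is correct given the facts about $J_{\delta}$ that the paper itself states. That argument runs through the closed range of the isometry $\overline{D}_{\omega}$, the identification of its annihilator in $L^{\widetilde\Phi}(\Omega)^{d}$ with the weakly solenoidal fields, and the density of $\mathcal{V}^{\widetilde\Phi}_{\textup{div}}$ among these via $J_{\delta}$.

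Two remarks. First, $(L^{\Phi})'=L^{\widetilde\Phi}$ already holds for $\Phi\in\Delta_{2}$, and Step~4 can use $\int_{\Omega}v\cdot J_{\delta}g\,d\mu=\int_{\Omega}J_{\delta}v\cdot g\,d\mu\to\int_{\Omega}v\cdot g\,d\mu$ instead of $g_{\delta}\to g$ in $L^{\widetilde\Phi}$, so your extra hypothesis $\widetilde\Phi\in\Delta_{2}$ is needed only to invoke \cite[Lemma 4]{franck} in $L^{\widetilde\Phi}$. Second, that invocation, namely $g_{\delta}\in\mathcal{C}^{\infty}(\Omega)^{d}$ for a possibly unbounded $g$, is the one step resting entirely on the quoted form of that lemma, since from the definition of $J_{\delta}$ alone one only gets $J_{\delta}g\in L^{\infty}(\Omega)$ when $g\in L^{\infty}(\Omega)$.
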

	
	\subsection{Orlicz spaces associated to supralgebra} \label{labelsub2sect2} 
	
	\subsubsection{Homogenization supralgebra}\label{labelsub2sub1sect2} 
	
	 This concept has just been
	defined in a more recent paper \cite{gabri}. It is more general than those
	defined in the papers \cite{nguetseng2003homogenization,zhikov1983averaging} because we do not need the algebra
	to be separable (as in \cite{nguetseng2003homogenization}), or to consist of functions that are
	uniformly continuous (as in \cite{zhikov1983averaging}). Before we go any further, we
	need to give some preliminaries. Let $\mathcal{H}=(H_{\varepsilon
	})_{\varepsilon >0}$ be the action of $\mathbb{R}_{+}^{\ast }$ (the
	multiplicative group of positive real numbers) on the numerical space $%
	\mathbb{R}^{d}$ defined as follows: 
	\begin{equation}
		H_{\varepsilon }(x)=\frac{x}{\varepsilon _{1}}\;\;(x\in \mathbb{R}^{d})
		\label{2.1}
	\end{equation}%
	where $\varepsilon _{1}$ is a positive function of $\varepsilon $ tending to
	zero with $\varepsilon $. For given $\varepsilon >0$, let 
	\begin{equation*}
		u^{\varepsilon }(x)=u(H_{\varepsilon }(x))\;\;(x\in \mathbb{R}%
		^{d}).\;\;\;\;\;\;
	\end{equation*}%
	
	A function $u\in \mathcal{B}(\mathbb{R}_{y}^{d})$ (the $\mathcal{C}^{\ast}$%
	-algebra of bounded continuous complex functions on $\mathbb{R}_{y}^{d}$)
	is said to have a mean value for $\mathcal{H}$, if there exists a complex
	number $M(u)$ such that $u^{\varepsilon }\rightarrow M(u)$ in $L^{\infty }(%
	\mathbb{R}_{x}^{d})$-weak $\ast $ as $\varepsilon \rightarrow 0$. The
	complex number $M(u)$ is called the mean value of $u$ (for $\mathcal{H}$).
	It is evident that this defines a mapping $M$ which is a positive linear
	form (on the space of functions $u\in \mathcal{B}(\mathbb{R}_{y}^{d})$ with
	mean value) attaining the value $1$ on the constant function $1$ and
	verifying the inequality $\left\vert M(u)\right\vert \leq \left\Vert
	u\right\Vert _{\infty }\equiv \sup_{y\in \mathbb{R}^{d}}\left\vert
	u(y)\right\vert $\ for all such $u$. The mapping $M$ is called the \textit{%
		mean value on} $\mathbb{R}^{N}$ \textit{for} $\mathcal{H}$. It is also a
	fact, as the characteristic function of all relatively compact set in $%
	\mathbb{R}^{d}$ lies in $L^{1}(\mathbb{R}^{d})$, that 
	\begin{equation}
		M(u)=\lim_{r\rightarrow +\infty }\frac{1}{\left\vert B_{r}\right\vert }%
		\int_{B_{r}}u(y)dy\;\;\;\;\;\;\;\;\;\;\;\;\;\;  \label{2.3}
	\end{equation}%
	where $B_{r}$ stands for the bounded open ball in $\mathbb{R}^{d}$ with
	radius $r$, and $\left\vert B_{r}\right\vert $ denotes its Lebesgue measure.
	Expression (\ref{2.3}) also works for $u\in L_{\text{loc}}^{1}(\mathbb{R}%
	^{d})$ provided that the above limit makes sense. In connection with the
	dynamical systems, we have the following Birkhoff ergodic theorem for the functions in Orlicz spaces (see \cite%
	{finet2006transfer}).
	
	\begin{theorem}[\textit{Birkhoff Ergodic Theorem}\cite{finet2006transfer}]
		\label{t2.0}Let $T$ be a dynamical system acting on the probability space $%
		(\Omega ,\mathcal{M},\mu )$. Let $f\in L^{\Phi}(\Omega )$. Then for
		almost all $\omega \in \Omega $ the realization $x\mapsto f(T(x)\omega )$
		possesses a mean value in the sense of \emph{(\ref{2.3})}. Furthermore, the
		mean value $M(f(T(\cdot )\omega ))$ is invariant and 
		\begin{equation*}
			\int_{\Omega }f(\omega )d\mu =\int_{\Omega }M(f(T(\cdot )\omega ))d\mu .
		\end{equation*}%
		Moreover if the dynamical system $T$ is ergodic, then 
		\begin{equation*}
			M(f(T(\cdot )\omega ))=\int_{\Omega }fd\mu \text{ \ for }\mu \text{-a.e. }%
			\omega \in \Omega .
		\end{equation*}
	\end{theorem}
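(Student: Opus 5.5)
We reduce the Orlicz statement to the classical $L^{1}$ Birkhoff ergodic theorem. By property \textbf{(vi)} of Subsection \ref{labelsubsub2sect2}, applied with the probability space $(\Omega,\mathscr{M},\mu)$ in place of $Q$, we have the continuous embedding $L^{\Phi}(\Omega)\hookrightarrow L^{1}(\Omega)$. The argument only needs $\mu(\Omega)=1<\infty$, the convexity of $\Phi$ and $\Phi(t)/t\to\infty$. Concretely, if $\int_{\Omega}\Phi(|f|/\delta)\,d\mu<\infty$, then since $\Phi(t)\geq t$ for $t\geq t_{1}$ we get $|f|/\delta\leq t_{1}+\Phi(|f|/\delta)$, and hence $f\in L^{1}(\Omega)$. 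So any $f\in L^{\Phi}(\Omega)$ is in particular an element of $L^{1}(\Omega)$.

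We then invoke the classical multiparameter Birkhoff (Wiener) ergodic theorem for a measure-preserving, jointly measurable $\mathbb{R}^{d}$-action on a probability space, for $f\in L^{1}(\Omega)$. Measurability property (3) of the dynamical system guarantees, via Fubini, that $(x,\omega)\mapsto f(T(x)\omega)$ is $\mathscr{L}\otimes\mathscr{M}$-measurable. Together with invariance (2), this gives $\int_{\Omega}\int_{B_{r}}|f(T(x)\omega)|\,dx\,d\mu=|B_{r}|\,\|f\|_{L^{1}(\Omega)}$. Hence for a.e.\ $\omega$ the realization lies in $L^{1}_{\mathrm{loc}}(\mathbb{R}^{d})$, and the averages in (\ref{2.3}) make sense.

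Wiener's theorem then yields, for $\mu$-a.e.\ $\omega$, the existence of
\[
\overline{f}(\omega)=\lim_{r\to\infty}\frac{1}{|B_{r}|}\int_{B_{r}}f(T(x)\omega)\,dx .
\]
Moreover $\overline{f}$ equals the conditional expectation $\mathbb{E}(f\mid\mathscr{I})$ onto the $\sigma$-algebra $\mathscr{I}$ of $T$-invariant sets. From this identification we obtain both that $\overline{f}$ is invariant and that $\int_{\Omega}\overline{f}\,d\mu=\int_{\Omega}f\,d\mu$.

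Invariance can also be checked directly. For fixed $z\in\mathbb{R}^{d}$, the group property gives $f(T(x)T(z)\omega)=f(T(x+z)\omega)$, and the averages over $B_{r}$ and $z+B_{r}$ differ by at most $\frac{|B_{r}\triangle(z+B_{r})|}{|B_{r}|}$ times a local bound. This difference vanishes as $r\to\infty$, but the control requires the standard maximal inequality, which is where the real work in Wiener's theorem lies.

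Since $\overline{f}=\mathbb{E}(f\mid\mathscr{I})$, conditional Jensen gives $\int_{\Omega}\Phi(|\overline{f}|/\delta)\,d\mu\leq\int_{\Omega}\Phi(|f|/\delta)\,d\mu$. Hence $\overline{f}\in L^{\Phi}(\Omega)$, so in fact $\overline{f}\in I^{\Phi}_{nv}(\Omega)$.

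Finally, if $T$ is ergodic, then $\overline{f}$ is an invariant function and therefore a.e.\ equal to a constant $c$, by the definition of ergodicity given above for functions in $I^{\Phi}_{nv}(\Omega)$. Integrating and using $\mu(\Omega)=1$ gives $c=\int_{\Omega}f\,d\mu$.

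The main obstacle is the pointwise a.e.\ convergence of the ball averages for a continuous $\mathbb{R}^{d}$-action, which needs Wiener's maximal/covering lemma. I would not reprove this but cite it (as the paper does via \cite{finet2006transfer}). The Orlicz-specific content is only the embedding $L^{\Phi}(\Omega)\subset L^{1}(\Omega)$ and the Jensen step.
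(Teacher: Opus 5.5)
Your argument is correct, but it takes a different route from the paper, which gives no proof at all and simply imports the statement from the Orlicz-space ergodic theory of \cite{finet2006transfer}, developed there via transference principles. You observe instead that on a probability space $L^{\Phi}(\Omega)\subset L^{1}(\Omega)$ for every $N$-function; your bound $t\le t_{1}+\Phi(t)$ shows that no $\Delta_{2}$ condition is needed. Consequently three of the conclusions are exactly the classical Wiener ergodic theorem for jointly measurable measure-preserving $\mathbb{R}^{d}$-actions: the existence of a mean value for almost every realization, its invariance, and the identity $\int_{\Omega}f\,d\mu=\int_{\Omega}M(f(T(\cdot)\omega))\,d\mu$. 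In that theorem the limit is the conditional expectation onto the invariant $\sigma$-algebra.

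The only Orlicz-specific ingredient left is the conditional Jensen inequality. It puts the limit in $I^{\Phi}_{nv}(\Omega)$, which is exactly what is needed to apply ergodicity as the paper defines it (through $I^{\Phi}_{nv}(\Omega)$). Equivalently, indicators of invariant sets belong to $L^{\Phi}(\Omega)$, so the paper's notion of ergodicity coincides with the usual one.

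Your route is more elementary and makes clear that the Orlicz hypothesis plays no role in the pointwise statement, which holds for all $f\in L^{1}(\Omega)$. The transference approach of the cited reference is geared toward $L^{\Phi}$-norm information such as maximal inequalities in Orlicz spaces, which this theorem never uses.

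One small correction to your side remark on invariance: once Wiener's theorem is available, no further maximal inequality is needed. For $r>|z|$, the discrepancy between the averages over $z+B_{r}$ and over $B_{r}$ is bounded by $\frac{1}{|B_{r}|}\int_{B_{r+|z|}\setminus B_{r-|z|}}|f(T(x)\omega)|\,dx$. This is a difference of two ball averages of $|f|$, weighted by $|B_{r\pm|z|}|/|B_{r}|\to1$. It therefore tends to $0$ wherever the ball averages of $|f|$ converge, that is, for a.e.\ $\omega$ by Wiener's theorem applied to $|f|$.
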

	
	We summarize below a few basic notions and results concerning the homogenization supralgebras. We refer to \cite{gabri,sango2} for further details.
	
	\begin{definition}
		\label{d2.1} By a homogenization supralgebra  (or $H$%
			-supralgebra, in short) on $\mathbb{R}^{d}$ for $\mathcal{H}$ 
			 we mean any closed subalgebra of $\mathcal{B}(\mathbb{R}^{d})$
			which contains the constants, is closed under complex conjugation and whose
			elements possess a mean value for $\mathcal{H}$.
	\end{definition}
	
	\begin{remark}
		\label{r2.1} From the above definition we see that the concept of $H$%
		-supralgebra is more general than those of $H$-algebra \cite{reed1980methods}
			and of algebra with mean value \cite{zhikov1983averaging}. In fact any separable $H$%
		-supralgebra is an $H$-algebra while any algebra with mean
			value is an $H$-supralgebra as any uniformly continuous function is
			continuous.
	\end{remark}
	
	Let $A$ be an $H$-supralgebra on $\mathbb{R}^{d}$ (for $\mathcal{H}$). It is
	known that $A$ (endowed with the sup norm topology) is a commutative $%
	\mathcal{C}$*-algebra with identity. We denote by $\Delta (A)$ the spectrum
	of $A$ and by $\mathcal{G}$ the Gelfand transformation on $A$. We recall
	that $\Delta (A)$ (a subset of the topological dual $A^{\prime }$ of $A$) is
	the set of all nonzero multiplicative linear functionals on $A$, and $%
	\mathcal{G}$ is the mapping from $A$ to $\mathcal{C}(\Delta (A))$ such that $%
	\mathcal{G}(u)(s)=\left\langle s,u\right\rangle $ ($s\in \Delta (A)$), where 
	$\left\langle ,\right\rangle $ denotes the duality pairing between $%
	A^{\prime }$ and $A$. We endow $\Delta (A)$ with the relative weak$\ast $
	topology on $A^{\prime }$. Then using the well-known theorem of Stone (see
	e.g., either \cite{larsenbanach} or more precisely \cite[Theorem IV.6.18]{dunford1988linear})
	one can easily show that the spectrum $\Delta (A)$ is a compact topological
	space, and the Gelfand transformation $\mathcal{G}$ is an isometric
	isomorphism identifying $A$ with $\mathcal{C}(\Delta (A))$ (the continuous
	functions on $\Delta (A)$) as $\mathcal{C}$*-algebras. Next, since each
	element of $A$ possesses a mean value, this yields a map $u\mapsto M(u)$
	(denoted by $M$ and called the mean value) which is a nonnegative continuous
	linear functional on $A$ with $M(1)=1$, and so provides us with a linear
	nonnegative functional $\psi \mapsto M_{1}(\psi )=M(\mathcal{G}^{-1}(\psi ))$
	defined on $\mathcal{C}(\Delta (A))=\mathcal{G}(A)$, which is clearly
	bounded. Therefore, by the Riesz-Markov theorem, $M_{1}(\psi )$ is
	representable by integration with respect to some Radon measure $\beta $ (of
	total mass $1$) in $\Delta (A)$, called the $M$\textit{-measure} for $A$ %
	\cite{nguetseng2003homogenization}. It is evident that we have 
	\begin{equation*}
		M(u)=\int_{\Delta (A)}\mathcal{G}(u)d\beta \;\, \text{\ for }u\in A\text{.}
		\label{3.5'}
	\end{equation*}
	\label{p2.0}
	We recall that (see \cite[Proposition 2]{sango2}), if 
	the $H$-supralgebra $A$ separates the points
	of $\mathbb{R}^{d}$, then $\Delta (A)$ is the Stone-\v{C}ech
	compactification of $\mathbb{R}^{d}$.

		We give now some examples of $H$-supralgebras (more precisely of $H$-algebras).
	\begin{example}\cite{wou}\label{p2.5}  
		\begin{itemize}
			\item[(1)] \textup{We denote by $\mathcal{C}_{\text{\emph{per}}}(Y)$\  the
				$H$-algebra of $Y$-periodic continuous functions on $\mathbb{R}_{y}^{d}$\ ($Y=(-%
				\frac{1}{2},\frac{1}{2})^{d}$). Its spectrum is the $N$-dimensional
				torus $\mathbb{T}^{d}=\mathbb{R}^{d}/\mathbb{Z}^{d}$. }
			\item[(2)] \textup{We denote by $AP(\mathbb{R}_{y}^{d})$  the set of all almost periodic continuous
				functions on $\mathbb{R}_{y}^{d}$ defined as the vector space consisting of
				all functions defined on $\mathbb{R}_{y}^{d}$ that are uniformly
				approximated by finite linear combinations of the functions in the set $%
				\{\exp (2i\pi k\cdot y):k\in \mathbb{R}^{d}\}$. Then $AP(\mathbb{R}_{y}^{d})$ is an $H$-algebra. Its spectrum $\Delta
				(AP(\mathbb{R}_{y}^{N}))$\ is a compact topological group homeomorphic to
				the Bohr compactification of $\mathbb{R}^{d}$. }
			\item[(3)] \textup{ Let the $H$-algebra $\mathcal{B}_{\infty}(\mathbb{R}^{d}_{y})$, denoting the space of all function $u\in \mathcal{C}(\mathbb{R}^{d}_{y})$ with $\lim_{|y|\to +\infty} u(y)= \zeta \in \mathbb{C}$ ($\zeta$ depending on $u$), where $|y|$ denotes the euclidean norm of $y$ in $\mathbb{R}^{d}_{y}$.  }
			\item[(4)] \textup{ Let $\mathcal{B}_{\infty,\mathbb{Z}^{d}}(\mathbb{R}^{d}_{y})$ denote the closure in $\mathcal{B}(\mathbb{R}^{d}_{y})$ of the space of all finite sums $\sum_{\text{finite}} \varphi_{i}u_{i}$ with $\varphi_{i} \in \mathcal{B}_{\infty}(\mathbb{R}^{d}_{y})$, $u_{i} \in \mathcal{C}_{per}(Y)$.  Then  $\mathcal{B}_{\infty,\mathbb{Z}^{d}}(\mathbb{R}^{d}_{y})$ is an $H$-algebra.}
			\item[(5)] \textup{ We denote by $WAP(\mathbb{R}_{y}^{d})$  the set of all weakly almost periodic continuous
				functions on $\mathbb{R}_{y}^{d}$ defined as the vector space consisting of
				all functions $u$ defined on $\mathbb{R}_{y}^{d}$ such that the set of translates $\{ \tau_{b}\, : \, b \in \mathbb{R}_{y}^{d}\}$ is relatively weakly compact in $\mathcal{B}(\mathbb{R}_{y}^{d})$. Then $WAP(\mathbb{R}_{y}^{d})$ is an $H$-algebra.}
			\item[(6)] \textup{ We denote by $FS(\mathbb{R}^{d}_{y}) $ the Fourier-Stieltjes algebra on $\mathbb{R}^{d}_{y}$ is defined as the closure in $\mathcal{B}(\mathbb{R}^{d}_{y})$ of the space }
			\begin{eqnarray*}
				FS_{\ast}(\mathbb{R}^{d}_{y}) = \left\{ f: \mathbb{R}^{d}_{y} \to \mathbb{R}, \;\; f(x)= \int_{\mathbb{R}^{d}_{y}} \exp(ix\cdot y) d\nu(y) \right. \nonumber \\ 
				\textup{ for some }\;\; \nu \in \mathcal{M}_{\ast}(\mathbb{R}^{d}_{y}) \bigg\},
			\end{eqnarray*}
			\textup{	where $\mathcal{M}_{\ast}(\mathbb{R}^{d}_{y})$ denotes the space of complex valued measures $\nu$ with finite total variation: $|\nu|(\mathbb{R}^{d}_{y}) < \infty$. }
		\end{itemize}
	\end{example}

	Next, the partial derivative of index $i$ ($1\leq i\leq d$) on $\Delta (A)$
	is defined to be the mapping $\partial _{i}=\mathcal{G}\circ \partial
	/\partial y_{i}\circ \mathcal{G}^{-1}$ (usual composition) of $\mathcal{D}%
	^{1}(\Delta (A))=\{\varphi \in \mathcal{C}(\Delta (A)):\mathcal{G}%
	^{-1}(\varphi )\in A^{1}\}$ into $\mathcal{C}(\Delta (A)),$ where $%
	A^{1}=\{\psi \in \mathcal{C}^{1}(\mathbb{R}^{d}):$ $\psi ,\partial \psi
	/\partial y_{i}\in A$ ($1\leq i\leq d$)$\}$. Higher order derivatives are
	defined analogously, and one also defines the space $A^{m}$ (integers $m\geq
	1$) to be the space of all $\psi \in \mathcal{C}^{m}(\mathbb{R}_{y}^{d})$
	such that $D_{y}^{\alpha }\psi =\frac{\partial ^{\left\vert \alpha
			\right\vert }\psi }{\partial y_{1}^{\alpha _{1}}\cdot \cdot \cdot \partial
		y_{d}^{\alpha _{d}}}\in A$ for every $\alpha =(\alpha _{1},...,\alpha
	_{d})\in \mathbb{N}^{d}$ with $\left\vert \alpha \right\vert \leq m$, and we
	set $A^{\infty }=\cap _{m\geq 1}A^{m}$. At the present time, let $\mathcal{D}%
	(\Delta (A))=\{\varphi \in \mathcal{C}(\Delta (A)):$ $\mathcal{G}%
	^{-1}(\varphi )\in A^{\infty }\}.$ Endowed with a suitable locally convex
	topology, $A^{\infty }$ (resp. $\mathcal{D}(\Delta (A))$) is a Fr\'{e}chet
	space and further, $\mathcal{G}$ viewed as defined on $A^{\infty }$ is a
	topological isomorphism of $A^{\infty }$ onto $\mathcal{D}(\Delta (A))$. It
	is worth recalling that $A^{\infty }$ is the deterministic analog of the
	space $\mathcal{C}^{\infty }(\Omega )$ defined in Subsection 2.1.
	
	Analogously to the space $\mathcal{D}^{\prime }(\mathbb{R}^{d})$, we now
	define the space of distributions on $\Delta (A)$ to be the space of all
	continuous linear form on $\mathcal{D}(\Delta (A))$. We denote it by $%
	\mathcal{D}^{\prime }(\Delta (A))$ and we endow it with the strong dual
	topology. 
	
	The following  result whose proof can be found in \cite{sango2}, 
	connect the dynamical systems to the spectrum of some $H$-supralgebras.
	
	\begin{proposition}\cite{sango2} \label{t2.2}  \\
		Let $A$ be an $H$-supralgebra on $\mathbb{R}^{d}$. Suppose $A$
		is translation invariant and that each of its elements is uniformly
		continuous (thus $A$ is an algebra with mean value). Then the translations $%
		T(y):\mathbb{R}^{d}\rightarrow \mathbb{R}^{d}$, $T(y)x=x+y$, extend to a
		group of homeomorphisms $T(y):\Delta (A)\rightarrow \Delta (A)$, $y\in 
		\mathbb{R}^{d}$, which forms a continuous $d$-dimensional dynamical system
		on $\Delta (A)$ whose invariant probability measure is precisely the $M$%
		-measure $\beta $ for $A$.
	\end{proposition}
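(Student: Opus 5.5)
We construct the extended translations through the Gelfand representation. For fixed $y\in\mathbb{R}^{d}$, let $\tau_{y}u=u(\cdot+y)$ for $u\in A$. Translation invariance of $A$ gives $\tau_{y}:A\to A$, and $\tau_{y}$ is a $\mathcal{C}^{*}$-algebra automorphism with inverse $\tau_{-y}$, preserving products, conjugation and constants. We then define $T(y):\Delta(A)\to\Delta(A)$ by transposition, $\langle T(y)s,u\rangle=\langle s,\tau_{y}u\rangle$ for $s\in\Delta(A)$ and $u\in A$. This $T(y)s$ is again a nonzero multiplicative linear functional, since $\tau_{y}1=1$. It is weak$*$ continuous because $s\mapsto\langle s,\tau_{y}u\rangle$ is continuous for each fixed $u$. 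From $\tau_{y_{1}+y_{2}}=\tau_{y_{1}}\circ\tau_{y_{2}}$ we get the group property $T(y_{1}+y_{2})=T(y_{1})\circ T(y_{2})$ and $T(0)=\mathrm{id}$, so each $T(y)$ is a homeomorphism with inverse $T(-y)$. To see that $T(y)$ extends the translation $x\mapsto x+y$, we use the canonical map $\delta:\mathbb{R}^{d}\to\Delta(A)$, $\langle\delta_{x},u\rangle=u(x)$. Then $\langle T(y)\delta_{x},u\rangle=u(x+y)=\langle\delta_{x+y},u\rangle$, so $T(y)\delta_{x}=\delta_{x+y}$.

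The main point is joint continuity of $(y,s)\mapsto T(y)s$ from $\mathbb{R}^{d}\times\Delta(A)$ to $\Delta(A)$, and this is where uniform continuity is used. Because $\Delta(A)$ carries the weak$*$ topology, it suffices to show that $(y,s)\mapsto\langle s,\tau_{y}u\rangle=\mathcal{G}(\tau_{y}u)(s)$ is continuous for each $u\in A$. We split
\[
|\langle s,\tau_{y}u\rangle-\langle s_{0},\tau_{y_{0}}u\rangle|\leq \|\tau_{y}u-\tau_{y_{0}}u\|_{\infty}+|\langle s,\tau_{y_{0}}u\rangle-\langle s_{0},\tau_{y_{0}}u\rangle|,
\]
using $\|s\|\leq1$ for characters. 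The first term equals $\sup_{x}|u(x+y)-u(x+y_{0})|$, which tends to $0$ as $y\to y_{0}$ precisely because $u$ is uniformly continuous. The second term tends to $0$ as $s\to s_{0}$ by the definition of the weak$*$ topology. This settles condition (4), so $T$ is a continuous $d$-dimensional dynamical system on the compact space $\Delta(A)$. Measurability (3) follows from joint continuity on the Borel $\sigma$-algebra.

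It remains to show that $\beta$ is invariant. For $\psi=\mathcal{G}(u)\in\mathcal{C}(\Delta(A))$ we have $\psi\circ T(y)=\mathcal{G}(\tau_{y}u)$, hence
\[
\int_{\Delta(A)}\psi\circ T(y)\,d\beta=M(\tau_{y}u).
\]
So the claim reduces to $M(\tau_{y}u)=M(u)$. This follows from formula (\ref{2.3}): $\frac{1}{|B_{r}|}\int_{B_{r}}u(x+y)dx$ and $\frac{1}{|B_{r}|}\int_{B_{r}}u(x)dx$ differ by at most $\|u\|_{\infty}|B_{r}\triangle(B_{r}+y)|/|B_{r}|$, which tends to $0$ as $r\to\infty$. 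Therefore $\int\psi\circ T(y)\,d\beta=\int\psi\,d\beta$ for all continuous $\psi$. By uniqueness in the Riesz–Markov representation, $T(y)_{\#}\beta=\beta$, that is, $\beta(T(y)F)=\beta(F)$ for every Borel set $F$, which is property (2).
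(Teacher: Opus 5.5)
Your proposal is correct. The paper does not prove this statement itself; it only cites \cite{sango2}, and your argument is the standard one behind that result: transpose the translation automorphisms $\tau_y$ of $A$ to $\Delta(A)$, obtain joint continuity from the uniform continuity of the elements of $A$, and obtain invariance of $\beta$ from $M(\tau_y u)=M(u)$ together with uniqueness in the Riesz--Markov theorem.

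Two steps you pass over quickly are harmless. The transpose reverses the order of composition, which does no damage because translations commute. Measurability of $(y,s)\mapsto T(y)s$ with respect to the product $\sigma$-algebra holds because $\mathbb{R}^{d}$ is second countable, so the Borel $\sigma$-algebra of $\mathbb{R}^{d}\times\Delta(A)$ equals the product of the Borel $\sigma$-algebras even when $\Delta(A)$ is not metrizable.
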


	With the above result, one may directly consider deterministic
	homogenization theory in algebras with mean value as a particular case of
	stochastic homogenization theory.
	
	\subsubsection{Orlicz spaces on spectrum of supralgebra}\label{labelsub2sub2sect2} 
	
	We recall here some notions about the  Orlicz spaces associated to a $H$%
	-supralgebra $A$. We refer to \cite{nnang2014deterministic} for further details.
	
	Let $\Phi$ be an $N$-function, and $u\in A$. We have $\Phi(|u|) \in A$, so that 
	\begin{equation*}
		\Phi(|u|)^{\varepsilon} \rightarrow \int_{\Delta (A)} \mathcal{G}(\Phi(|u|)) d\beta
	\end{equation*}
	in $L^{\infty}(\mathbb{R}^{d}_{x})$-weak $\ast$ as $\varepsilon \to 0$ (see \cite{nnang2014deterministic}).
	Moreover, we have
	\begin{equation}\label{gelfandphi1}
		\mathcal{G}(\Phi(|u|)) = \Phi(|\mathcal{G}(u)|).
	\end{equation}
	
	This being so, for $u\in \mathcal{C}(\Delta(A))$ the function $s \rightarrow \Phi(|u(s)|)$ makes sense and lies in $\mathcal{C}(\Delta(A); \mathbb{R})$. We define the Orlicz space 
	\begin{equation*}
		L^{\Phi}(\Delta(A)) = \left\{ u : \Delta(A) \rightarrow \mathbb{C} \; \textup{measurable}; \;\; \lim_{\delta \to 0^{+}} \int_{\Delta (A)} \Phi(\delta|u|) d\beta = 0 \right\}
	\end{equation*}
	which is a Banach space for the Luxemburg norm 
	\begin{equation*}
		\|u\|_{\Phi,\Delta(A)} = \inf  \left\{ \delta > 0 \; : \;\;  \int_{\Delta (A)} \Phi\left(\dfrac{|u(s)|}{\delta}\right) d\beta(s) \leq 1 \right\}.
	\end{equation*}
	Now,  if $A^{\infty }$ is dense in $A$
	(this is the case when, e.g., $A$ is translation invariant and moreover each
	element of $A$ is uniformly continuous; see \cite[Proposition 2.3]{woukeng2010homogenization} for
	the justification)
	then $L^{\Phi}(\Delta (A))$ is a subspace of $\mathcal{D%
	}^{\prime }(\Delta (A))$ (with continuous embedding), so that one may define
	the Orlicz-Sobolev spaces on $\Delta (A)$ as follows. 
	\begin{equation*}
		W^{1}L^{\Phi}(\Delta (A))=\{u\in L^{\Phi}(\Delta (A)):\text{ }\partial _{i}u\in
		L^{\Phi}(\Delta (A))\text{ (}1\leq i\leq d\text{)}\}
	\end{equation*}%
	where the derivative $\partial _{i}u$ is taken in the distribution sense on $%
	\Delta (A)$. We equip $W^{1}L^{\Phi}(\Delta (A))$ with the norm
	
	\begin{equation*}
		\begin{array}{l}
			||u||_{W^{1}L^{\Phi}} = ||u||_{\Phi,\Delta(A)} +  \sum_{i=1}^{d}||\partial _{i}u||_{\Phi,\Delta(A)}  \text{ \thinspace }\left( u\in W^{1}L^{\Phi}(\Delta (A))\right) , 
		\end{array}%
	\end{equation*}%
	which makes it a Banach space. To that space are attached some other spaces
	such as
	$$W^{1}L^{\Phi}(\Delta (A))/\mathbb{C}= \left\{ u\in W^{1}L^{\Phi}(\Delta
	(A)):\int_{\Delta (A)}ud\beta =0 \right\},$$
	provided with the seminorm
	\begin{equation*}
		\begin{array}{l}
			||u||_{W^{1}L^{\Phi}(\Delta (A))/\mathbb{C}} =   \sum_{i=1}^{d}||\partial _{i}u||_{\Phi,\Delta(A)}  \text{ \thinspace }\left( u\in W^{1}L^{\Phi}(\Delta (A))\right) , 
		\end{array}%
	\end{equation*}
	and its separated completion $%
	W^{1}_{\#}L^{\Phi}(\Delta (A))$; we refer to \cite{nnang2014deterministic} for a documented
	presentation of these spaces. We denote by $J$ the canonical mapping of $W^{1}L^{\Phi}(\Delta (A))/\mathbb{C}$ into its separated completion. Furthermore, the derivative $\partial_{i}$ viewed as a mapping of $W^{1}L^{\Phi}(\Delta (A))/\mathbb{C}$ into $L^{\Phi}(\Delta(A))$ extends to a unique continuous linear mapping, still denoted by $\partial_{i}$, of $W^{1}_{\#}L^{\Phi}(\Delta (A))$ into $L^{\Phi}(\Delta(A))$ such that $\partial_{i} J(v) = \partial_{i} v$ for $v \in W^{1}L^{\Phi}(\Delta (A))/\mathbb{C}$ and  
	\begin{equation*}
		\begin{array}{l}
			||u||_{W^{1}_{\#}L^{\Phi}} =   \sum_{i=1}^{d}||\partial _{i}u||_{\Phi,\Delta(A)}  \text{ \thinspace }\left( u\in W^{1}_{\#}L^{\Phi}(\Delta (A))\right). 
		\end{array}%
	\end{equation*}
	\begin{definition}
		An $H$-supralgebra $A$ is termed $\Phi$-total if $\mathcal{D}(\Delta(A))$ is dense in $W^{1}L^{\Phi}(\Delta (A))$.
	\end{definition}
	
	Suppose that $A$ is $\Phi$-total. Then the following assertions hold true (see, \cite{nnang2014deterministic}):
	\begin{itemize}
		\item[\textup{(i)}] $J(\mathcal{D}(\Delta(A))/\mathbb{C})$ is dense in $W^{1}_{\#}L^{\Phi}(\Delta (A))$, where $\mathcal{D}(\Delta(A))/\mathbb{C} = \mathcal{D}(\Delta(A)) \cap  W^{1}L^{\Phi}(\Delta (A))/\mathbb{C}$.
		\item[\textup{(ii)}] $\int_{\Delta (A)} \partial_{i} u\, d\beta = 0$ ($1\leq i\leq d$) for any $u \in W^{1}_{\#}L^{\Phi}(\Delta (A))$
	\end{itemize}

	\subsubsection{An appropriate representation in $\mathbb{R}^{d}$  of elements in $L^{\Phi}(\Delta(A))$ and $W^{1}L^{\Phi}(\Delta (A))$}\label{labelsub2sub3sect2} 
	
	We can define the appropriate representation of  Orlicz spaces associated to a $H$%
	-supralgebra. The notations are those of the preceding subsection.
	Let $\Xi^{\Phi} = \Xi^{\Phi}(\mathbb{R}^{d}_{y})$ be the space of all $u\in L^{\Phi}_{\textup{loc}}(\mathbb{R}^{d}_{y})$ for which the sequence $(u^{\varepsilon})_{0<\varepsilon\leq 1}$ is bounded in $L^{\Phi}_{\textup{loc}}(\mathbb{R}^{d}_{y})$ (where $u^{\varepsilon}(x) = u(H_{\varepsilon}(x))$, $x\in \mathbb{R}^{d}$, $H_{\varepsilon}$ defined in (\ref{2.1})). Provided with the norm 
	\begin{equation} \label{normchi2}
		\|u\|_{\Xi^{\Phi}} = \sup_{0< \varepsilon\leq 1}  \|u^{\varepsilon}\|_{\Phi,B_{d}}, \quad u \in \Xi^{\Phi},
	\end{equation}
	where $B_{d}$ denotes the open unit ball in $\mathbb{R}^{d}_{x}$, $\Xi^{\Phi}$ is a Banach space. We define the Banach space $\mathfrak{X}^{\Phi}_{A}$ to be the closure of $A$ in $\Xi^{\Phi}$. Furthermore, the results in \cite{nguetseng2003homogenization} together with their proofs carry over mutatis mutantis by replacing the space $\mathfrak{X}^{p}_{A}$ by $\mathfrak{X}^{\Phi}_{A}$. For any $u \in A$, one has 
	\begin{equation}
		M\left(\Phi\left(\dfrac{|u|}{\delta}\right)\right) = \int_{\Delta(A)} \mathcal{G}\left(\Phi\left(\dfrac{|u|}{\delta}\right)\right) d\beta =\int_{\Delta (A)} \Phi\left(\dfrac{|\hat{u}|}{\delta}\right) d\beta \geq 0 \label{meannew1}
	\end{equation}%
	for all $\delta>0$, where $\hat{u} = \mathcal{G}(u)$ (see, (\ref{gelfandphi1})). Naturally, (\ref{meannew1}) holds when $u\in \mathfrak{X}^{\Phi}_{A}$. So, instead of the $\Xi^{\Phi}$-norm, we endow $\mathfrak{X}^{\Phi}_{A}$ with the seminorm 
	\begin{equation*}\label{normchi1}
		\|u\|_{\Phi,A} = \inf \left\{ \delta>0 \; : \; 	M\left(\Phi\left(\dfrac{|u|}{\delta}\right)\right) \leq 1 \right\}, \quad u \in \mathfrak{X}^{\Phi}_{A},
	\end{equation*}	
	which, by (\ref{meannew1}) yields that 
	\begin{equation*}
		\|\mathcal{G}(u)\|_{\Phi,\Delta(A)} = \|u\|_{\Phi,A} \quad  u \in \mathfrak{X}^{\Phi}_{A}.	
	\end{equation*}
	Hence, endowed with the seminorm $\|\cdot\|_{\Phi,A}$, $\mathfrak{X}^{\Phi}_{A}$ is complete,
	verifying $\mathfrak{X}^{\Phi}_{A}\subset \mathfrak{X}^{\Psi}_{A}$
	when $\Phi\succ \Psi$.
	
	We recall that the spaces $\mathfrak{X}^{\Phi}_{A}$ are not in general Fr\'{e}chet spaces since they are not separated in
	general. The following properties are worth noticing (see, \cite{sango2} for the Lebesgue spaces and \cite{nnang2014deterministic} for the last property):
	\begin{itemize}
		\item[(\textbf{1)}] The Gelfand transformation $\mathcal{G}:A\rightarrow 
		\mathcal{C}(\Delta (A))$ extends by continuity to a unique continuous linear
		mapping, still denoted by $\mathcal{G}$, of $\mathfrak{X}^{\Phi}_{A}$ into $L^{\Phi}(\Delta
		(A))$. Furthermore if $u\in \mathfrak{X}^{\Phi,\infty}_{A}=\mathfrak{X}^{\Phi}_{A}\cap L^{\infty }(\mathbb{R}_{y}^{d})$
		then $\mathcal{G}(u)\in L^{\infty }(\Delta (A))$ and $\left\Vert \mathcal{G}%
		(u)\right\Vert _{L^{\infty }(\Delta (A))}\leq \left\Vert u\right\Vert
		_{L^{\infty }(\mathbb{R}_{y}^{d})}$.
		
		\item[(\textbf{2)}] The mean value $M$ viewed as defined on $A$, extends by
		continuity to a positive continuous linear form (still denoted by $M$) on $%
		\mathfrak{X}^{\Phi}_{A}$ satisfying $M(u)=\int_{\Delta (A)}\mathcal{G}(u)d\beta $ ($u\in
		\mathfrak{X}^{\Phi}_{A}$). Furthermore, $M(\tau _{a}u)=M(u)$ for each $u\in \mathfrak{X}^{\Phi}_{A}$ and
		all $a\in \mathbb{R}^{d}$, where $\tau _{a}u(y)=u(y-a)$ for almost all $y\in 
		\mathbb{R}^{N}$.
		
		\item[(\textbf{3)}] Let $\Phi$ be an $N$-function and $\widetilde{\Phi}$ its complementary. The topological dual of $\mathfrak{X}^{\Phi}_{A}$ coincides with $\mathfrak{X}^{\widetilde{\Phi}}_{A}$. Hence, the usual multiplication $A\times
		A\rightarrow A$; $(u,v)\mapsto uv$, extends by continuity to a bilinear form 
		$\mathfrak{X}^{\Phi}_{A}\times \mathfrak{X}^{\widetilde{\Phi}}_{A}\rightarrow L^{1}(\Delta(A))$ with 
		\begin{equation*}
			\int_{\Delta (A)} |\mathcal{G}(u) \mathcal{G}(v)| d\beta \leq \left\| u\right\|_{\Phi,A}\left\| v\right\|_{\widetilde{\Phi},A}%
			\text{\ for }(u,v)\in \mathfrak{X}^{\Phi}_{A}\times \mathfrak{X}^{\widetilde{\Phi}}_{A}.
		\end{equation*}
		
		\item[(\textbf{4)}] \label{p2.3} Assume each
		element of $A$ is uniformly continuous and moreover $A$ is translation
		invariant (i.e. $\tau _{a}u=u(\cdot +a)\in A$ for all $u\in A$ and all $a\in 
		\mathbb{R}^{d}$). Then $A^{\infty }$ is dense in $\mathfrak{X}^{\Phi}_{A}$.
	\end{itemize}
	
	
	
	Now, let $u\in \mathfrak{X}^{\Phi}_{A}$, $\|u\|_{\Phi,A} = 0$ if and only if $\mathcal{G}(u) = 0$ (see, e.g., in \cite{nnang2014deterministic}). Unfortunately, the mapping $\mathcal{G}$ (defined on $%
	\mathfrak{X}^{\Phi}_{A}$) is not in general injective. So, we denote the space of (class of) functions by  
	\begin{equation*}
		\mathcal{X}_{A}^{\Phi} = \mathfrak{X}^{\Phi}_{A}/Ker\mathcal{G},\;\;\;\;\;\;\;\;\;
	\end{equation*}%
	where $Ker\mathcal{G}$ is the kernel of $\mathcal{G}$.
	Endowed with the norm 
	\begin{equation}\label{rholabel}
		\left\| \varrho u\right\| _{\mathcal{X}_{A}^{\Phi}}=\left\| u\right\|
		_{\Phi,A}, \quad \;\;(u\in \mathfrak{X}^{\Phi}_{A}),\;\;
	\end{equation}%
	where $\varrho$ is the canonical mapping of $\mathfrak{X}^{\Phi}_{A}$ onto $\mathcal{X}_{A}^{\Phi}$, $\mathcal{X}_{A}^{\Phi}$ is a Banach space. Moreover, according with \cite[Theorem 2.8]{nnang2014deterministic}, The mapping $\mathcal{G}:\mathfrak{X}^{\Phi}_{A}\rightarrow L^{\Phi}(\Delta (A))$
	induces an isometric isomorphism $\mathcal{G}_{1}$ of $\mathcal{X}_{A}^{\Phi}$
	onto $L^{\Phi}(\Delta (A))$ defined by 
	\begin{equation} \label{isomorphismeG1}
		\mathcal{G}_{1}(\varrho u) = \mathcal{G}(u), \quad u \in \mathfrak{X}^{\Phi}_{A}.
	\end{equation}
	So, for any $u \in L^{\Phi}(\Delta (A))$ there exists a unique $u_{0} \in \mathcal{X}_{A}^{\Phi}$ such that $\mathcal{G}_{1}(u_{0}) = u$; that is, $u_{0}$ is the representation in $\mathbb{R}^{d}_{y}$ of $u$.
	
	\begin{remark}
		For $u\in A$ and for $\delta>0$, we have 
		\begin{equation*}
			M\left(\Phi\left(\dfrac{|u|}{\delta}\right)\right) = \lim_{\varepsilon \to 0} \dfrac{1}{|B_{d}|} \int_{B_{d}} \Phi\left(\dfrac{|u(\frac{x}{\varepsilon})|}{\delta}\right)	dx,
		\end{equation*}
		where $|K| = \int 1_{K} dx$. By changing the variable $x= \varepsilon y$ we are led to 
		\begin{equation*}
			M\left(\Phi\left(\dfrac{|u|}{\delta}\right)\right) = \lim_{\varepsilon \to 0} \dfrac{1}{|\varepsilon^{-1} B_{d}|} \int_{\varepsilon^{-1} B_{d}} \Phi\left(\dfrac{|u(y)|}{\delta}\right)	dy,
		\end{equation*}
		for every $\delta>0$, which turns to be exactly the identity observed in \cite{gabri}.
	\end{remark}
	Noting that 
	\begin{equation*}
		\lim_{\varepsilon \to 0}  \int_{B_{d}} \Phi\left(\dfrac{|u(\frac{x}{\varepsilon})|}{\delta}\right)	dx \leq \sup_{0< \varepsilon\leq 1} \int_{B_{d}} \Phi\left(\dfrac{|u(\frac{x}{\varepsilon})|}{\delta}\right)	dx, 
	\end{equation*}
	we get 
	\begin{equation*}
		\left\{ \delta>0 \, : \; \sup_{0< \varepsilon\leq 1} \dfrac{1}{|B_{d}|}\int_{B_{d}} \Phi\left(\dfrac{|u(\frac{x}{\varepsilon})|}{\delta}\right)	dx \leq 1 \right\} \subset \left\{ \delta>0 \, : \;  M\left(\Phi\left(\dfrac{|u|}{\delta}\right)\right) \leq 1	 \right\}.
	\end{equation*}
	Therefore, (\ref{normchi2}) implies $\|u\|_{\Phi,A} \leq |B_{d}|^{-1} \|u\|_{\Xi^{\Phi}}$ for all $u\in A$.
	
	\par We extend the mean value on $\mathcal{X}^{\Phi}_{A}$ by 
	\begin{equation}\label{m1m}
		M_{1}(\varrho u) = M(u), \quad (u\in \mathfrak{X}^{\Phi}_{A})	
	\end{equation}
	and then we have 
	\begin{equation*}
		M_{1}(\varrho u) = \lim_{\varepsilon \to 0} \dfrac{1}{|\varepsilon^{-1} B_{d}|} \int_{\varepsilon^{-1} B_{d}} u(y) dy \quad \textup{for \, any} \;\, u \in \mathfrak{X}^{\Phi}_{A}.
	\end{equation*}
	
	Moreover, the following results hold true (see, \cite{nnang2014deterministic}):	
	\begin{itemize}  \label{c2.1}
		\item[(i)] The spaces $\mathcal{X}^{\Phi}_{A}$ is reflexive;
		
		\item[(ii)] The topological dual of the space $\mathcal{X}^{\Phi}_{A}$ coincides with the space $\mathcal{X}^{\widetilde{\Phi}}_{A}$, the duality being given by 
		\begin{equation*}
			\begin{array}{l}
				\left\langle \varrho u, \varrho v\right\rangle_{\mathcal{X}^{\Phi}_{A},\mathcal{X}^{\widetilde{\Phi}}_{A}}=M(uv)=\int_{\Delta (A)}\mathcal{G}%
				_{1}(\varrho u) \mathcal{G}_{1}(\varrho v) d\beta \\ 
				\text{for }u\in \mathfrak{X}^{\Phi}_{A} \text{ and }v\in \mathfrak{X}^{\widetilde{\Phi}}_{A}\text{.}%
			\end{array}%
		\end{equation*}
	\end{itemize}
	
	\begin{remark}
		\label{r2.1'}The space $\mathcal{X}^{\Phi}_{A}$ is the separated
			completion of $\mathfrak{X}^{\Phi}_{A}$ and the canonical mapping of $\mathfrak{X}^{\Phi}_{A}$%
		 into $\mathcal{X}^{\Phi}_{A}$ is just the canonical surjection
			of $\mathfrak{X}^{\Phi}_{A}$ onto $\mathcal{X}^{\Phi}_{A}$; see once more %
			\cite[Chap. II, Sect. 3, no 7]{bourbaki2007topologie} for the theory of completion.
	\end{remark}
	\begin{definition}
		By the formal integral representation of $u\in \mathfrak{X}^{\Phi}_{A}$, the complex number 
		\begin{equation*}
			\pounds_{A} \int_{\mathbb{R}^{d}} u(y) dy \equiv  \pounds_{A} \int u(y) dy	
		\end{equation*}
		defined by 
		\begin{equation*}
			\pounds_{A} \int_{\mathbb{R}^{d}} u(y) dy \equiv  \pounds_{A} \int u(y) dy = M_{1}(\varrho u) = M(u).	
		\end{equation*}
	\end{definition}
	
		We recall now some notions about ergodic $H$-supralgebra which we will use in the next section.
		
	\begin{definition}
		\label{d2.2} We say that an $H$-supralgebra $A$ on $\mathbb{R}^{d}_{y}$ 
		is ergodic if for any $u\in A$, 
		\begin{equation*}
			\lim_{r\rightarrow +\infty }\frac{1}{\left| B_{r}\right| }%
			\int_{B_{r}}u(x+y)dx=M(u)\text{ uniformly with respect to }y.  
		\end{equation*}
	\end{definition}
	
	The following result whose proof can be found in \cite[Lemma 2.29]{nnang2014deterministic} give us
	an equivalent property for the ergodic $H$-supralgebras.
	
	\begin{lemma}\cite{nnang2014deterministic}
		\label{p2.4}An $H$-supralgebra $A$\ on $\mathbb{R}^{d}_{y}$\ is ergodic\ if and
		only if 
		\begin{equation*}
			\lim_{r\rightarrow +\infty }\left\| \frac{1}{\left| B_{r}\right| }%
			\int_{B_{r}}u(\cdot +y)dy-M(u)\right\| _{\Phi,A}=0\text{\ for all }u\in \mathfrak{X}^{\Phi}_{A}
		\end{equation*}	
	\end{lemma}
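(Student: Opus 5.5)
The plan is to prove both implications by passing through the Gelfand picture and using the density of $A$ in $\mathfrak{X}^{\Phi}_{A}$ together with a uniform bound on the averaging operators. For $r>0$ and $u\in\mathfrak{X}^{\Phi}_{A}$, set
\[
S_{r}u=\frac{1}{|B_{r}|}\int_{B_{r}}u(\cdot+y)\,dy .
\]
The first step is to show that each $S_{r}$ is a well-defined linear map of $\mathfrak{X}^{\Phi}_{A}$ into itself, with
\[
\|S_{r}u\|_{\Phi,A}\leq \|u\|_{\Phi,A}
\]
uniformly in $r$. For $u\in A$ this goes as follows. Jensen's inequality applied to the convex $\Phi$ with the probability measure $dy/|B_{r}|$ gives $\Phi(|S_{r}u|/\delta)\leq S_{r}\big(\Phi(|u|/\delta)\big)$ pointwise. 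Then positivity of $M$, together with its translation invariance (property (2)) and its continuity, lets us exchange $M$ with the Bochner-type average, so that
\[
M\big(\Phi(|S_{r}u|/\delta)\big)\leq M\big(\Phi(|u|/\delta)\big).
\]
Taking the infimum over $\delta$ gives the contraction on $A$. For this step to be clean I would work under the standing assumption that $A$ is translation invariant with uniformly continuous elements. Then $y\mapsto u(\cdot+y)$ is continuous into $A$ for the sup norm, and $S_{r}u\in A$. Finally, $S_{r}$ extends by density to $\mathfrak{X}^{\Phi}_{A}$ with the same bound.

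Sufficiency ($\Leftarrow$) is the easy direction. If the limit holds for all $u\in\mathfrak{X}^{\Phi}_{A}$, it holds in particular for all $u\in A$. For $u\in A$, uniform convergence $S_{r}u\to M(u)$ in the sup norm gives the norm convergence, since $\|v\|_{\Phi,A}\leq c\|v\|_{\infty}$ with $c=1/\Phi^{-1}(1)$. The converse, however, requires recovering uniform convergence from the weaker $\|\cdot\|_{\Phi,A}$ convergence. I would argue via the spectrum. By Proposition \ref{t2.2}, $\mathcal{G}(S_{r}u)$ is the ergodic average of $\mathcal{G}(u)$ along the continuous flow $T(y)$ on $\Delta(A)$, and $\|\cdot\|_{\Phi,A}$-convergence to a constant means $L^{\Phi}(\Delta(A),\beta)$-convergence. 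Since $\{S_{r}u\}$ is uniformly equicontinuous and bounded, Arzelà–Ascoli-type compactness on $\Delta(A)$ shows that any sup-norm cluster point is a continuous function equal $\beta$-a.e. to $M(u)$. One then concludes that it is identically $M(u)$, on the support of $\beta$ at least. This is exactly where I expect the main obstacle: the support of $\beta$ need not be all of $\Delta(A)$, for instance for $\mathcal{B}_{\infty}$. So for this direction I would instead follow the proof of \cite[Lemma 2.29]{nnang2014deterministic}, which appears to take as its ergodicity notion the $\Phi$-norm version and treats the uniform version only on algebras where $\beta$ has full support.

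Necessity ($\Rightarrow$) uses an $\varepsilon/3$ argument. Let $A$ be ergodic, $u\in\mathfrak{X}^{\Phi}_{A}$ and $\eta>0$. Choose $v\in A$ with $\|u-v\|_{\Phi,A}<\eta$. Then
\[
\|S_{r}u-M(u)\|_{\Phi,A}\leq \|S_{r}(u-v)\|_{\Phi,A}+\|S_{r}v-M(v)\|_{\Phi,A}+|M(v)-M(u)|\,\|1\|_{\Phi,A}.
\]
The first term is at most $\eta$ by the contraction estimate. The second term tends to $0$ by ergodicity together with $\|\cdot\|_{\Phi,A}\leq c\|\cdot\|_{\infty}$. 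The third term is at most $C\eta$ by continuity of $M$ on $\mathfrak{X}^{\Phi}_{A}$ (property (2)). Letting $r\to\infty$ and then $\eta\to 0$ concludes.
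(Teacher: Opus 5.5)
\emph{The $(\Rightarrow)$ direction.} Your $(\Rightarrow)$ argument is correct under the standing hypothesis you add: translation invariance with uniformly continuous elements, which is needed anyway for $S_{r}u$ to lie in $\mathfrak{X}^{\Phi}_{A}$. Jensen, together with positivity and translation invariance of $M$, gives $\|S_{r}u\|_{\Phi,A}\le\|u\|_{\Phi,A}$. To extend $S_{r}$ to $\mathfrak{X}^{\Phi}_{A}$ as the actual ball average, you should also note that $S_{r}$ is bounded for the $\Xi^{\Phi}$-norm, with an $r$-dependent constant. The bound $\|w\|_{\Phi,A}\le\|w\|_{\infty}/\Phi^{-1}(1)$ handles $u\in A$, and the $\varepsilon/3$ argument closes. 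This is also the only direction the paper actually uses, in the proof of Theorem \ref{t3.3}.

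\emph{The gap: $(\Leftarrow)$.} Your paragraph headed ``Sufficiency $(\Leftarrow)$'' in fact argues $(\Rightarrow)$ on $A$. Once the Arzel\`a--Ascoli sketch stalls, you defer to \cite[Lemma 2.29]{nnang2014deterministic} with a guess about its definitions, so nothing is proved for this direction. The paper itself gives no proof either, only that citation.

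\emph{Why it cannot be closed with Definition \ref{d2.2}.} The obstruction you spotted is real. $\|\cdot\|_{\Phi,A}$-convergence controls $\mathcal{G}(S_{r}u)-M(u)$ only $\beta$-a.e., whereas uniformity in $y$ is sup-norm convergence on all of $\Delta(A)$, i.e.\ essentially unique ergodicity of the flow of Proposition \ref{t2.2}. Non-full support of $\beta$ alone is harmless ($\mathcal{B}_{\infty}(\mathbb{R}^{d}_{y})$ is uniformly ergodic); the real problem is additional invariant measures.

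Here is a counterexample. Take $z_{k}\in\mathbb{R}^{d}$ with $|z_{k}|=e^{k^{2}}$, set $u(y)=\sum_{k\ge1}\min\{1,(k+1-|y-z_{k}|)^{+}\}$, and let $A$ be the closed subalgebra of $\mathcal{B}(\mathbb{R}^{d})$ generated by the constants and all translates of $u$.
\begin{itemize}
\item A polynomial $v=p(\tau_{a_{1}}u,\dots,\tau_{a_{m}}u)$ differs from $p(0)$ only on a set $Z$ such that, for each fixed $r$, $Z+B_{r}$ meets $B_{R}$ in measure $O((\log R)^{(d+1)/2})$.
\item Hence $v$ has mean value $p(0)$, and $A$ is a translation invariant algebra with mean value.
\item Also $\|S_{r}v-M(v)\|_{\Phi,A}=0$ for all $r$, so your own contraction/density argument gives the norm condition on all of $\mathfrak{X}^{\Phi}_{A}$.
\item Yet $M(u)=0$ while $S_{r}u(z_{k})=1$ for every $k\ge r$, so $A$ is not ergodic in the sense of Definition \ref{d2.2}.
\end{itemize}

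Thus the ``if'' direction is false as stated in this paper. It holds only if ergodicity means the $\Phi$-norm condition itself. Equivalently, for $\Phi,\widetilde{\Phi}\in\Delta_{2}$, it means that translation invariant elements of $\mathcal{X}^{\Phi}_{A}$ are constants; that equivalence is the mean ergodic theorem for the isometry group $\{\tau_{a}\}$ on the reflexive space $\mathcal{X}^{\Phi}_{A}$.
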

	
	\begin{remark}\label{algegood}
		\textup{As mentioned in \cite{gabri}, each $H$-supralgebra of Example \ref{p2.5} is ergodic, of class $\mathcal{C}^{\infty}$,  translation
			invariant, and moreover each of their elements is uniformly continuous}
	\end{remark}
	
	Our goal now is to define another spaces attached to $\mathcal{X}_{A}^{\Phi}$.
	Let $u\in \mathcal{D}^{\prime }(\Delta (A))$, and let $\alpha \in \mathbb{N}%
	^{d}$. We know that $\partial ^{\alpha }u\in \mathcal{D}^{\prime }(\Delta
	(A))$ exists and is defined by 
	\begin{equation}
		\left\langle \partial ^{\alpha }u,\varphi \right\rangle =(-1)^{\left| \alpha
			\right| }\left\langle u,\partial ^{\alpha }\varphi \right\rangle \text{\ for
			any }\varphi \in \mathcal{D}(\Delta (A))\text{.}  \label{2.7'}
	\end{equation}%
	This leads to the following definition.
	
	\begin{definition}\label{formderiva1}
		\label{d2.3} The formal derivative of order $\alpha \in \mathbb{N}^{d}$
		is defined to be the formal operator on $\mathcal{X}_{A}^{\Phi}$
			given by %
		\begin{equation*}
			\overline{D}_{y}^{\alpha }=\mathcal{G}_{1}^{-1}\circ \partial ^{\alpha
			}\circ \mathcal{G}_{1}  \label{2.2'}
		\end{equation*}%
		where $\partial ^{\alpha }$ is defined above. In particular,
			for $\alpha =(\delta _{ij})_{1\leq j\leq d}$ with $1\leq i\leq d$%
		, $\overline{D}_{y}^{\alpha }$ is denoted by $\overline{%
			\partial }/\partial y_{i}$ and is called the \textit{formal partial derivative of
		index} $i$.
	\end{definition}

	Due to (\ref{isomorphismeG1}), we put $(\mathcal{G}_{1})^{-1}(W^{1}L^{\Phi}(\Delta(A))) = W^{1}\mathcal{X}^{\Phi}_{A}$, that is, 
	\begin{equation*}
		W^{1}\mathcal{X}^{\Phi}_{A} = \left\{ u \in  \mathcal{X}^{\Phi}_{A}\, : \; \partial_{i} \mathcal{G}_{1}(u) \in L^{\Phi}(\Delta(A)), \; 1\leq i \leq d \right\}.
	\end{equation*}
	Since for any $u \in W^{1}\mathcal{X}^{\Phi}_{A}$, $\partial_{i} \mathcal{G}_{1}(u) \in L^{\Phi}(\Delta(A))$ ($1\leq i \leq d$) and, once more taking into account (\ref{isomorphismeG1}), there exists a unique $u_{i} \in \mathcal{X}^{\Phi}_{A}$ such that $\partial_{i} \mathcal{G}_{1}(u) = \mathcal{G}_{1}(u_{i})$.
	
	Hence, the characterization of $W^{1}\mathcal{X}^{\Phi}_{A}$ is 
	\begin{equation*}\label{orlsobogen1}
		W^{1}\mathcal{X}^{\Phi}_{A} = \left\{ u \in  \mathcal{X}^{\Phi}_{A}\, : \; \dfrac{\overline{\partial}u}{\partial y_{i}} \in \mathcal{X}^{\Phi}_{A}, \; 1\leq i \leq d \right\}.
	\end{equation*}
	In order to simplify the representation, from now on we will use the same letter $u$ (if there is no danger of confusion) to denote the equivalence class $\varrho u \in \mathcal{X}^{\Phi}_{A}$ and its representative $u \in \mathfrak{X}^{\Phi}_{A}$. This being so, first given $u$ in $W^{1}\mathfrak{X}^{\Phi}_{A} = \left\{ u \in \mathfrak{X}^{\Phi}_{A} \; : \; \frac{\partial u}{\partial y_{i}} \in \mathfrak{X}^{\Phi}_{A} \;\;  (1\leq i \leq d) \right\}$, we have 
	\begin{equation*}
		\mathcal{G}_{1}\left(\varrho\left(\dfrac{\partial u}{\partial y_{i}}\right)\right) = \mathcal{G}\left(\dfrac{\partial u}{\partial y_{i}}\right) = \partial_{i} \mathcal{G}(u)  = \partial_{i} \mathcal{G}_{1}(\varrho(u)) \; \substack{\textup{Definition \ref{formderiva1}}  \\ =} \; \mathcal{G}_{1}\left(\dfrac{\overline{\partial} \varrho(u)}{\partial y_{i}}\right);
	\end{equation*}
	so that 
	\begin{equation}\label{partial1}
		\dfrac{\overline{\partial} }{\partial y_{i}} \circ \varrho = \varrho \circ \dfrac{\partial}{\partial y_{i}} \quad \textup{on} \;\, W^{1}\mathfrak{X}^{\Phi}_{A}, \quad 1 \leq i \leq d.
	\end{equation}
	Second, we have	
	\begin{equation*}
		\pounds_{A}\int \dfrac{\overline{\partial} u}{\partial y_{i}} dy = 0 \quad \textup{for \; all} \;\, u \in  W^{1}\mathfrak{X}^{\Phi}_{A}, \quad 1 \leq i \leq d. 
	\end{equation*}
	Third, we endow $W^{1}\mathcal{X}^{\Phi}_{A}$ with the norm
	\begin{equation*}
		\|u\|_{W^{1}\mathcal{X}^{\Phi}_{A}} = \|u\|_{\Phi, A} + \sum_{i=1}^{d} \left\|\dfrac{\overline{\partial} u}{\partial y_{i}} \right\|_{_{\Phi, A}}, \quad u \in  W^{1}\mathcal{X}^{\Phi}_{A},
	\end{equation*}
	which makes it a Banach space. Besides, the canonical mapping $\mathcal{G}_{1}$ (see (\ref{isomorphismeG1})) is an isometric isomorphism of $W^{1}\mathcal{X}^{\Phi}_{A}$ such that 
	\begin{equation*}
		\mathcal{G}_{1}(u_{0}) = u \quad \textup{and} \quad \mathcal{G}_{1}\left(\dfrac{\overline{\partial} u}{\partial y_{i}}\right) = \partial_{i}u, \quad 1\leq i \leq d.	
	\end{equation*} 
	We will be concerned with the space 
	\begin{equation*}
		W^{1}\mathcal{X}^{\Phi}_{A}/\mathbb{C} = \left\{ u \in W^{1}\mathcal{X}^{\Phi}_{A} \; :\; \pounds_{A}\int u dy = 0 \right\}
	\end{equation*}
	equipped with the seminorm
	\begin{equation*}
		\|u\|_{W^{1}\mathcal{X}^{\Phi}_{A}/\mathbb{C}} =  \sum_{i=1}^{d} \left\|\dfrac{\overline{\partial} u}{\partial y_{i}} \right\|_{_{\Phi, A}}, \quad u \in  W^{1}\mathcal{X}^{\Phi}_{A}/\mathbb{C},
	\end{equation*}
	which makes it a locally convex topological space in general nonseparated and noncomplete. We denote by $W^{1}_{\#}\mathcal{X}^{\Phi}_{A}$  the separated completion of $W^{1}\mathcal{X}^{\Phi}_{A}/\mathbb{C}$ (for the above seminorm), and by $J_{1}$ the canonical mapping from $W^{1}\mathcal{X}^{\Phi}_{A}/\mathbb{C}$ onto $W^{1}_{\#}\mathcal{X}^{\Phi}_{A}$. Therefore, there is a unique isometric isomorphism $\overline{\mathcal{G}}_{1}$ of $W^{1}_{\#}\mathcal{X}^{\Phi}_{A}$ onto $W^{1}_{\#}L^{\Phi}(\Delta(A))$; otherwise the restriction $\frac{\overline{\partial}}{\partial y_{i}} : W^{1}\mathcal{X}^{\Phi}_{A}/\mathbb{C} \to \mathcal{X}^{\Phi}_{A}$ ($1\leq i \leq d$) extends by continuity to a continuous linear mapping, still denoted by $\frac{\overline{\partial}}{\partial y_{i}}$, from $W^{1}_{\#}\mathcal{X}^{\Phi}_{A}$ into $\mathcal{X}^{\Phi}_{A}$ with 
	\begin{equation*}
		\dfrac{\overline{\partial}}{\partial y_{i}}\circ J_{1} = \dfrac{\overline{\partial}}{\partial y_{i}} \quad \textup{in} \;\,  W^{1}\mathcal{X}^{\Phi}_{A}/\mathbb{C}, \;\, 1\leq i \leq d	
	\end{equation*} 
	\begin{equation*}
		\overline{\mathcal{G}}_{1}\circ J_{1} = J_{1} \circ \mathcal{G}_{1} \quad \textup{in} \;\,  W^{1}\mathcal{X}^{\Phi}_{A}/\mathbb{C},
	\end{equation*} 
	\begin{equation*}
		\partial_{i}\circ J \circ \mathcal{G}_{1}  = \mathcal{G}_{1}\circ \dfrac{\overline{\partial}}{\partial y_{i}} \quad \textup{in} \;\,  W^{1}\mathcal{X}^{\Phi}_{A}/\mathbb{C}, \;\; \textup{or}
	\end{equation*} 
	\begin{equation*}
		\partial_{i}\circ  \overline{\mathcal{G}}_{1}  = \mathcal{G}_{1}\circ \dfrac{\overline{\partial}}{\partial y_{i}} \quad \textup{in} \;\,  W^{1}_{\#}\mathcal{X}^{\Phi}_{A},
	\end{equation*}
	\begin{equation*}
		\|J_{1}u\|_{W^{1}_{\#}\mathcal{X}^{\Phi}_{A}}  = \|u\|_{W^{1}\mathcal{X}^{\Phi}_{A}/\mathbb{C}}, \quad  u \in   W^{1}\mathcal{X}^{\Phi}_{A}/\mathbb{C}, \quad \textup{and}
	\end{equation*}
	\begin{equation*}
		\|u\|_{W^{1}_{\#}\mathcal{X}^{\Phi}_{A}} =  \sum_{i=1}^{d} \left\|\dfrac{\overline{\partial} u}{\partial y_{i}} \right\|_{_{\Phi, A}}, \quad u \in  W^{1}_{\#}\mathcal{X}^{\Phi}_{A}.
	\end{equation*}
	Furthermore, as $J_{1}(W^{1}\mathcal{X}^{\Phi}_{A}/\mathbb{C})$ is
	dense in $W^{1}_{\#}\mathcal{X}^{\Phi}_{A}$ (this is classical), it follows that if $%
	A^{\infty }$ is dense in $A$ (this is the case when $A$ is an algebra with
	mean value), then $(J_{1}\circ \varrho )(A^{\infty }/\mathbb{C})$ is dense
	in $W^{1}_{\#}\mathcal{X}^{\Phi}_{A}$, where $A^{\infty }/\mathbb{C}=\{u\in A^{\infty
	}:M(u)=0\}$.

	\bigskip We return for a while to the framework of the preceding subsection
	and assume that the hypotheses in Proposition (\ref{t2.2})\ are satisfied. Let $%
	\{T(y):y\in \mathbb{R}^{d}\}$ be the dynamical system constructed in
	Proposition (\ref{t2.2}). We know by the results of Subsection 2.1 that $T(y)$ induces a $%
	d $-parameter group of isometries $U(y):L^{\Phi}(\Delta (A))\rightarrow
	L^{\Phi}(\Delta (A))$. By the properties of $\mathcal{G}_{1}$, this also yields
	a $d$-parameter group of isometries $\mathcal{G}_{1}^{-1}\circ U(y)\circ 
	\mathcal{G}_{1}:W^{1}\mathcal{X}^{\Phi}_{A}\rightarrow W^{1}\mathcal{X}^{\Phi}_{A}$. We
	denote by $D_{i,\Phi}$ the generators of $\mathcal{G}_{1}^{-1}\circ U(y)\circ 
	\mathcal{G}_{1}$. Now, let $u\in A^{1}$; we have $\partial _{i}\mathcal{G}%
	(u)=\mathcal{G}(\frac{\partial u}{\partial y_{i}})=\mathcal{G}_{1}(\varrho (%
	\frac{\partial u}{\partial y_{i}}))$, so that $\varrho (\frac{\partial u}{%
		\partial y_{i}})=\frac{\overline{\partial }}{\partial y_{i}}(\varrho (u))$
	by the preceding remark (see, (\ref{partial1})). But since $\frac{\partial u}{\partial y_{i}}$ is
	the derivative along the direction $e_{i}=(\delta _{ij})_{1\leq i\leq d}$ of
	the dynamical system induced by the translations in $\mathbb{R}^{d}$, it is
	immediate that 
	\begin{equation*}
		D_{i,\Phi}(\varrho (u))=\frac{\overline{\partial }}{\partial y_{i}}(\varrho
		(u)),
	\end{equation*}%
	so that 
	\begin{equation*}
		D_{i,\Phi}=\frac{\overline{\partial }}{\partial y_{i}}.\;\;\;\;\;\;\;\;\;
		\label{2.12}
	\end{equation*}%
	The above equality is crucial in the process of viewing homogenization in
	algebras with mean value as a special case of stochastic homogenization.
	Indeed in the case when $\Omega =\Delta (A)$, it allows to just replace $%
	\mathcal{C}^{\infty }(\Omega )$ by the space $\mathcal{G}_{1}(\varrho
	(A^{\infty }))=\mathcal{G}(A^{\infty })=\mathcal{D}(\Delta (A))$ which plays
	exactly the same role since firstly, it is dense in $L^{\Phi}(\Delta (A))$  and secondly, for all $u\in \mathcal{D}(\Delta
	(A))\equiv \mathcal{C}^{\infty }(\Delta (A))$ we have $u\in L^{\infty
	}(\Delta (A))$ and $\partial ^{\alpha }u\in L^{\infty }(\Delta (A))$ for all 
	$\alpha \in \mathbb{N}^{d}$.
	

	
	\section{stochastic $\Sigma$-convergence in Orlicz spaces} \label{labelsect3} 
	
	In this section we extend the concept of stochastic $\Sigma $-convergence to Orlicz setting. This is a combination of two convergence methods in this type of space : the stochastic  two-scale convergence in the mean in Orlicz space (see, 
	 \cite[J. Dongho \textit{et al.}]{franck}) and the $\Sigma $-convergence in Orlicz space (see, \cite[H. Nnang]{nnang2014deterministic}%
	). 
	
	In all that follows, $Q$ is an open subset of $\mathbb{\mathbb{R}}^{d}$
	and $A$ is an $H$-supralgebra on $\mathbb{\mathbb{R}}_{y}^{d}$. We use the
	letter $\mathcal{G}$ to denote the Gelfand transformation on $A$. Points in $%
	\Delta (A)$ are denoted by $s$. We still denote by $M$ the mean value on $%
	\mathbb{\mathbb{R}}^{d}$ for the action $\mathcal{H}$ (see Section 2). The
	compact space $\Delta (A)$ is equipped with the $M$-measure $\beta $ for $A$%
	. Next, let $(\Omega ,\mathcal{M},\mu )$ denote a probability space and let $%
	\{T(y):y\in \mathbb{\mathbb{R}}^{d}\}$ denote a $d$-dimensional dynamical
	system acting on the probability space $(\Omega ,\mathcal{M},\mu )$. Points
	in $\Omega $ are denoted by $\omega $. Assume that $\Phi$ is an $N$-function of $\Delta_{2}\cap\Delta^{\prime}$-class. Finally, let $\varepsilon _{1}$ and $%
	\varepsilon _{2}$ be two well separated functions of $\varepsilon $ tending
	towards zero with $\varepsilon $, that is, $0<\varepsilon _{1},\varepsilon
	_{2},\varepsilon _{2}/\varepsilon _{1}\rightarrow 0$ as $\varepsilon
	\rightarrow 0$, and such that the functions $x\mapsto x/\varepsilon _{1}$
	and $x\mapsto x/\varepsilon _{2}$ define two actions of $\mathbb{R}%
	_{+}^{\ast }$ on $\mathbb{\mathbb{R}}^{d}$.
	
	\subsection{(Weak and Strong) stochastic $\Sigma$-convergence}
	
	In this subsection, we start by define the concept of weak  stochastic $\Sigma$-convergence and we give some of her properties. Furthermore, we state and prove the compactness theorem. We end this subsection by the concept of 	strong stochastic $\Sigma$-convergence.
	\par 	Setting 
	\begin{eqnarray*}
		L^{\Phi}(Q\times\Omega; \mathcal{X}^{\Phi}_{A}) = \bigg\{ u : Q\times\Omega\times\mathbb{R}_{y}^{d} \rightarrow \mathbb{C} \; \textup{measurable}; \;\;  \\ \lim_{\delta \to 0^{+}} \iint_{Q\times\Omega} \pounds_{A}~\int \Phi(\delta|u|) dy\,dxd\mu  = 0 \bigg\},
	\end{eqnarray*}
	we define for the $H$-supralgebra $A$, an Orlicz space endowed with the Luxemburg norm
	\begin{equation*}
		\|u\|_{L^{\Phi}(Q\times\Omega; \mathcal{X}^{\Phi}_{A})} = \inf \left\{\delta>0 \; : \; \iint_{Q\times\Omega} \pounds_{A}\int \Phi\left(\dfrac{|u|}{\delta}\right) dy\, dxd\mu  \leq 1 \right\}.
	\end{equation*}
	
Let $\varepsilon>0$. For $f \in L_{l o c}^{1}\left(Q \times \Omega  \times \mathbb{R}_{y}^{d}\right)$, we set
\begin{equation*}
	f^{\varepsilon}(x, \omega)= f\left( x,T\left( \frac{x}{%
		\varepsilon _{1}}\right) \omega ,\frac{x}{\varepsilon _{2}}\right) \quad((x, \omega) \in Q \times \Omega ) \label{eq:3.1}
\end{equation*}
whenever the right-hand side has a meaning. This is the case when $f\in \mathcal{C}_{0}^{\infty }(Q)\otimes \mathcal{C}%
^{\infty }(\Omega )\otimes A$ or $f\in \mathcal{K}(Q;\mathcal{C}^{\infty }(\Omega ;A))$ (see, e.g., \cite[Definition 5]{sango2} and comment below).

	\begin{definition}
		\label{d3.1} A bounded sequence $(u_{\varepsilon })_{\varepsilon >0}$%
		 in $L^{\Phi}(Q\times \Omega )$  is
		said to weakly stochastically $\Sigma $-converge  in $L^{\Phi}(Q\times
		\Omega )$  to some $u_{0}\in L^{\Phi}(Q\times \Omega ;\mathcal{X}^{\Phi}_{A}) \equiv \mathcal{G}_{1}^{-1}(L^{\Phi}(Q\times\Omega\times\Delta(A)))$ if as $\varepsilon \rightarrow 0$, we have  
		\begin{equation}
			\iint_{Q\times \Omega }u_{\varepsilon }(x,\omega ) f^{\varepsilon}(x, \omega) dxd\mu
			\rightarrow \iint_{Q\times \Omega} \pounds_{A}\int u_{0}(x,\omega,y) f(x,\omega,y)dy\, dxd\mu  \;\;\;\;\;\;\;\;  \label{3.1}
		\end{equation}%
		for every $f\in \mathcal{C}_{0}^{\infty }(Q)\otimes \mathcal{C}%
		^{\infty }(\Omega )\otimes A$
		
		We express this by writing $%
		u_{\varepsilon }\rightarrow u_{0}$ in $L^{\Phi}(Q\times \Omega
		)$-weak $\digamma\,\Sigma $, where $\digamma$ stands for \textit{digamma} symbol in Latex.
	\end{definition}
	\begin{remark}
	We recall that $\mathcal{C}_{0}^{\infty }(Q)\otimes \mathcal{C}^{\infty
	}(\Omega )\otimes A$ is the space of functions of the form 
	\begin{equation*}
		f(x,\omega ,y)=\sum_{\text{finite}}\varphi _{i}(x)\psi _{i}(\omega )g_{i}(y)%
		\text{,\ \ }(x,\omega ,y)\in Q\times \Omega \times \mathbb{R}^{d}\text{,}
	\end{equation*}%
	with $\varphi _{i}\in \mathcal{C}_{0}^{\infty }(Q)$, $\psi _{i}\in \mathcal{C%
	}^{\infty }(\Omega )$ and $g_{i}\in A$. Such functions are dense in $%
	\mathcal{C}_{0}^{\infty }(Q)\otimes L^{\widetilde{\Phi}}(\Omega )\otimes A$, (since $\mathcal{C}^{\infty }(\Omega
	) $ is dense in $L^{\widetilde{\Phi}}(\Omega )$) and hence in $\mathcal{K}%
	(Q;L^{\widetilde{\Phi}}(\Omega ))\otimes A$, ($\mathcal{K}(Q;L^{\widetilde{\Phi}}(\Omega ))$ being the space of continuous functions of $Q$ into $%
	L^{\widetilde{\Phi}}(\Omega )$ with compact support containing in $Q$). As $\mathcal{K}%
	(Q;L^{\widetilde{\Phi}}(\Omega ))$ is dense in $L^{\widetilde{\Phi}}(Q\times \Omega )$ and $L^{\widetilde{\Phi}}(Q\times
	\Omega )\otimes A$ is dense in $L^{\widetilde{\Phi}}(Q\times \Omega ;A)$, the
	uniqueness of the stochastic $\Sigma $-limit is ensured.
\end{remark}
		
	Before continuing our study, we need to make a comparison between the weak
	stochastic $\Sigma $-convergence and other existing convergence methods
	closed to it. For that, we must first state these convergence schemes:
	
	\textbf{(1)} A sequence $(u_{\varepsilon })_{\varepsilon >0}\subset L^{\Phi}(Q)$ 
		is said to weakly $\Sigma $-converge\ in $%
		L^{\Phi}(Q)$\ to some $v_{0}\in L^{\Phi}(Q;\mathcal{X}^{\Phi}_{A})$ (in the sense of \cite[Definition 2.13]{nnang2014deterministic}) \ if as $E\ni
		\varepsilon \rightarrow 0$, we\emph{\ }have\emph{\ } 
		\begin{equation}
			\int_{Q}u_{\varepsilon }(x)f\left( x,\frac{x}{\varepsilon _{2}}\right)
			dx \rightarrow \int_{Q} \pounds_{A}\int v_{0}(x,y)f%
			(x,y) dydx \label{3.1'}
		\end{equation}%
		for every $f\in L^{\widetilde{\Phi}}(Q;A)$. We denote it by $%
		u_{\varepsilon }\rightarrow v_{0}$\textit{\ in }$L^{\Phi}\left( Q \right) $\textit{-weak} $\Sigma$.
		
		\textbf{(2)} A sequence $(u_{\varepsilon})_{\varepsilon >0}\subset
		L^{\Phi}(Q\times \Omega )$ is said to\emph{\ }%
		stochastically two-scale converge\ in the mean\ to some\emph{\ }$v_{0}\in
		L^{\Phi}(Q\times \Omega )$\ if as\emph{\ }$\varepsilon \rightarrow 0$, we have 
		\begin{equation}
			\iint_{Q\times \Omega }u_{\varepsilon }(x,\omega )f\left( x,T\left( \frac{x}{%
				\varepsilon _{1}}\right) \omega \right) dxd\mu \rightarrow \iint_{Q\times
				\Omega }v_{0}(x,\omega )f(x,\omega )dxd\mu  \label{3.2'}
		\end{equation}%
		for all admissible functions (in the sense of \cite[Section 3]{franck}) $f\in
		L^{\widetilde{\Phi}}\left( Q\times \Omega \right) $. We denote it by $%
		u_{\varepsilon }\rightarrow v_{0}$\textit{\ stoch. in }$L^{\Phi}\left( Q\times
		\Omega \right) $\textit{-weak.}
	
	\begin{remark}
		\label{r3.0} The weak stochastic $\Sigma $-convergence method
		generalizes the above two convergence methods. Indeed, it is very important
		to note that both the above definitions (\ref{3.1'}) and (\ref{3.2'}) imply
		the boundedness of the sequence $u_{\varepsilon }$ either in $%
		L^{\Phi}(Q)$ or in $L^{\Phi}(Q\times \Omega )$, accordingly. With
		this in mind, we see that if in (\ref{3.1}) we take $f\in \mathcal{C}%
		_{0}^{\infty }(Q)\otimes \mathcal{C}^{\infty }(\Omega )$, that is $f$ 
		is constant with respect to $y\in \mathbb{R}^{d}$, and next
		using the density of the latter space in $L^{\widetilde{\Phi}}(Q\times \Omega )$%
		, then (\ref{3.1}) reads as (\ref{3.2'}) with $v_{0}(x,\omega
		)=\pounds_{A}\int u_{0}(x,\omega,y) dy $ by choosing
		in $L^{\widetilde{\Phi}}(Q\times \Omega )$ admissible functions. If
		besides we take in (\ref{3.1}) $f\in \mathcal{C}_{0}^{\infty }(Q)\otimes A$%
		, that is $f$ not depending upon the random variable $\omega $%
		and further if we choose $u_{\varepsilon }$ not depending on 
		$\omega $, then using the density of $\mathcal{C}_{0}^{\infty
		}(Q)\otimes A$ in $L^{\widetilde{\Phi}}(Q;A)$ we readily get (\ref%
		{3.1'}) with $ v_{0}(x,y) =\int_{\Omega } u_{0}(x,\omega
		,y)d\mu $.
	\end{remark}
	
	\begin{remark}
		In Definition \ref{d3.1}, if we consider  $A = \mathcal{C}_{\textup{per}}(Y)$ be the classical $H$-algebra of $Y$-periodic continuous complex functions on $\mathbb{R}_{y}^{d}$ where $Y= (0,1)^{d}$ (the open unit cube in $\mathbb{R}_{y}^{d}$), then we find the concept of \textit{stochastic two-scale convergence} in Orlicz setting defined in \cite[Definition 2]{tchin2}.
	\end{remark}
	
	As in \cite[Proposition 6]{sango2}, the following result is easily verified (see, Remark \ref{r3.0} above, taking in particular $f\in \mathcal{C}%
	_{0}^{\infty }(Q)\otimes (\mathcal{C}^{\infty }(\Omega )\cap I_{nv}^{\widetilde{\Phi}}\left( \Omega \right)$).
	
	\begin{proposition}
		\label{p2}Let $(u_{\varepsilon })_{\varepsilon >0}$ be a sequence in $%
		L^{\Phi}\left( Q\times \Omega \right) $ and let $u_{0}\in L^{\Phi}(Q\times \Omega ;\mathcal{X}^{\Phi}_{A})$. If $u_{\varepsilon }\rightarrow u_{0}$
		 in $L^{\Phi}\left( Q\times \Omega \right) $-weak $\digamma\,\Sigma $, then $%
		(u_{\varepsilon })_{\varepsilon >0}$ stochastically two-scale converges in
		the mean towards $v_{0}(x,\omega )=\pounds_{A}\int u_{0}(x,\omega
		,y)dy $ and 
		\begin{equation*}
			\int_{\Omega }u_{\varepsilon }\left( \cdot ,\omega \right) \psi (\omega
			)d\mu \rightarrow \int_{\Omega} \pounds_{A}\int u_{0}\left(
			\cdot ,\omega ,y\right)  \psi (\omega ) dyd\mu  \text{ in }L^{1}(Q)\text{%
				-weak }\forall \psi \in I_{nv}^{\widetilde{\Phi}}\left( \Omega \right) .
		\end{equation*}
	\end{proposition}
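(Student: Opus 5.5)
The plan is to specialise the test functions in (\ref{3.1}) and then pass from smooth test functions to general ones by density, using the boundedness that weak $\digamma\,\Sigma$-convergence carries (Definition \ref{d3.1}: the sequence is bounded in $L^{\Phi}(Q\times\Omega)$).

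\emph{First assertion.} Take $f=\varphi\otimes\psi\otimes 1$ with $\varphi\in\mathcal{C}_{0}^{\infty}(Q)$ and $\psi\in\mathcal{C}^{\infty}(\Omega)$. Since $1\in A$ and $M(1)=1$, the left side of (\ref{3.1}) becomes $\iint u_{\varepsilon}(x,\omega)\varphi(x)\psi(T(x/\varepsilon_{1})\omega)\,dxd\mu$. The right side becomes $\iint v_{0}\varphi\psi\,dxd\mu$ with $v_{0}=\pounds_{A}\int u_{0}\,dy$.

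We next check that $v_{0}\in L^{\Phi}(Q\times\Omega)$. Jensen's inequality for the positive normalized functional $M$ gives $\Phi(|v_{0}|/\delta)\le \pounds_{A}\int\Phi(|u_{0}|/\delta)\,dy$, so $\|v_{0}\|_{L^{\Phi}(Q\times\Omega)}\le\|u_{0}\|_{L^{\Phi}(Q\times\Omega;\mathcal{X}^{\Phi}_{A})}$.

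We then extend to all admissible $f\in L^{\widetilde{\Phi}}(Q\times\Omega)$ by a $3\epsilon$-argument. Approximate $f$ in $L^{\widetilde{\Phi}}$ by finite sums $f_{n}$ of the above tensor form; this is possible by the density recalled in the remark after Definition \ref{d3.1}. Then use the generalized H\"older inequality together with $\sup_{\varepsilon}\|u_{\varepsilon}\|_{L^{\Phi}}<\infty$.

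The one genuinely delicate point is that $\|f^{\varepsilon}-f_{n}^{\varepsilon}\|_{L^{\widetilde{\Phi}}(Q\times\Omega)}$ must be controlled uniformly in $\varepsilon$. For this we use the invariance of $\mu$ under $T$: for fixed $x$, the map $\omega\mapsto T(x/\varepsilon_{1})\omega$ preserves $\mu$. By Fubini,
\[
\iint\widetilde{\Phi}\big(|g(x,T(x/\varepsilon_{1})\omega)|\big)\,dxd\mu=\iint\widetilde{\Phi}(|g|)\,dxd\mu .
\]
Hence the Luxemburg norms coincide. This is exactly what makes "admissible" functions in the sense of \cite{franck} behave well, and it closes the density step. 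I expect this step to be the main (mild) obstacle, since one has to check measurability of the composed function, which follows from property (3) of the dynamical system.

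\emph{Second assertion.} Fix $\psi\in I_{nv}^{\widetilde{\Phi}}(\Omega)$. First take $\psi\in\mathcal{C}^{\infty}(\Omega)\cap I_{nv}^{\widetilde{\Phi}}(\Omega)$. Invariance gives $\psi(T(x/\varepsilon_{1})\omega)=\psi(\omega)$ a.e., so the test function $\varphi\otimes\psi\otimes1$ gives
\[
\int_{Q}\Big(\int_{\Omega}u_{\varepsilon}\psi\,d\mu\Big)\varphi\,dx\to\int_{Q}\Big(\int_{\Omega}\pounds_{A}\int u_{0}\,\psi\,dy\,d\mu\Big)\varphi\,dx
\]
for all $\varphi\in\mathcal{C}_{0}^{\infty}(Q)$.

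For a general invariant $\psi$ we cannot approximate by smooth functions directly. Instead we argue directly with $f=\varphi\otimes\psi\otimes 1$: since $\psi\circ T(x/\varepsilon_{1})=\psi$, we have $f^{\varepsilon}=f$. The convergence then follows from the first assertion applied to this $f\in L^{\widetilde{\Phi}}(Q\times\Omega)$, which is admissible because $f^{\varepsilon}=f$ trivially.

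Finally, set $w_{\varepsilon}=\int_{\Omega}u_{\varepsilon}\psi\,d\mu$. By H\"older, $w_{\varepsilon}$ is bounded in $L^{1}(Q)$, and in fact in $L^{\Phi}(Q)$ via Jensen. Testing against $\varphi\in\mathcal{C}_{0}^{\infty}(Q)$ and using density of $\mathcal{C}_{0}^{\infty}(Q)$ in $L^{\widetilde{\Phi}}(Q)\supset L^{\infty}(Q)$, this upgrades to weak convergence in $L^{1}(Q)$ (indeed weakly in $L^{\Phi}(Q)$, hence in $L^{1}(Q)$). That proves the stated limit.
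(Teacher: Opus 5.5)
Your strategy is the paper's. The paper specializes (\ref{3.1}) to $y$-independent test functions and, for the second claim, to $f\in\mathcal{C}_{0}^{\infty}(Q)\otimes(\mathcal{C}^{\infty}(\Omega)\cap I_{nv}^{\widetilde{\Phi}}(\Omega))$, then argues by density as in Remark~\ref{r3.0}. Your observation that $(x,\omega)\mapsto(x,T(x/\varepsilon_{1})\omega)$ preserves $dx\times d\mu$ is what makes that density step rigorous: it makes $f\mapsto f^{\varepsilon}$ an isometry of $L^{\widetilde{\Phi}}(Q\times\Omega)$, and so gives the first assertion for every $f\in L^{\widetilde{\Phi}}(Q\times\Omega)$. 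Your bypass via $f^{\varepsilon}=f$ for invariant $\psi$ is fine. (Contrary to your remark, general invariant $\psi$ can be approximated by smooth invariant functions: its truncations at levels $\pm k$ are bounded, invariant, have zero stochastic derivatives, and converge in $L^{\widetilde{\Phi}}(\Omega)$ since $\widetilde{\Phi}\in\Delta_{2}$.)

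The gap is in your final paragraph. Jensen bounds $w_{\varepsilon}=\int_{\Omega}u_{\varepsilon}\psi\,d\mu$ in $L^{\Phi}(Q)$ only when $\psi\in L^{\infty}(\Omega)$, via $|w_{\varepsilon}|\le\|\psi\|_{\infty}\int_{\Omega}|u_{\varepsilon}|\,d\mu$. Elements of $I_{nv}^{\widetilde{\Phi}}(\Omega)$ are in general unbounded when $T$ is not ergodic. By duality, an estimate $\|\int_{\Omega}u\psi\,d\mu\|_{L^{\Phi}(Q)}\le C\|u\|_{L^{\Phi}(Q\times\Omega)}$ is equivalent to $\|h\otimes\psi\|_{L^{\widetilde{\Phi}}(Q\times\Omega)}\le C'\|h\|_{L^{\widetilde{\Phi}}(Q)}$. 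That holds if $\widetilde{\Phi}\in\Delta'$, but it does not follow from Jensen and can fail for $\Delta_{2}$ pairs (e.g.\ $\widetilde{\Phi}(t)\sim t^{2}/\log t$ at infinity). Without it you only have an $L^{1}(Q)$ bound. Convergence against $\varphi\in\mathcal{C}_{0}^{\infty}(Q)$ then does not give weak $L^{1}(Q)$ convergence, since $\mathcal{C}_{0}^{\infty}(Q)$ is not dense in $L^{\infty}(Q)=(L^{1}(Q))'$, and density in $L^{\widetilde{\Phi}}(Q)$ is useless without the $L^{\Phi}(Q)$ bound.

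The repair uses what you already proved. Apply your first assertion to $f=\varphi\otimes\psi$ with $\varphi\in L^{\infty}(Q)$; this $f$ lies in $L^{\widetilde{\Phi}}(Q\times\Omega)$ because $Q$ is bounded, and $f^{\varepsilon}=f$. This gives $\int_{Q}w_{\varepsilon}\varphi\,dx\to\int_{Q}\bigl(\int_{\Omega}\pounds_{A}\int u_{0}\psi\,dy\,d\mu\bigr)\varphi\,dx$ for every $\varphi\in L^{\infty}(Q)$, which is weak $L^{1}(Q)$ convergence outright. Alternatively, $\int_{E}|w_{\varepsilon}|\,dx\le 2\sup_{\varepsilon}\|u_{\varepsilon}\|_{L^{\Phi}(Q\times\Omega)}\|1_{E}\otimes\psi\|_{L^{\widetilde{\Phi}}(Q\times\Omega)}\to 0$ as $|E|\to 0$. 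So $(w_{\varepsilon})$ is uniformly integrable, and Dunford--Pettis upgrades your $\mathcal{C}_{0}^{\infty}(Q)$ convergence to weak $L^{1}(Q)$ convergence.
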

	
	The next results provide us with a few examples of sequences that weakly
	stochastically $\Sigma $-converge in the sense of Definition \ref{d3.1}.
	
	\begin{proposition}
		\label{p3.3}Let $f\in \mathcal{K}(Q;\mathcal{C}^{\infty }(\Omega ;A))$.
		Then, as $\varepsilon \rightarrow 0$, 
		\begin{equation}
			\iint_{Q\times \Omega }f\left( x,T\left( \frac{x}{\varepsilon _{1}}\right)
			\omega ,\frac{x}{\varepsilon _{2}}\right) dxd\mu \rightarrow \iint_{Q\times
				\Omega } \pounds_{A}\int f\left( x,\omega ,y\right)dy\, dxd\mu.
			\label{Eqn1}
		\end{equation}
	\end{proposition}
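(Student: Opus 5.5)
The natural route is to prove (\ref{Eqn1}) first for elementary tensors $f=\varphi\otimes\psi\otimes g$ with $\varphi\in\mathcal{C}_{0}^{\infty}(Q)$, $\psi\in\mathcal{C}^{\infty}(\Omega)$, $g\in A$, extend by linearity to $\mathcal{C}_{0}^{\infty}(Q)\otimes\mathcal{C}^{\infty}(\Omega)\otimes A$, and conclude for general $f\in\mathcal{K}(Q;\mathcal{C}^{\infty}(\Omega;A))$ by density. The final step is harmless: both sides of (\ref{Eqn1}) are bounded by $|Q|\sup_{x,\omega,y}|f|$. The left side involves $|f^{\varepsilon}(x,\omega)|\le\sup|f|$. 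The right side uses $|M(u)|\le\|u\|_{\infty}$. Uniform approximation of $f$ by such tensors, with supports in a fixed compact $K\subset Q$, therefore passes the limit through.

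For a tensor $f=\varphi\otimes\psi\otimes g$, the first move is to integrate in $\omega$ first, using Fubini and the invariance of $\mu$ under $T$. This gives
\begin{equation*}
\iint_{Q\times\Omega}\varphi(x)\psi\Big(T\big(\tfrac{x}{\varepsilon_{1}}\big)\omega\Big)g\big(\tfrac{x}{\varepsilon_{2}}\big)dxd\mu=\int_{\Omega}\psi\,d\mu\int_{Q}\varphi(x)g\big(\tfrac{x}{\varepsilon_{2}}\big)dx .
\end{equation*}
The identity holds because $\int_{\Omega}\psi(T(z)\omega)d\mu=\int_{\Omega}\psi\,d\mu$ for every $z$, and joint measurability follows from property (3) of the dynamical system. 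The $\omega$-dependence therefore decouples exactly for each $\varepsilon$. The scale $\varepsilon_{1}$ disappears, so the separation of scales is not even needed here.

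It remains to pass to the limit in $\int_{Q}\varphi(x)g(x/\varepsilon_{2})dx$. Since $g\in A$ has a mean value for the action $x\mapsto x/\varepsilon_{2}$, we have $g(\cdot/\varepsilon_{2})\to M(g)$ in $L^{\infty}(\mathbb{R}^{d})$-weak$*$. Testing against $\varphi\in L^{1}$ yields the limit $M(g)\int_{Q}\varphi\,dx$. Thus the left side tends to $\int_{Q}\varphi\,dx\int_{\Omega}\psi\,d\mu\,M(g)$, which equals $\iint_{Q\times\Omega}\pounds_{A}\int f\,dy\,dxd\mu$ by the definition of the formal integral.

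The main obstacle is the density step: one has to make sure $\mathcal{C}_{0}^{\infty}(Q)\otimes\mathcal{C}^{\infty}(\Omega)\otimes A$ is dense in $\mathcal{K}(Q;\mathcal{C}^{\infty}(\Omega;A))$ for the sup norm with a common compact support. If this is problematic, I would bypass it as follows.
\begin{itemize}
\item Apply the $\omega$-integration directly to general $f$, setting $F(x,y)=\int_{\Omega}f(x,T(x/\varepsilon_{1})\omega,y)\,d\mu=\int_{\Omega}f(x,\omega,y)\,d\mu$, again by invariance of $\mu$.
\item Note that $F\in\mathcal{K}(Q;A)$.
\item Invoke the deterministic fact that $\int_{Q}F(x,x/\varepsilon_{2})dx\to\int_{Q}M(F(x,\cdot))dx$. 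This follows by approximating $F$ uniformly by $\mathcal{K}(Q)\otimes A$, where continuity and compact support in $x$ are enough, or by citing the analogous result of \cite{nnang2014deterministic}.
\end{itemize}
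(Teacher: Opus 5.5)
Your proposal is correct and follows the paper's proof essentially step for step: reduce to tensors $\varphi\otimes\psi\otimes g$, use Fubini and the $T$-invariance of $\mu$ to factor out $\int_{\Omega}\psi\,d\mu$, pass to the limit via the mean value of $g$, and conclude by uniform approximation with supports in a fixed compact $K$, bounding the error terms by $|K|\sup|f-\phi|$. Your fallback route is a genuine improvement. It integrates out $\omega$ for general $f$ to obtain $F\in\mathcal{K}(Q;A)$ and then invokes the deterministic result, which needs only the routine interchange $M\bigl(\int_{\Omega}f\,d\mu\bigr)=\int_{\Omega}M(f)\,d\mu$. This sidesteps the density of $\mathcal{C}_{0}^{\infty}(Q)\otimes\mathcal{C}^{\infty}(\Omega)\otimes A$ in $\mathcal{K}(Q;\mathcal{C}^{\infty}(\Omega;A))$, which the paper simply asserts.
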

	
	\begin{proof}
		Since $\mathcal{C}_{0}^{\infty }(Q)\otimes \mathcal{C}^{\infty }(\Omega
		)\otimes A$ is dense in $\mathcal{K}(Q;\mathcal{C}^{\infty }(\Omega ;A))$ we
		first check (\ref{Eqn1}) for $f$ in $\mathcal{C}_{0}^{\infty }(Q)\otimes 
		\mathcal{C}^{\infty }(\Omega )\otimes A$. However, it is sufficient to do it
		for $f$ under the form $f(x,\omega ,y)=\varphi (x)\psi (\omega )g(y)$ with $%
		\varphi \in \mathcal{C}_{0}^{\infty }(Q)$, $\psi \in \mathcal{C}^{\infty
		}(\Omega )$ and $g\in A$. But for such a $f$ we have 
		\begin{eqnarray*}
			\iint_{Q\times \Omega }f\left( x,T\left( \frac{x}{\varepsilon _{1}}\right)
			\omega ,\frac{x}{\varepsilon _{2}}\right) dxd\mu &=&\int_{Q}\left(
			\int_{\Omega }\psi \left( T\left( \frac{x}{\varepsilon _{1}}\right) \omega
			\right) d\mu \right) \varphi (x)g\left( \frac{x}{\varepsilon _{2}}\right) dx
			\\
			&=&\int_{Q}\left( \int_{\Omega }\psi (\omega )d\mu \right) \varphi
			(x)g\left( \frac{x}{\varepsilon _{2}}\right) dx \\
			&=&\left( \int_{\Omega }\psi (\omega )d\mu \right) \int_{Q}\varphi
			(x)g\left( \frac{x}{\varepsilon _{2}}\right) dx
		\end{eqnarray*}%
		where the second equality above is due to the Fubini's theorem and to the
		fact that the measure $\mu $ is invariant under the maps $T(y)$. But, as $%
		\varepsilon \rightarrow 0$, we have the following well-known convergence
		result (see, e.g., \cite[Proposition 2.15]{nnang2014deterministic}): 
		\begin{equation*}
			\int_{Q}\varphi (x)g\left( \frac{x}{\varepsilon _{2}}\right) dx\rightarrow
			\int_{Q} \varphi (x) \pounds_{A}\int g(y)dy\, dx \;\, \text{ as }%
			\varepsilon \rightarrow 0.
		\end{equation*}%
		Hence the sequence 
		\begin{equation*}
			\iint_{Q\times \Omega }f\left( x,T\left( \frac{x}{\varepsilon _{1}}\right)
			\omega ,\frac{x}{\varepsilon _{2}}\right) dxd\mu \rightarrow \iint_{Q\times
				\Omega } \pounds_{A}\int f\left( x,\omega ,y\right) dy\, dxd\mu.
		\end{equation*}
		
		Now, let $f\in \mathcal{K}(Q;\mathcal{C}^{\infty }(\Omega ;A))$ and let $%
		\eta >0$ be arbitrarily fixed. Let $K\subset Q$ be a compact set such that
		supp$f\subset K$. By a density argument we choose $\phi $ in $\mathcal{C}%
		_{0}^{\infty }(Q)\otimes \mathcal{C}^{\infty }(\Omega )\otimes A$ with supp$%
		\phi \subset K$, such that $\left\Vert f-\phi \right\Vert _{\infty }\leq
		\eta /(3\left\vert K\right\vert )$, $\left\vert K\right\vert $ denoting the
		Lebesgue volume of $K$. By the decomposition 
		\begin{equation*}
			\begin{array}{l}
				\displaystyle	\iint_{Q\times \Omega }f^{\varepsilon }dxd\mu -\iint_{Q\times \Omega} \pounds_{A}\int f dy dxd\mu  = \displaystyle \iint_{Q\times \Omega }(f^{\varepsilon
				}-\phi ^{\varepsilon })dxd\mu \\ 
				\ \ \ \ \ \displaystyle +\iint_{Q\times \Omega }\phi ^{\varepsilon }dxd\mu -\iint_{Q\times \Omega} \pounds_{A}\int \phi  dy dxd\mu 
				+\iint_{Q\times \Omega
				} \pounds_{A}\int (\phi - f) dy dxd\mu ,%
			\end{array}%
		\end{equation*}%
		it follows readily that there exists $\varepsilon _{0}>0$ such that 
		\begin{equation*}
			\left\vert \iint_{Q\times \Omega }f^{\varepsilon }dxd\mu -\iint_{Q\times \Omega} \pounds_{A}\int f dy dxd\mu  \right\vert \leq \eta 
			\text{\ for }0<\varepsilon \leq \varepsilon _{0}\text{.}
		\end{equation*}%
		This completes the proof.
	\end{proof}
	
	As a result, we have the following corollaries.
	
	\begin{corollary}
		\label{c3.1}Let $u\in \mathcal{K}(Q;\mathcal{C}^{\infty }(\Omega ;A))$. Then, as $\varepsilon \rightarrow 0$,
		
		\begin{itemize}
			\item[(i)] $u^{\varepsilon }\rightarrow \varrho (u)$  in $%
			L^{\Phi}(Q\times \Omega )$-weak $\digamma\,\Sigma $, where $\varrho $ denote the
			canonical mapping of $\mathfrak{X}^{\Phi}_{A}$ into $\mathcal{X}^{\Phi}_{A}$, and the
			function $\varrho (u)$ is defined by $\varrho (u)\equiv \varrho
			(u(x,\omega,\cdot ))$ for a.e. $(x,\omega )\in Q\times \Omega $;
			
			\item[(ii)] $\left\| u^{\varepsilon }\right\| _{L^{\Phi}(Q\times \Omega
				)}\rightarrow \left\| \varrho (u)\right\| _{L^{\Phi}(Q\times \Omega ; \mathcal{X}^{\Phi}_{A})}$.
		\end{itemize}
	\end{corollary}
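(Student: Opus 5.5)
For (i), I would fix $f\in\mathcal{C}_{0}^{\infty}(Q)\otimes\mathcal{C}^{\infty}(\Omega)\otimes A$ and observe that the product $uf$ again lies in $\mathcal{K}(Q;\mathcal{C}^{\infty}(\Omega;A))$. Indeed, $A$ is an algebra, $\mathcal{C}^{\infty}(\Omega)$ is stable under multiplication by the Leibniz rule for the stochastic derivatives, and the support of the product is contained in that of $f$. Moreover $(uf)^{\varepsilon}=u^{\varepsilon}f^{\varepsilon}$. Proposition \ref{p3.3} applied to $uf$ then gives
\begin{equation*}
\iint_{Q\times\Omega}u^{\varepsilon}f^{\varepsilon}\,dxd\mu\rightarrow\iint_{Q\times\Omega}\pounds_{A}\int u(x,\omega,y)f(x,\omega,y)\,dy\,dxd\mu .
\end{equation*}
Since the formal integral $\pounds_{A}\int$ only depends on the class $\varrho(u(x,\omega,\cdot))$, by (\ref{m1m}), the right-hand side is exactly the limit required in (\ref{3.1}) with $u_{0}=\varrho(u)$.

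Two side facts still have to be checked. First, $(u^{\varepsilon})$ is bounded in $L^{\Phi}(Q\times\Omega)$: this is immediate because $|u^{\varepsilon}|\le\|u\|_{\infty}$ and $Q\times\Omega$ has finite measure. Second, $\varrho(u)\in L^{\Phi}(Q\times\Omega;\mathcal{X}^{\Phi}_{A})$: this is clear because $u$ is bounded and compactly supported in $x$.

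For (ii), the idea is to pass to the limit in the modular and then transfer this to the Luxemburg norm. For each fixed $\delta>0$ the function $\Phi(|u|/\delta)$ is again continuous in $x$ with compact support, and $\Phi(|g|/\delta)\in A$ for $g\in A$. The main obstacle is that $\Phi(|u|/\delta)$ need not be smooth in $\omega$, so it is not literally in $\mathcal{K}(Q;\mathcal{C}^{\infty}(\Omega;A))$.

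To get around this, I would first extend Proposition \ref{p3.3} by density to $f\in\mathcal{K}(Q;\mathcal{C}(\Omega;A))$, or more simply to uniform limits of admissible functions. The same $\eta/3$ argument used in the proof of Proposition \ref{p3.3} works, since it only uses the sup-norm. Then $\Phi(|u|/\delta)$ can be uniformly approximated: $\Phi$ is uniformly continuous on the bounded range of $|u|/\delta$, and one may approximate $u$ by tensor functions and mollify in $\omega$ with $J_{\eta}$, using (\ref{rem6}) in its $L^{\infty}$ version on $\mathcal{C}^{\infty}(\Omega)$. This yields
\begin{equation*}
\iint_{Q\times\Omega}\Phi\Big(\frac{|u^{\varepsilon}|}{\delta}\Big)dxd\mu\rightarrow\iint_{Q\times\Omega}\pounds_{A}\int\Phi\Big(\frac{|u|}{\delta}\Big)dy\,dxd\mu=:m(\delta)\quad\text{for every }\delta>0 .
\end{equation*}

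Finally I would convert modular convergence into norm convergence. Let $\ell=\|\varrho(u)\|_{L^{\Phi}(Q\times\Omega;\mathcal{X}^{\Phi}_{A})}$ and assume $\ell>0$; if $\ell=0$, the limsup argument below alone suffices. The function $m$ is continuous and nonincreasing in $\delta$, and, using the $\Delta_{2}$ property, $m(\delta)>1$ for $\delta<\ell$ and $m(\delta)<1$ for $\delta>\ell$. For $\delta>\ell$ the modular of $u^{\varepsilon}$ is eventually $\le1$, which gives $\limsup\|u^{\varepsilon}\|\le\delta$. For $\delta<\ell$ it is eventually $>1$, which gives $\liminf\|u^{\varepsilon}\|\ge\delta$. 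Letting $\delta\to\ell$ from both sides yields (ii).
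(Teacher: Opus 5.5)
Part (i) is the paper's argument: apply Proposition \ref{p3.3} to $uf$. Part (ii) is correct but takes a different route from the paper's.

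\textbf{The paper's route for (ii).} The paper reduces to a single tensor $u=\phi\psi g$. It uses invariance of $\mu$ to replace $\psi(T(x/\varepsilon_{1})\omega)$ by $\psi(\omega)$ inside the modular. It then moves $\lim_{\varepsilon}$ inside the infimum defining the Luxemburg norm, and finishes with the deterministic mean value and $\mathcal{G}(\Phi(|g|))=\Phi(|\mathcal{G}(g)|)$.

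\textbf{Your route for (ii).} You prove modular convergence $\iint\Phi(|u^{\varepsilon}|/\delta)\,dxd\mu\to m(\delta)$ for a general $u$. You then convert this into convergence of Luxemburg norms by treating $\delta>\ell$ and $\delta<\ell$ separately.

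\textbf{What your approach buys.} Your last paragraph is exactly the justification of the $\lim$/$\inf$ interchange that the paper performs without comment. You also never need the reduction to a single tensor. That reduction does not by itself cover finite sums, since the Luxemburg norm is not additive.

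\textbf{What the paper's trick buys.} Invariance of $\mu$ holds for any integrable function, not just tensors. So for every $u$ one has $\iint\Phi(|u^{\varepsilon}|/\delta)\,dxd\mu=\int_{Q}\int_{\Omega}\Phi(|u(x,\omega,x/\varepsilon_{2})|/\delta)\,d\mu\,dx$. The limit then follows from the deterministic mean-value result for $\mathcal{K}(Q;A)$ with $\omega$ as a parameter, plus dominated convergence in $\omega$. This makes your density extension of Proposition \ref{p3.3} and your mollification in $\omega$ unnecessary.

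\textbf{Two small corrections.} If you keep the mollification step, note that $F=\Phi(|u_{k}|/\delta)$ is not in $\mathcal{C}^{\infty}(\Omega)$. So the uniform convergence $J_{\eta}F\to F$ does not come from (\ref{rem6}). It comes from the uniform continuity of $F$ along the flow in sup norm, which holds because $u_{k}$ is Lipschitz along the flow. Also, the strict inequality $m(\delta)<1$ for $\delta>\ell$ comes from convexity and $\Phi(0)=0$, not from $\Delta_{2}$. Pick $\delta'<\delta$ with $m(\delta')\le 1$; then $m(\delta)\le(\delta'/\delta)\,m(\delta')\le\delta'/\delta<1$.
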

	
	\begin{proof}
		(i) For each $f\in \mathcal{C}_{0}^{\infty }(Q)\otimes \mathcal{C}^{\infty
		}(\Omega )\otimes A$ we have $uf\in \mathcal{K}(Q;\mathcal{C}^{\infty
		}(\Omega ;A))$, hence part (i) follows readily by Proposition \ref{p3.3}.
		For (ii), 	We just need to show its for each $u \in \mathcal{C}^{\infty}_{0}(Q)\otimes\mathcal{C}^{\infty}(\Omega)\otimes A$. For that, it is sufficient to do it for $f$ under the form $u(x, \omega,y) = \phi(x)\psi(\omega)g(y)$ with $\phi \in \mathcal{C}^{\infty}_{0}(Q)$, $\psi \in \mathcal{C}^{\infty}(\Omega)$ and $g \in A$. \\
		Given the fact that the measure $\mu$ is invariant under the dynamical system $T$ and by \cite[Proposition 2.15]{nnang2014deterministic} taking account (\ref{meannew1}), we have 
		\begin{equation*}
			\begin{array}{rcl}
				\displaystyle 	\lim_{\varepsilon\to 0} \| u^{\varepsilon} \|_{L^{\Phi}(Q\times \Omega
					)}  &= & \displaystyle  \lim_{\varepsilon\to 0} \inf \left\{\delta >0 \; ; \;\, \iint_{Q\times \Omega} \Phi\left(\dfrac{\phi(x)\psi(T\left(\frac{x}{\varepsilon_{1}}\right)\omega)g\left(\frac{x}{\varepsilon_{2}}\right) }{\delta} \right)dxd\mu \leq 1 \right\} \\
				&= & \displaystyle  \lim_{\varepsilon\to 0} \inf \left\{\delta >0 \; ; \;\, \iint_{Q\times \Omega} \Phi\left(\dfrac{\phi(x)g\left(\frac{x}{\varepsilon_{2}}\right)\psi(\omega) }{\delta} \right)dxd\mu_{T} \leq 1 \right\} \\
				& =& \inf \left\{\delta >0 \; ; \;\, \displaystyle \lim_{\varepsilon\to 0}  \displaystyle \iint_{Q\times \Omega} \Phi\left(\dfrac{\phi(x)g\left(\frac{x}{\varepsilon_{2}}\right)\psi(\omega) }{\delta} \right)dxd\mu \leq 1 \right\}  \\
				& = & \inf \left\{\delta >0 \; ; \;\,   \displaystyle \iiint_{Q\times \Omega\times \Delta(A)} \Phi\left(\dfrac{\phi(x)\hat{g}(s)\psi(\omega) }{\delta} \right)dxd\mu d\beta \leq 1 \right\}  \\
				& = & \inf \left\{\delta >0 \; ; \;\,   \displaystyle \iint_{Q\times \Omega} \int_{\Delta(A)} \mathcal{G}\left(\Phi\left(\dfrac{\phi(x)g(s)\psi(\omega) }{\delta} \right)\right)dxd\mu d\beta \leq 1 \right\}  \\
				& = & \inf \left\{\delta >0 \; ; \;\,   \displaystyle \iint_{Q\times \Omega} \pounds_{A}\int \Phi\left(\dfrac{|u|}{\delta}\right) dy dx d\mu \leq 1 \right\}  \\
				& = & \| \varrho(u) \|_{L^{\Phi}(Q\times \Omega ; \mathcal{X}^{\Phi}_{A})}.
			\end{array}
		\end{equation*} 
	\end{proof}

	Let now define the notion of \textit{admissible function} for Orlicz spaces, based on \cite[Definition 5]{sango2}.
	
	\begin{definition}
		\label{d3.1'} A function $u\in L^{\Phi}(Q\times \Omega ;\mathfrak{X}_{A}^{\Phi})$ 
			 is said to be admissible if the trace function $(x,\omega
		)\mapsto u(x,T(x/\varepsilon _{1})\omega ,x/\varepsilon _{2})$
			(denoted by $u^{\varepsilon }$), from $Q\times \Omega $ to $%
		\mathbb{C}$, is well-defined as an element of $L^{\Phi}(Q\times \Omega )$ 
		 and satisfies the following convergence result: 
		\begin{equation*}
			\iint_{Q\times \Omega } \Phi\left(\dfrac{\left\vert u^{\varepsilon }\right\vert}{\delta}\right) dxd\mu
			\rightarrow \iint_{Q\times \Omega} \pounds_{A}\int \Phi\left(\dfrac{\left\vert u%
				\right\vert}{\delta}\right) dy dxd\mu  \text{\ \emph{as} }\varepsilon \rightarrow 0\text{,%
			}  \label{adm}
		\end{equation*}
		for all $\delta>0$.
	\end{definition}
	\begin{remark}
	One can verify that any function in each of the following spaces is
	admissible: $\mathcal{K}(Q;L^{\Phi}(\Omega ;A))$ (the space of continuous
	functions $f:\mathbb{R}^{d}\rightarrow L^{\Phi}(\Omega ;A)$ with compact
	support contained in $Q$), $\mathcal{C}(\overline{Q}%
	;L^{\infty }(\Omega ;A))$ (for any bounded domain $Q$ in $\mathbb{R}^{d}$) and  $L^{\Phi}(Q)\otimes L^{\Phi}(\Omega) \otimes \mathfrak{X}_{A}^{\Phi}$ (when $\Phi$ is of $\Delta^{\prime}$-class). 
		\end{remark}
		
	\begin{proposition}
		\label{p3.4}Let $(u_{\varepsilon })_{\varepsilon \in E}\subset L^{\Phi}(Q\times
		\Omega )$  be a sequence which is weakly stochastically $%
		\Sigma $-convergent in $L^{\Phi}(Q\times \Omega )$ to some $u_{0}\in
		L^{\phi}(Q\times \Omega ;\mathcal{X}_{A}^{\Phi})$. Then as $E\ni \varepsilon
		\rightarrow 0$ we have \emph{(\ref{3.1})} (in Definition \emph{\ref{d3.1}})
		for any admissible function $f\in \mathcal{K}(Q;L^{\widetilde{\Phi}}(\Omega
		;\mathfrak{X}_{A}^{\widetilde{\Phi},\infty }))$ where $\mathfrak{X}_{A}^{\widetilde{\Phi},\infty }=\mathfrak{X}_{A}^{\widetilde{\Phi} }\cap L^{\infty }(\mathbb{R}^{d})$.
	\end{proposition}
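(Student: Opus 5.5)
The plan is a density argument modelled on \cite[Proposition 7]{sango2}, with Hölder's inequality in Orlicz spaces replacing the $L^{p}$ one. Fix an admissible $f\in \mathcal{K}(Q;L^{\widetilde{\Phi}}(\Omega;\mathfrak{X}_{A}^{\widetilde{\Phi},\infty}))$ and $\eta>0$. The sequence $(u_{\varepsilon})$ is bounded in $L^{\Phi}(Q\times\Omega)$, say $\|u_{\varepsilon}\|_{L^{\Phi}(Q\times\Omega)}\leq C$; this is part of Definition \ref{d3.1}. Let $K\subset Q$ be a compact set containing the support of $f$. By density I will pick $\phi\in\mathcal{C}_{0}^{\infty}(Q)\otimes\mathcal{C}^{\infty}(\Omega)\otimes A$, with support in $K$, close to $f$. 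The first step is to justify that this density holds in the norm of $L^{\widetilde{\Phi}}(Q\times\Omega;\mathfrak{X}^{\widetilde{\Phi}}_{A})$, i.e.\ with respect to $\|\cdot\|_{\widetilde{\Phi},A}$ in the $y$-variable. This uses the density of $\mathcal{C}^{\infty}(\Omega)$ in $L^{\widetilde{\Phi}}(\Omega)$ (valid since $\widetilde{\Phi}\in\Delta_{2}$) and the fact that $A$ is dense in $\mathfrak{X}^{\widetilde{\Phi}}_{A}$ by construction. I then write
\begin{equation*}
\iint u_{\varepsilon}f^{\varepsilon}-\iint\pounds_{A}\int u_{0}f=\iint u_{\varepsilon}(f^{\varepsilon}-\phi^{\varepsilon})+\Big(\iint u_{\varepsilon}\phi^{\varepsilon}-\iint\pounds_{A}\int u_{0}\phi\Big)+\iint\pounds_{A}\int u_{0}(\phi-f).
\end{equation*}

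The middle term tends to $0$ by Definition \ref{d3.1}. The last term is bounded by $2\|u_{0}\|_{L^{\Phi}(Q\times\Omega;\mathcal{X}^{\Phi}_{A})}\|\phi-f\|_{L^{\widetilde{\Phi}}(Q\times\Omega;\mathcal{X}^{\widetilde{\Phi}}_{A})}$. This follows from the generalized Hölder inequality on $Q\times\Omega\times\Delta(A)$, transported by $\mathcal{G}_{1}$ and using property (3) of $\mathfrak{X}^{\Phi}_{A}$. Hence the last term is small by the choice of $\phi$.

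The first term is bounded by $2C\|(f-\phi)^{\varepsilon}\|_{L^{\widetilde{\Phi}}(Q\times\Omega)}$. The key point is that $f-\phi$ is itself admissible: $f$ is admissible by hypothesis, and $\phi$ is admissible by Corollary \ref{c3.1}(ii) (in its $\widetilde{\Phi}$ version, as in the Remark after Definition \ref{d3.1'}). The difference of two admissible functions need not obviously be admissible, however. To get around this, I will show directly that $\iint\widetilde{\Phi}(|(f-\phi)^{\varepsilon}|/\delta)\to\iint\pounds_{A}\int\widetilde{\Phi}(|f-\phi|/\delta)$ for every $\delta>0$. The idea is to approximate $\widetilde{\Phi}(|f-\phi|/\delta)$ by elements of $\mathcal{K}(Q;\mathcal{C}(\Omega;A))$-type functions and apply Proposition \ref{p3.3}; the boundedness $f(x,\omega,\cdot)\in L^{\infty}$ makes $\widetilde{\Phi}(|\cdot|)$ locally Lipschitz on the relevant range, and the convexity bound $\widetilde{\Phi}(a+b)\leq\frac12\widetilde{\Phi}(2a)+\frac12\widetilde{\Phi}(2b)$ together with $\Delta_{2}$ controls the cross terms. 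Granting this, $\limsup_{\varepsilon}\|(f-\phi)^{\varepsilon}\|_{L^{\widetilde{\Phi}}(Q\times\Omega)}=\|f-\phi\|_{L^{\widetilde{\Phi}}(Q\times\Omega;\mathcal{X}^{\widetilde{\Phi}}_{A})}$, which is small. Combining the three estimates gives $\limsup|\cdots|\leq c\,\eta$, and the conclusion follows since $\eta$ is arbitrary.

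I expect the main obstacle to be this admissibility of $f-\phi$, i.e.\ controlling the modular of $(f-\phi)^{\varepsilon}$ uniformly in $\varepsilon$. This is exactly where the extra hypotheses are used: $f$ takes values in $\mathfrak{X}^{\widetilde{\Phi},\infty}_{A}$ and is continuous in $x$ with compact support, and $\widetilde{\Phi}\in\Delta_{2}$. If the direct approach fails, the fallback is a cruder bound. Using $\Delta_{2}$, one obtains $\limsup_{\varepsilon}\iint\widetilde{\Phi}(|(f-\phi)^{\varepsilon}|/\delta)\leq c\big(\iint\pounds_{A}\int\widetilde{\Phi}(2|f|/\delta)+\ldots\big)$, split along $f-\phi$. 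This suffices to make the Luxemburg norm of the first term small, and that is all the argument needs.
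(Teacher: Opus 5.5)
\paragraph{Verdict.} There is a genuine gap: your control of the first term $\iint u_{\varepsilon}(f^{\varepsilon}-\phi^{\varepsilon})\,dxd\mu$ does not close. Your three-term decomposition is the natural one, and the middle and last terms are handled correctly, but this first term carries the whole content of the proposition.

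\paragraph{Where the argument fails.} You take $\phi\in\mathcal{C}_{0}^{\infty}(Q)\otimes\mathcal{C}^{\infty}(\Omega)\otimes A$ close to $f$ only in the mean-value seminorm of $L^{\widetilde{\Phi}}(Q\times\Omega;\mathcal{X}^{\widetilde{\Phi}}_{A})$. That closeness says nothing about the traces $(f-\phi)^{\varepsilon}$, and admissibility is assumed for $f$, not for $f-\phi$.
\begin{itemize}
\item The direct route you sketch is circular. Proposition \ref{p3.3} applies only to integrands in $\mathcal{K}(Q;\mathcal{C}^{\infty}(\Omega;A))$. But $f(x,\cdot,\cdot)$ is merely $L^{\widetilde{\Phi}}$ in $\omega$ and $\mathfrak{X}^{\widetilde{\Phi},\infty}_{A}$-valued (not $A$-valued) in $y$. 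So any approximation of $\widetilde{\Phi}(|f-\phi|/\delta)$ by such integrands holds only in a mean-value sense. It leaves an error whose traces must again be controlled uniformly in $\varepsilon$, which is exactly the estimate you are trying to establish.
\item The local Lipschitz remark does not help. The $L^{\infty}$ bound on $f(x,\omega,\cdot)$ need not be uniform in $\omega$. In any case it only turns the problem into bounding $\iint|(f-g)^{\varepsilon}|\,dxd\mu$, again the trace of a difference.
\item The fallback fails outright. The convexity bound gives $\limsup_{\varepsilon}\iint\widetilde{\Phi}(|(f-\phi)^{\varepsilon}|/\delta)\,dxd\mu\leq\frac{1}{2}\iint\pounds_{A}\int[\widetilde{\Phi}(2|f|/\delta)+\widetilde{\Phi}(2|\phi|/\delta)]\,dydxd\mu$. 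This quantity is of the size of $f$ itself and does not become small as $\phi\to f$. No splitting in which $f$ appears alone can produce smallness, because admissibility of $f$ tells you only the asymptotic size of $f^{\varepsilon}$, not how $f^{\varepsilon}$ correlates with $\phi^{\varepsilon}$.
\end{itemize}

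\paragraph{The missing idea.} The paper first reduces to tensor products, so that the approximation happens only in the deterministic variable. There, mean-value theory controls traces without any admissibility.
\begin{itemize}
\item For $f=\varphi(x)\psi(\omega)v(y)$ with $\varphi\in\mathcal{K}(Q)$, $\psi\in\mathcal{C}^{\infty}(\Omega)$, $v\in\mathfrak{X}^{\widetilde{\Phi},\infty}_{A}$, it takes $g=\varphi(x)\psi(\omega)w(y)$. Here $\varphi$ and $\psi$ are the same, and $w\in A$ satisfies $\|v-w\|_{\widetilde{\Phi},A}\leq\delta$.
\item The error term is then bounded, up to a constant, by $\|\varphi\|_{\infty}\|\psi\|_{\infty}\|u_{\varepsilon}\|_{L^{\Phi}(Q\times\Omega)}$ times a Luxemburg norm. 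That norm is taken on a compact $K\supset\mathrm{supp}\,\varphi$, of the $\omega$-independent function $x\mapsto(v-w)(x/\varepsilon_{2})$.
\item Since $\widetilde{\Phi}(|v-w|/\lambda)\in\mathfrak{X}^{1}_{A}$ has a mean value, $\int_{K}\widetilde{\Phi}(|(v-w)(x/\varepsilon_{2})|/\lambda)\,dx\to|K|\,M(\widetilde{\Phi}(|v-w|/\lambda))$. This yields $\limsup_{\varepsilon}|(I)|\leq c\,\delta$.
\end{itemize}
Your scheme would work in this case if $\phi$ kept the $\omega$-factor of $f$ and (after a harmless uniform approximation) its $x$-factor. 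It does not work with an arbitrary $\phi$ that is merely close in seminorm.

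\paragraph{A caveat about the paper.} Its own passage from tensor products to general admissible $f$ is a one-line density remark. That step meets the same question: smallness of $\limsup_{\varepsilon}\|(f-g)^{\varepsilon}\|_{L^{\widetilde{\Phi}}(Q\times\Omega)}$ for $g\in\mathcal{K}(Q)\otimes\mathcal{C}^{\infty}(\Omega)\otimes\mathfrak{X}^{\widetilde{\Phi},\infty}_{A}$, which is presumably where admissibility is meant to enter. So the obstacle you flagged is real, but your proposal does not overcome it.
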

	
	\begin{proof}
		The space $\mathcal{K}(Q)\otimes \mathcal{C}^{\infty }(\Omega )\otimes
		\mathfrak{X}_{A}^{\widetilde{\Phi},\infty }$ is dense in $\mathcal{K}(Q;L^{\widetilde{\Phi}}(\Omega ;\mathfrak{X}_{A}^{\widetilde{\Phi},\infty }))$. Indeed $\mathcal{C}^{\infty
		}(\Omega )\otimes \mathfrak{X}_{A}^{\widetilde{\Phi},\infty }$ is dense in $L^{\widetilde{\Phi}}(\Omega ;\mathfrak{X}_{A}^{\widetilde{\Phi},\infty })$, so that by \cite[p. 46]{bourbaki1996}, our
		claim is justified. With this in mind, we firstly check (\ref{3.1}) for $%
		f\in \mathcal{K}(Q)\otimes \mathcal{C}^{\infty }(\Omega )\otimes
		\mathfrak{X}_{A}^{\widetilde{\Phi},\infty }$. It suffices to verify this for $f$ under the
		form 
		\begin{eqnarray*}
			f(x,\omega ,y) &=&\varphi (x)\psi (\omega )v(y)\text{\ \ }(x\in Q,\omega \in
			\Omega ,y\in \mathbb{R}^{N})\text{\ with} \\
			&  &	\varphi \in \mathcal{K}(Q),\psi \in \mathcal{C}^{\infty }(\Omega )\text{
				and }v\in \mathfrak{X}_{A}^{\widetilde{\Phi},\infty }\text{.}
		\end{eqnarray*}%
		Let $f$ be as above. Let $\delta >0$ be freely fixed, and let $w\in A$ be
		such that $\left\Vert v-w\right\Vert _{\widetilde{\Phi},A}\leq \delta $ (where we
		have used here the density of $A$ in $\mathfrak{X}_{A}^{\widetilde{\Phi}}$). Set 
		\begin{equation*}
			g(x,\omega ,y)=\varphi (x)\psi (\omega )w(y)\text{\ \ }(x\in Q,\omega \in
			\Omega ,y\in \mathbb{R}^{N}),
		\end{equation*}%
		which gives a function $g\in \mathcal{K}(Q)\otimes \mathcal{C}^{\infty
		}(\Omega )\otimes A$. We have 
		\begin{eqnarray*}
			&&\iint_{Q\times \Omega }u_{\varepsilon }f^{\varepsilon }dxd\mu
			-\iint_{Q\times \Omega} \pounds_{A}\int u_{0} f dy dxd\mu \\
			&=&\iint_{Q\times \Omega }u_{\varepsilon }\varphi (x)\psi (T(x/\varepsilon
			_{1})\omega )[v^{\varepsilon }(x/\varepsilon _{2})-w(x/\varepsilon
			_{2})]dxd\mu \\
			&&+\iint_{Q\times \Omega }u_{\varepsilon }g^{\varepsilon }dxd\mu
			-\iint_{Q\times \Omega} \pounds_{A}\int u_{0} g dy dxd\mu  \\
			&&+\iint_{Q\times \Omega } \pounds_{A} \int u_{0}\varphi \psi (%
			w-v) dydxd\mu  \\
			&=&(I)+(II)+(III).
		\end{eqnarray*}%
		As far as $(I)$ is concerned, we have 
		\begin{equation*}
			\left\vert (I)\right\vert \leq \left\Vert \varphi \right\Vert _{\infty
			}\left\Vert \psi \right\Vert _{\infty }\left\Vert u_{\varepsilon
			}\right\Vert _{L^{\Phi}(Q\times \Omega )}\left( \inf\left\{ \lambda>0 \;:\; \int_{K} \widetilde{\Phi}\left(\dfrac{\left\vert
				v^{\varepsilon }-w^{\varepsilon }\right\vert}{\lambda}\right) dx \leq 1   \right\} \right)
		\end{equation*}%
		where $K$ is a compact subset of $\mathbb{R}^{N}$ containing the support of $%
		\varphi $. But $v$ and $w$ possess mean value, so that, as $\varepsilon
		\rightarrow 0$, 
		\begin{equation*}
			\int_{K} \widetilde{\Phi}\left(\dfrac{\left\vert
				v^{\varepsilon }-w^{\varepsilon }\right\vert}{\lambda}\right) dx \rightarrow M\left(\widetilde{\Phi}\left(\dfrac{\left\vert
				v-w\right\vert}{\lambda}\right)\right)\left\vert
			K\right\vert \text{ since } \widetilde{\Phi}\left(\dfrac{\left\vert
				v-w\right\vert}{\lambda}\right)\in
			\mathfrak{X}_{A}^{1} := B^{1}_{A}\text{ (see \cite{sango2})}
		\end{equation*}%
	with	$\left\vert K\right\vert $ denoting the Lebesgue measure of $K$ and . In view of
		the equality $\left\Vert v-w\right\Vert _{\widetilde{\Phi},A}= \inf \left\{\lambda>0\, : \,  M\left(\widetilde{\Phi}\left(\dfrac{\left\vert
			v-w\right\vert}{\lambda}\right)\right) \leq 1 \right\}$, we have $\lim_{E\ni
			\varepsilon \rightarrow 0}\left\vert (I)\right\vert \leq c\delta $ where $c$
		is a positive constant independent of $\delta $. For $(III)$, we have 
		\begin{eqnarray*}
			&&\left\vert \iint_{Q\times \Omega } \pounds_{A} \int u_{0}\varphi \psi (%
			w-v) dydxd\mu \right\vert \\
			&\leq &\left\Vert u_{0}\right\Vert_{L^{\Phi}(Q\times \Omega ;
				\mathcal{X}_{A}^{\Phi})}\left\Vert \varphi \right\Vert _{\infty }\left\Vert \psi
			\right\Vert _{\infty } \inf \left\{\lambda>0\, : \, \pounds_{A} \int  \widetilde{\Phi}\left(\dfrac{\left\vert
				w-v\right\vert}{\lambda}\right) dy \leq 1 \right\} \\
			&=&c\left\Vert v-w\right\Vert _{\widetilde{\Phi},A}   \\
			&\leq & c\delta
		\end{eqnarray*}%
		where $c=\left\Vert u_{0}\right\Vert _{L^{\Phi}(Q\times \Omega ;
			\mathcal{X}_{A}^{\Phi})} \left\Vert\varphi \right\Vert _{\infty }\left\Vert \psi
		\right\Vert _{\infty }$. Next, since 
		\begin{equation*}
			\iint_{Q\times \Omega }u_{\varepsilon }g^{\varepsilon }dxd\mu \rightarrow
			\iint_{Q\times \Omega} \pounds_{A}\int u_{0}g dy dxd\mu
		\end{equation*}%
		it follows that 
		\begin{equation*}
			\lim_{E\ni \varepsilon \rightarrow 0}\left\vert \iint_{Q\times \Omega
			}u_{\varepsilon }\varphi (x)\psi (T(x/\varepsilon _{1})\omega
			)w(x/\varepsilon _{2})dxd\mu -\iint_{Q\times \Omega}%
		\pounds_{A}\int u_{0}\varphi \psi w dydxd\mu  \right\vert \leq c\delta
		\end{equation*}%
		where $c>0$ is independent of $\delta $, hence (\ref{3.1}) follows with the
		above taken $f$, since $\delta $ is arbitrary. In view of the density of $%
		\mathcal{K}(Q)\otimes \mathcal{C}^{\infty }(\Omega )\otimes \mathfrak{X}_{A}^{\widetilde{\Phi},\infty }$ in $\mathcal{K}(Q;L^{\widetilde{\Phi}}(\Omega ;\mathfrak{X}_{A}^{\widetilde{\Phi},\infty }))$ the result follows by repeating the same way of proceeding as
		done above.
	\end{proof}
	
	The next result is a mere consequence of the preceding result. 
	
	\begin{corollary}
		\label{c3.3}Let $u\in \mathcal{K}(Q;L^{\infty }(\Omega ;\mathfrak{X}_{A}^{\Phi,\infty }))$ 
		be an admissible function in the sense of Definition \emph{%
			\ref{d3.1'}}. Then the sequence $(u^{\varepsilon })_{\varepsilon >0}$ is
		weakly stochastically $\Sigma $-convergent in $L^{\Phi}(Q\times \Omega )$ to $%
		\varrho (u)$.
	\end{corollary}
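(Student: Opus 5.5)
We must prove that, for $u\in\mathcal{K}(Q;L^{\infty}(\Omega;\mathfrak{X}_{A}^{\Phi,\infty}))$ admissible, the sequence $(u^{\varepsilon})_{\varepsilon>0}$ is bounded in $L^{\Phi}(Q\times\Omega)$ and satisfies (\ref{3.1}) with limit $\varrho(u)$. The plan is to reduce the test-function pairing to a single trace integral and then apply Proposition \ref{p3.3} after a density approximation. Boundedness comes first and is immediate from admissibility. Taking $\delta=1$ (or any fixed $\delta$) in Definition \ref{d3.1'}, the modulars $\iint_{Q\times\Omega}\Phi(|u^{\varepsilon}|/\delta)\,dxd\mu$ converge to a finite limit, so they are bounded for small $\varepsilon$. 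Alternatively, $|u^{\varepsilon}|\le \|u\|_{\infty}$ and $Q$ is bounded, which gives a uniform $L^{\Phi}$ bound directly.

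Next fix a test function $f\in\mathcal{C}_{0}^{\infty}(Q)\otimes\mathcal{C}^{\infty}(\Omega)\otimes A$. We must show
\[
\iint_{Q\times\Omega}u^{\varepsilon}f^{\varepsilon}\,dxd\mu\to\iint_{Q\times\Omega}\pounds_{A}\int u f\,dy\,dxd\mu .
\]
Note that $(uf)^{\varepsilon}=u^{\varepsilon}f^{\varepsilon}$ and that $uf\in\mathcal{K}(Q;L^{\infty}(\Omega;\mathfrak{X}_{A}^{\Phi,\infty}))$. The key step is to approximate $u$ by $w\in\mathcal{K}(Q)\otimes\mathcal{C}^{\infty}(\Omega)\otimes A$, arguing as in the proof of Proposition \ref{p3.4}:
\begin{itemize}
\item[(a)] Use density of $\mathcal{K}(Q)\otimes\mathcal{C}^{\infty}(\Omega)\otimes\mathfrak{X}_{A}^{\Phi,\infty}$.
\item[(b)] Then replace each $v\in\mathfrak{X}_{A}^{\Phi,\infty}$ by some $w_{0}\in A$ that is close in $\|\cdot\|_{\Phi,A}$.
\end{itemize}
For the approximant, $wf\in\mathcal{K}(Q;\mathcal{C}^{\infty}(\Omega;A))$, so Proposition \ref{p3.3} gives convergence of $\iint w^{\varepsilon}f^{\varepsilon}$.

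The error terms are then controlled by the three-term decomposition of Proposition \ref{p3.4}:
\begin{itemize}
\item[(I)] $\iint (u-w)^{\varepsilon}f^{\varepsilon}$;
\item[(II)] the convergent term for $w$;
\item[(III)] $\iint\pounds_{A}\int (w-u)f$.
\end{itemize}
Term (III) is bounded by the generalized Hölder inequality (property (3) of $\mathfrak{X}_{A}^{\Phi}$) by $c\|f\|_{\infty}\,\|u-w\|_{L^{\Phi}(Q\times\Omega;\mathcal{X}^{\Phi}_{A})}$.

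Term (I) is the main obstacle. One needs $\limsup_{\varepsilon}\|(u-w)^{\varepsilon}\|_{L^{\Phi}(Q\times\Omega)}$ to be small, i.e.\ that the traces of $u-w$ satisfy the modular convergence of Definition \ref{d3.1'}. We know this for $u$ by hypothesis and for $w$ by Corollary \ref{c3.1}(ii), but it is not automatic for their difference. I would first try the following route. The difference $u-w$ is admissible, or at least $\Phi(|(u-w)^{\varepsilon}|/\lambda)$ has a $\limsup$ bounded by its mean, as in the proof of Proposition \ref{p3.4}. This uses that, after the reduction, $u$ may be taken of tensor form $\varphi\psi v$, for which $\widetilde{\Phi}$- or $\Phi$-modulars of $v^{\varepsilon}-w_{0}^{\varepsilon}$ converge to $M(\Phi(|v-w_{0}|/\lambda))|K|$ because $\Phi(|v-w_{0}|/\lambda)\in\mathfrak{X}_{A}^{1}$.

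Failing that, the simplest route is to invoke Proposition \ref{p3.4} directly with the roles exchanged. Take the constant sequence of test functions and the sequence $u^{\varepsilon}$, which exactly mirrors its proof with $\Phi$ and $\widetilde{\Phi}$ interchanged; this is legitimate since $u$ and $f$ are bounded, so all Orlicz norms reduce to $L^{\infty}$ bounds on the compact support. Letting the approximation parameter tend to $0$ then yields (\ref{3.1}) with limit $\varrho(u)$, which is the claim.
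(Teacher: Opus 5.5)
Your closing suggestion, to swap the roles of sequence and test function in Proposition~\ref{p3.4}, is how the corollary follows. The paper gives no argument beyond calling it a mere consequence of that proposition, and this swap is that consequence. As written, though, the proposal never actually carries the swap out, and the routes before it do not close.

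In the direct approximation route, term (I) needs $\limsup_{\varepsilon}\|(u-w)^{\varepsilon}\|_{L^{\Phi}(Q\times\Omega)}$ to be small for a general, non-tensor admissible $u$. Your first remedy assumes that ``after the reduction'' $u=\varphi\psi v$. That reduction is precisely the step that needs this estimate, so the argument is circular. The $L^{\infty}$ bounds on $u$ and $f$ do not help either: they bound term (I) but do not make it small. The approximation in (a)--(b) is only in Orlicz-type norms ($L^{\Phi}$ in $\omega$, $\|\cdot\|_{\Phi,A}$ in $y$), not uniform.

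To turn the swap into a proof, proceed as follows.

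First, fix $f\in\mathcal{C}_{0}^{\infty}(Q)\otimes\mathcal{C}^{\infty}(\Omega)\otimes A$. Put the traces $(f^{\varepsilon})_{\varepsilon>0}\subset L^{\widetilde{\Phi}}(Q\times\Omega)$ in the role of the weakly convergent sequence; this is not a constant sequence.

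Second, state the fact that makes this legitimate, which your proposal omits: $f^{\varepsilon}\to\varrho(f)$ in $L^{\widetilde{\Phi}}(Q\times\Omega)$-weak $\digamma\,\Sigma$. This is Corollary~\ref{c3.1}(i) with $\widetilde{\Phi}$ in place of $\Phi$. Its proof rests only on Proposition~\ref{p3.3}, which does not involve the $N$-function.

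Third, let $u$ play the role of the test function. It belongs to $\mathcal{K}(Q;L^{\Phi}(\Omega;\mathfrak{X}_{A}^{\Phi,\infty}))$ because $\mu$ is a probability measure, and it is $\Phi$-admissible by hypothesis. Admissibility, not boundedness, is what Proposition~\ref{p3.4} with $\Phi$ and $\widetilde{\Phi}$ interchanged requires.

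The conclusion is then $\iint_{Q\times\Omega}f^{\varepsilon}u^{\varepsilon}\,dxd\mu\to\iint_{Q\times\Omega}\pounds_{A}\int u f\,dy\,dxd\mu$. Together with your boundedness step, this is the claim.

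Note that you must cite the statement of Proposition~\ref{p3.4}, not ``mirror its proof'' and let an approximation parameter tend to $0$. Re-running that approximation with the non-tensor $u$ as test function brings back exactly the obstacle of term (I). Handling that density step is the job of Proposition~\ref{p3.4} itself.
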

	
	We define now  the concept of strong stochastic $\Sigma$-convergence in Orlicz setting.
	
	\begin{definition}
		\label{d3.2} A sequence $(u_{\varepsilon })_{\varepsilon >0}\subset
		L^{\Phi}(Q\times \Omega )$ is said to %
		strongly stochastically $\Sigma $-converge in $L^{\Phi}(Q\times \Omega
		) $ to some $u_{0}\in L^{\Phi}(Q\times \Omega ;\mathcal{X}_{A}^{\Phi})$ 
		 if  the following condition is verified: %
		\begin{equation}\label{c4eq4}
			\begin{array}{l}
				\textup{Given}\; \eta>0\; \textup{and}\; \varphi \in L^{\Phi}(Q\times\Omega; A)\; \textup{verifying}\; \|u_{0}- \varphi\|_{L^{\Phi}(Q\times \Omega ;\mathcal{X}_{A}^{\Phi})} \leq \dfrac{\eta}{2}\;  \\
				\textup{there exists}\; \rho>0\; \textup{such that} 	
				\|u_{\varepsilon}-\varphi^{\varepsilon}\|_{\Phi,Q\times\Omega} \leq \eta\;\; \textup{for\, all}\; 0<\varepsilon\leq \rho.
			\end{array}
		\end{equation}
		
		We denote this by $u_{\varepsilon }\rightarrow u_{0}$ 
			in $L^{\Phi}(Q\times \Omega )$-strong $\digamma\,\Sigma $.
	\end{definition}
	
	\begin{remark}
		\label{r3.1}
		\begin{itemize}
			\item[(1)]  By the above definition, the uniqueness of the limit
			of such a sequence is ensured.
			\item[(2)]  Note that in (\ref{c4eq4}), we have used the same letter $\varphi$  to denote the equivalence class $\varrho \varphi$ and its representative $\varphi$.
			\item[(3)]  By the Corollary \ref{c3.1} it is
			immediate that for any $u\in \mathcal{K}(Q;\mathcal{C}^{\infty }(\Omega
			;A)) $, the sequence $(u^{\varepsilon })_{\varepsilon >0}$ is
			strongly stochastically $\Sigma $-convergent to $\varrho (u)$.
			\item[(4)] 	We may the weak/strong stochastic $\Sigma $-convergence in $L^{\Phi}(Q\times\Omega)$ (see Definitions \ref{d3.1} and \ref{d3.2}) as a generalization of usual  weak/strong stochastic $\Sigma $-convergence in $L^{p}(Q\times\Omega)$ (see \cite[Definition 4 and 6]{sango2} and ) when $\Phi(t)=\frac{t^{p}}{p}$ ($t\geq 0$). 
		\end{itemize}
	\end{remark}
	
	As in \cite[Proposition 10]{sango2} the strong stochastic $\Sigma $-convergence in Orlicz space is a generalization of the
	classical strong convergence in this space as one can easily see in the following result.
	
	\begin{proposition}
		\label{p3.5}Let $(u_{\varepsilon })_{\varepsilon >0}\subset L^{\Phi}(Q\times
		\Omega )$  be a strongly convergent sequence in $%
		L^{\Phi}(Q\times \Omega )$ to some $u_{0}\in L^{\Phi}(Q\times \Omega )$. Then $%
		(u_{\varepsilon })_{\varepsilon \in E}$ strongly stochastically $\Sigma $%
		-converges in $L^{\Phi}(Q\times \Omega )$ towards $u_{0}$.
	\end{proposition}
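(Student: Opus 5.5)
The plan is to verify the condition (\ref{c4eq4}) of Definition \ref{d3.2} directly, with $u_{0}$ regarded as an element of $L^{\Phi}(Q\times\Omega;\mathcal{X}^{\Phi}_{A})$ constant in $y$. Fix $\eta>0$ and $\varphi\in L^{\Phi}(Q\times\Omega;A)$ with $\|u_{0}-\varphi\|_{L^{\Phi}(Q\times\Omega;\mathcal{X}^{\Phi}_{A})}\leq \eta/2$. The triangle inequality gives
\begin{equation*}
\|u_{\varepsilon}-\varphi^{\varepsilon}\|_{\Phi,Q\times\Omega}\leq \|u_{\varepsilon}-u_{0}\|_{\Phi,Q\times\Omega}+\|u_{0}-\varphi^{\varepsilon}\|_{\Phi,Q\times\Omega}.
\end{equation*}
The first term tends to $0$ by the assumed strong convergence, so it is below any prescribed threshold for $\varepsilon\leq\rho_{1}$. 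The whole argument therefore reduces to showing
\begin{equation*}
\limsup_{\varepsilon\to0}\|u_{0}-\varphi^{\varepsilon}\|_{\Phi,Q\times\Omega}\leq \|u_{0}-\varphi\|_{L^{\Phi}(Q\times\Omega;\mathcal{X}^{\Phi}_{A})}\leq\eta/2,
\end{equation*}
in fact with equality in the limit.

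To get this, set $w=u_{0}-\varphi$, a function of $(x,\omega,y)$ whose trace is $w^{\varepsilon}=u_{0}-\varphi^{\varepsilon}$ because $u_{0}$ does not depend on $y$. I will show that $w$ is admissible in the sense of Definition \ref{d3.1'}. The approach is to approximate first. Using the $\Delta_{2}$ condition, take $u_{0}$ by $\psi\in\mathcal{K}(Q;\mathcal{C}^{\infty}(\Omega))$ close in $L^{\Phi}(Q\times\Omega)$, and take $\varphi$ by $\phi\in\mathcal{K}(Q;\mathcal{C}^{\infty}(\Omega;A))$ close in $L^{\Phi}(Q\times\Omega;A)$, meaning in sup-norm in $y$ inside the Orlicz norm.

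For the smooth difference $\psi-\phi\in\mathcal{K}(Q;\mathcal{C}^{\infty}(\Omega;A))$, Corollary \ref{c3.1}(ii) gives
\begin{equation*}
\|(\psi-\phi)^{\varepsilon}\|_{\Phi,Q\times\Omega}\to\|\varrho(\psi-\phi)\|_{L^{\Phi}(Q\times\Omega;\mathcal{X}^{\Phi}_{A})}.
\end{equation*}
The two approximation errors are then controlled uniformly in $\varepsilon$, using two estimates:
\begin{itemize}
\item[(a)] $\|u_{0}-\psi\|_{\Phi,Q\times\Omega}$ does not depend on $\varepsilon$.
\item[(b)] $\|(\varphi-\phi)^{\varepsilon}\|_{\Phi,Q\times\Omega}\leq \|\sup_{y}|\varphi-\phi|(\cdot,\cdot,y)\|_{L^{\Phi}(Q\times\Omega)}$. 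This needs the invariance of $\mu$ under $T(x/\varepsilon_{1})$, applied via Fubini as in the proof of Corollary \ref{c3.1}.
\end{itemize}
The same two bounds also dominate the corresponding errors in $\|\cdot\|_{L^{\Phi}(Q\times\Omega;\mathcal{X}^{\Phi}_{A})}$, since $\|\cdot\|_{\Phi,A}\leq\|\cdot\|_{\infty}$ and $u_{0}$ is $y$-independent.

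Combining these with a three-$\eta$ argument gives $\limsup_{\varepsilon}\|w^{\varepsilon}\|_{\Phi,Q\times\Omega}\leq\|w\|_{L^{\Phi}(Q\times\Omega;\mathcal{X}^{\Phi}_{A})}+c\eta'$ for arbitrary $\eta'$. Choosing $\rho\leq\rho_{1}$ then yields $\|u_{\varepsilon}-\varphi^{\varepsilon}\|\leq\eta$ for $\varepsilon\leq\rho$, up to absorbing the small slack. Strictly, the slack can be absorbed because the inequality $\|u_{0}-\varphi\|\leq\eta/2$ leaves room $\eta/2$ for the first term plus the approximation errors.

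The main obstacle is estimate (b): making sense of the trace $\varphi^{\varepsilon}$ for a merely measurable-in-$(x,\omega)$ function $\varphi\in L^{\Phi}(Q\times\Omega;A)$, and showing that the composition with $T(x/\varepsilon_{1})$ preserves the Orlicz norm. I would handle this by bounding $|(\varphi-\phi)^{\varepsilon}(x,\omega)|\leq G(x,T(x/\varepsilon_{1})\omega)$, where $G(x,\omega)=\|(\varphi-\phi)(x,\omega,\cdot)\|_{\infty}$. Then
\begin{equation*}
\iint\Phi\bigl(G(x,T(x/\varepsilon_{1})\omega)/\lambda\bigr)\,dx\,d\mu=\iint\Phi\bigl(G/\lambda\bigr)\,dx\,d\mu,
\end{equation*}
by measure invariance for each fixed $x$.
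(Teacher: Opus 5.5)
\textbf{The gap is in your trace identity for $w$, and it cannot be repaired because the statement fails for non-invariant $u_{0}$.}

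Your reduction hinges on the sentence ``$w=u_{0}-\varphi$ has trace $w^{\varepsilon}=u_{0}-\varphi^{\varepsilon}$ because $u_{0}$ does not depend on $y$'', and this is where the argument breaks. The trace $g\mapsto g^{\varepsilon}$ substitutes $T(x/\varepsilon_{1})\omega$ for $\omega$, not only $x/\varepsilon_{2}$ for $y$. So in fact $w^{\varepsilon}(x,\omega)=u_{0}(x,T(x/\varepsilon_{1})\omega)-\varphi^{\varepsilon}(x,\omega)$. Everything you then establish (admissibility of $w$, Corollary \ref{c3.1}(ii) for $\psi-\phi$, the bounds (a)--(b)) controls $\|u_{0}^{\varepsilon}-\varphi^{\varepsilon}\|_{\Phi,Q\times\Omega}$. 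It does not control the term $\|u_{0}-\varphi^{\varepsilon}\|_{\Phi,Q\times\Omega}$ in your triangle inequality. The same slip sits in (a): the error you actually have to bound is $\|u_{0}-\psi^{\varepsilon}\|_{\Phi,Q\times\Omega}$ with $\psi^{\varepsilon}(x,\omega)=\psi(x,T(x/\varepsilon_{1})\omega)$. That quantity is neither independent of $\varepsilon$ nor small.

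\textbf{Counterexample.} Take the following data:
\begin{itemize}
\item $\Omega=\mathbb{T}^{d}$ with $T(z)\omega=\omega+z \bmod \mathbb{Z}^{d}$ and Lebesgue measure;
\item $\Phi(t)=t^{2}$ and any $A$;
\item $u_{\varepsilon}=u_{0}=\psi$ with $\psi(\omega)=\cos(2\pi\omega_{1})$;
\item $\varphi=\psi$, viewed as constant in $(x,y)$, so that $\|u_{0}-\varphi\|_{L^{\Phi}(Q\times\Omega;\mathcal{X}^{\Phi}_{A})}=0\leq\eta/2$ for every $\eta$.
\end{itemize}
Then
\[
\iint_{Q\times\Omega}|u_{\varepsilon}-\varphi^{\varepsilon}|^{2}\,dx\,d\mu=\int_{Q}\Bigl(1-\cos\frac{2\pi x_{1}}{\varepsilon_{1}}\Bigr)dx\longrightarrow |Q|,
\]
so \eqref{c4eq4} fails for every $\eta<|Q|^{1/2}$. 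This is structural, not an artefact of the example. Test with $\varepsilon_{1}\varphi(x)f(T(x/\varepsilon_{1})\omega)$ as in step (b) of the proof of Theorem \ref{t3.3}. This shows that any weak stochastic $\Sigma$-limit of a sequence with bounded $x$-gradient has an invariant $y$-mean. Hence a non-invariant $\psi$ cannot even be the weak stochastic $\Sigma$-limit of the constant sequence $\psi$.

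\textbf{The missing hypothesis.} You need $u_{0}(x,\cdot)\in I^{\Phi}_{nv}(\Omega)$ for a.e.\ $x\in Q$. The statement itself lacks this too; the paper gives no proof, only the analogy with the $L^{p}$ case. Under this hypothesis, invariance for each fixed $x$ together with Fubini gives $u_{0}^{\varepsilon}=u_{0}$ a.e., so your identity $w^{\varepsilon}=u_{0}-\varphi^{\varepsilon}$ becomes true. Invariance of $\mu$ also gives
\[
\|u_{0}-\psi^{\varepsilon}\|_{\Phi,Q\times\Omega}=\|(u_{0}-\psi)^{\varepsilon}\|_{\Phi,Q\times\Omega}=\|u_{0}-\psi\|_{\Phi,Q\times\Omega}.
\]
With (a) corrected this way, your three-$\eta$ argument goes through.
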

	
	In the next subsection we give some compactness results on the weak stochastic-$\Sigma$ convergence in the Orlicz-Sobolev spaces. 
	
	\subsection{Stochastic-$\Sigma$ compactness results in Orlicz spaces and stochastic $\Sigma$-reflexivity}
	
	Throughout the paper the letter $E$ will denote any ordinary sequence $%
	E=(\varepsilon _{n})$ (integers $n\geq 0$) with $0<\varepsilon _{n}\leq 1$
	and $\varepsilon _{n}\rightarrow 0$ as $n\rightarrow \infty $. Such a
	sequence will be termed a \textit{fundamental sequence}. 
	
	\subsubsection{Compactness Theorem in Orlicz space}
	
	The following result is the point of departure of all the compactness
	results involved in this paper.
	
	\begin{theorem}
		\label{t3.1} Let $\Phi$ be a $N$-function of class $\Delta_{2}\cap \nabla_{2}$ and let $A$ be an $H$-supralgebra on $\mathbb{R}^{d}_{y}$.
		Then, any bounded sequence $(u_{\varepsilon })_{\varepsilon \in E}$ in 
		$L^{\Phi}(Q\times \Omega )$ (where $E$ is a fundamental sequence) admits a subsequence which is weakly stochastically $\Sigma $%
		-convergent in $L^{\Phi}(Q\times \Omega )$.
	\end{theorem}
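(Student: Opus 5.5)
The plan is the standard duality argument used for two-scale compactness: produce the limit by Banach--Alaoglu on the dual side, then identify it as an element of $L^{\Phi}(Q\times\Omega;\mathcal{X}^{\Phi}_{A})$ through the Gelfand representation. Set $C=\sup_{\varepsilon\in E}\|u_\varepsilon\|_{L^{\Phi}(Q\times\Omega)}<\infty$.

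\textbf{Step 1: a uniform bound on test functions.} For each $\varepsilon\in E$ consider the linear form
\begin{equation*}
L_\varepsilon(f)=\iint_{Q\times\Omega}u_\varepsilon f^{\varepsilon}\,dx\,d\mu ,\qquad f\in \mathcal{C}_0^\infty(Q)\otimes\mathcal{C}^\infty(\Omega)\otimes A .
\end{equation*}
By the generalized H\"older inequality, $|L_\varepsilon(f)|\le 2C\|f^\varepsilon\|_{L^{\widetilde\Phi}(Q\times\Omega)}$. Corollary \ref{c3.1}(ii) applies to $\widetilde\Phi$ as well, since its proof uses only the invariance of $\mu$ and the mean value property of $A$. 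It gives
\begin{equation*}
\|f^\varepsilon\|_{L^{\widetilde\Phi}(Q\times\Omega)}\to\|\varrho(f)\|_{L^{\widetilde\Phi}(Q\times\Omega;\mathcal{X}^{\widetilde\Phi}_A)} .
\end{equation*}
Hence $\limsup_{E\ni\varepsilon\to0}|L_\varepsilon(f)|\le 2C\|f\|_{L^{\widetilde\Phi}(Q\times\Omega;\mathcal{X}^{\widetilde\Phi}_A)}$.

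\textbf{Step 2: extraction of a subsequence.} Take a countable set $\mathcal{F}\subset\mathcal{C}_0^\infty(Q)\otimes\mathcal{C}^\infty(\Omega)\otimes A$ that is dense in the closure of this tensor space for the norm of $L^{\widetilde\Phi}(Q\times\Omega;\mathcal{X}^{\widetilde\Phi}_A)$. Fix $f\in\mathcal{F}$. By Step 1 the scalar sequence $(L_\varepsilon(f))_\varepsilon$ is bounded, so a diagonal argument gives a subsequence $E'\subset E$ along which $L_\varepsilon(f)$ converges for every $f\in\mathcal{F}$. The uniform bound of Step 1 then yields a limit $L(f)$ for every $f$ in the tensor space, with $|L(f)|\le 2C\|f\|$.

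Separability is the main obstacle here, because $A$ need not be separable. In that case I would replace $A$ by the closed subalgebra $A_0$ generated by the countably many functions actually needed. More robustly, I would avoid separability altogether. Since $L^{\Phi}(Q\times\Omega)$ is reflexive ($\Phi\in\Delta_2\cap\nabla_2$), the closed unit ball of its dual is weak$^*$ compact. I would apply Tychonoff to the net $(L_\varepsilon)$ in the product space $\prod_f[-2C\|f\|,2C\|f\|]$ and extract a convergent subnet. I would then remark that, because only countably many test functions are involved in the application (or after reducing to a separable subalgebra), a genuine subsequence can be obtained.

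\textbf{Step 3: identification of the limit.} The functional $L$ extends by continuity to a bounded linear form on $L^{\widetilde\Phi}(Q\times\Omega\times\Delta(A))$. To see this, use the isometry $\mathcal{G}_1$ and the density of $\mathcal{C}_0^\infty(Q)\otimes\mathcal{C}^\infty(\Omega)\otimes\mathcal{G}(A)$ in that space, which holds because $\widetilde\Phi\in\Delta_2$ and $\mathcal{C}(\Delta(A))$ is dense in $L^{\widetilde\Phi}(\Delta(A))$. Since $\widetilde\Phi\in\Delta_2$, the dual of $L^{\widetilde\Phi}(Q\times\Omega\times\Delta(A))$ is $L^{\Phi}(Q\times\Omega\times\Delta(A))$. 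The Riesz-type representation in Orlicz spaces therefore gives a function $\hat u_0\in L^{\Phi}(Q\times\Omega\times\Delta(A))$ with
\begin{equation*}
L(f)=\iiint\hat u_0\,\mathcal{G}(f)\,dx\,d\mu\,d\beta .
\end{equation*}
Setting $u_0=\mathcal{G}_1^{-1}(\hat u_0)\in L^{\Phi}(Q\times\Omega;\mathcal{X}^{\Phi}_A)$ and rewriting the right-hand side with $M(\cdot)=\int_{\Delta(A)}\mathcal{G}(\cdot)\,d\beta$ gives exactly (\ref{3.1}) along $E'$. Hence $u_\varepsilon\to u_0$ in $L^{\Phi}(Q\times\Omega)$-weak $\digamma\,\Sigma$.
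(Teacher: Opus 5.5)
Your Steps 1 and 3 are the paper's argument. The bound $\limsup_{E\ni\varepsilon\to0}|L_\varepsilon(f)|\le c\,\|\varrho(f)\|_{L^{\widetilde{\Phi}}(Q\times\Omega;\mathcal{X}^{\widetilde{\Phi}}_{A})}$ comes from H\"older's inequality and Corollary \ref{c3.1}(ii), and the limit is identified through $\bigl(L^{\widetilde{\Phi}}(Q\times\Omega;\mathcal{X}^{\widetilde{\Phi}}_{A})\bigr)'=L^{\Phi}(Q\times\Omega;\mathcal{X}^{\Phi}_{A})$.

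The gap is Step 2. The paper does not build the subsequence by hand; it invokes \cite[Proposition 3.2]{wou}. As used there, that result says: if $X$ is a subspace of a reflexive Banach space $Y$ and $(L_n)$ are linear forms on $X$ with $\limsup_n|L_n(x)|\le c\|x\|$ for every $x\in X$, then some subsequence converges at every point of $X$, and the limit is a unique element of $Y'$. Your diagonal argument gives this only when $X$ is separable for the $Y$-seminorm, which fails in the generality of the theorem:
\begin{itemize}
\item an $H$-supralgebra need not be separable: for $A=AP(\mathbb{R}^d)$ the characters $y\mapsto e^{2i\pi k\cdot y}$, $k\in\mathbb{R}^d$, are pairwise at one and the same positive distance in $\mathcal{X}^{\widetilde{\Phi}}_{A}$;
\item $L^{\widetilde{\Phi}}(\Omega)$ need not be separable either, e.g.\ $\Omega=\Delta(AP(\mathbb{R}^d))$ with the dynamical system of Proposition \ref{t2.2}.
\end{itemize}
Neither of your fallbacks closes this. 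Passing to a separable subalgebra $A_0$ gives (\ref{3.1}) only for test functions built on $A_0$, whereas Definition \ref{d3.1} requires it for every $g\in A$; moreover your subsequence would depend on $A_0$. Tychonoff gives a convergent subnet, not a subsequence, and your remark that ``a genuine subsequence can be obtained'' is exactly what has to be proved. Also, the Step 1 bound holds only in the $\limsup$, so the $L_\varepsilon$ need not lie in the product of intervals you write down.

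What is missing is an extraction principle that uses reflexivity rather than separability. You can either cite it as the paper does or prove it for this $Y$. A sketch of a proof:
\begin{itemize}
\item $Y\cong L^{\widetilde{\Phi}}(Q\times\Omega\times\Delta(A))$ with $\Phi,\widetilde{\Phi}\in\Delta_2$ is superreflexive, so $Y'$ carries an equivalent uniformly convex dual norm.
\item Choose nested subsequences $E\supset E_1\supset E_2\supset\cdots$. At step $k$, take $f_k\in X$ with $\|f_k\|\le1$ whose oscillation along $E_{k-1}$ (the diameter of the set of cluster values of $L_\varepsilon(f_k)$) is at least half the supremum $\theta_{k-1}$ of such oscillations.
\item Take two sub-subsequences whose cluster values of $L_\varepsilon(f_k)$ are far apart. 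Let $E_k$ be the one whose limit functional on $\operatorname{span}(f_1,\dots,f_k)$ has the larger dual norm.
\item These norms stay bounded by $c$. Uniform convexity forces them to grow geometrically as long as $\theta_k\ge\eta>0$, so $\theta_k\to0$, and a diagonal subsequence converges on all of $X$.
\end{itemize}
Without some such argument, your proof covers only the case where $A$ and $L^{\widetilde{\Phi}}(\Omega)$ are separable.
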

	
	
	
	\begin{proof} 
		Let $Y=L^{\widetilde{\Phi}}(Q\times \Omega ; \mathcal{X}^{\widetilde{\Phi}}_{A})$, $X=\mathcal{C}%
		_{0}^{\infty }(Q)\otimes \mathcal{C}^{\infty }(\Omega )\otimes A$. Let us define the mapping $L_{\varepsilon }$ by 
		\begin{equation*}
			L_{\varepsilon }(f)=\int_{Q\times \Omega }u_{\varepsilon
			}f^{\varepsilon }dxd\mu \text{\ \ (}f \in \mathcal{C}_{0}^{\infty
			}(Q)\otimes \mathcal{C}^{\infty }(\Omega )\otimes A \text{).}
		\end{equation*}%
		where $f^{\varepsilon }(x,\omega )=f(x,T(x/\varepsilon _{1})\omega
		,x/\varepsilon _{2})$ for $(x,\omega )\in Q\times \Omega $. Since $(u_{\varepsilon})_{\varepsilon\in E}$ is bounded in $L^{\Phi}(Q\times\Omega)$, then 
		\begin{equation*}
			\underset{\varepsilon }{\lim \sup }\left\vert L_{\varepsilon }(f%
			)\right\vert \leq c\left\Vert \varrho(f)\right\Vert _{L^{\widetilde{\Phi}}(Q\times \Omega ; \mathcal{X}^{\widetilde{\Phi}}_{A})}\text{\ for all } f \in X.
		\end{equation*}%
		Indeed one has the inequality $\left\vert L_{\varepsilon }(f)\right\vert
		\leq c\left\Vert f^{\varepsilon }\right\Vert _{L^{\widetilde{\Phi}}(Q\times
			\Omega )}$ and thus, as $\varepsilon \rightarrow 0$, $\left\Vert
		f^{\varepsilon }\right\Vert _{L^{\widetilde{\Phi}}(Q\times \Omega )}\rightarrow
		\left\Vert \varrho(f)\right\Vert _{L^{\widetilde{\Phi}}(Q\times \Omega ; \mathcal{X}^{\widetilde{\Phi}}_{A})}$ (see Corollary \ref{c3.1}). We therefore apply \cite[Proposition 3.2]{wou} with the above notation to get the existence of a subsequence $%
		E^{\prime }$ of $E$ and of a unique $v_{0}\in \left(L^{\widetilde{\Phi}}(Q\times \Omega ; \mathcal{X}^{\widetilde{\Phi}}_{A})\right)^{\prime}\equiv L^{\Phi}(Q\times \Omega ; \mathcal{X}^{\Phi}_{A})$ (by reflexivity) such that 
		\begin{equation*}
		\lim_{E' \ni\varepsilon \to 0}	\iint_{Q\times \Omega }u_{\varepsilon }f^{\varepsilon }dxd\mu =
			\iint_{Q\times \Omega} \pounds_{A}\int  v_{0}f dydxd\mu 
		\end{equation*}%
		\ for all $f\in X$, and so the result
		follows.
	\end{proof}

	
	\subsubsection{Stochastic $\Sigma$-reflexivity in Orlicz-Sobolev spaces}
	
	In this subsection, we start by define the concepts of weak stochastic $\Sigma$-convergence and stochastic $\Sigma$-reflexivity (for $H$-supralgebra) in Orlicz-Sobolev spaces.
	
	We define the Banach space 
	\begin{equation*}
		W^{1}L^{\Phi}_{D_{x}}(Q; L^{\Phi}(\Omega;\mathcal{X}_{A}^{\Phi})) = \left\{ u \in L^{\Phi}(Q\times\Omega;\mathcal{X}_{A}^{\Phi})\, : \; \dfrac{\partial u}{\partial x_{i}} \in L^{\Phi}(Q\times\Omega;\mathcal{X}_{A}^{\Phi}), \, 1 \leq i \leq d \right\}
	\end{equation*}
	equiped with the norm 
	\begin{equation*}
		\|u\|_{W^{1}L^{\Phi}_{D_{x}}(Q; L^{\Phi}(\Omega;\mathcal{X}_{A}^{\Phi}))} = \|u\|_{L^{\Phi}(Q\times\Omega;\mathcal{X}_{A}^{\Phi})} + \sum_{i=1}^{d} \left\|\dfrac{\partial u}{\partial x_{i}} \right\|_{L^{\Phi}(Q\times\Omega;\mathcal{X}_{A}^{\Phi})}.
	\end{equation*}
	\begin{definition}\label{sigmaSobolev}
		A sequence $(u_{\varepsilon})_{\varepsilon\in E}\subset 	W^{1}L^{\Phi}_{D_{x}}(Q; L^{\Phi}(\Omega))$ is said to be weakly stochastically $\Sigma$-convergent (for $A$) in $	W^{1}L^{\Phi}_{D_{x}}(Q; L^{\Phi}(\Omega))$ to some $u_{0} \in 	W^{1}L^{\Phi}_{D_{x}}(Q; L^{\Phi}(\Omega;\mathcal{X}_{A}^{\Phi}))$ if there exist two  functions $u_{1} \in L^{1}\left(Q; W^{1}_{\#}L^{\Phi}(\Omega)\right)$ and $u_{2}\in L^{1}(Q\times\Omega; W^{1}_{\#}\mathcal{X}_{A}^{\Phi})$ such that as $E \ni \varepsilon \to 0$, we have 
		\begin{itemize}
			\item[(i)] $u_{\varepsilon} \rightarrow u_{0}$ in $L^{\Phi}(Q\times\Omega)$-weak $\digamma\,\Sigma$,
			\item [(ii)] $\frac{\partial u_{\varepsilon}}{\partial x_{i}} \rightarrow \frac{\partial u_{0}}{\partial x_{i}} + \overline{D}_{i,\Phi}u_{1} + \frac{\overline{\partial} u_{2}}{\partial y_{i}}$ in $L^{\Phi}(Q\times\Omega)$-weak $\digamma\,\Sigma$, $1 \leq i \leq d$.
		\end{itemize}
		We then express by: $u_{\varepsilon} \rightarrow u_{0}$ in $W^{1}L^{\Phi}_{D_{x}}(Q; L^{\Phi}(\Omega))$-weak $\digamma\,\Sigma$, and we refer to $u_{0}$ as the weak stochastic $\Sigma$-limit in $W^{1}L^{\Phi}_{D_{x}}(Q; L^{\Phi}(\Omega))$ of $(u_{\varepsilon})_{\varepsilon\in E}$. The functions $u_{1}$ and $u_{2}$ are the stochastic corrector and deterministic corrector for $(u_{\varepsilon})_{\varepsilon\in E}$, respectively. 
	\end{definition}
	
	
	\begin{definition}
		Given a bounded open set $\Omega$ in $\mathbb{R}^{d}_{x}$, the Orlicz-Sobolev space $W^{1}L^{\Phi}_{D_{x}}(Q; L^{\Phi}(\Omega))$ is said to be stochastically $\Sigma$-reflexive (for $A$) if the following holds: Given a sequence $(u_{\varepsilon})_{\varepsilon\in E}$ in $W^{1}L^{\Phi}_{D_{x}}(Q; L^{\Phi}(\Omega)$ such that $(u_{\varepsilon})$ and $(\partial_{x_{i}}u_{\varepsilon})$ are bounded in $L^{\Phi}(Q\times\Omega)$, $1\leq i\leq d$, a subsequence $E^{\prime}$ can be extracted such that the sequence $(u_{\varepsilon})_{\varepsilon\in E^{\prime}}$ is weakly stochastically $\Sigma$-convergent (for $A$) in $W^{1}L^{\Phi}_{D_{x}}(Q; L^{\Phi}(\Omega))$.
	\end{definition}

 Next, we define the concept of Proper $H$-algebras and we study the particular cases of periodic $H$-algebras and $H$-algebras with mean value. 

	\subsection{Stochastic-properness of some $H$-algebras}

In all that follows, $\Phi$ and $\widetilde{\Phi}$ are of $\Delta_{2}$-class.
	
	\begin{definition}
		The $H$-algebra $A$ (of class $\mathcal{C}^{\infty}$) on $\mathbb{R}^{d}_{y}$ is said to be stochastic $\Phi$-proper (resp. stochastic $\Phi$-pseudoproper) if the following two conditions are satisfied: 
		\begin{itemize}
			\item[(i)] $A$ is $\Phi$-total.
			\item[(ii)] For each bounded open set $Q$ in $\mathbb{R}^{d}_{x}$, the Orlicz-Sobolev space $W^{1}L^{\Phi}_{D_{x}}(Q; L^{\Phi}(\Omega))$ (resp. $W^{1}_{0}L^{\Phi}_{D_{x}}(Q; L^{\Phi}(\Omega))$) is stochastically $\Sigma$-reflexive (for $A$).
		\end{itemize}
	\end{definition}

We discuss now about the two cases : the periodic $H$-algebras and the ergodic $H$-algebras. Let us begin by the first case.
	
	\subsubsection{Case of periodic $H$-algebras}
	
	Let $A = \mathcal{C}_{\textup{per}}(Y)$ be the classical $H$-algebra of $Y$-periodic continuous complex functions on $\mathbb{R}_{y}^{d}$ where $Y= (0,1)^{d}$ (the open unit cube in $\mathbb{R}_{y}^{d}$). It is known that $\mathcal{C}_{\textup{per}}(Y)$ is of class $\mathcal{C}^{\infty}$, see \cite{nguetseng2003homogenization}. We set 
	
	\begin{eqnarray*}
		L^{\Phi}_{\textup{per}}(Y) = \left\{v \in L^{\Phi}_{\textup{loc}}(\mathbb{R}_{y}^{d}) \; : \; v \; \textup{is} \; Y\textup{-periodic}  \right\},  \\
		W^{1}L^{\Phi}_{\textup{per}}(Y) = \left\{v \in W^{1}L^{\Phi}_{\textup{loc}}(\mathbb{R}_{y}^{d}) \; : \; v \; \textup{is} \; Y\textup{-periodic}  \right\}, \\
		W^{1}_{\#}L^{\Phi}_{\textup{per}}(Y) = \left\{v \in W^{1}L^{\Phi}_{\textup{per}}(Y) \; : \; \int_{Y} v dy = 0  \right\}.
	\end{eqnarray*}
	
	$L^{\Phi}_{\textup{per}}(Y)$ is a Banach space under the $L^{\Phi}$-norm (denoted $\|\cdot\|_{\Phi,Y}$). Also each of spaces $W^{1}L^{\Phi}_{\textup{per}}(Y)$ and $W^{1}_{\#}L^{\Phi}_{\textup{per}}(Y)$ is a Banach space under norms 
	\begin{equation*}
		\|u\|_{W^{1}L^{\Phi}(Y)} = \|u\|_{\Phi,Y} + \sum_{i=1}^{d} \left\|\dfrac{\partial u}{\partial y_{i}} \right\|_{\Phi,Y}, \quad u \in W^{1}L^{\Phi}_{\textup{per}}(Y)
	\end{equation*}
	and 
	\begin{equation*}
		\|u\|_{W^{1}_{\#}L^{\Phi}(Y)} =  \sum_{i=1}^{d} \left\|\dfrac{\partial u}{\partial y_{i}} \right\|_{\Phi,Y}, \quad u \in W^{1}_{\#}L^{\Phi}_{\textup{per}}(Y),
	\end{equation*}
	respectively. Then $\mathcal{C}_{\textup{per}}(Y)$ is dense in $L^{\Phi}_{\textup{per}}(Y)$, and $\mathcal{C}^{\infty}_{\textup{per}}(Y)= \mathcal{C}^{\infty}(\mathbb{R}_{y}^{d})\cap \mathcal{C}_{\textup{per}}(Y)$ is dense in $W^{1}L^{\Phi}_{\textup{per}}(Y)$ (see, \cite{nnang2014deterministic}). We have $\pounds_{\mathcal{C}_{\textup{per}}(Y)}\int_{Y} u(y)dy = \int_{Y} u(y)dy$ ($u\in \mathfrak{X}^{\Phi}_{\mathcal{C}_{\textup{per}}(Y)} \equiv L^{\Phi}_{\textup{per}}(Y)$).
	
	\begin{proposition}
		The periodic $H$-algebra $\mathcal{C}_{\textup{per}}(Y)$ is stochastic $\Phi$-proper.
	\end{proposition}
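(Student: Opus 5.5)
We have to check the two conditions in the definition of stochastic $\Phi$-properness: (i) $\Phi$-totality of $\mathcal{C}_{\textup{per}}(Y)$, and (ii) stochastic $\Sigma$-reflexivity of $W^{1}L^{\Phi}_{D_{x}}(Q; L^{\Phi}(\Omega))$ for every bounded open $Q$.

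\emph{Condition (i).} Here $\Delta(A)=\mathbb{T}^{d}$, $\beta$ is the Haar measure, and $\mathcal{G}$ identifies $\mathfrak{X}^{\Phi}_{A}$, modulo its kernel, with $L^{\Phi}_{\textup{per}}(Y)$. Under this identification $W^{1}L^{\Phi}(\Delta(A))$ becomes $W^{1}L^{\Phi}_{\textup{per}}(Y)$ and $\mathcal{D}(\Delta(A))$ becomes $\mathcal{C}^{\infty}_{\textup{per}}(Y)$. Condition (i) is therefore exactly the density of $\mathcal{C}^{\infty}_{\textup{per}}(Y)$ in $W^{1}L^{\Phi}_{\textup{per}}(Y)$, quoted above from \cite{nnang2014deterministic}. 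The only care needed is that the formal derivatives $\partial_{i}$ on $\Delta(A)$ agree with the usual $\partial/\partial y_{i}$ on periodic functions, which is (\ref{partial1}).

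\emph{Condition (ii), first step.} Let $(u_{\varepsilon})_{\varepsilon\in E}$ be given with $(u_{\varepsilon})$ and $(D_{x}u_{\varepsilon})$ bounded in $L^{\Phi}(Q\times\Omega)$. By Theorem \ref{t3.1} (since $\Phi,\widetilde{\Phi}\in\Delta_{2}$), a diagonal extraction gives a subsequence $E'$ and limits with
\begin{equation*}
u_{\varepsilon}\to u_{0}, \qquad \partial_{x_{i}}u_{\varepsilon}\to z_{i} \quad \text{in } L^{\Phi}(Q\times\Omega)\text{-weak }\digamma\,\Sigma, \qquad z_{i}\in L^{\Phi}(Q\times\Omega;L^{\Phi}_{\textup{per}}(Y)).
\end{equation*}

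\emph{Condition (ii), identifying $u_{0}$.} Test against $\varepsilon_{2}\,\varphi(x)\psi(T(x/\varepsilon_{1})\omega)g(x/\varepsilon_{2})$ and integrate by parts in $x$. The $x$-derivative produces three terms: one of order $\varepsilon_{2}$, one of order $\varepsilon_{2}/\varepsilon_{1}$ (through $D_{\omega}\psi$, using (\ref{lem18})), and one of order one (through $\partial_{y}g$). Passing to the limit gives $\int_{Y} u_{0}\,\partial_{y_{i}}g\,dy=0$, so $u_{0}$ is independent of $y$. Next test with $\varepsilon_{1}\varphi\psi(T(x/\varepsilon_{1})\omega)$ and no $y$-dependence. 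This gives $\int_{\Omega}u_{0}D_{i,\infty}\psi\,d\mu=0$, hence $D_{i,\Phi}u_{0}=0$, and by ergodicity property (ii) $u_{0}(x,\cdot)\in I^{\Phi}_{nv}(\Omega)$. Finally, testing with $\varphi\psi$, $\psi$ invariant, and using Proposition \ref{p2} shows that $\partial_{x_{i}}u_{0}=\int_{Y}z_{i}\,dy$ projected on invariants. In particular $u_{0}\in W^{1}L^{\Phi}_{D_{x}}(Q;I^{\Phi}_{nv}(\Omega))$.

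\emph{Condition (ii), the correctors.} Set $w=z-Du_{0}$. Testing with $\varphi\psi(T(x/\varepsilon_{1})\omega)\Psi(x/\varepsilon_{2})$, where $\Psi\in\mathcal{C}^{\infty}_{\textup{per}}(Y)^{d}$ and $\textup{div}_{y}\Psi=0$, the singular $\varepsilon_{2}^{-1}$ term disappears. This yields $\iint\int_{Y}w\cdot\Psi\,\psi\,\varphi=0$, so $w-\int_{Y}w\,dy$ is orthogonal to periodic divergence-free fields. The periodic Orlicz de Rham lemma (as used in \cite{tacha1,nnang2014deterministic}) then gives $w-\int_{Y}w\,dy=D_{y}u_{2}$ with $u_{2}(x,\omega)\in W^{1}_{\#}L^{\Phi}_{\textup{per}}(Y)$.

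Similarly, for $\int_{Y}w\,dy$ we test with $\varphi(x)\Psi(T(x/\varepsilon_{1})\omega)$, where $\Psi\in\mathcal{V}^{\widetilde{\Phi}}_{\textup{div}}$. The $\varepsilon_{1}^{-1}$ term vanishes, giving orthogonality to $\mathcal{V}^{\widetilde{\Phi}}_{\textup{div}}$. Lemma \ref{lem5} applied for a.e. $x$ gives $\int_{Y}w\,dy=\overline{D}_{\omega}u_{1}$.

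\emph{Measurability and integrability.} It remains to check that $x\mapsto u_{1}(x)$ and $(x,\omega)\mapsto u_{2}(x,\omega)$ are measurable and lie in $L^{1}$ with the right values. Since $\overline{D}_{\omega}$ and $D_{y}$ are isometric embeddings, $\|u_{1}(x)\|$ and $\|u_{2}(x,\omega)\|$ are controlled by the Orlicz norms of the fibers of $w$. Property (ix) and (\ref{be1}) then give $L^{1}$ bounds, and $L^{\Phi}$ bounds when $\widetilde{\Phi}\in\Delta'$. Measurability follows by writing the inverse of the closed-range embedding as a continuous map.

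\emph{Main obstacle.} The hard step is the two-scale bookkeeping in the test-function argument. The well-separatedness $\varepsilon_{2}/\varepsilon_{1}\to0$ must kill exactly the right cross terms. The oscillating factor $\Psi(x/\varepsilon_{2})$ must be paired with the random factor so that Proposition \ref{p3.4}/Corollary \ref{c3.3} applies to products of admissible functions. One must also justify that the divergence-free periodic and stochastic test fields separate, so that the decomposition $w=\overline{D}_{\omega}u_{1}+D_{y}u_{2}$ is obtained fiberwise in a measurable way.
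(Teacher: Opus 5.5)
The paper gives no argument here; it only cites \cite[Theorem 10]{tchin2}. Your treatment of (i), of $u_0$, and of $u_1$ via Lemma \ref{lem5} is sound and parallels the proof of Theorem \ref{t3.3}. The construction of $u_2$, however, has a genuine gap.

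When you differentiate $\varphi(x)\psi(T(x/\varepsilon_1)\omega)\Psi(x/\varepsilon_2)$, the condition $\mathrm{div}_y\Psi=0$ removes only the $\varepsilon_2^{-1}$ term. By (\ref{lem18}) there remains
\begin{equation*}
\frac{1}{\varepsilon_1}\iint_{Q\times\Omega}u_\varepsilon\,\varphi\,((D_\omega\psi)\cdot\Psi)^\varepsilon\,dx\,d\mu ,
\end{equation*}
which you never mention. Without the prefactor this integral does tend to $0$, since $u_0$ is invariant. That only gives $\varepsilon_1^{-1}\cdot o(1)$, with no rate.

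In fact the identity you state cannot hold for all periodic divergence-free $\Psi$. Take $\Psi\equiv c$ constant; the same integration by parts shows that this term converges to $-\iint_{Q\times\Omega}(\int_Y w\,dy)\cdot c\,\varphi\psi\,dx\,d\mu$. Your identity would then force $\int_Y w\,dy=0$, i.e. $\overline{D}_\omega u_1=0$. Your closing remark that $\varepsilon_2/\varepsilon_1\to0$ must kill the cross terms is the right instinct, but this test function never produces a factor $\varepsilon_2/\varepsilon_1$.

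You can repair this in either of two ways.

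(a) Use only $\Psi\in\mathcal{C}^{\infty}_{\textup{per}}(Y)^d$ with $\mathrm{div}_y\Psi=0$ and $\int_Y\Psi\,dy=0$; this suffices to identify $w-\int_Y w\,dy$. Write $\Psi_j=\sum_k\partial_{y_k}\Lambda_{kj}$ with $\Lambda$ smooth, periodic and antisymmetric, e.g. $\Lambda_{kj}=\partial_{y_k}\eta_j-\partial_{y_j}\eta_k$ with $\Delta_y\eta=\Psi$. With $\Lambda^\varepsilon_{kj}(x)=\Lambda_{kj}(x/\varepsilon_2)$ one has
\begin{equation*}
\frac{1}{\varepsilon_1}((D_\omega\psi)\cdot\Psi)^\varepsilon=\frac{\varepsilon_2}{\varepsilon_1}\sum_{j,k}\frac{\partial}{\partial x_k}\left((D_{j,\omega}\psi)^\varepsilon\Lambda^\varepsilon_{kj}\right)-\frac{\varepsilon_2}{\varepsilon_1^{2}}\sum_{j,k}(D_{k,\omega}D_{j,\omega}\psi)^\varepsilon\Lambda^\varepsilon_{kj}.
\end{equation*}
The last sum vanishes identically, because stochastic derivatives commute and $\Lambda$ is antisymmetric. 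After integrating by parts, the bad term is $O(\varepsilon_2/\varepsilon_1)$, since $D(\varphi u_\varepsilon)$ is bounded in $L^1(Q\times\Omega)$.

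(b) Alternatively, follow the proof of Theorem \ref{t3.3}. Apply Theorem \ref{t3.1} to the difference quotients $z^r_\varepsilon=\varepsilon_2^{-1}(u_\varepsilon-|B_{\varepsilon_2 r}|^{-1}\int_{B_{\varepsilon_2 r}}u_\varepsilon(\cdot+\rho)\,d\rho)$, which are bounded in $L^\Phi$. In (\ref{5.10'}) the derivative of the random factor then carries the factor $\varepsilon_2/\varepsilon_1$. Finish by using ergodicity of $\mathcal{C}_{\textup{per}}(Y)$ and letting $r\to\infty$.

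Since $\mathcal{C}_{\textup{per}}(Y)$ is an ergodic algebra with mean value, route (b) is just Theorem \ref{t3.3} specialized to the periodic case. It must be localized away from $\partial Q$, because no extension off a general $Q$ is available for the full space $W^{1}L^{\Phi}_{D_{x}}(Q;L^{\Phi}(\Omega))$.
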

	\begin{proof}
		See \cite[Theorem 10]{tchin2}.	
	\end{proof}
	

	\subsubsection{Case of ergodic $H$-algebras}

	Now we assume in the sequel that the $H$-supralgebra $A$ is translation
	invariant and moreover each of its elements is uniformly continuous, that
	is, $A$ is an algebra with mean value. The next result requires some
	preliminaries. Let $a\in \mathbb{\mathbb{R}}^{N}$. Since $A$ is translation
	invariant, the translation operator $\tau _{a}:A\rightarrow A$ extends by
	continuity to a unique translation operator still denoted by $\tau
	_{a}:\mathfrak{X}_{A}^{\Phi}\rightarrow \mathfrak{X}_{A}^{\Phi}$. Indeed $\tau _{a}$
	is bijective and $\left\| \tau _{a}u\right\| _{\Phi,A}=\left\| u\right\| _{\Phi,A}$
	since $M(\Phi(\left| \tau _{a}u\right|))=M(\tau _{a}\Phi(\left| u\right|))=M(\Phi(\left| u\right|))$ for all $u\in A$. Besides, as each element of 
	$A$ is uniformly continuous, the group of unitary operators $\{\tau
	_{a}:a\in \mathbb{\mathbb{R}}^{d}\}$ thus defined is strongly continuous,
	i.e. $\tau _{a}u\rightarrow u$ in $\mathfrak{X}_{A}^{\Phi}$ as $\left| a\right|
	\rightarrow 0$ for all $u\in \mathfrak{X}_{A}^{\Phi}$. Moreover 
	\begin{equation}
		M(\tau _{a}u)=M(u)\text{ for all }u\in \mathfrak{X}_{A}^{\Phi}\text{ and any }a\in \mathbb{%
			\mathbb{R}}^{d}\text{.}  \label{5.2'}
	\end{equation}%
	Arguing as above we see that the group $\{\tau _{a}\}_{a\in \mathbb{\mathbb{R%
		}}^{d}}$ yields a family of mappings still denoted by $\{\tau _{a}\}_{a\in 
		\mathbb{\mathbb{R}}^{d}}$ (each of them sending $L^{\Phi}(\Omega ;\mathfrak{X}_{A}^{\Phi})$
	into itself) verifying 
	\begin{equation*}
		\tau _{a}u\left( \omega ,y\right) =\tau _{a}u(\omega ,\cdot )(y)=u\left(
		\omega ,y+a\right) \text{ for a.e. }(\omega ,y)\in \Omega \times \mathbb{R}%
		^{d}\text{ and for }u\in L^{\Phi}(\Omega ;\mathfrak{X}_{A}^{\Phi}).
	\end{equation*}%
	With this in mind, we begin with the following preliminary results.
	
	
	\begin{lemma}
		\label{l5.1}Assume the $H$-supralgebra $A$ is an algebra with mean value on $%
		\mathbb{R}_{y}^{d}$, i.e., it is translation invariant and each of its
		elements is uniformly continuous. Let $(u_{\varepsilon })_{\varepsilon \in
			E} $ be a sequence in $L^{\Phi}(Q\times \Omega )$  which weakly
		stochastically $\Sigma $-converges towards $u_{0}\in L^{\Phi}(Q\times \Omega ;%
		\mathcal{X}_{A}^{\Phi})$. Let the sequence $(v_{\varepsilon })_{\varepsilon \in
			E}$ be defined by 
		\begin{equation*}
			v_{\varepsilon }(x,\omega )=\int_{B_{r}}u_{\varepsilon }(x+\varepsilon
			_{2}\rho ,\omega )d\rho \text{\ \ }((x,\omega )\in Q\times \Omega ).
		\end{equation*}%
		Then, as $E\ni \varepsilon \rightarrow 0$, 
		\begin{equation}
			v_{\varepsilon }\rightarrow v_{0}\text{ in }L^{\Phi}(Q\times \Omega )%
			\text{-weak } \digamma\,\Sigma  \label{5.3'}
		\end{equation}%
		where $v_{0}$ is defined by $v_{0}(x,\omega ,y)=\int_{B_{r}}u_{0}(x,\omega
		,y+\rho )d\rho $\ for $(x,\omega ,y)\in Q\times \Omega \times \mathbb{%
			\mathbb{R}}^{d}$.
	\end{lemma}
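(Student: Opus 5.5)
The plan is to test $v_\varepsilon$ against an arbitrary product test function and move the averaging onto the test function by a change of variables. By linearity it suffices to take $f(x,\omega,y)=\varphi(x)\psi(\omega)g(y)$ with $\varphi\in\mathcal{C}_0^\infty(Q)$, $\psi\in\mathcal{C}^\infty(\Omega)$ and $g\in A$.

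\textbf{Boundedness.} First check that $(v_\varepsilon)$ is bounded in $L^\Phi(Q\times\Omega)$, where $u_\varepsilon$ is extended by zero outside $Q$. This follows from Jensen's inequality applied to the normalized average over $B_r$, together with the translation invariance of Lebesgue measure: it gives $\|v_\varepsilon\|_{L^\Phi(Q\times\Omega)}\le |B_r|\,\|u_\varepsilon\|_{L^\Phi(Q\times\Omega)}$.

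\textbf{Change of variables.} Fubini gives
\[
\iint_{Q\times\Omega} v_\varepsilon f^\varepsilon\,dxd\mu=\int_{B_r}\iint u_\varepsilon(z,\omega)\,\varphi(z-\varepsilon_2\rho)\,\psi\bigl(T(\tfrac{z-\varepsilon_2\rho}{\varepsilon_1})\omega\bigr)\,g\bigl(\tfrac{z}{\varepsilon_2}-\rho\bigr)\,dzd\mu\, d\rho ,
\]
obtained by substituting $z=x+\varepsilon_2\rho$. For $\varepsilon$ small, $\operatorname{supp}\varphi+\varepsilon_2 B_r\subset Q$, so the $z$-integral stays inside $Q$.

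\textbf{Removing the perturbations.} Replace $\varphi(z-\varepsilon_2\rho)$ by $\varphi(z)$ and $\psi(T(\frac{z-\varepsilon_2\rho}{\varepsilon_1})\omega)$ by $\psi(T(\frac{z}{\varepsilon_1})\omega)$. The first error is bounded by $\|\nabla\varphi\|_\infty\varepsilon_2 r$. For the second, write $\psi(T(-\tfrac{\varepsilon_2}{\varepsilon_1}\rho)T(\tfrac z{\varepsilon_1})\omega)$. Since $\psi\in\mathcal{C}^\infty(\Omega)$ has bounded stochastic derivatives, (\ref{lem18}) gives $|\psi(T(h)\omega')-\psi(\omega')|\le |h|\sup_i\|D_{i,\infty}\psi\|_\infty$ with $h=\varepsilon_2\rho/\varepsilon_1\to0$ uniformly in $\rho\in B_r$. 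This is where the well-separatedness $\varepsilon_2/\varepsilon_1\to0$ is essential. Both errors are then controlled by the generalized Hölder inequality and the bound on $\|u_\varepsilon\|_{L^\Phi}$, and they vanish. What remains is
\[
\int_{B_r}\iint_{Q\times\Omega} u_\varepsilon\,(\tau_{-\rho}g)^\varepsilon\,\varphi\,\psi^\varepsilon\, dzd\mu\,d\rho ,
\]
where $\tau_{-\rho}g=g(\cdot-\rho)\in A$ by translation invariance. For each fixed $\rho$, Definition \ref{d3.1} gives convergence of the inner integral to $\iint \pounds_A\!\int u_0(z,\omega,y)\varphi\psi\, g(y-\rho)\,dy\,dzd\mu$. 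The inner integrals are bounded uniformly in $\rho$ and $\varepsilon$ by $c\|g\|_\infty$, so dominated convergence in $\rho$ passes the limit through the outer integral.

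\textbf{Identifying the limit.} Finally, the translation invariance of the mean value, $M(\tau_a w)=M(w)$ from (\ref{5.2'}), applied with $w=u_0(z,\omega,\cdot)\,\tau_{-\rho}g\in\mathfrak{X}^1_A$ and $a=\rho$, turns $\pounds_A\int u_0(z,\omega,y)g(y-\rho)\,dy$ into $\pounds_A\int u_0(z,\omega,y+\rho)g(y)\,dy$. Fubini in $\rho$ then yields $\iint\pounds_A\int v_0 f\,dy\,dzd\mu$. Along the way one also checks that $v_0\in L^\Phi(Q\times\Omega;\mathcal{X}^\Phi_A)$, which follows from the isometry of $\tau_\rho$ and Jensen's inequality.

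The main obstacle is the $\omega$-shift estimate in the third step, which needs uniform control of $\psi$ along short orbits, together with justifying the interchange of the limit with the $\rho$-integral. The argument also needs to make rigorous the application of (\ref{5.2'}) to products with $u_0$, which is done by approximating $u_0$ by elements of $L^\Phi(Q\times\Omega;A)$.
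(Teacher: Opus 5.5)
Your proposal is correct and follows essentially the same route as the paper. Both arguments use Fubini in $\rho$, the change of variables $z=x+\varepsilon_2\rho$, removal of the $\varphi$- and $\omega$-shifts (using $\varepsilon_2/\varepsilon_1\to 0$), the definition applied to the translated $g\in A$, translation invariance of $M$, and dominated convergence in $\rho$. The only differences are minor, and they tighten the argument. You avoid the paper's boundary terms $(II)$ and $(III)$, which it handles by uniform integrability, by observing that $\operatorname{supp}\varphi+\varepsilon_2 B_r\subset Q$ for small $\varepsilon$. You control the $\omega$-shift by a Lipschitz bound from bounded stochastic derivatives, where the paper uses strong continuity of $U$ in $L^{\widetilde{\Phi}}(\Omega)$. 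Finally, you explicitly check that $(v_\varepsilon)$ is bounded, which Definition \ref{d3.1} requires and the paper leaves implicit.
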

	
	\begin{remark}
		\label{r5.2}\emph{Assume Lemma \ref{l5.1} holds. Then as }$E\ni \varepsilon
		\rightarrow 0$\emph{, }%
		\begin{equation}
			\frac{1}{\left\vert B_{\varepsilon _{2}r}\right\vert }\int_{B_{\varepsilon
					_{2}r}}u_{\varepsilon }(x+y,\omega )dy\rightarrow \frac{1}{\left\vert
				B_{r}\right\vert }v_{0}\text{\  \emph{in} }L^{\Phi}(Q\times \Omega
			)\text{\emph{-weak }} \digamma\,\Sigma \text{\emph{.}}  \label{5.5'}
		\end{equation}%
		\emph{The above convergence result will be of particular interest in the
			Compactness theorem for the ergodic $H$-supralgebra.}
	\end{remark}
	
	\begin{proof}[\textit{Proof of Lemma} \ref{l5.1}]
		Let $\varphi \in \mathcal{C}_{0}^{\infty }(Q)$, $f\in \mathcal{C}^{\infty
		}(\Omega )$ and $g\in A$. One has 
		\begin{eqnarray*}
			&&\int_{Q\times \Omega }\left( \int_{B_{r}}u_{\varepsilon }(x+\varepsilon
			_{2}\rho ,\omega )d\rho \right) \varphi (x)f\left( T\left( \frac{x}{%
				\varepsilon _{1}}\right) \omega \right) g\left( \frac{x}{\varepsilon _{2}}%
			\right) dxd\mu \\
			&=&\int_{B_{r}}\left( \int_{Q\times \Omega }u_{\varepsilon }(x+\varepsilon
			_{2}\rho ,\omega )\varphi (x)f\left( T\left( \frac{x}{\varepsilon _{1}}%
			\right) \omega \right) g\left( \frac{x}{\varepsilon _{2}}\right) dxd\mu
			\right) d\rho .
		\end{eqnarray*}%
		In view of the Lebesgue dominated convergence theorem, (\ref{5.3'}) will be
		checked as soon as we show that for each fixed $\rho \in \mathbb{\mathbb{R}}%
		^{d}$, 
		\begin{eqnarray*}
			&&\int_{Q\times \Omega }u_{\varepsilon }(x+\varepsilon _{2}\rho ,\omega
			)\varphi (x)f\left( T\left( \frac{x}{\varepsilon _{1}}\right) \omega \right)
			g\left( \frac{x}{\varepsilon _{2}}\right) dxd\mu \\
			&\rightarrow &\int_{Q\times \Omega } \pounds_{A}\int \tau _{-\rho
				}u_{0}(x,\omega ,y)\varphi (x)f(\omega ) g(y) dy dxd\mu  \;\; \text{
				when }E\ni \varepsilon \rightarrow 0\text{.}
		\end{eqnarray*}%
		First, let us beginning by noticing that since $\mathcal{G}_{1}$ is a
		bounded linear operator of $\mathcal{X}_{A}^{1}= \mathcal{B}_{A}^{1}$ into $L^{1}(\Delta (A))$ (see, \cite{sango2}),
		we have 
		\begin{equation*}
			\mathcal{G}_{1}\left( \int_{Br}u_{0}(x,\omega ,\cdot +\rho )d\rho \right)
			=\int_{B_{r}}\mathcal{G}_{1}(u_{0}(x,\omega ,\cdot +\rho ))d\rho
		\end{equation*}%
		where $u_{0}$ is as above. So let $a\in \mathbb{\mathbb{R}}^{N}$ and let $%
		\varphi $, $f$ and $g$ be as above. One has 
		\begin{equation*}
			\begin{array}{l}
			\displaystyle 	\int_{Q\times \Omega }u_{\varepsilon }(x-\varepsilon _{2}a,\omega )\varphi
				(x)f\left( T\left( \frac{x}{\varepsilon _{1}}\right) \omega \right) g\left( 
				\frac{x}{\varepsilon _{2}}\right) dxd\mu  \\ 
			\displaystyle 	\;\;=\int_{\left( Q-\varepsilon _{2}a\right) \times \Omega }u_{\varepsilon
				}(x,\omega )\varphi (x+\varepsilon _{2}a)f\left( T\left( \frac{x}{%
					\varepsilon _{1}}+\frac{\varepsilon _{2}}{\varepsilon _{1}}a\right) \omega
				\right) g\left( \frac{x}{\varepsilon _{2}}+a\right) dxd\mu \\ 
			\displaystyle 	\;\;=\int_{Q\times \Omega }u_{\varepsilon }(x,\omega )\varphi (x+\varepsilon
				_{2}a)f\left( T\left( \frac{x}{\varepsilon _{1}}+\frac{\varepsilon _{2}}{%
					\varepsilon _{1}}a\right) \omega \right) g\left( \frac{x}{\varepsilon _{2}}%
				+a\right) dxd\mu \\ 
			\displaystyle 	\;\;\;\;\;-\int_{\left( Q\backslash (Q-\varepsilon _{2}a)\right) \times
					\Omega }u_{\varepsilon }(x,\omega )\varphi (x+\varepsilon _{2}a)f\left(
				T\left( \frac{x}{\varepsilon _{1}}+\frac{\varepsilon _{2}}{\varepsilon _{1}}%
				a\right) \omega \right) g\left( \frac{x}{\varepsilon _{2}}+a\right) dxd\mu
				\\ 
			\displaystyle 	\;\;\;\;\;\;\;+\int_{\left( (Q-\varepsilon _{2}a)\backslash Q\right) \times
					\Omega }u_{\varepsilon }(x,\omega )\varphi (x+\varepsilon _{2}a)f\left(
				T\left( \frac{x}{\varepsilon _{1}}+\frac{\varepsilon _{2}}{\varepsilon _{1}}%
				a\right) \omega \right) g\left( \frac{x}{\varepsilon _{2}}+a\right) dxd\mu
				\\ 
				=(I)-(II)+(III).%
			\end{array}%
		\end{equation*}%
		As for $(I)$ we have 
		\begin{eqnarray*}
			(I) &=&\int_{Q\times \Omega }u_{\varepsilon }(x,\omega )\varphi (x)f\left(
			T\left( \frac{x}{\varepsilon _{1}}+\frac{\varepsilon _{2}}{\varepsilon _{1}}%
			a\right) \omega \right) (\tau _{-a}g)\left( \frac{x}{\varepsilon _{2}}%
			\right) dxd\mu \\
			&&+\int_{Q\times \Omega }u_{\varepsilon }(x,\omega )[\varphi (x+\varepsilon
			_{2}a)-\varphi (x)]f\left( T\left( \frac{x}{\varepsilon _{1}}+\frac{%
				\varepsilon _{2}}{\varepsilon _{1}}a\right) \omega \right) (\tau
			_{-a}g)\left( \frac{x}{\varepsilon _{2}}\right) dxd\mu \\
			\;\;\;\;\;\; &=&(I_{1})+(I_{2}).
		\end{eqnarray*}%
		But 
		\begin{eqnarray*}
			(I_{1}) &=&\int_{Q\times \Omega }u_{\varepsilon }(x,\omega )\varphi
			(x)f\left( T\left( \frac{x}{\varepsilon _{1}}\right) \omega \right) (\tau
			_{-a}g)\left( \frac{x}{\varepsilon _{2}}\right) dxd\mu \\
			&&+\int_{Q\times \Omega }u_{\varepsilon }(x,\omega )\varphi (x)(\tau
			_{-a}g)\left( \frac{x}{\varepsilon _{2}}\right) \left[ f\left( T\left( \frac{%
				x}{\varepsilon _{1}}+\frac{\varepsilon _{2}}{\varepsilon _{1}}a\right)
			\omega \right) -f\left( T\left( \frac{x}{\varepsilon _{1}}\right) \omega
			\right) \right] dxd\mu \\
			&=&(I_{1}^{\prime })+(I_{2}^{\prime }).
		\end{eqnarray*}%
		The $H$-supralgebra $A$ being translation invariant, we have $\tau _{-a}g\in
		A$ and so, 
		\begin{equation*}
			(I_{1}^{\prime })\rightarrow \int_{Q\times \Omega } \pounds_{A}\int 
				u_{0}(x,\omega ,y)\varphi (x)f(\omega ) \tau _{-a}g(y) dy
			dxd\mu  \;\; \text{\ as }E\ni \varepsilon \rightarrow 0.
		\end{equation*}%
		But 
		\begin{eqnarray*}
			\pounds_{A} \int u_{0}(x,\omega ,y) \tau _{-a}g(y) dy
			&=&M(u_{0}(x,\omega ,\cdot )(\tau _{-a}g)) \\
			&=&M(\tau _{-a}[\tau _{a}u_{0}(x,\omega ,\cdot )g]) \\
			&=&M(\tau _{a}u_{0}(x,\omega ,\cdot )g)\text{\ (see (\ref{5.2'}))} \\
			&=&\pounds_{A}\int \tau _{a}u_{0}(x,\omega ,y) g%
			(y)dy.
		\end{eqnarray*}%
		Note that here we have identified $u_{0}(x,\omega ,\cdot )\in \mathcal{X}%
		_{A}^{\Phi}$ with its representative still denoted by $u_{0}(x,\omega ,\cdot
		)\in \mathfrak{X}_{A}^{\Phi}$ so that $M_{1}(u_{0}(x,\omega ,\cdot ))=M(u_{0}(x,\omega
		,\cdot ))$, $u_{0}(x,\omega ,\cdot )$ on the left-hand side of the above
		equality being an equivalence class whereas $u_{0}(x,\omega ,\cdot )$ on the
		right-hand side is one of its representative. For $(I_{2}^{\prime })$, we
		have 
		\begin{equation*}
			\begin{array}{l}
				|(I'_{2})|  \leq  2 \|u_{\epsilon}\|_{\Phi,Q\times\Omega} \times \|\varphi\|_{\infty} \times \|g\|_{\infty} \times \\ \left(  \inf\left\{ \delta>0 :  \, \displaystyle \iint_{Q\times \Omega} \widetilde{\Phi}\left( \dfrac{ \left| f\left( T\left( \frac{%
						x}{\varepsilon _{1}}+\frac{\varepsilon _{2}}{\varepsilon _{1}}a\right)
					\omega \right) -f\left( T\left( \frac{x}{\varepsilon _{1}}\right) \omega
					\right) \right| }{\delta}  \right) dx d\mu \leq 1  \right\} \right).
			\end{array}
		\end{equation*}	
		But 
		\begin{equation*}
			\begin{array}{l}
				\inf\left\{ \delta>0 :  \, \displaystyle \iint_{Q\times \Omega} \widetilde{\Phi}\left( \dfrac{ \left| f\left( T\left( \frac{%
						x}{\varepsilon _{1}}+\frac{\varepsilon _{2}}{\varepsilon _{1}}a\right)
					\omega \right) -f\left( T\left( \frac{x}{\varepsilon _{1}}\right) \omega
					\right) \right| }{\delta}  \right) dx d\mu \leq 1  \right\}  \\
				=  \inf\left\{ \delta>0 :  \, \displaystyle \iint_{Q\times \Omega} \widetilde{\Phi}\left( \dfrac{ \left| \left(U\left(\frac{%
						x}{\varepsilon _{1}}+\frac{\varepsilon _{2}}{\varepsilon _{1}}a\right)f \right)(\omega) - \left(U\left(\frac{x}{\varepsilon_{1}}\right)f \right)(\omega) \right| }{\delta}  \right) dx d\mu \leq 1  \right\}. 
			\end{array}
		\end{equation*}
		Since the group $U(x)$ is strongly continuous in $L^{\widetilde{\Phi}}(\Omega)$ [see (\ref{lem38})], we get immediately that 
		\begin{equation*}
			\begin{array}{c}
				\inf\left\{ \delta>0 :  \, \displaystyle \iint_{Q\times \Omega} \widetilde{\Phi}\left( \dfrac{ \left|f\left( T\left( \frac{%
						x}{\varepsilon _{1}}+\frac{\varepsilon _{2}}{\varepsilon _{1}}a\right)
					\omega \right) -f\left( T\left( \frac{x}{\varepsilon _{1}}\right) \omega
					\right)  \right| }{\delta}  \right) dx d\mu \leq 1  \right\}  \rightarrow 0, \quad \textup{as} \; \epsilon \to 0.
			\end{array}
		\end{equation*}
		Thus $(I_{2}^{\prime })\rightarrow 0$ as $E\ni \varepsilon \rightarrow 0$.
		Finally since the sequence $(u_{\varepsilon })_{\varepsilon \in E}$ is
		bounded in $L^{\Phi}(Q\times \Omega )$, this sequence is uniformly
		integrable in $L^{1}(Q\times \Omega )$, so that from the inequality 
		\begin{eqnarray*}
			&&\int_{\left( (Q-\varepsilon _{2}a)\Delta Q\right) \times \Omega
			}\left\vert u_{\varepsilon }(x,\omega )\right\vert \left\vert \varphi
			(x+\varepsilon _{2}a)\right\vert \left\vert f\left( T\left( \frac{x}{%
				\varepsilon _{1}}+\frac{\varepsilon _{2}}{\varepsilon _{1}}a\right) \omega
			\right) \right\vert \left\vert g\left( \frac{x}{\varepsilon _{2}}+a\right)
			\right\vert dxd\mu \\
			&\leq &\left\Vert \varphi \right\Vert _{\infty }\left\Vert f\right\Vert
			_{\infty }\left\Vert g\right\Vert _{\infty }\int_{\left( (Q-\varepsilon
				_{2}a)\Delta Q\right) \times \Omega }\left\vert u_{\varepsilon }(x,\omega
			)\right\vert dxd\mu ,
		\end{eqnarray*}%
		we see that $(II)$ and $(III)$ go towards $0$ as $E\ni \varepsilon
		\rightarrow 0$; here the symbol $\Delta $ between the sets $(Q-\varepsilon
		_{2}a)$ and $Q$ denotes the symmetric difference between these two sets.
		Hence the lemma.
	\end{proof}
	
	We are now able to state and prove the most important compactness result of
	the paper. It will be of capital interest in the next sections.
	
	\begin{theorem}
		\label{t3.3} Let $\Phi \in \Delta_{2}$ be a $N$-function and $\widetilde{\Phi} \in \Delta_{2}$ its conjugate and $Q$ being an open subset of $\mathbb{\mathbb{R}}^{d}$. Let $A$ be an ergodic supralgebra on $\mathbb{\mathbb{R}}_{y}^{d}$ such that $A$ is translation invariant and  each of its elements is uniformly
		continuous.
		Assume $(u_{\varepsilon })_{\varepsilon \in E}$ is a sequence in $%
		L^{\Phi}(Q\times \Omega )$ such that:
		
		\begin{itemize}
			
			\item[(i)] $(u_{\varepsilon })_{\varepsilon \in E}$ is bounded in $L^{\Phi}\left(Q\times\Omega\right)$ and $(D_{x}u_{\epsilon})_{\epsilon\in E}$ is bounded in $L^{\Phi}\left(Q\times\Omega\right)^{d}$.
		\end{itemize}
		
		Then there exist $u_{0} \in W^{1}L^{\Phi}_{D_{x}}(Q; I_{nv}^{\Phi}(\Omega))$, $u_{1} \in L^{1}\left(Q; W^{1}_{\#}L^{\Phi}(\Omega)\right)$, $u_{2}\in L^{1}(Q\times\Omega; W^{1}_{\#}\mathcal{X}^{\Phi}_{A})$ and a subsequence $E'$ from $E$ such that, as $E' \ni \epsilon \rightarrow 0$, 
		
		\begin{itemize}
			
			\item[(ii)] $u_{\varepsilon }\rightarrow u_{0}$  in $L^{\Phi}(Q\times
			\Omega )$-weak $\digamma\,\Sigma$;
			
			\item[(iii)] $Du_{\varepsilon }\rightarrow Du_{0}+\overline{D}_{\omega }u_{1}+%
			\overline{D}_{y}u_{2}$  in $L^{\Phi}(Q\times \Omega )^{d}$-weak $\digamma\,\Sigma $,  with $u_{1} \in L^{\Phi}(Q; W^{1}_{\#}L^{\Phi}(\Omega))$, $u_{2}\in L^{\Phi}(Q\times\Omega; W^{1}_{\#}\mathcal{X}^{\Phi}_{A})$  when $\widetilde{\Phi} \in \Delta'$.  
		\end{itemize}
	\end{theorem}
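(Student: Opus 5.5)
By Theorem \ref{t3.1} (applicable since $\Phi,\widetilde{\Phi}\in\Delta_2$ gives $\Phi\in\Delta_2\cap\nabla_2$), I will extract a subsequence $E'$ and limits $u_0\in L^{\Phi}(Q\times\Omega;\mathcal{X}^{\Phi}_A)$, $v=(v_i)\in L^{\Phi}(Q\times\Omega;\mathcal{X}^{\Phi}_A)^d$ with $u_\varepsilon\to u_0$ and $D_{x_i}u_\varepsilon\to v_i$ in $L^{\Phi}(Q\times\Omega)$-weak $\digamma\,\Sigma$. The proof then identifies the structure of $u_0$ and $v$ by testing against suitably scaled oscillating functions.

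The first step shows that $u_0$ depends on neither $y$ nor $\omega$ in the invariant sense. I will take $f=\varphi\otimes\psi\otimes g$ with $g\in A^\infty$ and write
\[
\varepsilon_2\iint D_{x_i}u_\varepsilon\,\varphi\,\psi^\varepsilon g^\varepsilon
=-\varepsilon_2\iint u_\varepsilon\big[(\partial_i\varphi)\psi^\varepsilon g^\varepsilon+\tfrac{1}{\varepsilon_1}\varphi(D_{i,\infty}\psi)^\varepsilon g^\varepsilon\big]-\iint u_\varepsilon\varphi\psi^\varepsilon(\partial_i g)^\varepsilon ,
\]
using (\ref{lem18}). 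Since $\varepsilon_2/\varepsilon_1\to0$, passing to the limit gives $\pounds_A\!\int u_0\,\partial_i g\,dy=0$ for all $g\in A^\infty$. By the $\Phi$-totality and ergodicity of $A$ (Lemma \ref{p2.4}, which gives that a distribution on $\Delta(A)$ with zero gradient is constant), $u_0$ is independent of $y$. Next I will multiply by $\varepsilon_1$ and test with $\varphi\otimes\psi$ only, which gives $\int_\Omega u_0 D_{i,\infty}\psi\,d\mu=0$. Hence $D_{i,\Phi}u_0=0$, and by characterization (ii) of invariant functions, $u_0(x,\cdot)\in I^{\Phi}_{nv}(\Omega)$. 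Testing with $f=\varphi\otimes\psi$ with $\psi$ invariant, so that $\psi^\varepsilon=\psi$, and integrating by parts in $x$ then shows $\pounds_A\!\int v_i\,dy$ projected on invariants equals $\partial_{x_i}u_0$. This gives $u_0\in W^1L^{\Phi}_{D_x}(Q;I^{\Phi}_{nv}(\Omega))$.

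The second step splits $v-Du_0$. First I will use test functions $\varphi\otimes\Psi\otimes1$ with $\Psi\in\mathcal{V}^{\widetilde\Phi}_{\rm div}$ (stochastic divergence free). Integrating by parts, the $1/\varepsilon_1$ term vanishes and we get $\iint(\pounds_A\!\int v\,dy-Du_0)\cdot\Psi\varphi=0$. Lemma \ref{lem5} then yields, for a.e.\ $x$, a $u_1(x)\in W^1_\#L^{\Phi}(\Omega)$ with $\pounds_A\!\int v\,dy=Du_0+\overline D_\omega u_1$. Next I will test with $\varphi\otimes\psi\otimes G$, where $G\in (A^\infty)^d$ has $\mathrm{div}_yG=0$. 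Integrating by parts, the $1/\varepsilon_2$ term vanishes, and the $1/\varepsilon_1$ term $\iint u_\varepsilon\varphi (D_\omega\psi)^\varepsilon G^\varepsilon$ is killed because $u_0$ is invariant and $\pounds_A\!\int G\,dy$ pairs with $D_\omega\psi$ to give $\int u_0 D\psi=0$. Here Lemma \ref{l5.1} and Remark \ref{r5.2} are needed to control the term properly: averaging $u_\varepsilon$ over $B_{\varepsilon_2 r}$ and using ergodicity of $A$ lets us replace $u_\varepsilon$ by a quantity whose limit is $y$-independent. This gives $\pounds_A\!\int (v-Du_0-\overline D_\omega u_1)\cdot G\,dy=0$ for divergence-free $G$. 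The deterministic analogue of Lemma \ref{lem5} on $\Delta(A)$ (via Proposition \ref{t2.2}, which makes $\Delta(A)$ a dynamical system with $D_{i,\Phi}=\overline\partial/\partial y_i$) then produces $u_2(x,\omega)\in W^1_\#\mathcal{X}^{\Phi}_A$.

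The main obstacle is the $1/\varepsilon_1$ singular term in this last step and the measurability and integrability of $u_1,u_2$ in $x$ (and in $\omega$). Pointwise use of Lemma \ref{lem5} only gives $\|u_1(x)\|\lesssim\|v(x)\|_{L^\Phi}$, hence $u_1\in L^1$ via the Orlicz norm being dominated by the modular plus one. When $\widetilde\Phi\in\Delta'$, property (ix) and (\ref{be1}) upgrade $u_1$ to $L^{\Phi}(Q;W^1_\#L^\Phi(\Omega))$, and $u_2$ likewise to $L^{\Phi}(Q\times\Omega;W^1_\#\mathcal{X}^\Phi_A)$. Measurability follows because $\overline D_\omega$ and $\overline D_y$ are isometric embeddings with closed range, so $u_1,u_2$ are the compositions of $v$ with continuous left inverses.
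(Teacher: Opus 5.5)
Your handling of $u_0$ and $u_1$ is sound. In particular, testing with bounded invariant $\psi$, for which $\psi^\varepsilon=\psi$, is a clean substitute for the paper's appeal to \cite{Bourgeat}. The construction of $u_2$, however, has a genuine gap. With the test function $\varphi(x)\psi(T(x/\varepsilon_1)\omega)G(x/\varepsilon_2)$ and $\mathrm{div}_yG=0$, integration by parts leaves the term
\[
\frac{1}{\varepsilon_1}\iint_{Q\times\Omega}u_\varepsilon\,\varphi\,(D_\omega\psi)^\varepsilon\cdot G^\varepsilon\,dxd\mu .
\]
Invariance of $u_0$ only tells you that the integral itself tends to $\iint u_0\,\varphi\,D_\omega\psi\cdot M(G)\,dxd\mu=0$. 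It gives no rate, so after multiplication by $\varepsilon_1^{-1}$ the term need not vanish or even stay bounded. Your appeal to Lemma \ref{l5.1} and Remark \ref{r5.2} does not supply that rate either: replacing $u_\varepsilon$ by something with a $y$-independent limit again controls only the $O(1)$ limit.

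Two natural repairs do not work. Taking $\psi$ invariant removes the term, but then you only learn about the invariant projection, not about a.e. $\omega$. Restricting to $G=\mathrm{div}_y\Lambda$ with $\Lambda\in A^\infty$ antisymmetric makes the term $O(\varepsilon_2/\varepsilon_1)$, since $\sum_{i,j}\Lambda_{ij}D_iD_j\psi=0$. But then you would need orthogonality to such fields to characterize $\overline{D}_yW^1_\#\mathcal{X}^\Phi_A$, and that is not available for a general ergodic algebra.

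The missing idea is the paper's difference quotient
\[
z^r_\varepsilon=\frac{1}{\varepsilon_2}\Big(u_\varepsilon-\frac{1}{|B_{\varepsilon_2 r}|}\int_{B_{\varepsilon_2 r}}u_\varepsilon(\cdot+\rho,\cdot)\,d\rho\Big).
\]
It is bounded in $L^{\Phi}(Q\times\Omega)$ by the bound on $Du_\varepsilon$, using the mean-value formula and Jensen. Since $\partial_{x_i}u_\varepsilon-\frac{1}{|B_{\varepsilon_2 r}|}\int_{B_{\varepsilon_2 r}}\partial_{x_i}u_\varepsilon(\cdot+\rho,\cdot)\,d\rho=\varepsilon_2\,\partial_{x_i}z^r_\varepsilon$, testing against $\varphi f^\varepsilon g^\varepsilon$ with arbitrary $g\in A^\infty$ produces the stochastic derivative with prefactor $\varepsilon_2/\varepsilon_1\to0$. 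So no singular term ever appears, and $g$ need not be divergence free.

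The rest of the paper's argument then runs as follows:
\begin{itemize}
\item Extracting $z^r_\varepsilon\to z_r$ weakly stochastically $\Sigma$ and using Remark \ref{r5.2} gives $\overline{\partial}z_r/\partial y_i=v_i-\frac{1}{|B_r|}\int_{B_r}v_i(\cdot+\rho)\,d\rho$.
\item Ergodicity of $A$ (Lemma \ref{p2.4}) makes the associated $g_r$ Cauchy in $W^1_\#\mathcal{X}^\Phi_A$ as $r\to\infty$, which yields $\overline{D}_yu_2=\mathbf{v}-M(\mathbf{v})$.
\item Your divergence-free stochastic test then identifies $M(\mathbf{v})-Du_0=\overline{D}_\omega u_1$ via Lemma \ref{lem5}.
\end{itemize}
This route needs no deterministic analogue of Lemma \ref{lem5} on $\Delta(A)$; ergodicity of $A$ does that work instead.
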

	
	\begin{proof}
		By Theorem \ref{t3.1}, there exist a subsequence $E^{\prime }$ from $E$, a
		function $u_{0}\in L^{\Phi}(Q\times \Omega ;\mathcal{X}_{A}^{\Phi})$ and a vector
		function $\mathbf{v}=(v_{i})_{1\leq i\leq d}\in L^{\Phi}(Q\times \Omega ;%
		\mathcal{X}_{A}^{\Phi})^{d}$ such that, as $E^{\prime }\ni \varepsilon
		\rightarrow 0$, we have $u_{\varepsilon }\rightarrow u_{0}$  in $%
		L^{\Phi}(Q\times \Omega )$-weak $\digamma\,\Sigma $ and $Du_{\varepsilon }\rightarrow 
		\mathbf{v}$  in $L^{\Phi}(Q\times \Omega )^{d}$-weak $\digamma\,\Sigma $.
		
		\noindent At this level, the proof consists of two parts. We must check
		that:\ Part (\textbf{I}) ($a$) $u_{0}$ does not depend upon $y$, that is $%
		\overline{D}_{y}u_{0}=0$, ($b$) $u_{0}(x,\cdot )\in I_{nv}^{\Phi}(\Omega )$,
		that is $D_{\omega }u_{0}(x,\cdot )=0$ or equivalently $\int_{\Omega
		}u_{0}(x,\cdot )D_{i,p}\varphi d\mu =0\;\forall \varphi \in \mathcal{C}%
		^{\infty }(\Omega )$ and ($c$) $u_{0}\in W^{1}L^{\Phi}_{D_{x}}(Q; I_{nv}^{\Phi}(\Omega))$;
		Part (\textbf{II}) There exist two functions  $u_{1} \in L^{1}\left(Q; W^{1}_{\#}L^{\Phi}(\Omega)\right)$  and $u_{2}\in L^{1}(Q\times\Omega; W^{1}_{\#}\mathcal{X}^{\Phi}_{A})$  such that $\mathbf{v}=Du_{0}+\overline{D}_{\omega }u_{1}+%
		\overline{D}_{y}u_{2}$. The fact that $u_{1} \in L^{\Phi}(Q; W^{1}_{\#}L^{\Phi}(\Omega))$, $u_{2}\in L^{\Phi}(Q\times\Omega; W^{1}_{\#}\mathcal{X}^{\Phi}_{A})$  if $\widetilde{\Phi} \in \Delta'$   will follow from \cite[Theorem 12]{franck} and \cite[Remark 2]{tacha2}.
		
		Let us first prove (\textbf{I}). ($a$) Let $\Phi _{\varepsilon }(x,\omega
		)=\varepsilon _{2}\varphi (x)f(T(x/\varepsilon _{1})\omega )g(x/\varepsilon
		_{2})$ for $(x,\omega )\in Q\times \Omega $, where $\varphi \in \mathcal{C}%
		_{0}^{\infty }(Q)$, $f\in \mathcal{C}^{\infty }(\Omega )$ and $g\in
		A^{\infty }$. Then 
		\begin{eqnarray*}
			\int_{Q\times \Omega }\frac{\partial u_{\varepsilon }}{\partial x_{i}}\Phi
			_{\varepsilon }dxd\mu &=&-\int_{Q\times \Omega }\varepsilon
			_{2}u_{\varepsilon }f^{\varepsilon }g^{\varepsilon }\frac{\partial \varphi }{%
				\partial x_{i}}dxd\mu -\int_{Q\times \Omega }u_{\varepsilon }\varphi
			f^{\varepsilon }\left( D_{y_{i}}g\right) ^{\varepsilon }dxd\mu \\
			&&-\int_{Q\times \Omega }\frac{\varepsilon _{2}}{\varepsilon _{1}}%
			u_{\varepsilon }\varphi g^{\varepsilon }(D_{i,\omega }f)^{\varepsilon }dxd\mu
		\end{eqnarray*}%
		where $D_{y_{i}}g=\partial g/\partial y_{i}$. Letting $E^{\prime }\ni
		\varepsilon \rightarrow 0$ we get 
		\begin{equation*}
			\iint_{Q\times \Omega} \pounds_{A} \int u_{0}\varphi 
				D_{y_{i}}gf dydxd\mu  = 0\text{,}
		\end{equation*}%
		hence $\pounds_{A}\int u_{0}(x,\omega ,\cdot )D_{y_{i}}g dy = 0$ for all $g\in A^{\infty }$ and all $1\leq i\leq d$, which means
		that $u_{0}$ does not depend on $y$ since the $H$-supralgebra $A$ is ergodic.
		
		($b$) Let $\Phi _{\varepsilon }(x,\omega )=\varepsilon _{1}\varphi
		(x)f(T(x/\varepsilon _{1})\omega )$ for $(x,\omega )\in Q\times \Omega $
		where $\varphi \in \mathcal{C}_{0}^{\infty }(Q)$ and $f\in \mathcal{C}%
		^{\infty }(\Omega )$. Then proceeding as above we get $\int_{\Omega
		}u_{0}(x,\cdot )D_{i,\omega }fd\mu =0$ for all $1\leq i\leq d$ and $f\in 
		\mathcal{C}^{\infty }(\Omega )$, which is equivalent to say that $%
		u_{0}(x,\cdot )\in I_{nv}^{\Phi}(\Omega )$ for a.e. $x\in Q$.
		
		($c$)  Hypothesis (i) implies that the sequence $(u_{\epsilon})_{\epsilon\in E'}$ is bounded in $W^{1}L^{\Phi}_{D_{x}}(Q ; L^{\Phi}(\Omega))$, which yields the existence of a subsequence of $E'$ not relabeled and of a function $u \in W^{1}L^{\Phi}_{D_{x}}(Q ; L^{\Phi}(\Omega))$ such that $u_{\epsilon} \rightarrow u$ in $W^{1}L^{\Phi}_{D_{x}}(Q ; L^{\Phi}(\Omega))$-weak as $E' \ni \epsilon \rightarrow 0$. In particular $\displaystyle \int_{\Omega} u_{\epsilon}(\cdot,\omega)\psi(\omega)d\mu \rightarrow \int_{\Omega} u(\cdot,\omega)\psi(\omega)d\mu$ in $L^{1}(Q)$-weak for all $\psi \in I_{nv}^{\widetilde{\Phi}}(\Omega)$. Therefore, using \cite{Bourgeat} (see in particular [Lemma 3.6] therein, replacing the Lebesgue spaces by Orlicz spaces), we get at once $u_{0} \in W^{1}L^{\Phi}_{D_{x}}(Q ; L^{\Phi}(\Omega))$ so that $u_{0} \in W^{1}L^{\Phi}_{D_{x}}(Q ; I_{nv}^{\Phi}(\Omega))$.
		
		
		It	remains to check (\textbf{II}) here above.
		We begin by deriving the
		existence of $u_{2}\in L^{\Phi}(Q\times\Omega; W^{1}_{\#}\mathcal{X}^{\Phi}_{A})$. For
		that purpose, let $r>0$ be freely fixed. Let $B_{\varepsilon _{2}r}$ denote
		the open ball in $\mathbb{\mathbb{R}}^{d}$ centered at the origin and of
		radius $\varepsilon _{2}r$. By the equalities 
		\begin{eqnarray*}
			&&\frac{1}{\varepsilon _{2}}\left( u_{\varepsilon }(x,\omega )-\frac{1}{%
				\left\vert B_{\varepsilon _{2}r}\right\vert }\int_{B_{\varepsilon
					_{2}r}}u_{\varepsilon }(x+\rho ,\omega )d\rho \right) \\
			&=&\frac{1}{\varepsilon _{2}}\frac{1}{\left\vert B_{\varepsilon
					_{2}r}\right\vert }\int_{B_{\varepsilon _{2}r}}\left( u_{\varepsilon
			}(x,\omega )-u_{\varepsilon }(x+\rho ,\omega )\right) d\rho \\
			&=&\frac{1}{\varepsilon _{2}}\frac{1}{\left\vert B_{r}\right\vert }%
			\int_{B_{r}}\left( u_{\varepsilon }(x,\omega )-u_{\varepsilon
			}(x+\varepsilon _{2}\rho ,\omega )\right) d\rho \\
			&=&-\frac{1}{\left\vert B_{r}\right\vert }\int_{B_{r}}d\rho
			\int_{0}^{1}Du_{\varepsilon }(x+t\varepsilon _{2}\rho ,\omega )\cdot \rho dt
		\end{eqnarray*}%
		where the dot denotes the usual Euclidean inner product in $\mathbb{\mathbb{R%
		}}^{d}$, we deduce from the boundedness of $(u_{\varepsilon })_{\varepsilon
			\in E^{\prime }}$ in $W^{1}L^{\Phi}_{D_{x}}(Q; L^{\Phi}(\Omega))$ and Jensen's inequality, that the sequence $%
		(z_{\varepsilon }^{r})_{\varepsilon \in E^{\prime }}$ defined by 
		\begin{equation*}
			z_{\varepsilon }^{r}(x,\omega )=\frac{1}{\varepsilon _{2}}\left(
			u_{\varepsilon }(x,\omega )-\frac{1}{\left\vert B_{\varepsilon
					_{2}r}\right\vert }\int_{B_{\varepsilon _{2}r}}u_{\varepsilon }(x+\rho
			,\omega )d\rho \right) \;((x,\omega )\in Q\times \Omega ,\varepsilon \in
			E^{\prime })
		\end{equation*}%
		is bounded in $L^{\Phi}(Q\times \Omega )$. It is important to note that in
		general the function $z_{\varepsilon }^{r}$ is well defined since $%
		u_{\varepsilon }$ and $Du_{\varepsilon }$ can be naturally extended off $Q$
		as elements of $L^{\Phi}(\Omega ;L_{\text{loc}}^{\Phi}(\mathbb{R}^{d}))$ and $%
		L^{\Phi}(\Omega ;L_{\text{loc}}^{\Phi}(\mathbb{R}^{d})^{d})$, respectively. Once
		more, by virtue of Theorem \ref{t3.1} we find that there exist a subsequence
		from $E^{\prime }$ (not relabeled) and a function $z_{r}$ in $L^{\Phi}(Q\times
		\Omega ;\mathcal{X}_{A}^{\Phi})$ such that, as $E^{\prime }\ni \varepsilon
		\rightarrow 0$ 
		\begin{equation}
			z_{\varepsilon }^{r}\rightarrow z_{r}\text{ in }L^{\Phi}(Q\times \Omega
			)\text{-weak }\digamma\,\Sigma \text{.}  \label{5.8'}
		\end{equation}%
		As $(z_{\varepsilon }^{r})_{\varepsilon \in E^{\prime }}$ is bounded in $%
		L^{\Phi}(Q\times \Omega )$ we have (since $\varepsilon _{2}$, $\varepsilon
		_{2}/\varepsilon _{1}\rightarrow 0$ as $E^{\prime }\ni \varepsilon
		\rightarrow 0$) that 
		\begin{equation}
			\varepsilon _{2}z_{\varepsilon }^{r}\rightarrow 0\text{\ in }L^{\Phi}(Q\times
			\Omega )\text{ and }\frac{\varepsilon _{2}}{\varepsilon _{1}}z_{\varepsilon
			}^{r}\rightarrow 0\text{ in }L^{\Phi}(Q\times \Omega )\text{ as }E^{\prime }\ni
			\varepsilon \rightarrow 0.  \label{5.9'}
		\end{equation}%
		Now, for $\varphi \in \mathcal{C}_{0}^{\infty }(Q)$, $f\in \mathcal{C}%
		^{\infty }(\Omega )$ and $g\in A^{\infty }$ we have 
		\begin{equation}
			\begin{array}{l}
				\displaystyle 	\int_{Q\times \Omega }\left( \frac{\partial u_{\varepsilon }}{\partial x_{i}}%
				(x,\omega )-\frac{1}{\left\vert B_{\varepsilon _{2}r}\right\vert }%
				\int_{B_{\varepsilon _{2}r}}\frac{\partial u_{\varepsilon }}{\partial x_{i}}%
				(x+\rho ,\omega )d\rho \right) \varphi (x)f\left( T\left( \frac{x}{%
					\varepsilon _{1}}\right) \omega \right) g\left(\frac{x}{\varepsilon _{2}}\right)dxd\mu
				\\ 
				\displaystyle 	\;\;=-\int_{Q\times \Omega }\varepsilon _{2}z_{\varepsilon }^{r}(x,\omega
				)f\left( T\left( \frac{x}{\varepsilon _{1}}\right) \omega \right) g\left(\frac{x}{%
					\varepsilon _{2}}\right)\frac{\partial \varphi }{\partial x_{i}}(x)dxd\mu \\ 
			\displaystyle 	 \;\;\;\;\;	-\int_{Q\times \Omega }\frac{\varepsilon _{2}}{\varepsilon _{1}}%
				z_{\varepsilon }^{r}(x,\omega )\varphi (x)g(\frac{x}{\varepsilon _{2}}%
				)\left( D_{i,\omega }f\right) (T\left( \frac{x}{\varepsilon _{1}}\right)
				\omega )dxd\mu \\ 
			\displaystyle 	\;\;\;\;\;-\int_{Q\times \Omega }z_{\varepsilon }^{r}(x,\omega )\varphi
				(x)f\left( T\left( \frac{x}{\varepsilon _{1}}\right) \omega \right) \frac{%
					\partial g}{\partial y_{i}}\left( \frac{x}{\varepsilon _{2}}\right) dxd\mu .%
			\end{array}
			\label{5.10'}
		\end{equation}%
		Passing to the limit in (\ref{5.10'}) (as $E^{\prime }\ni \varepsilon
		\rightarrow 0$) using conjointly (\ref{5.8'}), (\ref{5.9'}) and Remark \ref%
		{r5.2} (see (\ref{5.5'}) therein) one gets 
		\begin{equation*}
			\begin{array}{l}
				\displaystyle 	\iint_{Q\times \Omega} \pounds_{A}\int \left( v_{i}(x,\omega
				,\cdot )-\frac{1}{\left\vert B_{r}\right\vert }\int_{B_{r}}v_{i}(x,\omega
				,\cdot +\rho )d\rho \right) (s)\varphi (x)f(\omega ) g dydxd\mu
				 \\ 
				\displaystyle 	\;\;\;=-\iint_{Q\times \Omega} \pounds_{A}\int z_{r}(x,\omega
				,y)\varphi (x)f(\omega )\partial _{i} g(y) dydxd\mu  ,%
			\end{array}%
		\end{equation*}%
		the derivative $\partial _{i}$ in front of $g$ being the partial
		derivative of index $i$ with respect to $A$ 
		(see also (\ref{2.7'}) therein). Therefore, because of
		the arbitrariness of $\varphi $, $f$ and $g$, we are led to  
		\begin{equation*}
			\frac{\overline{\partial }z_{r}}{\partial y_{i}}(x,\omega ,\cdot
			)=v_{i}(x,\omega ,\cdot )-\frac{1}{\left\vert B_{r}\right\vert }%
			\int_{B_{r}}v_{i}(x,\omega ,\cdot +\rho )d\rho \text{\ a.e. in }\mathbb{R}%
			_{y}^{d}\text{\ for }(x,\omega )\in Q\times \Omega
		\end{equation*}%
		$1\leq i\leq d$. Set $f_{r}(x,\omega
		,y)=z_{r}(x,\omega ,y)-M_{y}(z_{r}(x,\omega ,\cdot ))$ where here, $%
		z_{r}(x,\omega ,\cdot )\in \mathcal{X}_{A}^{\Phi}$ is viewed as its
		representative in $\mathfrak{X}_{A}^{\Phi}$ and $M_{y}=M$ standing here for the mean value
		with respect to $y$  (see in particular
		property (\textbf{2}) and equality (\ref{m1m}) in Subsection \ref{labelsub2sub3sect2}). Then $%
		M_{y}(f_{r})=0$ and moreover $\overline{D}_{y}f_{r}=\overline{D}_{y}z_{r}$
		so that $f_{r}(x,\omega,\cdot)\in \mathcal{X}_{A}^{\Phi}$ with $%
		\overline{\partial }f_{r}(x,\omega,\cdot)/\partial y_{i}\in \mathcal{X}_{A}^{\Phi}$ for a.e. $(x,\omega) \in Q\times\Omega$, that is, 
		\begin{equation*}
			f_{r}(x,\omega,\cdot)\in W^{1}\mathcal{X}_{A}^{\Phi}/\mathbb{C}\text{.}%
			\;\;\;\;\;\;\;\;\;\;\;\;\;\;\;\;\;\;\;\;
		\end{equation*}%
		So let $g_{r}=J_{1}\circ f_{r}$, where $J_{1}$ denotes the canonical mapping
		of $W^{1}\mathcal{X}_{A}^{\Phi}/\mathbb{C}$ into its separated completion $%
		W^{1}_{\#}\mathcal{X}_{A}^{\Phi}$. Then $g_{r}(x,\omega,\cdot)\in W^{1}_{\#}\mathcal{X}_{A}^{\Phi}$ for a.e. $(x,\omega) \in Q\times\Omega$
		and moreover 
		\begin{equation*}
			\frac{\overline{\partial }g_{r}}{\partial y_{i}}(x,\omega ,\cdot
			)=v_{i}(x,\omega ,\cdot )-\frac{1}{\left\vert B_{r}\right\vert }%
			\int_{B_{r}}v_{i}(x,\omega ,\cdot +\rho )d\rho \;\ \ (1\leq i\leq d)
		\end{equation*}%
		since $\frac{\overline{\partial }g_{r}}{\partial y_{i}}(x,\omega ,\cdot )=%
		\frac{\overline{\partial }f_{r}}{\partial y_{i}}(x,\omega ,\cdot )=\frac{%
			\overline{\partial }z_{r}}{\partial y_{i}}(x,\omega ,\cdot )$. Now, we also
		view $v_{i}(x,\omega ,\cdot )$ as its representative in $\mathfrak{X}_{A}^{\Phi}$. Taking
		this into account, we have 
		\begin{equation}
			\begin{array}{l}
				\left\Vert g_{r}(x,\omega ,\cdot )-g_{r^{\prime }}(x,\omega ,\cdot
				)\right\Vert _{W^{1}_{\#}\mathcal{X}_{A}^{\Phi}} \\ 
				\leq \left\Vert \overline{D}_{y}g_{r}(x,\omega ,\cdot )-\mathbf{v}(x,\omega
				,\cdot )+M_{y}(\mathbf{v}(x,\omega ,\cdot ))\right\Vert _{\Phi,A} \\ 
				+\left\Vert \overline{D}_{y}g_{r^{\prime }}(x,\omega ,\cdot )-\mathbf{v}%
				(x,\omega ,\cdot )+M_{y}(\mathbf{v}(x,\omega ,\cdot ))\right\Vert _{\Phi,A}.%
			\end{array}
			\label{5.11'}
		\end{equation}%
		But 
		\begin{equation*}
			\begin{array}{l}
				\left\Vert \overline{D}_{y}g_{r}(x,\omega ,\cdot )-\mathbf{v}(x,\omega
				,\cdot )+M_{y}(\mathbf{v}(x,\omega ,\cdot ))\right\Vert _{\Phi,A} \\ 
				\;\;\;=\left\Vert \frac{1}{\left\vert B_{r}\right\vert }\int_{B_{r}}\mathbf{v%
				}(x,\omega ,\cdot +\rho )d\rho -M_{y}(\mathbf{v}(x,\omega ,\cdot
				))\right\Vert _{\Phi,A}.%
			\end{array}%
		\end{equation*}%
		Therefore, since the algebra $A$ is ergodic, the right-hand side (and hence
		the left-hand side) of (\ref{5.11'}) goes to zero when $r,r^{\prime
		}\rightarrow +\infty $. Thus, the sequence $(g_{r}(x,\omega ,\cdot ))_{r>0}$
		is a Cauchy sequence in the Banach space $W^{1}_{\#}\mathcal{X}_{A}^{\Phi}$, whence
		the existence of a unique $u_{2}(x,\omega ,\cdot )\in W^{1}_{\#}\mathcal{X}_{A}^{\Phi} $ such that 
		\begin{equation*}
			g_{r}(x,\omega ,\cdot )\rightarrow u_{2}(x,\omega ,\cdot )\text{\ in }%
			W^{1}_{\#}\mathcal{X}_{A}^{\Phi} \text{\ as }r\rightarrow +\infty ,
		\end{equation*}%
		that is 
		\begin{equation*}
			\overline{D}_{y}g_{r}(x,\omega ,\cdot )\rightarrow \overline{D}%
			_{y}u_{2}(x,\omega ,\cdot )\text{\ in }(\mathcal{X}_{A}^{\Phi})^{d}\text{\ as }%
			r\rightarrow +\infty .
		\end{equation*}%
		Once again the ergodicity of $A$ and the uniqueness of the limit leads at
		once to 
		\begin{equation*}
			\overline{D}_{y}u_{2}(x,\omega ,\cdot )=\mathbf{v}(x,\omega ,\cdot )-M_{y}(%
			\mathbf{v}(x,\omega ,\cdot ))\text{\ a.e. in }\mathbb{R}^{d}\text{\ and for
				a.e. }(x,\omega )\in Q\times \Omega \text{.}
		\end{equation*}%
		We deduce the existence of a function $u_{2}:Q\times \Omega \rightarrow 
		W^{1}_{\#}\mathcal{X}_{A}^{\Phi}$, $(x,\omega )\mapsto u_{2}(x,\omega ,\cdot )$, lying in 
		$L^{1}(Q\times \Omega ;W^{1}_{\#}\mathcal{X}_{A}^{\Phi})$ such that 
		\begin{equation}
			\mathbf{v}-M(\mathbf{v})=\overline{D}_{y}u_{2}.\;\;\;\;  \label{Equ1}
		\end{equation}%
		
		Let us finally derive the existence of $u_{1}$. Let $\Phi _{\varepsilon
		}(x,\omega )=\varphi (x)\Psi (T(x/\varepsilon _{1})\omega )$ ($(x,\omega
		)\in Q\times \Omega )$) with $\varphi \in \mathcal{C}_{0}^{\infty }(Q)$ and $%
		\Psi =(\psi _{j})_{1\leq j\leq N}\in \mathcal{V}^{\widetilde{\Phi}}_{\text{div}}$ (i.e. $\text{%
			div}_{\omega ,\widetilde{\Phi}}\Psi =0$). Clearly 
		\begin{equation*}
			\sum_{j=1}^{d}\int_{Q\times \Omega }\frac{\partial u_{\varepsilon }}{%
				\partial x_{j}}\varphi \psi _{j}^{\varepsilon }dxd\mu
			=-\sum_{j=1}^{d}\int_{Q\times \Omega }u_{\varepsilon }\psi _{j}^{\varepsilon
			}\frac{\partial \varphi }{\partial x_{j}}dxd\mu
		\end{equation*}%
		where $\psi _{j}^{\varepsilon }(x,\omega )=\psi _{j}(T(x/\varepsilon
		_{1})\omega )$. Passing to the limit when $E^{\prime }\ni \varepsilon
		\rightarrow 0$ yields 
		\begin{equation*}
			\sum_{j=1}^{d}\iint_{Q\times \Omega} \pounds_{A} \int v_{j}\varphi
			\psi _{j} dydxd\mu  =-\sum_{j=1}^{d}\iint_{Q\times \Omega} \pounds_{A}\int u_{0}\psi _{j}\frac{\partial \varphi }{\partial x_{j}} dydxd\mu 
			\text{,}
		\end{equation*}%
		or equivalently, 
		\begin{equation*}
			\iint_{Q\times \Omega} \pounds_{A} \int \left( v
			-Du_{0}\right) \cdot \Psi \varphi  dydxd\mu =0\text{,}
		\end{equation*}%
		and so, as $\varphi $ is arbitrarily fixed in $\mathcal{C}_{0}^{\infty }(Q)$%
		, 
		\begin{equation*}
			\int_{\Omega} \pounds_{A} \int \left( \mathbf{v}(x,\omega
			,y)-Du_{0}(x,\omega )\right) \cdot \Psi (\omega )d\mu dy =0\;\;\forall
			\Psi \in \mathcal{V}^{\widetilde{\Phi}}_{\text{div}}.
		\end{equation*}%
		This is also equivalent to 
		\begin{equation*}
			\int_{\Omega }\left( M(\mathbf{v})-Du_{0}\right) \cdot \Psi d\mu =0\;\;\text{%
				for all }\Psi \in \mathcal{V}^{\widetilde{\Phi}}_{\text{div}}\text{.}
		\end{equation*}%
		Therefore, the Lemma \ref{lem5} provides us with a function $%
		u_{1}(x,\cdot )\in W^{1}_{\#}L^{\Phi}(\Omega)$ such that 
		\begin{equation}
			M(\mathbf{v})-Du_{0}=\overline{D}_{\omega }u_{1}(x,\cdot ).  \label{Equ2}
		\end{equation}%
		Putting (\ref{Equ1}) and (\ref{Equ2}) together leads at once at $\mathbf{v}%
		=Du_{0}+\overline{D}_{\omega }u_{1}+\overline{D}_{y}u_{2}$, where the
		function $u_{1}:x\mapsto u_{1}(x,\cdot )$ lies in $L^{1}\left(Q; W^{1}_{\#}L^{\Phi}(\Omega)\right)$ (see, e.g., \cite[Theorem 12]{franck}). This completes the proof.
	\end{proof}
	
	\begin{remark}\label{remX}
		Under the hypothesis of Theorem \ref{t3.3}, if the sequence $(u_{\varepsilon})_{\varepsilon\in E} \subset  L^{\Phi}(Q\times\Omega)$ is such that $u_{\varepsilon}(\cdot,\omega) \in X$ for all $\varepsilon\in E$ and for $\mu$-a.e. $\omega\in \Omega$, where $X$ is a norm closed convex subset of $W^{1}L^{\Phi}(Q)$, then as in \cite[Theorem 3.7(b)]{Bourgeat} the stochastic $\Sigma$-limit $u_{0}$ is such that $u_{0}(\cdot,\omega) \in X$ for $\mu$-a.e. $\omega\in \Omega$.
	\end{remark}
	
	
	\begin{corollary}
		Let the hypothesis be as in Theorem \ref{t3.3}. By Remark \ref{remX}, it follows that $A$ is stochastic $\Phi$-pseudoproper.
	\end{corollary}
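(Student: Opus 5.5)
The plan is to check the two conditions in the definition of stochastic $\Phi$-pseudoproperness directly, relying on Theorem \ref{t3.3} and Remark \ref{remX}. The first condition is that $A$ is $\Phi$-total. $A$ is an algebra with mean value, meaning it is translation invariant and its elements are uniformly continuous, so property (\textbf{4}) of Subsection \ref{labelsub2sub3sect2} tells us $A^{\infty}$ is dense in $\mathfrak{X}^{\Phi}_{A}$. Transporting this through $\mathcal{G}$, $\mathcal{D}(\Delta(A))=\mathcal{G}(A^{\infty})$ is dense in $L^{\Phi}(\Delta(A))$. To get density in $W^{1}L^{\Phi}(\Delta(A))$ I would regularize. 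Proposition \ref{t2.2} makes the translations a continuous dynamical system on $\Delta(A)$ with invariant measure $\beta$. Convolving $u\in W^{1}L^{\Phi}(\Delta(A))$ with this system, $J_{\delta}u=\int K_{\delta}(y)U(y)u\,dy$, produces smooth elements, and (\ref{rem6}) gives $J_{\delta}u\to u$ in $L^{\Phi}$. Since $J_{\delta}$ commutes with the generators $\partial_{i}=D_{i,\Phi}$, we also get $\partial_{i}J_{\delta}u=J_{\delta}\partial_{i}u\to\partial_{i}u$, which is density in the graph norm. Approximating $J_{\delta}u$ itself by elements of $\mathcal{D}(\Delta(A))$ would use density of $A^{\infty}$ together with the fact that $\mathcal{G}$ intertwines translations.

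The second condition is that $W^{1}_{0}L^{\Phi}_{D_{x}}(Q;L^{\Phi}(\Omega))$ is stochastically $\Sigma$-reflexive. Take $(u_{\varepsilon})_{\varepsilon\in E}$ in this space with $u_{\varepsilon}$ and $\partial_{x_{i}}u_{\varepsilon}$ bounded in $L^{\Phi}(Q\times\Omega)$. This is exactly hypothesis (i) of Theorem \ref{t3.3}. That theorem then gives a subsequence $E'$, a limit $u_{0}\in W^{1}L^{\Phi}_{D_{x}}(Q;I^{\Phi}_{nv}(\Omega))$, and correctors $u_{1}\in L^{1}(Q;W^{1}_{\#}L^{\Phi}(\Omega))$, $u_{2}\in L^{1}(Q\times\Omega;W^{1}_{\#}\mathcal{X}^{\Phi}_{A})$. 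Along $E'$ we have $u_{\varepsilon}\to u_{0}$ weak $\digamma\,\Sigma$ and $Du_{\varepsilon}\to Du_{0}+\overline{D}_{\omega}u_{1}+\overline{D}_{y}u_{2}$ weak $\digamma\,\Sigma$. These are items (i)--(ii) of Definition \ref{sigmaSobolev}. Since $u_{0}$ does not depend on $y$, it sits in $W^{1}L^{\Phi}_{D_{x}}(Q;L^{\Phi}(\Omega;\mathcal{X}^{\Phi}_{A}))$ as a $y$-constant function.

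What remains is to show the limit keeps the zero boundary condition, and for this I would use Remark \ref{remX}. Set $X=W^{1}_{0}L^{\Phi}(Q)$. It is a norm-closed linear subspace of $W^{1}L^{\Phi}(Q)$, so in particular closed and convex. For $\mu$-a.e. $\omega$ we have $u_{\varepsilon}(\cdot,\omega)\in X$, and the remark then gives $u_{0}(\cdot,\omega)\in X$ for $\mu$-a.e. $\omega$. Hence $u_{0}\in W^{1}_{0}L^{\Phi}_{D_{x}}(Q;I^{\Phi}_{nv}(\Omega))$.

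I expect the main obstacle to be this membership argument, namely justifying Remark \ref{remX} in the Orlicz setting. My first attempt would go as follows. The sequence is bounded in the reflexive space $W^{1}L^{\Phi}_{D_{x}}(Q;L^{\Phi}(\Omega))$, so after a further extraction $u_{\varepsilon}\to u$ weakly there. The closed convex set $\{v : v(\cdot,\omega)\in X \text{ a.e.}\}$ is weakly closed by Mazur's lemma, so $u$ lies in it. Then, as in step $(c)$ of the proof of Theorem \ref{t3.3}, I would identify $u_{0}$ with the conditional expectation of $u$ onto invariant functions. The last point is to check that this projection preserves the constraint $v(\cdot,\omega)\in X$. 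Since $X$ is a closed linear space and the projection is an average over $\omega$, this should follow.
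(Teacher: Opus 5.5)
Your proposal is correct and, for the reflexivity condition, follows the paper's own route: Theorem~\ref{t3.3} supplies the subsequence, the limit $u_{0}$ and the correctors $u_{1},u_{2}$, and Remark~\ref{remX} applied with $X=W^{1}_{0}L^{\Phi}(Q)$ transfers the zero boundary condition to $u_{0}$. You actually go beyond the paper's one-line justification, which neither checks condition (i) ($\Phi$-totality) nor argues Remark~\ref{remX} beyond citing Bourgeat et al.; your mollification $J_{\delta}$ along the translation flow on $\Delta(A)$ and your weak-limit plus conditional-expectation argument supply those two points, and the latter becomes rigorous once it is phrased via the vector-valued conditional expectation in the separable space $W^{1}L^{\Phi}(Q)$.
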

	
	In the next section, we apply the results obtained in this section (in particular Theorem \ref{t3.3})  to the homogenization of problems (\ref{ras2})-(\ref{ras1}). 
	
	\section{Application to the homogenization of an integral functional with convex and nonstandard growth random integrand} \label{labelsect4} 
	
	\subsection{Setting of the problem}
	
	Let $(\Omega, \mathscr{M}, \mu)$ be a measure space with probability measure $\mu$ and $\{ T(x): \Omega\to \Omega, \, x \in \mathbb{R}^{d} \}$ be a fixed $d$-dimensional dynamical system on $\Omega$ which is invariant for $T$. Let $\Phi \in \Delta_{2}$ be a $N$-function and $\widetilde{\Phi} \in \Delta_{2}$ its complementary and let $A$ be an ergodic supralgebra on $\mathbb{\mathbb{R}}_{y}^{d}$ such that $A$ is translation invariant and  each of its elements is uniformly
	continuous. \\
	Let $(x,\omega,y,\lambda) \rightarrow f(x,\omega,y,\lambda)$ be a function from $\mathbb{R}^{d}\times\Omega\times\mathbb{R}^{d}\times \mathbb{R}^{d}$ into $\mathbb{R}$ defined as in Section \ref{labelintro} and satisfying hypotheses \textbf{(H$_{1}$)}-\textbf{(H$_{4}$)} of Subsection \ref{hypoproblem}.
	Let $Q$ be a bounded open set in $\mathbb{R}^{d}_{x}$ and let $\varepsilon _{1}$, $%
	\varepsilon _{2}$ be two well separated functions of $\varepsilon $ tending
	towards zero with $\varepsilon $.
	
	We consider the minimization problem
	\begin{equation}\label{lem26}
		\min \left\{ F_{\varepsilon}(v) : v \in W^{1}_{0}L^{\Phi}_{D_{x}}(Q; L^{\Phi}(\Omega)) \right\},
	\end{equation}
	where $F_{\varepsilon}$ is the integral functional defined by 
	\begin{eqnarray}\label{lem13}
		F_{\varepsilon}(v) = \iint_{Q\times \Omega} f\left(x, T\left(\frac{x}{\varepsilon_{1}}\right)\omega, \frac{x}{\varepsilon_{2}}, \, Dv(x,\omega) \right)dxd\mu, \\
		 \quad v \in W^{1}_{0}L^{\Phi}_{D_{x}}(Q; L^{\Phi}(\Omega)). \nonumber
	\end{eqnarray} 
	
		Note firstly that, hypothesis \textbf{(H$_{1}$)}-\textbf{(H$_{2}$)} yields that (see, \cite[Corollary 4.1]{tchin2} and \cite[Theorem 4.1]{tchin2}) problem \eqref{lem13} makes sense and there exists (for each $\varepsilon>0$) a unique $u_{\varepsilon} \in W^{1}_{0}L^{\Phi}_{D_{x}}(Q; L^{\Phi}(\Omega))$ that realizes the infimum of $F_{\varepsilon}$ on $W^{1}_{0}L^{\Phi}_{D_{x}}(Q; L^{\Phi}(\Omega))$, i.e., 
	\begin{equation*}
		F_{\varepsilon}(u_{\epsilon}) = \min \left\{ F_{\varepsilon}(v) : v \in W^{1}_{0}L^{\Phi}_{D_{x}}(Q; L^{\Phi}(\Omega)) \right\}.
	\end{equation*}
	
	Our objective in this section amounts to find, under the assumptions \textbf{(H$_{1}$)}-\textbf{(H$_{4}$)} a homogenized functional $F$ such that the sequence of minimizers $u_{\varepsilon}$ converges to a limit $\mathbf{u}$, which is precisely the minimizer of $F$. 
	
	
	
	\begin{remark}\label{remcase}
		Taking into account Proposition \ref{t2.2}, if $A$ is translation invariant and each of their elements are uniformly continuous then stochastic-deterministic homogenization problem \eqref{lem13} can be reduces to a entirely deterministic reiterated homogenization problem in the framework of Orlicz-Sobolev spaces (see \cite{tchin3}). 
	\end{remark}
	
		\begin{remark}
		Suppose that $\Phi(t) = \frac{t^{p}}{p}$ ($p>1, \;t\geq 0$), then $\Phi \in \Delta_{2}\cap\Delta'$ (with $\widetilde{\Phi}(t)=\frac{t^{q}}{q}$, $q=\frac{p}{p-1}$) and one has  $W^{1}_{0}L^{\Phi}(Q; L^{\Phi}(\Omega))\equiv W^{1,p}_{0}(Q; L^{p}(\Omega))=L^{p}(\Omega; W^{1,p}_{0}(Q))$.  Therefore, stochastic-deterministic homogenization problem (\ref{lem26}) can be rewritten in the classical Sobolev's spaces as
			\begin{equation}\label{minclass}
				\min \left\{ \int_{Q\times\Omega} f\left(x,T\left(\frac{x}{\varepsilon_{1}}\right)\omega,\frac{x}{\varepsilon_{2}}, Dv(x,\omega) \right) dx \; : \; v \in  L^{p}(\Omega; W^{1,p}_{0}(Q)) \right\}.
			\end{equation}
		Moreover,  if $A$ is translation invariant and each of their elements are uniformly continuous, then taking into account Proposition \ref{t2.2}, homogenization problem (\ref{minclass}) becomes a detreministic reiterated homogenization problem and whose a particular case can be found in \cite{wou}.
	\end{remark}
	
	\subsection{Preliminary results} 
	
	Let $\mathbf{v}  \in  \mathcal{C}(\overline{Q} ; L^{\infty}(\Omega; \mathcal{B}(\mathbb{R}^{d}_{y})^{d}))$. As in \cite[Sect. 5]{sango2}, the function  $(x, \omega) \rightarrow f\left( x,T\left(\frac{x}{\varepsilon_{1}}\right)\omega,\frac{x}{\varepsilon_{2}}, \mathbf{v}\left( x,T\left(\frac{x}{\varepsilon_{1}}\right)\omega,\frac{x}{\varepsilon_{2}}\right)\right)$ of $Q\times\Omega$ into $\mathbb{C}$, denoted by $f^{\epsilon}(\cdot, \cdot, \mathbf{v}^{\epsilon})$ is well defined as an element of $L^{\infty}(Q\times \Omega)$. Moreover for any  $\mathbf{v}  \in  [\mathcal{C}^{\infty}_{0}(Q)\otimes \mathcal{C}^{\infty}(\Omega)\otimes A]^{d}$, the function  $f(\cdot,\cdot, \mathbf{v}) : (x,\omega,y) \mapsto f(x,\omega,y, \mathbf{v}(x,\omega,y))$ lies in $\mathcal{C}(\overline{Q}; L^{\infty}(\Omega; A))$.
	
	We can now state and prove the first preliminary result.
	
	\begin{proposition}\label{lem23}
		Suppose that \textbf{(H$_{1}$)}-\textbf{(H$_{4}$)} hold. For every $\mathbf{v}  \in  [\mathcal{C}^{\infty}_{0}(Q)\otimes \mathcal{C}^{\infty}(\Omega)\otimes A]^{d}$ we have the following convergence result: 
		\begin{equation}\label{lem17}
			f^{\varepsilon }(\cdot , \cdot, \mathbf{v} ^{\varepsilon }) \longrightarrow \varrho \circ
			f(\cdot , \cdot, \mathbf{v} )\text{ in }L^{\Phi}(Q\times \Omega )\text{%
				-weak } \digamma\,\Sigma \text{ as }\varepsilon \rightarrow 0\text{ },
		\end{equation}%
		$\varrho $ being the canonical mapping from $\mathfrak{X}_{A}^{\Phi}$ into $%
		\mathcal{X}_{A}^{\Phi}$ and $(\varrho \circ f(\cdot , \cdot, \mathbf{v}
		))(x,\omega ,y)=\varrho (f(x,\omega ,\cdot ,\mathbf{v} (x,\omega ,\cdot )))(y)$
		for $(x,\omega ,y)\in Q\times \Omega \times \mathbb{R}^{d}$.
		
		Furthermore, the mapping  $\mathbf{v} \rightarrow f(\cdot,\cdot, \mathbf{v})$ of $[\mathcal{C}^{\infty}_{0}(Q)\otimes \mathcal{C}^{\infty}(\Omega)\otimes A]^{d}$ into $L^{1}(Q\times \Omega ; \mathfrak{X}^{\Phi}_{A})$ extends by continuity to a mapping, still denoted by $\mathbf{v} \rightarrow f(\cdot,\cdot, \mathbf{v})$, of
		$L^{\Phi}(Q\times \Omega ; \mathfrak{X}^{\Phi}_{A})^{d}$ into $L^{1}(Q\times \Omega ; \mathfrak{X}^{\Phi}_{A})$ with the property 
		\begin{equation}\label{f3}
			\begin{array}{l}
				\parallel f(\cdot, \mathbf{v}) - f(\cdot, \mathbf{w}) \parallel_{L^{1}(Q\times \Omega ; \mathfrak{X}^{\Phi}_{A})} \\
				\leq c\, \left( \|1\|_{L^{\widetilde{\Phi}}(Q\times \Omega ; \mathfrak{X}^{\widetilde{\Phi}}_{A})} + \parallel \phi( 1+|\mathbf{v}|+ |\mathbf{w}|)\parallel_{L^{\widetilde{\Phi}}(Q\times \Omega ; \mathfrak{X}^{\widetilde{\Phi}}_{A})} \right)\parallel \mathbf{v}-\mathbf{w}  \parallel_{L^{\Phi}(Q\times \Omega ; \mathfrak{X}^{\Phi}_{A})^{d}}
			\end{array}
		\end{equation}
		for all $\mathbf{v}, \mathbf{w} \in L^{\Phi}(Q\times \Omega ; \mathfrak{X}^{\Phi}_{A})^{d}$.
	\end{proposition}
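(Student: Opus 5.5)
The plan has two parts: first the convergence (\ref{lem17}), then the continuity estimate (\ref{f3}) and the extension by density.

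\emph{Part 1: the convergence (\ref{lem17}).} Fix $\mathbf{v}\in[\mathcal{C}^{\infty}_{0}(Q)\otimes\mathcal{C}^{\infty}(\Omega)\otimes A]^{d}$ and set $h(x,\omega,y)=f(x,\omega,y,\mathbf{v}(x,\omega,y))$.

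1. By (H$_1$), the $x$-modulus of continuity gives continuity in $x$, with a bound involving $a(y)+f$; combined with (H$_4$) and the remark preceding the proposition, this yields $h\in\mathcal{C}(\overline{Q};L^{\infty}(\Omega;A))$.
2. Since $\mathbf{v}$ has compact support $K\subset Q$, we have $f(x,\omega,y,0)$ off $K$. I would therefore split $h=h\chi_K+(h-h\chi_K)$. It is cleaner, though, to note that the test functions $\phi\in\mathcal{C}^{\infty}_{0}(Q)\otimes\mathcal{C}^{\infty}(\Omega)\otimes A$ have compact support. Hence $h\phi\in\mathcal{K}(Q;L^{\infty}(\Omega;A))$ is admissible, by the remark following Definition \ref{d3.1'}.
3. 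I then argue exactly as in Proposition \ref{p3.3}. Approximate $h\phi$ uniformly by elements of $\mathcal{C}^{\infty}_{0}(Q)\otimes\mathcal{C}^{\infty}(\Omega)\otimes A$. On tensor products, the invariance of $\mu$ under $T$ kills the $\omega$-oscillation, and \cite[Proposition 2.15]{nnang2014deterministic} handles the $y$-oscillation.
4. This gives
\[
\iint_{Q\times\Omega} f^{\varepsilon}(\cdot,\cdot,\mathbf{v}^{\varepsilon})\,\phi^{\varepsilon}\,dx\,d\mu \to \iint_{Q\times\Omega}\pounds_{A}\int h\,\phi\,dy\,dx\,d\mu .
\]
Since $\pounds_A\int$ only sees $\varrho(h)$, the limit is $\varrho\circ f(\cdot,\cdot,\mathbf{v})$.
5. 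Boundedness of $f^{\varepsilon}(\cdot,\cdot,\mathbf{v}^{\varepsilon})$ in $L^{\Phi}(Q\times\Omega)$ follows from (H$_3$) and the boundedness of $\mathbf{v}$, since the values are in $L^\infty$. So Definition \ref{d3.1} applies.

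\emph{Part 2: the estimate (\ref{f3}).} I first prove it for smooth $\mathbf{v},\mathbf{w}$ pointwise in $(x,\omega,y)$.

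1. Convexity (H$_2$) together with the growth (H$_3$) gives the standard local Lipschitz bound
\[
|f(x,\omega,y,\lambda)-f(x,\omega,y,\lambda')|\le c\,\bigl(1+\phi(1+|\lambda|+|\lambda'|)\bigr)\,|\lambda-\lambda'| .
\]
To get it, I bound the subgradient of a convex function by the oscillation over a ball of radius $1+|\lambda|$. With (H$_3$) this gives $|\partial f|\le c\,\Phi(2(1+|\lambda|))/(1+|\lambda|)$, which is at most $c'\,\phi(2(1+|\lambda|))$ by (\ref{lem10}). The $\Delta_2$ property of $\Phi$ then absorbs the factor $2$, up to constants.
2. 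Next I take the mean value in $y$ and integrate over $Q\times\Omega$. Applying the Hölder inequality of property (\textbf{3}) (together with its $x,\omega$ analogue) bounds the $L^1(Q\times\Omega;\mathfrak{X}^\Phi_A)$ seminorm of the difference. The bound is $c$ times the $\widetilde\Phi$-norms of $1$ and $\phi(1+|\mathbf v|+|\mathbf w|)$, multiplied by $\|\mathbf v-\mathbf w\|$.
3. One must also check that $\phi(1+|\mathbf v|+|\mathbf w|)$ lies in $L^{\widetilde\Phi}$ with norm controlled by the $L^\Phi$ norms of $\mathbf v,\mathbf w$. This uses $\widetilde\Phi(\phi(t))\le k\Phi(t)$ (from $\Phi\in\Delta_2$) and $\Delta_2$ once more.

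\emph{Extension.} Since $[\mathcal{C}^{\infty}_{0}(Q)\otimes\mathcal{C}^{\infty}(\Omega)\otimes A]^d$ is dense in $L^\Phi(Q\times\Omega;\mathfrak X^\Phi_A)^d$, the map is locally uniformly continuous on bounded sets by (\ref{f3}). It therefore extends uniquely, and the estimate passes to the limit.

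\emph{Main obstacle.} I expect the hardest step to be Part 1, step 1: justifying the joint regularity of $h$ in $(x,\omega,y)$, namely continuity in $x$ with values in $L^\infty(\Omega;A)$, using only (H$_1$) with the weight $a\in L^1_{loc}$. My approach would be to use (H$_1$) to transfer uniform-in-$x$ approximations from finitely many frozen values $x_j$ (a partition of unity on $K$). This reduces everything to functions $f(x_j,\omega,\cdot,\lambda)\in A$ composed with $\mathbf v$, and then uses the continuity of $f$ in $\lambda$ on the bounded range of $\mathbf v$.
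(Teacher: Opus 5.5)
Your architecture is the paper's. There, (\ref{lem17}) is obtained by taking $f(\cdot,\cdot,\mathbf v)\in\mathcal C(\overline Q;L^\infty(\Omega;A))$ for granted and invoking Corollary~\ref{c3.3}. The estimate (\ref{f3}) on smooth $\mathbf v,\mathbf w$ is taken from \cite[Proposition 12]{tchin2}, whose content is your subgradient bound plus H\"older, followed by density. Your Part 2 is fine. Your observation that $h(x,\omega,y)=f(x,\omega,y,0)\neq0$ for $x\notin K$ is also useful: compact support must come from the test function, and this fixes a mismatch in the paper, since Corollary~\ref{c3.3} is stated for $\mathcal K(Q;\cdot)$.

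However, Part 1, step 3 fails as written. $h$ is merely measurable in $\omega$, and the sup-norm closure of $\mathcal C^\infty(\Omega)$ in $L^\infty(\Omega)$ contains only $g$ with $\|U(z)g-g\|_\infty\to0$ as $z\to0$. For example, take $\Omega=\mathbb T^d$ with translations, $\chi$ the indicator of a half-torus, $\Phi(t)=t^p/p$ and $f=(1+\chi(\omega))|\lambda|^p/p$. Then (H$_1$)--(H$_4$) hold, $\chi$ is at distance at least $1/2$ from that closure, and $h\phi$ cannot be approximated uniformly by $\mathcal C^\infty_0(Q)\otimes\mathcal C^\infty(\Omega)\otimes A$. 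The repair is cheap. For fixed $x$ the map $\omega\mapsto T(x/\varepsilon_1)\omega$ preserves $\mu$, so by Fubini $\iint_{Q\times\Omega}(h\phi)^\varepsilon\,dx\,d\mu=\int_Q g(x,x/\varepsilon_2)\,dx$ with $g(x,y)=\int_\Omega(h\phi)(x,\omega,y)\,d\mu$, and only the $x$- and $y$-structure remain. Equivalently, approximate in $L^1(\Omega)$ rather than uniformly; the paper's route through Proposition~\ref{p3.4} does this with $L^{\widetilde\Phi}(\Omega)$-density.

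The second issue is the one you flagged as the main obstacle, and your plan does not resolve it under (H$_1$) as stated. Freezing $x$ at $x_j$ costs $\varpi(|x-x_j|)\,[a(y)+f]$, which is small uniformly in $y$ only when $a$ is bounded. So it cannot give $h\in\mathcal C(\overline Q;L^\infty(\Omega;A))$. Even after taking traces and integrating, the cost is $\varpi(\delta)\int_K a(x/\varepsilon_2)\,dx$, which is unbounded as $\varepsilon\to0$ for $a(y)=|y|\in L^1_{loc}$. The paper does not overcome this either; it simply asserts the regularity, following \cite[Sect.~5]{sango2}. To complete your proof you need an extra hypothesis on $a$, e.g. $a$ bounded, or $a\ge0$ with a mean value, in which case the freezing argument can be run directly on the integrals. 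The $\lambda$-part of your plan is sound provided you use your Part 2 Lipschitz bound. That bound is uniform in $(\omega,y)$, so it makes $\lambda\mapsto f(x_j,\omega,\cdot,\lambda)$ continuous into $A$ for the sup norm; pointwise continuity in $\lambda$ alone would not suffice.
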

	
	\begin{proof}
		Let	 $\mathbf{v}  \in  [\mathcal{C}^{\infty}_{0}(Q)\otimes \mathcal{C}^{\infty}(\Omega)\otimes A]^{d}$. Since the function  $f(\cdot,\cdot, \mathbf{v}) : (x,\omega,y) \mapsto f(x,\omega,y, \mathbf{v}(x,\omega,y))$ lies in $\mathcal{C}(\overline{Q}; L^{\infty}(\Omega; A))$ the convergence result (\ref{lem17}) is a consequence of Corollary \ref{c3.3}. 
		
		On the other hand, since $\Phi \in \Delta_{2}$, using the \cite[Proposition 12]{tchin2}, there is a constant $c= c(c_{2},Q,\Omega,\Phi)$ such that (\ref{f3}) holds for all $\mathbf{v}, \mathbf{w} \in [\mathcal{C}^{\infty}_{0}(Q)\otimes \mathcal{C}^{\infty}(\Omega)\otimes A]^{d}$. We end the proof by routine argument of continuity and density.
	\end{proof}
	
	\begin{corollary}\label{lem24}
		Let 
		\begin{equation*}
			\phi_{\varepsilon}(x,\omega) = \psi_{0}(x,\omega) + \varepsilon_{1} \psi_{1}\left(x,T\left(\frac{x}{\varepsilon_{1}}\right)\omega\right) + \varepsilon_{2} \psi_{2}\left(x,T\left(\frac{x}{\varepsilon_{1}}\right)\omega,\frac{x}{\varepsilon_{2}}\right),
		\end{equation*}
		with $\varepsilon>0$, $(x,\omega) \in Q\times \Omega$, $\psi_{0} \in \mathcal{C}^{\infty}_{0}(Q)\otimes I^{\Phi}_{nv}(\Omega)$, $\psi_{1} \in \mathcal{C}^{\infty}_{0}(Q)\otimes \mathcal{C}^{\infty}(\Omega)$ and  $\psi_{2} \in \mathcal{C}^{\infty}_{0}(Q)\otimes \mathcal{C}^{\infty}(\Omega)\otimes A^{\infty}$. Then,
		\begin{equation*}
			\begin{array}{l}
				\displaystyle 	\lim_{\varepsilon \to 0} \iint_{Q\times\Omega} f^{\varepsilon}\left(\cdot,\cdot, D\phi_{\varepsilon}\right) dxd\mu  \\ 
				= \displaystyle \iint_{Q\times\Omega} \pounds_{A}\int \varrho \circ f\left(\cdot,\cdot, D_{x}\psi_{0}+ D_{\omega}\psi_{1} +\partial \psi_{2}\right) dydxd\mu,
			\end{array}
		\end{equation*}
		where $(\varrho \circ f\left(\cdot,\cdot, D_{x}\psi_{0}+ D_{\omega}\psi_{1} +\partial \psi_{2}\right))(x,\omega ,y)=\varrho (f(x,\omega ,\cdot ,D_{x}\psi_{0}+ D_{\omega}\psi_{1} +\partial \psi_{2}(x,\omega,\cdot )))(y)$
		for $(x,\omega ,y)\in Q\times \Omega \times \mathbb{R}^{d}$.
	\end{corollary}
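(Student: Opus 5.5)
The plan is to reduce Corollary \ref{lem24} to Proposition \ref{lem23}: compute $D\phi_\varepsilon$ explicitly, and show that it is a trace $\mathbf{w}^\varepsilon$ of a smooth test field, up to a remainder that tends to zero strongly.

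\textbf{Step 1: the chain rule.} Using (\ref{lem18}) for the stochastic variable and the usual chain rule for the deterministic one, for a.e. $(x,\omega)$ we get
\begin{equation*}
D\phi_\varepsilon = \big(D_x\psi_0 + D_\omega\psi_1 + D_y\psi_2\big)^\varepsilon + r_\varepsilon ,
\qquad
r_\varepsilon = \varepsilon_1 (D_x\psi_1)^\varepsilon + \varepsilon_2 (D_x\psi_2)^\varepsilon + \frac{\varepsilon_2}{\varepsilon_1}(D_\omega\psi_2)^\varepsilon .
\end{equation*}
Here $\partial\psi_2 = D_y\psi_2$ and $\psi_0$ is independent of $y$. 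Set $\mathbf{w} = D_x\psi_0 + D_\omega\psi_1 + D_y\psi_2$.

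Since $\psi_1\in\mathcal{C}^\infty_0(Q)\otimes\mathcal{C}^\infty(\Omega)$ and $\psi_2\in\mathcal{C}^\infty_0(Q)\otimes\mathcal{C}^\infty(\Omega)\otimes A^\infty$, every function appearing in $r_\varepsilon$ is bounded, uniformly in $\varepsilon$. Because $\varepsilon_1,\varepsilon_2,\varepsilon_2/\varepsilon_1\to 0$, we get $\|r_\varepsilon\|_{L^\infty(Q\times\Omega)}\to 0$, hence also $r_\varepsilon\to 0$ in $L^\Phi(Q\times\Omega)^d$.

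\textbf{Step 2: discard $r_\varepsilon$.} I will use the local Lipschitz-type estimate from \cite[Proposition 12]{tchin2}, the $\varepsilon$-level analogue of (\ref{f3}), which follows from convexity and the growth condition (H$_3$):
\begin{equation*}
\iint_{Q\times\Omega}\big|f^\varepsilon(\cdot,\cdot,D\phi_\varepsilon)-f^\varepsilon(\cdot,\cdot,\mathbf{w}^\varepsilon)\big|\,dxd\mu
\le c\big(1+\|\phi(1+|D\phi_\varepsilon|+|\mathbf{w}^\varepsilon|)\|_{L^{\widetilde\Phi}}\big)\|r_\varepsilon\|_{L^\Phi}.
\end{equation*}
The middle factor is bounded because the arguments are uniformly bounded in $L^\infty$. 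Hence the difference tends to $0$.

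\textbf{Step 3: pass to the limit on $\mathbf{w}^\varepsilon$.} The component $\psi_0$ lies in $\mathcal{C}^\infty_0(Q)\otimes I^\Phi_{nv}(\Omega)$, not in $\mathcal{C}^\infty(\Omega)$, so Proposition \ref{lem23} does not apply directly. Since $\psi_0$ is invariant, $\psi_0(x,T(x/\varepsilon_1)\omega)=\psi_0(x,\omega)$, so $D_x\psi_0$ is a trace as well. I see two ways to handle it. One is to approximate the invariant factors by $J_\delta$ regularizations, which lie in $\mathcal{C}^\infty(\Omega)$ and remain invariant; this uses (\ref{rem6}) together with the continuity estimate (\ref{f3}). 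The other is to note that the argument proving Proposition \ref{lem23}, via Corollary \ref{c3.3}, only uses admissibility of $f(\cdot,\cdot,\mathbf{w})$, which holds for bounded invariant factors; this is where boundedness of the invariant factors, i.e. $I^\Phi_{nv}(\Omega)\cap L^\infty$, may have to be assumed or obtained by truncation.

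Either way, Proposition \ref{lem23} (with test function $f\equiv 1$ on $\operatorname{supp}$, or directly Proposition \ref{p3.3}, since $f(\cdot,\cdot,\mathbf{w})$ is admissible) gives
\begin{equation*}
\iint f^\varepsilon(\cdot,\cdot,\mathbf{w}^\varepsilon)\,dxd\mu \to \iint\pounds_A\int \varrho\circ f(\cdot,\cdot,\mathbf{w})\,dy\,dxd\mu .
\end{equation*}
Outside $\operatorname{supp}$ of the test functions, the integrand reduces to $f(x,\cdot,\cdot,0)$, which is handled by the same admissibility argument. Combining this with Step 2 gives the claim.

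\textbf{Main obstacle.} I expect the hardest part to be Step 3, namely justifying the weak $\digamma\,\Sigma$ limit of $f^\varepsilon(\cdot,\cdot,\mathbf{w}^\varepsilon)$ integrated against the constant $1$, given that $\psi_0$ is merely invariant and not smooth. I would first try the $J_\delta$ regularization combined with the uniform-in-$\varepsilon$ estimate of Step 2.
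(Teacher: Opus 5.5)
Your Steps 1--3 follow the paper's proof. Both use the identity \eqref{tc1} and discard the remainder with the local Lipschitz bound coming from convexity and (H$_{3}$), then apply \eqref{lem17} to the leading term. The paper uses the bound pointwise, $|\cdots|\le c\varepsilon$, with a constant involving $\phi(\|D_x\psi_0\|_\infty)$; you integrate it against $\|r_\varepsilon\|_{L^\Phi}$ instead. The paper never addresses the $\psi_0$ issue you raise. It simply asserts $D_x\psi_0\in[\mathcal{C}^\infty(\Omega)\otimes\mathcal{C}(\overline{Q})]^d$, which tacitly assumes the invariant factors are bounded.

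The fix you say you would try first does not work. For $h\in I^{\Phi}_{nv}(\Omega)$ one has $U(y)h=h$ for every $y$, hence $J_\delta h=\int K_\delta(y)U(y)h\,dy=h$. The regularization leaves an unbounded invariant function unchanged and produces no bounded approximant.

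The route that works is the truncation you mention only in passing. Set $h_m=\max(-m,\min(m,h))$; it is still invariant and bounded, so it lies in $\mathcal{C}^\infty(\Omega)$, since all its stochastic derivatives vanish. Also $h_m\to h$ in $L^\Phi(\Omega)$ by dominated convergence, since $\Phi\in\Delta_2$. If you build $\psi_0^m$ from the $h_m$, the field $\mathbf{w}_m$ lies in $[\mathcal{C}^\infty_0(Q)\otimes\mathcal{C}^\infty(\Omega)\otimes A]^d$, and your argument applies verbatim.

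To let $m\to\infty$, note that the trace of $D_x\psi_0$ is $D_x\psi_0$ itself. So $\mathbf{w}^\varepsilon-\mathbf{w}_m^\varepsilon=D_x\psi_0-D_x\psi_0^m$ does not depend on $\varepsilon$. Your Step 2 estimate then bounds $\iint|f^\varepsilon(\cdot,\cdot,\mathbf{w}^\varepsilon)-f^\varepsilon(\cdot,\cdot,\mathbf{w}_m^\varepsilon)|\,dxd\mu$ by $C\|D_x\psi_0-D_x\psi_0^m\|_{L^\Phi(Q\times\Omega)^d}$, uniformly in $\varepsilon$ and $m$. Estimate \eqref{f3} gives the same control on the limit side.

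In this step, and already in your Step 2 once $\psi_0$ is unbounded, the factor $\|\phi(1+|\cdot|+|\cdot|)\|_{L^{\widetilde\Phi}}$ cannot be justified by $L^\infty$ bounds as you wrote. It must be controlled by $L^\Phi$ bounds. That is legitimate because $\widetilde\Phi(\phi(t))\le\kappa\,\Phi(t)$ for large $t$ when $\Phi\in\Delta_2$.
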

	
	\begin{proof}
		As $D\psi_{1}\left(x,T\left(\frac{x}{\varepsilon_{1}}\right)\omega\right) = (D_{x}\psi_{1})^{\varepsilon} + \frac{1}{\varepsilon_{1}}(D_{\omega}\psi_{1})^{\varepsilon}$ and $D\psi_{2}\left(x,T\left(\frac{x}{\varepsilon_{1}}\right)\omega,\frac{x}{\varepsilon_{2}}\right) = (D_{x}\psi_{2})^{\varepsilon} + \frac{1}{\varepsilon_{1}}(D_{\omega}\psi_{2})^{\varepsilon} + \frac{1}{\varepsilon_{2}}(D_{y}\psi_{2})^{\varepsilon}$, [see (\ref{lem18})], we have
		\begin{equation}\label{tc1}
			D\phi_{\varepsilon}(x,\omega) = D_{x}\psi_{0}+ \varepsilon_{1} (D_{x}\psi_{1})^{\varepsilon} + (D_{\omega}\psi_{1})^{\varepsilon} + \varepsilon_{2}(D_{x}\psi_{2})^{\varepsilon} + \frac{\varepsilon_{2}}{\varepsilon_{1}} (D_{\omega}\psi_{2})^{\varepsilon} +  (D_{y}\psi_{2})^{\varepsilon}.
		\end{equation}
		Recalling that functions $D_{x}\psi_{0}, D_{x}\psi_{1}$, $ D_{\omega}\psi_{1}$ belongs to $\left[\mathcal{C}^{\infty}(\Omega)\otimes \mathcal{C}(\overline{Q}, \mathbb{R}) \right]^{d}$ and $D_{y}\psi_{2}$ belong to $\left[ \mathcal{C}^{\infty}(\Omega)\otimes\mathcal{C}(\overline{Q} ; A) \right]^{d}$, the function $f(\cdot,\cdot, D_{x}\psi_{0}+ D_{\omega}\psi_{1} + D_{y}\psi_{2}) $ belongs to  $\mathcal{C}(\overline{Q}; L^{\infty}(\Omega ; A))$, so that (\ref{lem17}) implies 
		\begin{eqnarray}\label{lem21}
			f^{\varepsilon}(\cdot,\cdot, D_{x}\psi_{0}+ (D_{\omega}\psi_{1})^{\varepsilon} + (D_{y}\psi_{2})^{\varepsilon}) \longrightarrow \varrho \circ
			f(\cdot , \cdot, D_{x}\psi_{0}+ D_{\omega}\psi_{1} + D_{y}\psi_{2} ) \nonumber \\ 
			\text{\  in }L^{\Phi}(Q\times \Omega )\text{%
				-weak } \digamma\,\Sigma \text{ as }\varepsilon \rightarrow 0.\text{ }
		\end{eqnarray}
		On the other hand, since (\ref{tc1}) holds, it follows (see, \cite[Subsection 4.2]{tchin2}) 
		\begin{equation*}
			|f^{\varepsilon}(\cdot,\cdot, D\phi_{\varepsilon}) - f^{\varepsilon}(\cdot,\cdot, D_{x}\psi_{0}+ (D_{\omega}\psi_{1})^{\varepsilon} + (D_{y}\psi_{2})^{\varepsilon}) | \leq c\varepsilon \quad \textup{in} \; Q\times\Omega, \; \varepsilon >0,
		\end{equation*} 
		where $c = c\left( \|D_{x}\psi_{1}\|_{\infty}, \phi(\|D_{x}\psi_{0}\|_{\infty}), \phi(\|D_{\omega}\psi_{1}\|_{\infty}), \phi(\|D_{y}\psi_{2}\|_{\infty})  \right) >0$. Hence 
		\begin{equation}\label{lem20}
			f^{\varepsilon}(\cdot,\cdot, D\phi_{\varepsilon}) - f^{\varepsilon}(\cdot,\cdot, D_{x}\psi_{0}+ (D_{\omega}\psi_{1})^{\varepsilon} + (D_{y}\psi_{2})^{\varepsilon}) \rightarrow 0 \quad \textup{in} \; L^{1}(Q\times \Omega) \; \textup{as} \; \varepsilon \to 0.
		\end{equation}
		The proof is completed by combining (\ref{lem21})-(\ref{lem20}) with the decomposition 
		\begin{equation*}
			\begin{array}{l}
				\displaystyle 	\iint_{Q\times\Omega}  f^{\varepsilon}(\cdot,\cdot, D\phi_{\varepsilon})dxd\mu - 	\iint_{Q\times\Omega} \pounds_{A}\int  \varrho \circ f\left(\cdot,\cdot, D_{x}\psi_{0}+ D_{\omega}\psi_{1} +\partial \psi_{2}\right) dydxd\mu \\
				= \displaystyle \iint_{Q\times\Omega} \left[  f^{\varepsilon}(\cdot,\cdot, D\phi_{\varepsilon}) -  f^{\varepsilon}(\cdot,\cdot, D_{x}\psi_{0}+ (D_{\omega}\psi_{1})^{\varepsilon} + (D_{y}\psi_{2})^{\varepsilon}) 
				\right] dxd\mu \\
				\;\,	 + \displaystyle \iint_{Q\times\Omega}  f^{\varepsilon}(\cdot,\cdot, D_{x}\psi_{0}+ (D_{\omega}\psi_{1})^{\varepsilon} + (D_{y}\psi_{2})^{\varepsilon}) dxd\mu \\
				\;\,	  - \displaystyle \iint_{Q\times\Omega} \pounds_{A}\int  \varrho \circ f\left(\cdot,\cdot, D_{x}\psi_{0}+ D_{\omega}\psi_{1} +\partial \psi_{2}\right) dydxd\mu. 
			\end{array}
		\end{equation*}
	\end{proof}

	
	Assume that   $\widetilde{\Phi}$ is of class $\Delta_{2}\cap \Delta'$. We put
	\begin{equation}\label{banac1}
		\mathbb{F}_{0}^{1}L^{\Phi} = W^{1}_{0}L^{\Phi}_{D_{x}}(Q; I_{nv}^{\Phi}(\Omega))\times L^{\Phi}(Q; W^{1}_{\#}L^{\Phi}(\Omega))\times L^{\Phi}(Q\times\Omega; W^{1}_{\#}\mathcal{X}^{\Phi}_{A}).
	\end{equation}
	We equip $\mathbb{F}_{0}^{1}L^{\Phi}$ with the norm 
	\begin{equation*}
		\parallel \mathbf{u} \parallel_{\mathbb{F}_{0}^{1}L^{\Phi}} = \parallel Du_{0} \parallel_{L^{\Phi}(Q\times\Omega)^{d}} + \parallel \overline{D}_{\omega}u_{1} \parallel_{L^{\Phi}(Q\times\Omega)^{d}} + \parallel \overline{D}_{y}u_{2} \parallel_{L^{\Phi}(Q\times\Omega; \mathcal{X}^{\Phi}_{A})},
	\end{equation*} 
	where $\mathbf{u} = (u_{0}, u_{1}, u_{2}) \in \mathbb{F}_{0}^{1}L^{\Phi}$.
	With this norm, $\mathbb{F}_{0}^{1}L^{\Phi}$ in (\ref{banac1}) is a Banach space admitting 
	\begin{equation}\label{banac2}
		F^{\infty}_{0} = \left[  \mathcal{C}^{\infty}_{0}(Q)\otimes I_{nv}^{\Phi}(\Omega) \right] \times \left[ \mathcal{C}^{\infty}_{0}(Q)\otimes I_{\Phi}(\mathcal{C}^{\infty}(\Omega)) \right] \times \left[  \mathcal{C}^{\infty}_{0}(Q)\otimes \mathcal{C}^{\infty}(\Omega)\otimes (J_{1}\circ \varrho )(A^{\infty }/%
		\mathbb{C}) \right]	
	\end{equation}
	as a dense subspace, where  $J_{1}$ (resp. $\varrho $, $I_{\Phi}$)
	denotes the canonical mapping of $W^{1}\mathcal{X}_{A}^{\Phi}/\mathbb{C}$ (resp. $%
	\mathfrak{X}_{A}^{\Phi}$, $\mathcal{C}^{\infty }(\Omega )$) into its separated completion $%
	W^{1}_{\#}\mathcal{X}^{\Phi}_{A}$ (resp. $\mathcal{X}_{A}^{\Phi}$, $W^{1}_{\#}L^{\Phi}(\Omega)$). \\
	
	Let now $\mathbf{v} = (v_{0}, v_{1}, v_{2}) \in \mathbb{F}_{0}^{1}L^{\Phi}$, set $\mathbb{D}\mathbf{v} = Dv_{0} + \overline{D}_{\omega}v_{1} + \overline{D}_{y}v_{2} \in L^{\Phi}(Q\times\Omega,\mathcal{X}_{A}^{\Phi})^{d}$.
We define the functional $F$ on $\mathbb{F}_{0}^{1}L^{\Phi}$ by 
	\begin{equation*}
		F(\mathbf{v}) = \iint_{Q\times\Omega} \pounds_{A} \int  \varrho \circ f(\cdot,\cdot, \mathbb{D}\mathbf{v})\, dydxd\mu,
	\end{equation*}
	where the function $\varrho \circ f$ here is defined as in Corollary \ref{lem24}. \\
	The hypotheses \textbf{(H$_{1}$)}-\textbf{(H$_{4}$)} drive to the following lemma.
	\begin{lemma}
		There exists a unique $\mathbf{u} = (u_{0}, u_{1}, u_{2}) \in \mathbb{F}_{0}^{1}L^{\Phi}$ [see (\ref{banac1})] such that 
		\begin{equation}\label{lem28}
			F(\mathbf{u}) = \inf \left\{  F(\mathbf{v})\, : \, \mathbf{v} \in \mathbb{F}_{0}^{1}L^{\Phi} \right\}.
		\end{equation}
	\end{lemma}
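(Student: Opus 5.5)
The plan is the direct method of the calculus of variations on the reflexive Banach space $\mathbb{F}_{0}^{1}L^{\Phi}$, with uniqueness coming from strict convexity. The first step checks that $F$ is well defined, finite and continuous on $\mathbb{F}_{0}^{1}L^{\Phi}$. The map $\mathbf{v}\mapsto \mathbb{D}\mathbf{v}=Dv_{0}+\overline{D}_{\omega}v_{1}+\overline{D}_{y}v_{2}$ is linear and continuous from $\mathbb{F}_{0}^{1}L^{\Phi}$ into $L^{\Phi}(Q\times\Omega;\mathcal{X}^{\Phi}_{A})^{d}$. Composing it with the extension of $\mathbf{w}\mapsto f(\cdot,\cdot,\mathbf{w})$ from Proposition \ref{lem23}, and using the Lipschitz-type estimate (\ref{f3}) together with the continuity of the mean value $M$ on $\mathfrak{X}^{1}_{A}$, shows that $F$ is continuous (in fact locally Lipschitz) on $\mathbb{F}_{0}^{1}L^{\Phi}$. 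The upper bound in \textbf{(H$_{3}$)} passes to the extension by density, giving
\begin{equation*}
F(\mathbf{v})\leq c_{2}\iint_{Q\times\Omega}\pounds_{A}\int \bigl(1+\Phi(|\mathbb{D}\mathbf{v}|)\bigr)\,dy\,dx\,d\mu<\infty .
\end{equation*}

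The second step establishes convexity and coercivity. By \textbf{(H$_{2}$)}, $\lambda\mapsto f(x,\omega,y,\lambda)$ is strictly convex, so convexity of $F$ holds on the dense subspace $F^{\infty}_{0}$ of (\ref{banac2}), where $f(\cdot,\cdot,\mathbb{D}\mathbf{v})$ is a genuine function. It then extends to all of $\mathbb{F}_{0}^{1}L^{\Phi}$ by the continuity from the first step. For coercivity, the lower bound $c_{1}\Phi(|\lambda|)\leq f$ in \textbf{(H$_{3}$)} also passes to the limit and gives $F(\mathbf{v})\geq c_{1}\iint\pounds_{A}\int\Phi(|\mathbb{D}\mathbf{v}|)$. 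Because $\Phi$ is superlinear, this modular dominates a multiple of the Luxemburg norm once the norm is large. It remains to bound $\|\mathbb{D}\mathbf{v}\|$ from below by $\|\mathbf{v}\|_{\mathbb{F}_{0}^{1}L^{\Phi}}$. The three components of $\mathbb{D}\mathbf{v}$ are mutually orthogonal in the mean sense: $M_{y}(\overline{D}_{y}v_{2})=0$ by property (ii) of $\Phi$-total algebras, $\int_{\Omega}\overline{D}_{\omega}v_{1}\,\psi\,d\mu=0$ for invariant $\psi$, and $Dv_{0}$ is invariant and independent of $y$. The corresponding projections (mean in $y$, then conditional expectation onto invariant functions) are bounded in $L^{\Phi}$ by Jensen's inequality. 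These give $\|Dv_{0}\|,\|\overline{D}_{\omega}v_{1}\|,\|\overline{D}_{y}v_{2}\|\leq C\|\mathbb{D}\mathbf{v}\|$, hence $F(\mathbf{v})\to\infty$ as $\|\mathbf{v}\|_{\mathbb{F}_{0}^{1}L^{\Phi}}\to\infty$.

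The third step is existence. Since $\Phi,\widetilde{\Phi}\in\Delta_{2}$, the factors $W^{1}_{0}L^{\Phi}_{D_{x}}(Q;I^{\Phi}_{nv}(\Omega))$, $L^{\Phi}(Q;W^{1}_{\#}L^{\Phi}(\Omega))$ and $L^{\Phi}(Q\times\Omega;W^{1}_{\#}\mathcal{X}^{\Phi}_{A})$ embed isometrically as closed subspaces of reflexive Orlicz spaces. Here the $\Delta'$ assumption on $\widetilde{\Phi}$ is used to identify the vector-valued spaces, and the isometries are $\overline{D}_{\omega}$ and $\overline{D}_{y}$. Hence $\mathbb{F}_{0}^{1}L^{\Phi}$ is reflexive. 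A convex continuous functional is weakly lower semicontinuous, so a bounded minimizing sequence, which exists by coercivity, has a weakly convergent subsequence whose limit $\mathbf{u}$ attains the infimum (\ref{lem28}).

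The last step is uniqueness. If $\mathbf{u}\neq\mathbf{u}'$ were both minimizers, the norm inequality of the second step gives $\mathbb{D}\mathbf{u}\neq\mathbb{D}\mathbf{u}'$ on a set of positive measure (in the $\mathcal{G}_{1}$-representation on $Q\times\Omega\times\Delta(A)$). Strict convexity of $f$ would then make $F\bigl(\tfrac12(\mathbf{u}+\mathbf{u}')\bigr)<\min F$, a contradiction. I expect the main obstacle to be this transfer of \emph{strict} convexity, and of the pointwise bounds, to the extended functional $\varrho\circ f(\cdot,\cdot,\mathbb{D}\mathbf{v})$. My first attempt will be to work through $\mathcal{G}_{1}$, identifying $\mathcal{G}(f(x,\omega,\cdot,\mathbf{w}))$ with $\hat f(x,\omega,s,\hat{\mathbf{w}}(s))$ on $\Delta(A)$, where $\hat f$ inherits strict convexity in $\lambda$ from $f$. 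The injectivity behind the norm inequality of the second step is the other delicate point.
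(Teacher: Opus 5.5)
The existence half of your argument is sound, and the direct method is the natural comparison: the paper gives no proof, only the remark that \textbf{(H$_{1}$)}--\textbf{(H$_{4}$)} ``drive to'' the lemma. Convexity, both growth bounds and continuity pass from $F^{\infty}_{0}$ to $\mathbb{F}^{1}_{0}L^{\Phi}$, because non-strict inequalities survive the positive map $\mathcal{G}$ and the density limits. Your projection estimate $\|Dv_{0}\|+\|\overline{D}_{\omega}v_{1}\|+\|\overline{D}_{y}v_{2}\|\leq C\|\mathbb{D}\mathbf{v}\|$ (mean in $y$, then conditional expectation onto invariant functions) gives both coercivity and injectivity of $\mathbf{v}\mapsto\mathbb{D}\mathbf{v}$.

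The genuine gap is exactly where you suspected, and it cannot be closed: $\hat f(x,\omega,s,\cdot)=\mathcal{G}(f(x,\omega,\cdot,\cdot))(s)$ does \emph{not} in general inherit strict convexity. The map $y\mapsto\delta_{y}$ has dense image in $\Delta(A)$, so for real $u\in A$ one has $\min_{\Delta(A)}\mathcal{G}(u)=\inf_{\mathbb{R}^{d}}u$. Hence the strict midpoint inequality, valid at every $y$, survives at every $s$ only if the midpoint gap has positive infimum over $y$. Moreover, $\beta$ may live entirely on points of $\Delta(A)$ not coming from $\mathbb{R}^{d}$.

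Concretely, take $A=\mathcal{B}_{\infty}(\mathbb{R}^{d})$, which is admissible by Remark \ref{algegood}. Then $\Delta(A)=\mathbb{R}^{d}\cup\{\infty\}$, $\beta=\delta_{\infty}$, $\mathcal{X}^{\Phi}_{A}\cong\mathbb{C}$ and $W^{1}_{\#}\mathcal{X}^{\Phi}_{A}=\{0\}$. With $t_{0}>0$ such that $\Phi(t_{0})=1$, let
\[
f(x,\omega,y,\lambda)=\max\{1,\Phi(|\lambda|)\}+\frac{\sqrt{1+|\lambda|^{2}}}{1+|y|^{2}} .
\]
This $f$ satisfies \textbf{(H$_{1}$)}--\textbf{(H$_{4}$)} with $c_{1}=1$, and it is strictly convex in $\lambda$ for every $y$ thanks to the second term. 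Yet
\[
F(\mathbf{v})=\iint_{Q\times\Omega}\max\{1,\Phi(|Dv_{0}+\overline{D}_{\omega}v_{1}|)\}\,dx\,d\mu\geq|Q|,
\]
with equality at $\mathbf{v}=(t\psi,0,0)$ for every $\psi\in\mathcal{C}^{\infty}_{0}(Q)$ and every $t$ with $t\|D\psi\|_{\infty}\leq t_{0}$. So the uniqueness assertion is false under the hypotheses as stated.

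Your last step therefore needs an extra assumption. One option is that for all $\lambda\neq\lambda'$ the gap $\tfrac12 f(\cdot,\lambda)+\tfrac12 f(\cdot,\lambda')-f(\cdot,\tfrac{\lambda+\lambda'}{2})$ has positive infimum in $y$; this does pass through $\mathcal{G}$. Another is simply strict convexity of $\hat f(x,\omega,s,\cdot)$ for $\beta$-a.e. $s$. In the periodic case $\Delta(A)=\mathbb{T}^{d}$ is covered by the image of $\mathbb{R}^{d}$, so your transfer works as written there. Under such a hypothesis, the rest of your uniqueness argument (injectivity, then a strict midpoint inequality on a set of positive $dx\,d\mu\,d\beta$-measure) goes through.
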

	
	\subsection{Regularization} 
	
	As in \cite{tacha3}, we regularize the integrand $f$ in order to get an approximating family of integrands $(f_{n})_{n\in\mathbb{N}^{\ast}}$ having in particular some properties \textbf{(H$_{1}$)}-\textbf{(H$_{4}$)} . Precisely, let $\theta_{n} \in \mathcal{C}^{\infty}_{0}(\mathbb{R}^{d})$ with $0\leq \theta_{n}$, $\textup{supp}\theta_{n} \subset \frac{1}{n}\overline{B_{d}}$ (where $\overline{B_{d}}$ denotes the closure of the open unit ball $B_{d}$ in $\mathbb{R}^{d}$) and $\int \theta_{n}(\eta)d\eta = 1$. Setting 
	\begin{equation*}
		f_{n}(x,\omega,y, \lambda) = \int \theta_{n}(\eta)f(x,\omega,y, \lambda-\eta)d\eta, \quad (x,\omega,y,\lambda) \in \mathbb{R}^{d}\times\Omega\times \mathbb{R}^{d}\times \mathbb{R}^{d}.
	\end{equation*}
	The main properties of this new integrand are the following:
	\begin{itemize}
		\item[$(H_{1})_{n}$] for all $(x,\lambda) \in \mathbb{R}^{d}\times \mathbb{R}^{d}$ and for almost all $(\omega,y)\in\Omega\times\mathbb{R}^{d}$, $f_{n}(x,\cdot,\cdot, \lambda)$ is measurable and $f_{n}(\cdot,\omega,\cdot,\cdot)$ is continuous. Moreover, there exist a continuous positive function $\varpi : \mathbb{R} \to \mathbb{R}_{+}$ with $\varpi(0)=0$, and a function $a \in L^{1}_{loc}(\mathbb{R}^{d}_{y})$ such that 
		\begin{equation*}
			|f_{n}(x,\omega,y,\lambda) - f_{n}(x',\omega,y,\lambda)| \leq \varpi(|x - x'|)[a(y) + f_{n}(x,\omega,y,\lambda)]
		\end{equation*}
		for all $x, x' \in \mathbb{R}^{d}$, $\lambda \in \mathbb{R}^{d}$ and for $d\mu\times dy$-almost all $(\omega,y) \in \Omega\times\mathbb{R}^{d}$ ;
		\item[$(H_{2})_{n}$] $f_{n}(x,\omega,y,\cdot)$ is strictly convex for almost all $\omega \in \Omega$ and for all $x,y \in \mathbb{R}^{d}$  ;
		\item[$(H_{3})_{n}$] There is constant $c_{5} > 0$ such that 
		\begin{equation*}
			f_{n}(x,\omega,y,\lambda) \leq c_{5}(1+ \Phi(|\lambda|))
		\end{equation*}
		for all $x,y,\lambda \in \mathbb{R}^{N}$ and for almost all $\omega \in \Omega$ ;
		\item[$(H_{4})_{n}$] $f_{n}(x,\omega,\cdot, \lambda) \in A$ for all $x,\lambda \in \mathbb{R}^{d}$ and for almost all $\omega \in \Omega$;
		\item[$(H_{5})_{n}$] $\dfrac{\partial f_{n}}{\partial \lambda}(x,\omega,y,\lambda)$ exists for all $x,y,\lambda \in \mathbb{R}^{d}$ and for almost all $\omega\in \Omega$, and there exists a constant $c_{6}=c_{6}(n) >0$ such that 
		\begin{equation*}
			\left| \dfrac{\partial f_{n}}{\partial \lambda}(x,\omega,y,\lambda) \right| \leq c_{6} (1+\phi(|\lambda|)).
		\end{equation*}
	\end{itemize}
	That being the case, we obtain the results in Proposition \ref{lem23} and in Corollary \ref{lem24}, where $f$ is replaced by $f_{n}$, and as in \cite[Subsection 4.3.]{franck} we have the following lemma.
	\begin{lemma}
		For every $\mathbf{v} \in L^{\Phi}(Q\times\Omega, \mathcal{X}_{A}^{\Phi})^{d}$, as $n\to \infty$, one has
		\begin{equation*}
			\varrho \circ f_{n}(\cdot,\cdot,\mathbf{v}) \rightarrow \varrho \circ f(\cdot,\cdot,\mathbf{v}) \quad \textup{in} \; L^{1}(Q\times\Omega; \mathcal{X}_{A}^{\Phi}),
		\end{equation*}
		where $\varrho \circ f_{n}$ and $\varrho \circ f$ are defined as in Corollary \ref{lem24}.
	\end{lemma}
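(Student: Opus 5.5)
We have to prove that $\varrho \circ f_{n}(\cdot,\cdot,\mathbf{v}) \to \varrho \circ f(\cdot,\cdot,\mathbf{v})$ in $L^{1}(Q\times\Omega;\mathcal{X}_{A}^{\Phi})$ for every $\mathbf{v}\in L^{\Phi}(Q\times\Omega;\mathcal{X}_{A}^{\Phi})^{d}$. The plan is a density argument in two steps: first prove the convergence for smooth $\mathbf{v}$, then pass to general $\mathbf{v}$ using continuity estimates that are uniform in $n$. We split
\[
f_{n}(\cdot,\cdot,\mathbf{v})-f(\cdot,\cdot,\mathbf{v}) = \bigl[f_{n}(\cdot,\cdot,\mathbf{v})-f_{n}(\cdot,\cdot,\mathbf{w})\bigr] + \bigl[f_{n}(\cdot,\cdot,\mathbf{w})-f(\cdot,\cdot,\mathbf{w})\bigr] + \bigl[f(\cdot,\cdot,\mathbf{w})-f(\cdot,\cdot,\mathbf{v})\bigr],
\]
with $\mathbf{w}\in[\mathcal{C}^{\infty}_{0}(Q)\otimes\mathcal{C}^{\infty}(\Omega)\otimes A]^{d}$ close to $\mathbf{v}$. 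Since $\varrho$ is a contraction onto the quotient, it is enough to estimate the representatives in $L^{1}(Q\times\Omega;\mathfrak{X}_{A}^{\Phi})$.

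\emph{Step 1 (smooth $\mathbf{w}$).} Fix such a $\mathbf{w}$; it is bounded, say $|\mathbf{w}|\le R$. For each $(x,\omega,y)$,
\[
|f_{n}(x,\omega,y,\mathbf{w})-f(x,\omega,y,\mathbf{w})| \le \int\theta_{n}(\eta)\,|f(x,\omega,y,\mathbf{w}-\eta)-f(x,\omega,y,\mathbf{w})|\,d\eta .
\]
A convex function satisfying the upper bound in (H$_{3}$) is locally Lipschitz in $\lambda$, with constant on $\overline{B}_{R+1}$ controlled by $c_{2}(1+\Phi(R+2))$, independently of $(x,\omega,y)$. Hence the difference is at most $C(R)/n$ uniformly in $(x,\omega,y)$. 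The seminorm $\|\cdot\|_{\Phi,A}$ of a function bounded by $\kappa$ is at most $\kappa/\Phi^{-1}(1)$. Therefore this middle term tends to $0$ in $L^{1}(Q\times\Omega;\mathfrak{X}_{A}^{\Phi})$, because $Q\times\Omega$ has finite measure.

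\emph{Step 2 (uniform continuity).} The outer terms are handled with estimate (\ref{f3}) of Proposition \ref{lem23}. The essential point, and the main obstacle, is that the constant $c$ there must be independent of $n$. The growth constant $c_{5}$ in $(H_{3})_{n}$ is uniform, but $c_{6}(n)$ in $(H_{5})_{n}$ is not. So I would not use $(H_{5})_{n}$. Instead I would rederive the Lipschitz-type bound from convexity together with the uniform growth bound $(H_{3})_{n}$: for convex $g$ with $g(\lambda)\le c_{5}(1+\Phi(|\lambda|))$ one has
\[
|g(\lambda)-g(\mu)|\le c\,\bigl(1+\phi(1+|\lambda|+|\mu|)\bigr)|\lambda-\mu|,
\]
where $c$ depends only on $c_{5}$ and the $\Delta_{2}$ constant. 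This is the same argument as in \cite[Proposition 12]{tchin2}, and the constant $c(c_{5},Q,\Omega,\Phi)$ it produces is uniform in $n$. With this bound, the generalized H\"older inequality in $\mathfrak{X}^{\Phi}_{A}$ (property (\textbf{3})), applied for a.e. $(x,\omega)$ and then integrated, gives
\[
\|f_{n}(\cdot,\cdot,\mathbf{v})-f_{n}(\cdot,\cdot,\mathbf{w})\|_{L^{1}(Q\times\Omega;\mathfrak{X}^{\Phi}_{A})} \le c\bigl(1+\|\phi(1+|\mathbf{v}|+|\mathbf{w}|)\|_{L^{\widetilde{\Phi}}}\bigr)\,\|\mathbf{v}-\mathbf{w}\|_{L^{\Phi}},
\]
and the same inequality for $f$. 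The factor $\|\phi(\cdots)\|_{L^{\widetilde{\Phi}}}$ stays bounded as long as $\mathbf{w}$ ranges over a bounded set, since $\widetilde{\Phi}(\phi(t))\le k\Phi(t)$ and $\Phi\in\Delta_{2}$.

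\emph{Conclusion.} Given $\eta>0$, use density to choose $\mathbf{w}$ with $\|\mathbf{v}-\mathbf{w}\|$ small enough that both outer terms are at most $\eta/3$ for all $n$. Then choose $n_{0}$ from Step 1 so that the middle term is at most $\eta/3$ for $n\ge n_{0}$. This completes the proof. One point to verify is that the extension of $\mathbf{v}\mapsto f(\cdot,\cdot,\mathbf{v})$ to $L^{\Phi}(Q\times\Omega;\mathcal{X}_{A}^{\Phi})^{d}$ (as opposed to $\mathfrak{X}$) is well defined. This follows because (\ref{f3}) makes the map factor through $\varrho$: elements with zero seminorm have zero image under $\varrho\circ f$.
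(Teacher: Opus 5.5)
Your strategy (density plus a local Lipschitz bound uniform in $n$, derived from convexity and $(H_3)_n$ rather than from $(H_5)_n$) is sound. The step that fails as written is the Step 2 estimate. The H\"older inequality of property (\textbf{3}) controls $\int_{\Delta(A)}|\mathcal{G}(g)\mathcal{G}(h)|\,d\beta=M(|gh|)$, which is an $\mathcal{X}^{1}_{A}$-type quantity. It gives no control of $\|gh\|_{\Phi,A}$. What your bound plus H\"older actually yields is
\[
\iint_{Q\times\Omega}\pounds_{A}\int|f_{n}(\cdot,\cdot,\mathbf{v})-f_{n}(\cdot,\cdot,\mathbf{w})|\,dy\,dx\,d\mu\le c\bigl(1+\|\phi(1+|\mathbf{v}|+|\mathbf{w}|)\|_{L^{\widetilde{\Phi}}(Q\times\Omega;\mathcal{X}^{\widetilde{\Phi}}_{A})}\bigr)\|\mathbf{v}-\mathbf{w}\|_{L^{\Phi}(Q\times\Omega;\mathcal{X}^{\Phi}_{A})^{d}},
\]
which is a bound in $L^{1}(Q\times\Omega;\mathcal{X}^{1}_{A})$, not in $L^{1}(Q\times\Omega;\mathfrak{X}^{\Phi}_{A})$.

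No argument can give the latter. Since $f\geq c_{1}\Phi(|\lambda|)$, the function $f(\cdot,\cdot,\mathbf{v})$ is in general not even an element of $L^{1}(Q\times\Omega;\mathcal{X}^{\Phi}_{A})$. For instance, take $\Phi(t)=t^{p}/p$, $A=\mathcal{C}_{\textup{per}}(Y)$, and $|\mathbf{v}(x,\omega,\cdot)|\in L^{p}_{\textup{per}}(Y)\setminus L^{p^{2}}_{\textup{per}}(Y)$. The same slip appears in the statement and in \eqref{f3}. The meaningful target is $L^{1}(Q\times\Omega;\mathcal{X}^{1}_{A})\cong L^{1}(Q\times\Omega\times\Delta(A))$, which is also all that the passage $n\to\infty$ after Proposition \ref{conseq1} needs. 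You should say explicitly that this is what you prove.

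With that reading your proof closes. In Step 1, $|h|\le\kappa$ simply gives $M(|h|)\le\kappa$. In Step 2, apply H\"older once on the product space $Q\times\Omega\times\Delta(A)$. Do not apply it fiberwise in $(x,\omega)$ and then integrate: that leaves a mixed norm $\bigl\|\,\|\cdot\|_{\widetilde{\Phi},A}\bigr\|_{L^{\widetilde{\Phi}}(Q\times\Omega)}$, which needs a separate comparison argument (cf.\ property (ix)).

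As for the route: the paper gives no argument of its own, only a pointer to \cite[Subsection 4.3]{franck}, so your decomposition is a genuine proof rather than a reproduction. Your Step 2 bound also gives a shorter proof. Insert $|g(\lambda)-g(\mu)|\le c(1+\phi(1+|\lambda|+|\mu|))|\lambda-\mu|$ into $f_{n}-f=\int\theta_{n}(\eta)[f(\cdot,\lambda-\eta)-f(\cdot,\lambda)]\,d\eta$ to get $|f_{n}(\cdot,\lambda)-f(\cdot,\lambda)|\le\frac{c}{n}(1+\phi(2+2|\lambda|))$. This yields $O(1/n)$ convergence for each $\mathbf{v}$ directly, with no smooth approximation. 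The advantage of your density scheme is that it works only with $f(\cdot,\cdot,\mathbf{v})$ as the continuous extension from Proposition \ref{lem23}, which is how it is defined for nonsmooth $\mathbf{v}$. You therefore never need to identify that extension with a pointwise composition. In either route you are right to avoid $(H_5)_n$, whose constant depends on $n$.
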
 
	
	We are now ready to give one of the most important results of this subsection.
	\begin{proposition}\label{conseq1}
		Let $(\mathbf{v}_{\varepsilon})_{\varepsilon}$ be a sequence in $L^{\Phi}(Q\times\Omega)^{d}$ which weakly $\Sigma$-converges (in each component) to $\mathbf{v} \in L^{\Phi}(Q\times\Omega; \mathcal{X}_{A}^{\Phi})^{d}$. Then, for any integer $n\geq 1$,  we have that there exists a constant $C'$ such that
		\begin{equation*}
			\iint_{Q\times\Omega} \pounds_{A}\int \varrho \circ f_{n}(\cdot,\cdot, \mathbf{v}) dydxd\mu   - \dfrac{C'}{n} \leq \lim_{\varepsilon \to 0} \iint_{Q\times\Omega}  f\left(x, T\left(\frac{x}{\varepsilon_{1}}\right)\omega,\frac{x}{\varepsilon_{2}}, \mathbf{v}_{\varepsilon}(x,\omega)\right)  dxd\mu.
		\end{equation*}
	\end{proposition}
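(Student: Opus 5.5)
We follow the classical lower semicontinuity argument via convexity (as in \cite[Subsection 4.3]{franck} and \cite{tchin2}), using the smooth regularized integrand $f_{n}$. The limit on the right is understood as a $\liminf$ (or along a subsequence where it exists).

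\textbf{Step 1: reduce from $f$ to $f_{n}$.} Since $\theta_{n}$ is supported in $\frac{1}{n}\overline{B_{d}}$, convexity of $f(x,\omega,y,\cdot)$ together with the growth bound \textbf{(H$_{3}$)} gives a local Lipschitz estimate for $f$ in $\lambda$ of the form $|f(\cdot,\lambda-\eta)-f(\cdot,\lambda)|\leq c(1+\phi(|\lambda|+1))|\eta|$. Integrating against $\theta_{n}$ yields
\begin{equation*}
f_{n}(x,\omega,y,\lambda)\leq f(x,\omega,y,\lambda)+\frac{c}{n}\bigl(1+\phi(1+|\lambda|)\bigr).
\end{equation*}
Evaluating at $\lambda=\mathbf{v}_{\varepsilon}$, integrating over $Q\times\Omega$, and using $\phi(t)\leq \Phi(2t)/t$ from (\ref{lem10}), the $\Delta_{2}$ condition, and the boundedness of $(\mathbf{v}_{\varepsilon})$ in $L^{\Phi}(Q\times\Omega)^{d}$, we obtain $\iint f_{n}^{\varepsilon}(\cdot,\cdot,\mathbf{v}_{\varepsilon})\leq \iint f^{\varepsilon}(\cdot,\cdot,\mathbf{v}_{\varepsilon})+C'/n$ with $C'$ independent of $\varepsilon$. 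This leaves the claimed inequality with $f_{n}$ on both sides.

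\textbf{Step 2: convexity inequality with a smooth test function.} Fix $\mathbf{w}\in[\mathcal{C}^{\infty}_{0}(Q)\otimes\mathcal{C}^{\infty}(\Omega)\otimes A]^{d}$. By $(H_{2})_{n}$ and $(H_{5})_{n}$,
\begin{equation*}
f_{n}^{\varepsilon}(\cdot,\cdot,\mathbf{v}_{\varepsilon})\geq f_{n}^{\varepsilon}(\cdot,\cdot,\mathbf{w}^{\varepsilon})+\frac{\partial f_{n}}{\partial\lambda}^{\varepsilon}(\cdot,\cdot,\mathbf{w}^{\varepsilon})\cdot(\mathbf{v}_{\varepsilon}-\mathbf{w}^{\varepsilon}).
\end{equation*}
We pass to the limit in each term. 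Proposition \ref{lem23}, applied with $f_{n}$, handles the first term. For the second, note that $\partial f_{n}/\partial\lambda(\cdot,\cdot,\mathbf{w})$ belongs to $\mathcal{C}(\overline{Q};L^{\infty}(\Omega;A))^{d}$, because $\partial_{\lambda}f_{n}(x,\omega,\cdot,\lambda)$ inherits $(H_{4})_{n}$ through the mollification and difference quotients, with the bound from $(H_{5})_{n}$. It is therefore an admissible test function, so Proposition \ref{p3.4} and Corollary \ref{c3.3} together with the weak $\digamma\,\Sigma$ convergence of $\mathbf{v}_{\varepsilon}$ give the limit of the cross term. Likewise, $\partial_{\lambda}f_{n}(\mathbf{w})\cdot\mathbf{w}$ converges by Proposition \ref{p3.3}. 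This yields
\begin{equation*}
\liminf_{\varepsilon\to0}\iint f_{n}^{\varepsilon}(\cdot,\cdot,\mathbf{v}_{\varepsilon})\geq \iint\pounds_{A}\!\int\Bigl[f_{n}(\cdot,\cdot,\mathbf{w})+\partial_{\lambda}f_{n}(\cdot,\cdot,\mathbf{w})\cdot(\mathbf{v}-\mathbf{w})\Bigr]dy\,dx\,d\mu .
\end{equation*}

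\textbf{Step 3: density.} Let $\mathbf{w}\to\mathbf{v}$ in $L^{\Phi}(Q\times\Omega;\mathfrak{X}^{\Phi}_{A})^{d}$, using the density of the tensor product space. The first term converges by the continuity estimate (\ref{f3}) applied to $f_{n}$. The cross term tends to zero by the generalized Hölder inequality, because $\partial_{\lambda}f_{n}(\mathbf{w})$ stays bounded in $L^{\widetilde{\Phi}}(Q\times\Omega;\mathcal{X}^{\widetilde{\Phi}}_{A})$: indeed $\widetilde{\Phi}(\phi(t))\leq k\Phi(t)$ under $\Delta_{2}$, and $\mathbf{w}$ is bounded in $L^{\Phi}$. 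Combining Steps 1 and 3 gives the statement.

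The main obstacle is the uniformity in Step 1. We need a bound on $f_{n}-f$ that is uniform in $\varepsilon$ and involves only $\Phi$-integrals of $\mathbf{v}_{\varepsilon}$. This requires the convexity-based Lipschitz estimate together with the $\Delta_{2}$ control of $\phi$, and $C'$ is then a multiple of $\sup_{\varepsilon}\iint\Phi(2|\mathbf{v}_{\varepsilon}|+2)$. A secondary difficulty is justifying that $\partial_{\lambda}f_{n}(\cdot,\cdot,\mathbf{w})$ has values in $A$, which is needed to invoke Corollary \ref{c3.3}.
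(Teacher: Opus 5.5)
Your proposal is correct and is essentially the paper's argument: the paper's proof simply invokes the scheme of \cite[Proposition 21]{franck} with smooth approximants $\mathbf{v}_{l}\to\mathbf{v}$ in $L^{\Phi}(Q\times\Omega;\mathcal{X}_{A}^{\Phi})^{d}$. That scheme consists of the regularization bound $f\geq f_{n}-\frac{c}{n}(1+\phi(\cdot))$ with a constant independent of $\varepsilon$ and $n$, the tangent inequality for the differentiable convex $f_{n}$ at $\mathbf{v}_{l}$, the limit $\varepsilon\to 0$, and then $l\to\infty$ via (\ref{f3}) and the generalized H\"older inequality, exactly as in your Steps 1--3.

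One point needs tidying: $\partial_{\lambda}f_{n}(\cdot,\cdot,\mathbf{w})$ is not compactly supported in $x$ (off the support of $\mathbf{w}$ it equals $\partial_{\lambda}f_{n}(\cdot,\cdot,0)$), so Proposition \ref{p3.4} does not apply verbatim to the cross term. Splitting with a cutoff $\chi\in\mathcal{C}^{\infty}_{0}(Q)$ and bounding the remainder by H\"older and the $L^{\Phi}$-bound on $(\mathbf{v}_{\varepsilon})$ handles this routinely.
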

	\begin{proof}
		Arguing as in the proof of \cite[Proposition 21]{franck} with the sequence $(\mathbf{v}_{l})_{l\in \mathbb{N}} \in \left[ \mathcal{C}^{\infty}_{0}(Q)\otimes\mathcal{C}^{\infty}(\Omega)\otimes (J_{1}\circ \varrho)(A^{\infty}) \right]^{d}$ such that $\mathbf{v}_{l} \rightarrow \mathbf{v}$ in  $L^{\Phi}(Q\times\Omega; \mathcal{X}_{A}^{\Phi})^{d}$ as $l\to \infty$. One get the result.
	\end{proof} 
	
	Letting $n\to\infty$ in Proposition \ref{conseq1}, and replacing $\mathbf{v}_{\varepsilon}$ by $Du_{\varepsilon}$, with $Du_{\varepsilon}$ stochastically weakly $\Sigma$-converges (componentwise) to
	$\mathbb{D}\mathbf{u} = Du_{0} + \overline{D}_{\omega}u_{1} + \overline{D}_{y}u_{2} \in L^{\Phi}(Q\times\Omega;\mathcal{X}_{A}^{\Phi})^{d}$, one obtains the following result.
	
	\begin{corollary}\label{lem32}
		Let $(u_{\varepsilon})_{\varepsilon\in E}$ be a sequence in $W^{1}L^{\Phi}_{D_{x}}(Q; L^{\Phi}(\Omega))$. Assume that $(Du_{\varepsilon})_{\varepsilon\in E}$ stochastically weakly $\Sigma$-converges componentwise to $\mathbb{D}\mathbf{u} = Du_{0} + \overline{D}_{\omega}u_{1} + \overline{D}_{y}u_{2} \in L^{\Phi}(Q\times\Omega;\mathcal{X}_{A}^{\Phi})^{d}$, where $\mathbf{u} = (u_{0}, u_{1}, u_{2}) \in \mathbb{F}^{1}_{0}L^{\Phi}$. Then 
		\begin{equation*}
			\iint_{Q\times\Omega} \pounds_{A}\int \varrho \circ f(\cdot,\cdot, \mathbb{D}\mathbf{u}(x,\omega,y))  dydxd\mu  \leq \lim_{\varepsilon \to 0} \iint_{Q\times\Omega}  f^{\varepsilon}\left(\cdot,\cdot, Du_{\varepsilon}\right)  dxd\mu.
		\end{equation*}
	\end{corollary}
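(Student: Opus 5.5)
The plan is to obtain Corollary \ref{lem32} from Proposition \ref{conseq1} by letting $n\to\infty$ there, after taking $\mathbf{v}_{\varepsilon}=Du_{\varepsilon}$.

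\emph{Step 1 (apply Proposition \ref{conseq1}).} By hypothesis, $Du_{\varepsilon}\rightarrow \mathbb{D}\mathbf{u}$ componentwise in $L^{\Phi}(Q\times\Omega)$-weak $\digamma\,\Sigma$, with $\mathbb{D}\mathbf{u}\in L^{\Phi}(Q\times\Omega;\mathcal{X}_{A}^{\Phi})^{d}$. A weakly stochastically $\Sigma$-convergent sequence is bounded by Definition \ref{d3.1}, so $(Du_{\varepsilon})$ is bounded in $L^{\Phi}(Q\times\Omega)^{d}$. The hypotheses of Proposition \ref{conseq1} therefore hold with $\mathbf{v}_{\varepsilon}=Du_{\varepsilon}$ and $\mathbf{v}=\mathbb{D}\mathbf{u}$.

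The limit on the right-hand side of Proposition \ref{conseq1} need not exist, so we read it as $\liminf_{E\ni\varepsilon\to 0}$; the proof of that proposition only gives a lower bound, and this is harmless. For every $n\geq 1$ we then get
\begin{equation*}
\iint_{Q\times\Omega}\pounds_{A}\int \varrho\circ f_{n}(\cdot,\cdot,\mathbb{D}\mathbf{u})\,dy\,dx\,d\mu-\frac{C'}{n}\leq \liminf_{E\ni\varepsilon\to 0}\iint_{Q\times\Omega} f^{\varepsilon}(\cdot,\cdot,Du_{\varepsilon})\,dx\,d\mu .
\end{equation*}
The constant $C'$ must not depend on $n$. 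I would check this against the proof of \cite[Proposition 21]{franck}. There $C'$ comes from the growth bound (H$_3$), the convexity estimate $f_{n}\leq f+$ (a modulus of order $1/n$ times $1+\phi(\cdot)$), and the boundedness of $(Du_{\varepsilon})$. The right-hand side does not involve $n$ at all.

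\emph{Step 2 (pass to the limit in $n$).} By the regularization lemma preceding Proposition \ref{conseq1}, applied with $\mathbf{v}=\mathbb{D}\mathbf{u}$, we have $\varrho\circ f_{n}(\cdot,\cdot,\mathbb{D}\mathbf{u})\to\varrho\circ f(\cdot,\cdot,\mathbb{D}\mathbf{u})$ in $L^{1}(Q\times\Omega;\mathcal{X}_{A}^{\Phi})$.

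The map $w\mapsto\iint_{Q\times\Omega}\pounds_{A}\int w\,dy\,dx\,d\mu$ is a continuous linear form on that space. Indeed, $|M(w(x,\omega,\cdot))|\leq \|w(x,\omega,\cdot)\|_{\mathcal{X}^{1}_{A}}$, and the $\mathcal{X}^{\Phi}_{A}$-norm dominates the $\mathcal{X}^{1}_{A}$-norm since $\Phi\succ t$ and $\beta$ is a probability measure. Hence the left-hand side converges to $\iint_{Q\times\Omega}\pounds_{A}\int\varrho\circ f(\cdot,\cdot,\mathbb{D}\mathbf{u})\,dy\,dx\,d\mu$, while $C'/n\to0$. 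This yields the claimed inequality with $\liminf$, and a fortiori with $\lim$ whenever the limit exists.

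\emph{Main obstacle.} The delicate point is the $n$-independence of $C'$ in Proposition \ref{conseq1}, that is, a uniform estimate of the form $|f_{n}-f|\leq \frac{c}{n}(1+\phi(1+|\lambda|))$. I would derive it from convexity and (H$_3$), in the spirit of the local Lipschitz bound underlying \eqref{f3}. This estimate is then integrated against the bounded sequence $Du_{\varepsilon}$, using the Hölder inequality and $\widetilde{\Phi}(\phi(t))\leq k\Phi(t)$ from \eqref{lem10}. If instead $C'$ depended on $n$, I would redo the lower-semicontinuity argument directly for $f$ via the density of $F^{\infty}_{0}$ and convexity.
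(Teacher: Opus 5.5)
Your proposal is correct and takes the same route as the paper, which obtains Corollary \ref{lem32} in one line by setting $\mathbf{v}_{\varepsilon}=Du_{\varepsilon}$ in Proposition \ref{conseq1} and letting $n\to\infty$ via the regularization lemma. Your extra checks only make explicit what the paper leaves implicit: reading the limit as a $\liminf$, the $n$-independence of $C'$, and the continuity of $w\mapsto\iint_{Q\times\Omega}\pounds_{A}\int w\,dy\,dx\,d\mu$.
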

	
	\subsection{Main homogenization result}
	
	Our objective in this section is to prove the following result.
	\begin{theorem}\label{lem37}
		For each $\varepsilon > 0$, let $(u_{\varepsilon})_{\varepsilon\in E} \in W^{1}_{0}L^{\Phi}_{D_{x}}(Q; L^{\Phi}(\Omega))$ be the unique solution of (\ref{lem26}). Then, as $\varepsilon \to 0$, 
		\begin{equation}\label{lem30}
			u_{\varepsilon} \rightarrow u_{0} \quad  \;\textup{in} \; L^{\Phi}(Q\times\Omega)-weak \,\digamma\,\Sigma,  
		\end{equation}
		and 
		\begin{equation}\label{lem31}
			Du_{\varepsilon} \rightarrow Du_{0} + \overline{D}_{\omega}u_{1} + \overline{D}_{y}u_{2} \quad  \;\textup{in} \; L^{\Phi}(Q\times\Omega)^{d}-weak \,\digamma\,\Sigma,  
		\end{equation}
		where $\mathbf{u} = (u_{0}, u_{1}, u_{2}) \in \mathbb{F}_{0}^{1}L^{\Phi}$ [see (\ref{banac1})] is the unique solution to the minimization problem (\ref{lem28}). 
	\end{theorem}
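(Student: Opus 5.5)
We follow the classical energy argument of $\Gamma$-convergence type, as in the periodic Orlicz case \cite{tchin2} and in \cite{sango2}. The first step is an a priori bound. Since $u_\varepsilon$ minimizes $F_\varepsilon$, comparison with $v=0$ and the growth condition \textbf{(H$_3$)} give
\[
c_1\iint_{Q\times\Omega}\Phi(|Du_\varepsilon|)\,dxd\mu\le F_\varepsilon(u_\varepsilon)\le F_\varepsilon(0)\le c_2(1+\Phi(0))|Q|.
\]
Because $\Phi\in\Delta_2$, this modular bound yields a bound on $\|Du_\varepsilon\|_{L^\Phi(Q\times\Omega)^d}$. The Poincaré inequality in $W^1_0L^\Phi_{D_x}(Q;L^\Phi(\Omega))$, applied for $\mu$-a.e. $\omega$, then bounds $u_\varepsilon$ in $L^\Phi(Q\times\Omega)$.

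Next, given any fundamental sequence $E$, we apply Theorem \ref{t3.3}, with $\widetilde\Phi\in\Delta'$ so that the correctors have the $L^\Phi$ integrability. This gives a subsequence $E'$ and a triple $\mathbf{u}=(u_0,u_1,u_2)$ satisfying (\ref{lem30})--(\ref{lem31}). Remark \ref{remX}, applied with $X=W^1_0L^\Phi(Q)$, ensures that $u_0(\cdot,\omega)$ has zero trace, so $\mathbf{u}\in\mathbb{F}^1_0L^\Phi$. Corollary \ref{lem32} then yields the lower bound
\[
F(\mathbf{u})\le\liminf_{E'\ni\varepsilon\to0}F_\varepsilon(u_\varepsilon).
\]

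For the upper bound we take $\boldsymbol\psi=(\psi_0,\psi_1,\psi_2)\in F^\infty_0$ and build the oscillating test function $\phi_\varepsilon$ of Corollary \ref{lem24}, which lies in $W^1_0L^\Phi_{D_x}(Q;L^\Phi(\Omega))$. Minimality gives $F_\varepsilon(u_\varepsilon)\le F_\varepsilon(\phi_\varepsilon)$, and Corollary \ref{lem24} shows that $F_\varepsilon(\phi_\varepsilon)\to F(\boldsymbol\psi)$. Hence
\[
F(\mathbf{u})\le\liminf F_\varepsilon(u_\varepsilon)\le\limsup F_\varepsilon(u_\varepsilon)\le F(\boldsymbol\psi)\quad\text{for all }\boldsymbol\psi\in F^\infty_0 .
\]
We then extend this to all $\mathbf{v}\in\mathbb{F}^1_0L^\Phi$ by the density of $F^\infty_0$ in $\mathbb{F}^1_0L^\Phi$ (see (\ref{banac2})). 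Continuity of $F$ along this approximation comes from the Lipschitz-type estimate (\ref{f3}) of Proposition \ref{lem23}, transported to $\mathcal X^\Phi_A$ through $\varrho$ together with the growth bound \textbf{(H$_3$)}. It follows that $\mathbf{u}$ solves (\ref{lem28}).

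By the uniqueness lemma preceding (\ref{lem28}), which rests on the strict convexity \textbf{(H$_2$)}, the limit $\mathbf{u}$ does not depend on $E'$. A subsequence-of-subsequence argument therefore shows the convergences hold along the whole sequence $\varepsilon\to0$.

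The main obstacle is the density/continuity step. One must check that $\psi_0\in\mathcal C^\infty_0(Q)\otimes I^\Phi_{nv}(\Omega)$ is admissible in Corollary \ref{lem24}, since invariant functions need not be smooth in $\omega$. One must also check that $F$ is continuous on $\mathbb{F}^1_0L^\Phi$, which requires $\phi(1+|\mathbb D\mathbf{v}|)$ to stay bounded in $L^{\widetilde\Phi}$ on bounded sets. For the first point, we would use (\ref{lem10}) and $\Delta_2$ to get $\widetilde\Phi(\phi(t))\le k\Phi(t)$, and approximate $\psi_0$ by $J_\delta\psi_0$, which is still invariant and is smooth.
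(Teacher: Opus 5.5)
Your argument follows the paper's proof of Theorem \ref{lem37} step for step: the a priori bound from \textbf{(H$_{3}$)}, compactness via Theorem \ref{t3.3}, the lower bound from Corollary \ref{lem32}, the test functions $\phi_\varepsilon$ of Corollary \ref{lem24} combined with minimality, extension to $\mathbb{F}^1_0L^\Phi$ by density, and uniqueness for (\ref{lem28}) to pass from $E'$ to the whole sequence. You also supply several details the paper leaves implicit: the Poincar\'e bound, the zero trace via Remark \ref{remX}, and the continuity of $F$ via (\ref{f3}).

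One correction is needed in your last paragraph, because the remedy you propose for $\psi_0$ does nothing.
\begin{itemize}
\item If $g\in I^\Phi_{nv}(\Omega)$, then $U(y)g=g$ for every $y$. Hence $J_\delta g=\bigl(\int K_\delta(y)\,dy\bigr)g=g$, so $J_\delta$ does not approximate $\psi_0$ by anything new.
\item The diagnosis is also off. An invariant function is already stochastically smooth, since all its derivatives $D_{i,\Phi}g$ vanish. What can fail is boundedness.
\item Boundedness is exactly what Corollary \ref{lem24} needs. Its proof places $f(\cdot,\cdot,D_x\psi_0+D_\omega\psi_1+D_y\psi_2)$ in $\mathcal{C}(\overline{Q};L^\infty(\Omega;A))$ in order to invoke (\ref{lem17}) via Corollary \ref{c3.3}. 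The paper simply asserts $D_x\psi_0\in[\mathcal{C}^\infty(\Omega)\otimes\mathcal{C}(\overline{Q})]^d$ there.
\end{itemize}

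The correct fix is truncation. Set $T_kg=\max(-k,\min(g,k))$.
\begin{itemize}
\item $T_kg$ is still invariant, because truncation commutes with composition by $T(x)$.
\item $T_kg$ is bounded and its stochastic difference quotients vanish identically, so $T_kg\in\mathcal{C}^\infty(\Omega)$.
\item $T_kg\to g$ in $L^\Phi(\Omega)$ by dominated convergence, since $\Phi\in\Delta_2$.
\end{itemize}
Consequently $\mathcal{C}^\infty_0(Q)\otimes\bigl(I^\Phi_{nv}(\Omega)\cap L^\infty(\Omega)\bigr)$ is still dense in the first factor of $\mathbb{F}^1_0L^\Phi$. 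Run the upper-bound estimate with such $\psi_0$, and your continuity-of-$F$ argument then closes the density step.
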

	\begin{proof}
		In view of the growth conditions in $(H_{3})$, the sequence $(u_{\varepsilon})_{\varepsilon>0}$ is bounded in $W^{1}_{0}L^{\Phi}_{D_{x}}(Q; L^{\Phi}(\Omega))$ and so the sequence $(f^{\varepsilon}(\cdot,\cdot, Du_{\varepsilon}))_{\varepsilon>0}$ is bounded in $L^{1}(Q\times\Omega)$. Thus, given an arbitrary fundamental sequence $E$, we get by Theorem \ref{t3.1} the existence of a subsequence $E'$ from $E$ and a triplet $\mathbf{u} = (u_{0}, u_{1}, u_{2}) \in \mathbb{F}_{0}^{1}L^{\Phi}$ such that (\ref{lem30})-(\ref{lem31}) hold when $E' \ni \varepsilon \to 0$. The sequence $(F_{\varepsilon}(u_{\varepsilon}))_{\varepsilon>0}$ consisting of real numbers being bounded, since $(u_{\varepsilon})_{\varepsilon>0}$ is bounded in $W^{1}_{0}L^{\Phi}_{D_{x}}(Q; L^{\Phi}(\Omega))$, there exists a subsequence from $E'$ not relabeled such that $\lim_{E' \ni\varepsilon \to 0} F_{\varepsilon}(u_{\epsilon})$ exists. 
		We still have to verify that $\mathbf{u} = (u_{0}, u_{1}, u_{2})$ solves (\ref{lem28}). In fact, if $\mathbf{u}$ solves this problem, then thanks to the uniqueness of the solution of (\ref{lem28}), the whole sequence $(u_{\varepsilon})_{\varepsilon>0}$ will verify (\ref{lem30}) and (\ref{lem31}) when $\varepsilon \to 0$. Thus, our only concern here is to check that $\mathbf{u}$ solves problem (\ref{lem28}). To this end, in view of Corollary \ref{lem32}, we have 
		\begin{equation}\label{lem34}
			\begin{array}{l}
				\displaystyle	\iint_{Q\times\Omega}\pounds_{A}\int \varrho \circ f(\cdot,\cdot, \mathbb{D}\mathbf{u}) dydxd\mu  \\
				\leq \lim_{E'\ni\varepsilon \to 0} \displaystyle \iint_{Q\times\Omega}  f^{\varepsilon}\left(\cdot,\cdot, Du_{\varepsilon}(x,\omega)\right)  dxd\mu.
			\end{array}
		\end{equation}
		On the other hand, let us establish an upper bound for 
		\begin{equation*}
			\iint_{Q\times\Omega}  f^{\varepsilon}\left(\cdot,\cdot, Du_{\varepsilon}(x,\omega)\right)  dxd\mu.
		\end{equation*}
		To do that, let $\phi = \left(\psi_{0}, I_{\Phi}(\psi_{1}), \psi_{2}\right) \in F^{\infty}_{0}$ [see (\ref{banac2})] with $\psi_{0} \in \mathcal{C}_{0}^{\infty}(Q)\otimes I_{nv}^{\Phi}(\Omega)$, $\psi_{1} \in \mathcal{C}_{0}^{\infty}(Q)\otimes \mathcal{C}^{\infty}(\Omega)$ and $\psi_{2} \in \mathcal{C}_{0}^{\infty}(Q)\otimes \mathcal{C}^{\infty}(\Omega)\times (J_{1}\circ\varrho)(A^{\infty}/\mathbb{C})$. 	Define $\phi_{\varepsilon}$ as in Corollary \ref{lem24}. Since $u_{\varepsilon}$ is the minimizer, one has 
		\begin{equation*}
			\iint_{Q\times\Omega}  f^{\varepsilon}\left(\cdot,\cdot, Du_{\varepsilon}(x,\omega)\right)  dxd\mu \leq \iint_{Q\times\Omega}  f^{\varepsilon}\left(\cdot,\cdot, D\phi_{\varepsilon}\right)  dxd\mu.
		\end{equation*}
		Thus, using Corollary \ref{lem24} we get 
		\begin{equation*}
			\begin{array}{l}
				\displaystyle	\lim_{E'\ni\varepsilon \to 0}  \displaystyle \iint_{Q\times\Omega}  f^{\varepsilon}\left(\cdot,\cdot, Du_{\varepsilon}(x,\omega)\right)  dxd\mu  \\
				\leq  \displaystyle \iint_{Q\times\Omega}\pounds_{A}\int  \varrho \circ f(\cdot,\cdot, D\psi_{0} + D_{\omega}\psi_{1} + \partial \psi_{2})  dydxd\mu,
			\end{array}
		\end{equation*}
		for any $\phi \in F^{\infty}_{0}$, and by density, for all $\phi \in \mathbb{F}_{0}^{1}L^{\Phi}$. From which we get 
		\begin{equation}\label{lem33}
			\lim_{E'\ni\varepsilon \to 0}  \iint_{Q\times\Omega}  f^{\varepsilon}\left(\cdot,\cdot, Du_{\varepsilon}(x,\omega)\right)  dxd\mu  \leq  \inf_{\mathbf{v} \in \mathbb{F}_{0}^{1}L^{\Phi}} \iint_{Q\times\Omega} \pounds_{A}\int \varrho \circ f(\cdot,\cdot, \mathbb{D}\mathbf{v})  dydxd\mu.
		\end{equation}
		Inequalities (\ref{lem34}) and (\ref{lem33}) yield 
		\begin{equation*}
			\iint_{Q\times\Omega}\pounds_{A}\int \varrho \circ f(\cdot,\cdot,  \mathbb{D}\mathbf{u}) dydxd\mu  = \inf_{\mathbf{v} \in \mathbb{F}_{0}^{1}L^{\Phi}} \iint_{Q\times\Omega} \pounds_{A}\int \varrho \circ f(\cdot,\cdot, \mathbb{D}\mathbf{v})  dydxd\mu 
		\end{equation*}
		i.e. (\ref{lem28}). The proof is complete.
	\end{proof}	
	
	\section{Concrete homogenization problems for (\ref{lem13})-(\ref{lem26})} \label{labelsect5} 
	
	In this section, we examine some concrete homogenization problem for (\ref{lem13})-(\ref{lem26}) using Theorem \ref{usetheo}.
	
	\subsection{The coupled stochastic-periodic homogenization problem}

	Here, we assume only that $A = \mathcal{C}_{\textup{per}}(Y)$ be the classical $H$-algebra of $Y$-periodic continuous complex functions on $\mathbb{R}_{y}^{d}$ where $Y= (0,1)^{d}$ (see Example \ref{p2.5}).
	For each fixed $(x,\omega
	,\lambda )\in Q\times \Omega \times \mathbb{R}^{d}$,
	the functions $y\mapsto
	f(x,\omega ,y,\lambda )$ in (\ref{lem13}) satisfy:
	\begin{equation*}
		f(x,\omega ,\cdot ,\lambda )\in \mathcal{C}_{\textup{per}}(Y) \text{ for any }%
		(x,\omega ,\lambda )\in \overline{Q}\times \Omega \times \mathbb{R}^{d};
	\end{equation*}
	(we also say that the function $y\mapsto
	f(x,\omega ,y,\lambda )$  is $Y$-periodic). It is well known that $\mathcal{C}_{\textup{per}}(Y)$ satisfies assumptions of Theorem \ref{usetheo} (see \cite{gabri}%
	).  Then the stochastic-deterministic homogenization problem (\ref{lem26}) is equivalent to the stochastic-periodic homogenization problem  which is treated in \cite{tchin2} (with $\varepsilon_{1}=\varepsilon$ and $\varepsilon_{2}=\varepsilon^{2}$).

\begin{remark}
	Indeed, when we consider the particular dynamical system $T(x)$ on $\Omega = \mathbb{T}^{d} \equiv \mathbb{R}^{d}/\mathbb{Z}^{d}$ (the $d$-dimensional torus) defined by $T(x)\omega = x+\omega\;\textup{mod}\;\mathbb{Z}^{d}$, then one can view $\Omega$ as the unit cube in $\mathbb{R}^{d}$ with all the pairs of antipodal faces being identified. The Lebesgue measure on $\mathbb{R}^{d}$ induces the Haar measure on $\mathbb{T}^{d}$ which is invariant with respect to the action of $T(x)$ on $\mathbb{T}^{d}$. Moreover, $T(x)$ is ergodic and in this situation, any function on $\Omega$ may be regarded as a periodic function on $\mathbb{R}^{d}$ whose period in each coordinate is 1, so that in this case our integrand $f$ may be viewed as a periodic function with respect to the variable $\omega$. Thus, the stochastic-deterministic homogenization problem (\ref{lem26}) is equivalent to the reiterated-periodic homogenization problem (with $\varepsilon_{1}=\varepsilon$ and $\varepsilon_{2}=\varepsilon^{2}$),
	\begin{equation*}
		\min \left\{ \int_{Q} f\left(x,\frac{x}{\varepsilon},\frac{x}{\varepsilon^{2}}, Dv(x) \right) dx \; : \; v \in  W^{1}_{0}L^{\Phi}(Q) \right\},
	\end{equation*}
	whose a particular case is treated in \cite{tacha3}. 
\end{remark}

	\subsection{The coupled stochastic-almost periodic homogenization problem}
	
	Here, we assume that 	for each fixed $(x,\omega
	,\lambda )\in Q\times \Omega \times \mathbb{R}^{d}$, the functions $y\mapsto
	f(x,\omega ,y,\lambda )$ in (\ref{lem13}) satisfy:
	\begin{equation*}
		f(x,\omega ,\cdot ,\lambda )\in AP(\mathbb{R}^{d})\text{ for any }%
		(x,\omega ,\lambda )\in \overline{Q}\times \Omega \times \mathbb{R}^{d};
	\end{equation*}
	where here, $AP(\mathbb{R}^{d})$ (see Example \ref{p2.5}) is the algebra of all Bohr almost periodic
	complex functions defined as the algebra of functions on $\mathbb{R}%
	^{d}$ that are uniformly approximated by finite linear combinations of
	functions in the set $\{\gamma _{k}:k\in \mathbb{R}^{d}\}$ with $\gamma
	_{k}(y)=\exp (2i\pi k\cdot y)$ ($y\in \mathbb{R}^{d}$). It is known that $AP(%
	\mathbb{R}^{d})$ satisfies assumptions of Theorem \ref{usetheo} (see \cite{bohr,gabri}%
	). We are led to the homogenization of (\ref{lem26}) with $A=AP(\mathbb{R}%
	^{d}) $.
	
	\subsection{The coupled stochastic-weakly almost periodic homogenization problem}
	
	Here, we assume that 	for each fixed $(x,\omega
	,\lambda )\in Q\times \Omega \times \mathbb{R}^{d}$, the functions $y\mapsto
	f(x,\omega ,y,\lambda )$ in (\ref{lem13}) satisfy:
	\begin{equation*}
		f(x,\omega ,\cdot ,\lambda )\in WAP(\mathbb{R}^{d})\text{ for any }%
		(x,\omega ,\lambda )\in \overline{Q}\times \Omega \times \mathbb{R}^{d};
	\end{equation*}
	where $WAP(\mathbb{R}^{d})$ (see Example \ref{p2.5}) is the algebra of weakly almost periodic
	functions on $\mathbb{R}^{d}$. It is known (see \cite{gabri}) that $%
	WAP(\mathbb{R}^{d})$ satisfies hypotheses of Theorem \ref{usetheo}. This leads to the
	homogenization of (\ref{lem26}) with $A=WAP(\mathbb{R}^{d})$.
	
		\subsection{The coupled stochastic-infinity bounded homogenization problem I}
	
	Here, we assume that 	for each fixed $(x,\omega
	,\lambda )\in Q\times \Omega \times \mathbb{R}^{d}$, the functions $y\mapsto
	f(x,\omega ,y,\lambda )$ in (\ref{lem13}) satisfy:
	\begin{equation*}
		f(x,\omega ,\cdot ,\lambda )\in \mathcal{B}_{\infty}(\mathbb{R}^{d})\text{ for any }%
		(x,\omega ,\lambda )\in \overline{Q}\times \Omega \times \mathbb{R}^{d};
	\end{equation*}
	where $\mathcal{B}_{\infty}(\mathbb{R}^{d})$ (see Example \ref{p2.5}) is the algebra of infinity bounded
	functions on $\mathbb{R}^{d}$. It is known (see \cite{gabri}) that $%
	\mathcal{B}_{\infty}(\mathbb{R}^{d})$ satisfies hypotheses of Theorem \ref{usetheo}. This leads to the
	homogenization of (\ref{lem26}) with $A=\mathcal{B}_{\infty}(\mathbb{R}^{d})$.
	
		\subsection{The coupled stochastic-infinity bounded homogenization problem II}
	
	Here, we assume that 	for each fixed $(x,\omega
	,\lambda )\in Q\times \Omega \times \mathbb{R}^{d}$, the functions $y\mapsto
	f(x,\omega ,y,\lambda )$ in (\ref{lem13}) satisfy:
	\begin{equation*}
		f(x,\omega ,\cdot ,\lambda )\in \mathcal{B}_{\infty,\mathbb{Z}^{d}}(\mathbb{R}^{d})\text{ for any }%
		(x,\omega ,\lambda )\in \overline{Q}\times \Omega \times \mathbb{R}^{d};
	\end{equation*}
	where $\mathcal{B}_{\infty,\mathbb{Z}^{d}}(\mathbb{R}^{d})$ is defined as in Example \ref{p2.5}. It is known (see \cite{gabri}) that $%
	\mathcal{B}_{\infty,\mathbb{Z}^{d}}(\mathbb{R}^{d})$ satisfies hypotheses of Theorem \ref{usetheo}. This leads to the
	homogenization of (\ref{lem26}) with $A=\mathcal{B}_{\infty,\mathbb{Z}^{d}}(\mathbb{R}^{d})$.
	
		\subsection{The coupled stochastic-Fourier Stieltjes homogenization problem}
	
	Here, we assume that 	for each fixed $(x,\omega
	,\lambda )\in Q\times \Omega \times \mathbb{R}^{d}$, the functions $y\mapsto
	f(x,\omega ,y,\lambda )$ in (\ref{lem13}) satisfy:
	\begin{equation*}
		f(x,\omega ,\cdot ,\lambda )\in FS(\mathbb{R}^{d}_{y}) \text{ for any }%
		(x,\omega ,\lambda )\in \overline{Q}\times \Omega \times \mathbb{R}^{d};
	\end{equation*}
	where $FS(\mathbb{R}^{d}_{y})$ is the Fourier Stieltjes algebra  defined as in Example \ref{p2.5}. It is known (see \cite{gabri}) that $%
	FS(\mathbb{R}^{d}_{y})$ satisfies hypotheses of Theorem \ref{usetheo}. This leads to the
	homogenization of (\ref{lem26}) with $A=\mathcal{B}_{\infty,\mathbb{Z}^{d}}(\mathbb{R}^{d})$.
	
	\section*{Acknowledgments}
	
	The authors would like to thank the anonymous referee for his/her pertinent remarks, comments and suggestions.

	\section*{Declarations}
	
	\begin{itemize}
		\item Funding : No funding was received to to assist with the preparation of this manuscript.
		\item Conflict of interest/Competing interests : We have no conflicts of interest to disclose. 
		\item Consent to participate : All authors consented to participate in this work.
		\item Consent for publication : We are enclosing herewith a manuscript entitled ``Stochastic $\Sigma$-convergence in Orlicz setting and Applications" submitted to the journal ``++++++" for possible evaluation. 
		\item Ethics approval : With the submission of this manuscript we would like to undertake that the above mentioned manuscript has not been published elsewhere, accepted for publication elsewhere or under editorial review for publication elsewhere. 
		\item Availability of data and materials : `Not applicable'
		\item Code availability : `Not applicable'
		\item Authors contributions : The authors contributed equally to this work. The first draft of the manuscript was written by \textsc{ Tchinda Takougoum Franck Arnold} and all authors commented on previous versions of the manuscript. All authors read and approved the final manuscript.
	\end{itemize}
	

	\bibliographystyle{abbrv}
	\bibliography{stoc_sigma_biblio1}
	
\end{document}